\DeclareSymbolFont{cyrletters}{OT2}{wncyr}{m}{n}
\DeclareMathSymbol{\Sha}{\mathalpha}{cyrletters}{"58}
\let\svthefootnote\thefootnote
\newcommand\blankfootnote[1]{%
  \let\thefootnote\relax\footnotetext{#1}%
  \let\thefootnote\svthefootnote%
}
\newcommand{\A}{\mathbb{A}}
\newcommand{\Z}{\mathbb{Z}}
\newcommand{\Q}{\mathbb{Q}}
\newcommand{\R}{\mathbb{R}}
\newcommand{\C}{\mathbb{C}}
\newcommand{\HH}{\mathbb{H}} %
\newcommand{\PGSp}{\mathrm{PGSp}}
\newcommand{\GSp}{\mathrm{GSp}}
\newcommand{\Sp}{\mathrm{Sp}}
\newcommand{\GSO}{\mathrm{GSO}}
\newcommand{\Gl}{\mathrm{Gl}}
\newcommand{\Sl}{\mathrm{Sl}}
\newcommand{\Hom}{\mathrm{Hom}}
\newcommand{\virt}{\mathrm{virt}}
\newcommand{\cusp}{\mathrm{cusp}}
\newcommand{\GU}{\mathrm{GU}}
\newcommand{\SU}{\mathrm{SU}}
\newcommand{\SO}{\mathrm{SO}}
\newcommand{\gfrak}{\mathfrak{g}}
\newcommand{\im}{\mathrm{im}}
\DeclareMathOperator{\Ind}{\mathrm{Ind}}
\DeclareMathOperator{\Frob}{\mathrm{Frob}}
\newcommand{\fin}{\mathrm{fin}}
\newcommand{\new}{\mathrm{new}}
\newcommand{\der}{\mathrm{der}}
\newcommand{\elliptic}{\mathrm{ell}}
\DeclareMathOperator{\tr}{\mathrm{tr}}
\newcommand\nosf[1]{\begin{footnotesize}\textup{\textsf{#1}}\end{footnotesize}}
\DeclareMathOperator{\diag}{diag}
\newcommand{\coh}{\mathrm{coh}}
\DeclareMathOperator{\Res}{Res}
\newcommand{\hol}{\mathrm{hol}}
\newtheorem{thm}{Theorem}[section]
\newtheorem{cor}[thm]{Corollary}
\newtheorem{lemma}[thm]{Lemma}
\newtheorem{conjecture}[thm]{Conjecture}
\newtheorem{prop}[thm]{Proposition}
\newtheorem{rmk}[thm]{Remark}
\newtheorem{defn}[thm]{Definition}
\setlist[enumerate]{itemsep=-0.5ex plus0.1ex minus 0.2ex}
\setlist[description]{itemsep=-0.5ex plus0.1ex minus 0.2ex}
\setlist[itemize]{itemsep=-0.5ex plus0.1ex minus 0.2ex}
\newcommand{\GlobalField}{k}
\newcommand{\id}{\mathrm{id}}
\begin{document}
 
\title{
\begin{large} Global liftings between inner forms of $\mathrm{GSp}(4)$\end{large}
}
\date{}
 \author{M.~R\"osner\and R.~Weissauer}
\maketitle

\begin{abstract}
For reductive groups $G$ over a number field we discuss automorphic liftings of cohomological cuspidal irreducible automorphic representations $\pi$ of $G(\A)$ to  irreducible cohomological automorphic representations of $H(\A)$ for the quasi-split inner form $H$ of $G$, and other inner forms as well. 
We show the existence of  nontrivial weak global cohomological liftings in many cases,
in particular for the case where $G$ is anisotropic at the archimedean places.  
A priori, for these weak liftings we do not give a description of the precise nature 
of the corresponding local liftings at the ramified places, nor do we characterize the image of the lift. For inner forms of the group $H=\mathrm{GSp}(4)$ however we address these finer questions. Especially, we prove the recent conjectures of Ibukiyama and Kitayama on paramodular newforms of squarefree level.
\blankfootnote{2010 \textit{Mathematics Subject Classification.} Primary 22E55; Secondary 11F46, 11F70, 20G05.}
\end{abstract}

\setcounter{tocdepth}{1}

\textit{Introduction}.
Algebraic modular forms are automorphic forms defined for reductive groups $G$ over a number field $\GlobalField$ that are anisotropic at the real places; compare \cite{Gross_alg_mod_forms}.
In thm.~\ref{innerlifting} we show that irreducible cuspidal automorphic representations $\pi$  of $G(\A)$, defined by algebraic modular forms, always admit weak liftings to  cohomological automorphic representations of $H(\A)$, where $H$ is the quasi-split inner form  of $G$ or another inner form of $G$.
For this assertion we may assume that $\pi$ is of general type, i.e.\ not weakly isomorphic to an Eisenstein representation or endoscopic representation of $H(\A)$.
In the simplest case $H=\Gl(2)$ this gives the Jacquet-Langlands lift between automorphic forms on the multiplicative group $G=D^\times$ of quaternion algebras $D$, definite at the archimedean places, to automorphic forms on $H=\Gl(2)$. Similar to the work of Jacquet and Langlands, our method is based on the trace formula. The method used also  
provides weak liftings from $G$ as above to other inner forms $H$ of $G$ in a completely analogous way, under certain conditions on norm maps. In particular, since groups $H$ defining hermitian symmetric spaces 
always admit inner forms that are anisotropic at the archimedean places,  this gives examples of special interest, since liftings of representations $\pi$ of $G(\A)$ of general type as above, then give rise to holomorphic automorphic forms, i.e. to irreducible cuspidal automorphic representations of $H(\A)$ whose archimedean components belong to the holomorphic discrete series. This allows to attach Galois representations to $\pi$ by considering $\ell$-adic cohomology groups.

\medskip\noindent
We can extend the results described above to inner liftings of irreducible cohomological cuspidal automorphic representations from an arbitrary inner form $G$ to the quasi-split inner form~$H$ under a certain conjectural hypothesis  (conjecture~\ref{conj:middle_coh}).
We show these hypothesis in certain cases of interest, e.g. for inner forms of $\GSp(4)$ and automorphic representations whose archimedean discrete series representations are of regular weight $\lambda$ at the archimedean places.

\medskip\noindent
After the general discussion of weak inner liftings, in the second half of the paper we study the particular case $H=\GSp(4)$, starting from section~\ref{s:inner_forms_coh} onward, where we give a precise and explicit description of the inner lift at all the local places in terms of a local-global principle; see thm.~\ref{mainthm} and thm.~\ref{mainthm-2}. This in particular uses local results of
Chan and Gan \cite{Chan_Gan_GSp4_III}.

\medskip\noindent
Our investigations had three initial motivations. One of them arose from the desire to have explicit existence results for  paramodular Siegel modular forms of genus two and weight three that came from computations of Rama and Tornar\'{\i}a~\cite{Rama_Tornaria}.
They use quinary quadratic forms to construct algebraic modular forms for unitary groups
$\mathrm{U}(2)$ over quaternion algebras.
We show that these forms lift to weight three Siegel modular forms by a generalized Jacquet-Langlands correspondence.
For the special case of squarefree levels this was independently shown by van Hoften~\cite{van_Hoften} using methods from algebraic geometry via the weight filtration spectral sequence. 

\medskip\noindent
Another motivation for us came from various conjectures of Ibukiyama \cite{Ibukiyama06, Ibukiyama08, Ibukiyama17, Ibukiyama18}, formulated over the last decades. These predict inner liftings between automorphic forms related to the group $\GSp(4)$.
One of these conjectures~\cite[1.3]{Ibukiyama17} will be proved in this paper; see section~\ref{s:innerlift}.
We became also interested in the conjectures of Ibukiyama~\cite{Ibukiyama08},
suggesting that liftings may yield an answer to Harder's conjectures relating congruences between Siegel modular forms and congruences for special $L$-values of elliptic cusp forms \cite{Harder}.
The lifting proposed by Ibukiyama~\cite{Ibukiyama08} relates Siegel modular forms of genus two of integral weight to Siegel modular forms of genus two of half integral weight, and this should be thought as a generalization of the Shimura lift for elliptic modular forms.
In particular cases (not for the full modular group) this generalized Shimura lift, whose existence was also conjectured by Ibukiyama, should be obtained by the composition of two other liftings, namely from 
the inner lift, relating automorphic forms on $H=\GSp(4)$ to algebraic modular forms 
on inner forms $G$ of $\GSp(4)$, then followed by a theta lift, relating algebraic automorphic 
for $H$ with automorphic forms on $\Sp(4)$ of half integral weight,
by using definite quinary theta series. The latter  uses the exceptional isomorphism 
between the root systems $B_2$ and $C_2$. 

\medskip\noindent
Last but not least, our final incentive to study inner liftings for $\GSp(4)$ arose from computations of the local $L$-factors for global $L$-series defined for automorphic forms on $\GU_D(1,1)$.
These $L$-series, defined similarly as the Piateski-Shapiro $L$-series attached to automorphic forms 
for the group $\GSp(4)$, were studied by us \cite{Roesner_Weissauer_L_factor_inner_form} recently.
We computed the local $L$-factors $L(\pi,s)$ of these $L$-series except for local supercuspidal irreducible representations $\pi$ of the non-split inner form of $\GSp(4)$ over a local nonarchimedean field $\GlobalField_v$. 
Previously Piatetski-Shapiro had shown  that generic supercuspidal
irreducible representations $\pi_v$ of $\GSp(4,\GlobalField_v)$ over local nonarchimedean fields $\GlobalField_v$ have trivial local $L$-factors $L(\pi_v,s)$, whereas
the local factors $L(\pi,s)$ for non-generic supercuspidal representations of $\GSp(4,\GlobalField_v)$ are known by Dani\c{s}man~\cite{Danisman_Annals}.
In contrast, generic representations are not defined for groups that are not quasi-split over $\GlobalField_v$, so Piatetski-Shapiro's argument does not apply for inner forms.
However, using the global inner lifting from this paper and a comparison of $L$-factors at almost all places, it is possible to compute the supercuspidal local $L$-factors
by reducing the question to the knowledge of $L$-factors for the split case.
For this particular application of the global inner lift 
we refer to the forthcoming paper \cite{Roesner_Weissauer_L_factor_inner_form} where we study Piatetski-Shapiro's  $L$-functions \cite{PS-L-Factor_GSp4} in the context of non-quasi-split forms of $\GSp(4)$.

\medskip\noindent
Many of the assertions, in particular in the first sections of the present paper, could be stated and proven in somewhat greater generality.
Although this mainly requires technical improvements, due to our primary interest we did not want to go into this and preferred to postpone this to a more complete treatment at another occasion.

\tableofcontents

\section{Assumptions on \texorpdfstring{$G$}{G}}\label{section1}
Let $G$ be a connected reductive group over a totally real number field $F$.
For simplicity we make the hypothesis that $G$ satisfies the following two properties:
\begin{enumerate}
\item For the center $Z$ of $G$ the Galois cohomology $H^1(\GlobalField,Z)$ is trivial.
\item The derived group $G_{\der}$ of $G$ is simply connected.
\end{enumerate}
One may 
use $z$-extensions, in the sense of Langlands, to remove these assumptions. 
On the other hand, for specialists it is clear  that for the main results of sections 1-7 both assumptions are superfluous. They were made for non-expert readers in order to simplify   the presentation in the technical  earlier  sections, and also to be able to refer directly to statements \cite{Weissauer_Endo_GSp4}, where the second assumption was also made (for simplicity). This being said, let us remark that later we will also assume that the quasi-split inner form $H$ of $G$ over $\GlobalField$ has the property that $H(\R)$ admits discrete series representations, i.e. that $H(\R)$ has maximal tori that are $\R$-anisotropic. In contrast to the above
two assumptions, this later assumption will indeed turn out to be the key assumption for 
our arguments. Trying to avoid it, one has to consider twisted cohomological trace formulas
\cite{Weselmann_trace_formula} instead of the cohomological trace formula, something that lies outside of the intention of this article.

\medskip 
Since restriction of scalars preserves both properties, we will assume $\GlobalField=\Q$.
Indeed, both properties are inherited by inner forms of $G$.

\bigskip\noindent
\textit{Weak Lifts}.
Two irreducible representations $\pi=\otimes'_v \pi_v$
and $\pi' = \otimes'_v \pi'_v$ of $G(\mathbb A)$ are called weakly equivalent
if $\pi_v \cong \pi'_v$ holds for almost all places $v$.
An irreducible automorphic representation $\pi$ of $G(\mathbb A)$ is called
\textit{weakly $G$-Eisenstein} if it is weakly equivalent to an irreducible constitutent $\pi'$ of a parabolically induced representation $\Ind_{P(\mathbb A)}^{G(\mathbb A)}(\sigma)$ of an automorphic representation $\sigma$ of  $L(\mathbb A)$ for the Levi subgroup $L$ of a proper $\GlobalField$-rational parabolic subgroup $P=LN_P$ of $G$.
For an inner form $H$ of $G$ the local groups $H_v=H(\GlobalField_v)$ and $G_v=G(\GlobalField_v)$ are locally isomorphic at $v\notin S$ outside of some finite set $S$ of places $v$ of $K$.
An irreducible automorphic representation $\pi$ of $G(\mathbb A)$ is called a \textit{weak inner lift} of an irreducible automorphic representation $\widetilde\pi$ of $H(\mathbb A)$, and vice versa, if $\pi_v \cong \widetilde{\pi}_v$ and $G_v\cong H_v$ holds for almost every $v$.
It can happen that $\tilde\pi$ is weakly $H$-Eisenstein although $\pi$ is not weakly $G$-Eisenstein. 
If $\pi$ is cuspidal and is a weak inner lift of an $H$-Eisenstein representation of the quasi-split inner form $H$, then we call $\pi$ a \textit{CAP representation} of $G(\A)$. 
We say $\pi$ is of general type if $\pi$ is neither CAP nor endoscopic.

\section{Lefschetz numbers}\label{s:Lefschetz}

Let $K_\infty$ be a maximal compact subgroup of the Lie group $G_\infty=G(\R)$, let
$A_G$ be the maximal $\Q$-split subtorus of the center $Z$ of $G$ and $A_G(\R)^0$ be the topologically connected component of $A_G(\R)$.
Put $\tilde K_\infty= K_\infty \cdot A_G(\R)^0$ and $X_G= G_\infty/\tilde K_\infty$.
Fix compact open subgroups $\Omega_v$ of $G_v=G(\GlobalField_v)$ at the non-archimedean places $v$
that stabilize a special point in the Bruhat-Tits building adapted to a fixed choice of a minimal $\mathbb Q$-parabolic subgroup $P_0$ of $G$ as in \cite{Weissauer_Endo_GSp4}, p.29.
For compact open sufficiently small subgroups $K$ of $\prod_{v<\infty}\Omega_v$ of $G(\mathbb A_{\fin})$
put $$S_K(G)= G(\mathbb Q)\!\setminus (X_G \times G(\mathbb A_{\fin}))/K \ .$$
Finite dimensional irreducible complex representations $\tau=\tau_\lambda$ of $G$ of highest weight $\lambda$
define coefficient systems $V_\lambda$ on $S_K(G)$; here we follow the conventions
of \cite{Chai_Faltings}, \cite{Weissauer_Endo_GSp4}.
We denote the Lie algebra of $G_\infty$ by $\mathfrak{g}=\mathrm{Lie}(G_\infty)$.
An irreducible representation $\pi_\infty$ of $G_\infty$ is cohomological with respect to
$V_\lambda$ if the graded space $H^\bullet(\gfrak ,\tilde K_\infty,
\pi_{\infty}\otimes \tau_\lambda)$ of relative Lie algebra cohomology  does not vanish.   
Its Euler characteristic will be denoted 
$$ \chi(\pi_\infty,\lambda) = \sum_i (-1)^i \dim(H^i(\gfrak ,\tilde K_\infty,
\pi_{\infty}\otimes \tau_\lambda))\ .$$
Up to isomorphism there are only finitely many irreducible representations $\pi_\infty$ 
that are cohomological for given $V_\lambda$, \cite{Vogan_Zuckerman}, \cite{Borel_Wallach}.
Let $\pi$ be an irreducible cuspidal automorphic representation of $G(\mathbb A)$ whose archimedean factor
$\pi_\infty$ is cohomological with respect to  the coefficient system $V_\lambda$.
For a finite subset $S$ of the nonarchimedean places
we write $$\pi  = \pi_\infty\otimes \pi_S\otimes \pi^S\ ,$$ where $\pi^S$
denotes the restricted tensor product over all $\pi_v$ for the nonarchimedean places $v\notin S$.
We write $\A^S$ and $\A_S$ for the restricted tensor product of $\GlobalField_v$ for the nonarchimedean places $v\notin S$ resp.~$v\in S$.
For a cuspidal admissible representation $W$ of $G(\A^S)$ let $W(\pi^S)$ denote the $\pi^S$-isotypical subspace of $W$.
For the coefficient system $V_\lambda$ on $S_G(G)$ attached to $\lambda$ the cohomology
$$  H^\bullet(S_K(G),V_\lambda) \ ,$$
is considered as a $\mathbb Z$-graded object in the usual way.
Its generalized $\pi^S$ isotypic subspace $H^\bullet(S_K(G),V_\lambda)(\pi^S)$
defines the direct limit $$\varinjlim_K H^\bullet(S_K(G),V_\lambda)(\pi^S)$$ as a $G(\mathbb A_S)$ module.
Traces of $K$-biinvariant functions $f_S$ coincide with the trace on the finite
dimensional subspaces $H^\bullet(S_K(G),V_\lambda)(\pi^S)$.
By the work of Franke \cite{Franke} and Franke-Schwermer \cite{Franke_Schwermer}
all irreducible constituents of this direct limit are isomorphic to
the finite part of some irreducible automorphic representations $\pi \cong \otimes'_v \pi_v$ of $G(\A)$
whose infinite component $\pi_\infty$ is cohomological for $V_\lambda$.  

\bigskip\noindent
\textit{$\pi^S$-typic components}. If we fix an automorphic representation $\pi=\pi_\infty\pi_S\pi^S$ and consider the generalized eigenspace on the cohomology
with respect to $\pi^S$, it is convenient to work with test functions in $C_c^\infty(G_v,\omega_v)$, where $\omega_v$ is the inverse of the central character of $\pi_v$ of $Z(\GlobalField_v)$.
Here $C_c^\infty(G_v,\omega_v)$ denotes the subspace of functions 
$f_v$ in $C^\infty(G_v)$ that have compact support modulo $Z(F_v)$ and transform
as $f_v(g_vz_v)= \omega_v(z_v)f_v(g_v)$.
Although in the cohomological trace formula we use test functions in $C_c^\infty(G(\A_{\fin}))$,
we may pass to their projections $\int_{Z(\A_{\fin})} f(gz)\omega(z)^{-1}dz$ in $C_c^\infty(G(\A_{\fin}),\omega)$ by integration.
The supertrace, i.e.\ the alternating sum of the traces of $f_S\in C^\infty_c(G(\A_S),\omega_S)$ on each $H^i(S_K(G),V_\lambda)(\pi^S)$
with sign $(-1)^i$, defines the Lefschetz number
$$ L(f_S,V_\lambda,\pi^S) = tr_s(f_S, H^\bullet(S_K(G),V_\lambda)(\pi^S)) \ ,$$
and similarly $ L(f,V_\lambda) = tr_s(f, H^\bullet(S_K(G),V_\lambda))$ for
$f\in C_c^\infty(G(\mathbb A_{\fin}))$.
Often it is convenient to enlarge $S$ to some finite set $T$ of nonarchimedean places
that in particular contains all ramified places of $\pi$.
Fix a compact open subgroup $K=K_T K^T =\prod_{v\neq \infty} K_v \subseteq \Omega$
such that $K^T=\prod_{v\notin T} \Omega_v$.
The group $K_T$ may be assumed to be a good small subgroup of $G_T=\prod_{v\in T}G_v$
in the sense of \cite{Weissauer_Endo_GSp4}, p.~79 and p.~26.
Such groups define cofinal systems of open compact subgroups of $G_T$.
For a given finite set ${\cal P}$ of isomorphism classes of admissible irreducible representations of $G_T$,
all assumed to have nontrivial $K_T$ fixed vectors,
there exists a $K_T$-biinvariant function $f_T=f(\pi_T)$ in $C_c^\infty(G_T)$ with $tr(\pi_T,f_T)=1$ for a fixed $\pi_T$ in ${\cal P}$ and $tr(\pi'_T,f_T)=0$ for all $\pi'_T \not\cong \pi_T$ in ${\cal P}$.
We then call $f=f_T$ a projector for $\pi_T$ in ${\cal P}$.

\bigskip\noindent
In the cases where $S_K(G)$ defines a Hermitean symmetric space, 
one can similarly define the Lefschetz number for $L^2$-cohomology
$$ L_{(2)}(f_S,V_\lambda,\pi^S) = tr_s(f_S, H^\bullet_{(2)} (S_K(G),V_\lambda)(\pi^S)) \ .$$
By a theorem of Borel and Casselman \cite{Borel-Casselman}
$\tr(f_S, H^i_{(2)} (S_K(G),V_\lambda)(\pi^S))$ is equal to 
$$ \sum_{\pi} m_{disc}(\pi) \dim(
H^i(\gfrak ,\tilde K_\infty,
\pi_{\infty}\otimes \tau_\lambda)) \cdot\tr(\pi_S(f_S)) $$
where the sum extends over all irreducible automorphic representations $\pi$ of $G(\A)$
with given fixed $\pi^S$ that occur in the discrete spectrum of $L^2(G(\Q)\setminus G(\A),\omega)$ with multiplicity $m_{disc}(\pi)$.
Here the central character $\omega$ is determined by $\lambda$.
The cohomology groups $H^i_{(2)}(S_K(G),V_\lambda)(\pi^S))$ can be identified with
the middle weighted cohomology groups on the Baily-Borel compactification of $S_K(G)$
or the middle intersection cohomology groups of the Borel-Serre compactification
$S_K^+(G)$, and they are finite dimensional \cite{Saper_Stern}, \cite{Langlands_debut_trace_formula}, \cite{Goresky_Harder_MacPherson}, \cite{Goresky_MacPherson}.
If $\pi^S$ is not $G$-Eisenstein, by a theorem of Franke \cite{Borel_Labesse_Schwermer}, \cite{Franke}, \cite{Franke_Schwermer} 
for the ordinary cohomology, the trace
$$ \tr(f_S, H^i(S_K(G),V_\lambda)(\pi^S))$$ is equal to
$$ \sum_{\pi} m_{\cusp}(\pi) \dim(
H^i(\gfrak ,\tilde K_\infty,
\pi_{\infty}\otimes \tau_\lambda)) \cdot\tr(\pi_S(f_S)) \ $$
for the cuspidal multiplicity $m_{\cusp}(\pi)$ of $\pi$ in $L^2_{\cusp}(G(\Q)\setminus G(\A),\omega)$.
The supertrace is the alternating sum over the cohomology degrees
\begin{gather*}
\tr_s(f_S, H^\bullet(S_K(G),V_\lambda)(\pi^S))=\sum_{i=0}^\infty(-1)^i\tr_s(f_S, H^i(S_K(G),V_\lambda)(\pi^S))\\
=\sum_{\pi=\pi_\infty\pi_\fin}\chi(\pi_\infty, \lambda) m_{\cusp}(\pi) \cdot\tr\pi_S(f_S)
\end{gather*}
with the Euler characteristic
$$\chi(\pi_\infty, \lambda)=\sum_{i=0}^\infty(-1)^i\dim(H^i(\gfrak ,\tilde K_\infty, \pi_{\infty}\otimes \tau_\lambda))\ .$$
This defines the cohomological multiplicity 
$$m_{\coh}(\pi)=\chi(\pi_\infty, \lambda) m_{\cusp}(\pi)$$
and the virtual multiplicity as the sum
$$m_{\virt}(\pi_\fin)=\sum_{\pi_\infty}m_{\coh}(\pi_\infty\otimes\pi_\fin)$$ over the cohomological multiplicities of the automorphic cuspidal representations with the same non-archimedean factor  $\pi_\fin$.
Counted with their multiplicity in the cuspidal spectrum  this sum ranges over all irreducible cuspidal automorphic representations of $G(\A)$ whose non-archimedean factor is $\pi_{\fin}$.
The virtual multiplicity is the multiplicity of the $G(\A_S)$-module $\pi_{\fin}$ on the super-representation $H^\bullet(S_K(G),V_\lambda)(\pi^S)$.
The above supertrace is then the (finite) sum
$$
\tr_s(f_S, H^\bullet(S_K(G),V_\lambda)(\pi^S))=\sum_{\pi_S} m_{\virt}(\pi^S\pi_S) \tr(\pi_S(f_S))\ .$$

\section{Finiteness results and the cohomological trace formula}\label{s:Finiteness}

The central results of this section are proposition~\ref{p3.9} and its corollary
\ref{STF}. Their proof is basically contained in \cite[\S2]{Weissauer_Endo_GSp4}, but the construction
of auxiliary places there is slightly more involved due to comparisons with the Lefschetz-Grothendieck etale fixed  point formula.
For the convenience of the reader we present in this section a rough outline of the relevant arguments of \cite{Weissauer_Endo_GSp4}, which may be skipped on first reading.

\bigskip\noindent
For fixed $\lambda$ and $K$ as above, the set of isomorphism  classes of irreducible automorphic representations $\pi$ of $G(\mathbb A)$ with  nonvanishing $K$-fixed vector and with a cohomological representation $\pi_\infty$ attached to our fixed $\lambda$  is a finite set ${\cal P}={\cal P}(K,\lambda)$. 
This implies that the larger set ${\cal P}_{halo}={\cal P}(K,\lambda)_{halo}$
of isomorphism classes of irreducible automorphic representations
with nonvanishing $K$-fixed vector that occur in the halo $\Pi(\lambda)$ is also finite.
For the definition of the halo we refer to \cite[p.~45]{Weissauer_Endo_GSp4}.
The finiteness of $\mathcal{P}_{halo}$ implies that there exists a finite
set $T'$ of nonarchimedean places containing $T$ such that the following holds:
For any $\pi, \pi'$ in ${\cal P}$ and any nonarchimedean $v_0\notin T'$  
the existence of isomorphisms $\pi_v\cong \pi'_v$ for all nonarchimedean places $v\neq v_0$ implies $\pi_{v_0}\cong \pi'_{v_0}$. We call a set $T'$ with this property \textit{saturated}
with respect to $\lambda$, $K$ and $S$. If $T$ is already saturated in this sense, obviously any $\pi$ in ${\cal P}_{halo}$ is uniquely determined by $\pi_T$. Certain nonarchimedean places in the complement
of a saturated finite set $T$ will be used as \textit{auxiliary} places in the sequel.
By construction the representations $\pi^T$ attached to the representations $\pi$ 
in ${\cal P}$ define finitely many unramified representations
of $G(\mathbb A^T)$. Hence, these $\pi^T$ can be separated by the eigenvalues of suitable chosen $\Omega_{v_i}$-biinvariant spherical functions $f_i$ at finitely many auxiliary places $v_1,...,v_r$ in the complement  of $T$. This allows to define a spherical projector $$f(\pi^T)=\prod_{i=1}^r f_i \cdot \prod_{v\notin T\cup \{v_1,...,v_r\} } 1_{\Omega_v}$$ onto some fixed  representation $\pi^T$ for fixed $\pi$ in ${\cal P}$ within the set of ${\pi'}^T$ for 
all $\pi'\in {\cal P}_{halo}$. Now, also fixing this projector $f^T=f(\pi^T)$, we will also consider sufficiently large auxiliary places $v_0$ not in $T$ that are different from $v_1,...,v_r$. Their choice only depends on our fixed data $K,\lambda,T, f(\pi_T), f(\pi^T)$. Recall we fixed $K=K_TK^T$ in $G(\mathbb A_{\fin})$.

\begin{prop} \label{D1} For every $K$-biinvariant function $f\in C_c^\infty(G(\mathbb A_{\fin}))$,
$$ L(f,V_\lambda) = \sum_{P_0 \subseteq P \subseteq G}
\sum_{w\in W^P} (-1)^{\ell(w)} \cdot T_{\elliptic}^L(\overline f^{(P)}\!\cdot\!  \chi_P^G, w(\lambda+\rho_G)- \rho_L) $$ where $P=L N_P$ runs over the $\mathbb Q$-rational standard parabolic subgroups of $G$ with Levi group $L$.
The elliptic traces
$$T_{\elliptic}^L(h,\mu)=\sum_{\gamma\in L(\mathbb Q)/\sim} \chi(L_\gamma) O^L_\gamma(h)\tr(\tau_\mu(\gamma^{-1}))$$ for $h\in C_c^\infty(L(\mathbb A_{\fin}))$
 are sums over representatives $\gamma$ of the $L(\mathbb Q)$-conjugacy classes of
 strongly elliptic semisimple elements in $L(\mathbb Q)$.
\end{prop}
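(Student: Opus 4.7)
The plan is to derive this from the topological Lefschetz formula of Arthur and Goresky--Kottwitz--MacPherson applied to the Borel--Serre compactification $S_K^+(G)$. Since $S_K(G) \hookrightarrow S_K^+(G)$ is a homotopy equivalence, $L(f, V_\lambda)$ equals the supertrace of the Hecke correspondence attached to $f$ on $H^\bullet(S_K^+(G), V_\lambda)$. The boundary of $S_K^+(G)$ is stratified by $G(\mathbb Q)$-conjugacy classes of proper standard $\mathbb Q$-parabolic subgroups $P = LN_P \supseteq P_0$, and each stratum $e_P$ fibres over a locally symmetric space attached to the Levi $L$, with nilmanifold fibres modelled on $N_P(\mathbb R)$.

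First I would apply the Lefschetz fixed point formula for Hecke correspondences to $S_K^+(G)$, expressing $L(f, V_\lambda)$ as the contribution of the open interior $S_K(G)$ plus boundary contributions indexed by each $P$. On each stratum the nilmanifold fibration, together with Kostant's theorem, identifies the virtual $L$-representation $H^\bullet(\mathfrak n_P, V_\lambda)$ with $\sum_{w \in W^P}(-1)^{\ell(w)} \tau^L_{w(\lambda+\rho_G)-\rho_L}$. The induced action of $f$ on the boundary is governed by the constant term $\overline{f}^{(P)}$ along $P$, weighted by the truncation factor $\chi_P^G$ that is inserted to correct the overcounting produced by the nested corner structure of the compactification.

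Finally, I would apply the trace formula to each Levi $L$ with test function $\overline{f}^{(P)}\chi_P^G$ and coefficient system of highest weight $w(\lambda+\rho_G)-\rho_L$. The main obstacle is the combinatorial cancellation of the non-elliptic (hyperbolic, singular semisimple, and non-strongly-elliptic) contributions: these must telescope across the nested strata once the factors $\chi_P^G$ are correctly normalised, so that only the strongly elliptic terms
$$T_{\elliptic}^L\bigl(\overline{f}^{(P)}\chi_P^G,\, w(\lambda+\rho_G)-\rho_L\bigr) = \sum_{\gamma \in L(\mathbb Q)/\sim} \chi(L_\gamma)\, O^L_\gamma(\overline{f}^{(P)}\chi_P^G)\, \tr\bigl(\tau_{w(\lambda+\rho_G)-\rho_L}(\gamma^{-1})\bigr)$$
survive. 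Verifying this cancellation is the heart of Arthur's topological trace formula and, in the situation at hand, is carried out in \cite[\S2]{Weissauer_Endo_GSp4}; the remaining task is then purely bookkeeping, collecting the surviving terms according to the pair $(P, w)$.
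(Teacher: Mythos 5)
Your outline matches the paper's approach: the paper's own proof is a bare citation to \cite{Weissauer_Endo_GSp4}, lemma~2.9 and corollary~2.3, and the machinery you describe --- the Goresky--MacPherson Lefschetz fixed point formula on the Borel--Serre compactification, the boundary stratification by standard $\Q$-parabolics, Kostant's theorem for $H^\bullet(\mathfrak{n}_P,V_\lambda)$, the constant term $\overline{f}^{(P)}$, and the truncation $\chi_P^G$ tied to $P$-contractivity --- is exactly what underlies that reference, as the surrounding discussion in section~\ref{s:Finiteness} makes explicit.

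One small caution concerning the mechanism you sketch: in this framework the fixed points of the Hecke correspondence on a boundary stratum $e_P$ are automatically parametrized only by $\gamma\in L(\Q)$ that are strongly elliptic in $L$, because only such $\gamma$ can fix a point in the archimedean symmetric space attached to $L$ modulo $A_L(\R)^0$; thus the restriction to strongly elliptic conjugacy classes is built in from the outset rather than emerging as the residue of a telescoping cancellation of hyperbolic and singular terms, and $\chi_P^G$ encodes the $P$-contractivity condition entering the local Lefschetz contribution at each boundary fixed point (the work of Goresky--MacPherson) rather than a combinatorial correction for overcounting. This is a point of interpretation rather than a gap, and your delegation of the actual verification to \cite[\S2]{Weissauer_Endo_GSp4} is precisely what the paper itself does.
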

\begin{proof}
See \cite{Weissauer_Endo_GSp4} lemma~2.9 and its corollary~2.3.
\end{proof}

\noindent
For the definition of $\overline f^{(P)} \in C_c^\infty(G(\mathbb A_{\fin}))$, the Euler characteristics $\chi(L_\gamma)$ and the orbital integrals $O^L_\gamma(h)$ we refer to \cite{Weissauer_Endo_GSp4}, definition 2.2 and section 2.4, 2.5 and 2.6.
In every conjugacy class of parabolic subgroups there is a unique standard $P$ containing the fixed minimal $\Q$-parabolic subgroup $P_0$ of $G$.
Finally, $\chi_P^G$ denotes a real valued \textit{truncation function} on $L(\mathbb Q)$, defined by the characteristic function
 $$\chi_P^G = \tau_{F}\circ H_{P,\infty} \ .$$ 
Let us recall its definition: Let $F$ denote the subset $\Delta_F=\{\alpha_i, i\in F\}$ of the set $\Delta=\{\alpha_1,...,\alpha_r\} $ of simple $\mathbb Q$-roots  attached to $(G,P_0)$ that define the simple roots of the Levi group $L$ of $P$.
Semisimple elliptic elements $\gamma$ in $L(\mathbb Q)$
can be written in the form $\gamma = a_\infty \cdot x_\infty k_\infty x_\infty^{-1}$
for $a_\infty\in A_L(\R)^0$ and $k_\infty$ in the compact subgroup $K_{L,\infty}$
of $L(\R)$.
Then $\chi_P^G(\gamma)=1$ holds if and only if $\vert \alpha_j(a_\infty)\vert_\infty >1$ holds for all simple roots $\alpha_j, j\notin F$ of the $\Delta$, and 
$\chi_P^G(\gamma)=0$ holds otherwise. So $\chi_P^G$  restricts summation in the sum defining $T_{\elliptic}^L$ (as in \cite{Weissauer_Endo_GSp4}, p.36) to the $P$-contractive elements that arise by the work of Goresky and McPherson \cite{Goresky_MacPherson}.
 The condition $\chi_P^G(\gamma)=1$ only depends on the component $a_\infty$ of $\gamma$ in $A_L(\R)^0$, and in fact only depends on $a_\infty$
modulo the center $Z$ of $G$. To see this we recall the definition of the functions $\tau_F$ and $H_{P,\infty}$ in the next sections.

\bigskip\noindent
\textit{3. Root systems}.
For an abstract root system $\Phi$ in Euclidean space $V=\R^r$ with metric $(.,.)$ and  basis $\Delta=\{\alpha_1,...,\alpha_r\}$ let $\beta_1,...,\beta_r$ be the dual  basis defined by $2\frac{(\beta_j, \alpha_i)}{(\alpha_i,\alpha_i)}=\delta_{ij}$ for $1\leq i,j\leq r$.
Then $(\beta_i, \beta_j)\geq 0$ (resp. $>0$ for a simple root system $\Phi$)
holds for all $i,j$ and $(\alpha_i,\alpha_j)\leq 0$ holds for all $i\neq j$.
Notice $V^+ \subseteq {}^+V$ and $V^+ \cap -\overline{{}^+V}=\emptyset$, $\overline{ V^+} \cap -{}^+V=\emptyset$ for
the open cones ${}^+V= \sum_{i=1}^r \R_{>0} \cdot \alpha_i $ (the obtuse Weyl chamber)
and $V^+ = \sum_{j=1}^r \R_{>0} \cdot \beta_j$ in $V$ (the open Weyl chamber), 
and $\overline{{}^+V}$, $\overline{ V^+}$ denote their closures respectively.
For any subset $F\subseteq \{1,...,r\}$ we consider
$V_F= \bigoplus_{j\notin F} \R \cdot \beta_j$
and the orthogonal decomposition $V=V_F \oplus U$ for  $U=V_F^\perp$.
The intersection $\Phi\cap U$
of $\Phi$ with the subspace $U=\bigoplus_{i\in F} \R \cdot \alpha_i \subseteq V$ defines an abstract root system in $U$ with respect to  restricted metric and root basis $F$. By definition the orthogonal projection $$pr_F: V \to V_F$$ has kernel $U$.
Let $i_F: V_F \hookrightarrow V$ denote the inclusion and $r_F=i_F\circ pr_F: V\to V$
the corresponding retract.

\bigskip\noindent
Using $pr_F$ one defines the cones $V_F^+ := pr_F(V^+)$
and ${}^+V_F:= pr_F({}^+V)$ in $V_F$. So by definition ${}^+V_F:= pr_F(\sum_{i\notin F}
\R_{>0} \cdot \alpha_i)$. Let $\tau_F=1_{V_F^+}$ resp. $\widehat\tau_F=1_{{}^+ V_F}$ denote the characteristic functions of the open cones $V_F^+$ resp.  ${}^+ V_F$.  

\begin{rmk}
 $V_F^+ = \sum_{j\notin F} \R_{>0} \cdot \beta_j $ 
 \end{rmk}
\begin{proof}
For the convenience of the reader we recall the argument: By definition the right hand side is contained in $V_F^+$.
So, the assertion  means that  $pr_F(\beta_j)$ is contained in the right side, for each $j\in F$.
To show this consider the scalar product of $\alpha_{j'}$ for $j'\in F$ with $\beta_j$ and the orthogonal decomposition $\beta_j = \sum_{i\notin F} x_i\beta_i + \sum_{i\in F} y_i \alpha_i$.
This gives $\delta_{j'j} = \sum_{i\in F} y_i (\alpha_{j'},\alpha_i)$.
Hence $(\alpha_{j'}, v_j)= \delta_{j'j}$ for $j,j'\in F$ holds with respect to the scalar product on $U$, where $v_j=\sum_{i\in F} y_i \alpha_{i}$.
Hence the $v_j$ define the dual basis of  the root basis $\alpha_{j'}, j'\in F$ of $(U,\Phi\cap U)$. Thus $v_j \in U^+\subseteq {}^+U$, and therefore $y_i\geq 0$ for $i\in F$.
For $j\notin F$ on the other hand, considering  $( \alpha_{j},\beta_j)$
we obtain $x_j = 1  - \sum_{i\in F} y_i (\alpha_j, \alpha_i) >0$.
Indeed $j\neq i\in F$ implies $(\alpha_j, \alpha_i)\leq 0$ for the elements of the basis $\Delta$ of $\Phi$.
\end{proof}

For $x\in V_F$ it is clear that $x\in V_F^+$ if and only if $\alpha_j(x) >0$ for all $j\notin F$.

\bigskip\noindent
\begin{rmk}
  $\ -\overline{{}^+V_F} \cap \overline{V_F^+}   = \{0\} \ .$
\end{rmk}

\begin{proof}
Indeed, if $\sum_{j\notin F} x_j\beta_j $ is in $\sum_{i\notin F}
\R_{\leq 0}\cdot \alpha_i + \sum_{i\in F}
\R \cdot \alpha_i$ and $x_j\geq 0$, then $j\notin F$ holds. 
By taking scalar products with $\beta_k, k\notin F$
this implies $\sum_{j\notin F} x_j (\beta_j,\beta_k) \leq 0$.
Now from $(\beta_j,\beta_k) \geq 0$ we obtain $x_k=0$.
\end{proof}

\medskip\noindent
For each $F\subseteq \{\alpha_1,...,\alpha_r\}$ the roots $\alpha_j, j\notin F$
are nontrivial linear forms on $r_F(V)$
Composing them with the retract $r_F$, one obtains nontrivial linear forms $\alpha_j \circ r_F \circ w$
on $V$ for all $F$, all $j\notin F$ and all $w$ in the Weyl group $W$ of $\Phi$.
The kernels of these finitely many linear forms define hyperplanes in $V$ through the origin.
The complement in $V$ of the union of these hyperplanes is an open dense subset
of $V$ and decomposes into finitely many connected components, the \emph{refined Weyl chambers}, each of which is a convex cone.

\medskip\noindent
We now apply this to $V= \mathrm{Lie}(A_{L_0}/A_{L_0}\cap Z)$. 
Then $V_F = \mathrm{Lie}(A_{L}/A_{L}\cap Z)$, for the rational parabolic subgroup subgroup $P_F=L_F N_F$ of $G$ containing $P_0$.
The Levi subgroup $L_F$ has the root system that is spanned in $V$ by the $\alpha_i$ for $i\in F$ .
We consider these refined Weyl chambers for the corresponding root system over the number field $\GlobalField$,
and also for the corresponding root systems over the archimedean and nonarchimedean completions of $\GlobalField$ respectively.

\bigskip\noindent

\bigskip\noindent
\textit{4. Harish-Chandra height functions  $H_{P,v}$}. For an extension field $K$ of $\GlobalField$ let $A_{0,K}$ a maximal $K$-split torus of $G$,
let $L_{0,K}$ its centralizer in $G$, $P_{0,K}=L_{0,K}\cdot N_{0,K}$ a parabolic subgroup over $K$ containing $L_{0,K}$ as Levi subgroup. Let $\Phi=\Phi(K)$ be the root system for $A_{0,K}$ with basis $\Delta(K)$ corresponding to the choice of $P_{0,K}$.
Let $P=L \cdot N$ be a $K$-parabolic subgroup of $G$ containing $P_{0,K}$, and let
$A_{L,K}$ be the maximal $K$-split subtorus of the center of the Levi group $L$. 
Consider the projection $X_*(L)\to X_*(L)_{\Gamma_K}$ to the maximal quotient that is invariant under the absolute Galois group $\Gamma_K$ of $K$. 
We define ${}^K\! X^*(L)$ as the subgroup  of all characters $\chi\in X^*(L)$ in the character group
of $L$ that are trivial on the kernel of the projection $X_*(L)\to X_*(L)_{\Gamma_K}$.
Notice, ${}^K\!  X^*(.)$ is a contravariant functor in $L$.   
For each field extension $K\hookrightarrow K'$, we have ${}^K \!  X^*(L)\subseteq
{}^{K'} \!  X^*(L)$ and thus an induced surjective map $p_{K\to K'}$
from $\Hom({}^{K'}\!  X^*(L), \R)$ to $\Hom({}^{K}\!  X^*(L), \R)$. 
If $K=\GlobalField_v$ is a local field, there is a unique homomorphism 
$$  H_{P,\GlobalField_v}: L(\GlobalField_v)  \to \Hom({}^{\GlobalField_v}\!  X^*(L), \R) \ $$
such that
$ \langle H_{P,\GlobalField_v}(\gamma) , \chi \rangle  =  \log \vert \chi(\gamma)\vert_v $
holds for all $\gamma\in L(\GlobalField_v)$ and $\chi\in {}^{\GlobalField_v}\!X^*(L)$.

\bigskip\noindent
If $\GlobalField_v$ is nonarchimedean, this is related to \cite{Cartier}, p. 135 formula (D)
and the fact that the lattice $\Lambda$ in loc. cit. is commensurable
to the cocharacter group $X_*(A_{L,\GlobalField_v})$. 

\smallskip\noindent
If $\GlobalField_v$ is archimedean, $\Hom({}^{\GlobalField_v} X_*(L),\R)$ can be identified with the Lie algebra $\mathrm{Lie}(A_{L,\GlobalField_v})$.
This yields the alternative description
$\chi(\exp(H_{P,\GlobalField_v}(\gamma)))
=   \vert \chi(\gamma)\vert_v $ for all $\gamma\in L(\GlobalField_v)$.

\bigskip\noindent
\textit{5. Global situation}. For an embedding of the global field $\GlobalField$ into one of its completions $\GlobalField_v$ and a given $\GlobalField$-rational standard parabolic subgroup $P_F=L_F\cdot N_F$ of $G$,
containing the fixed minimal $\GlobalField$-rational parabolic $P_0$,
we consider the composite maps $H_{P,v}= p_{\GlobalField\to \GlobalField_v}\circ H_{P,\GlobalField_v}$ 
$$ H_{P,v}:  L(\GlobalField_v) \to {\cal X}_L \ ,$$
where ${\cal X}_L := \Hom({}^{\GlobalField} X^*(L), \R)$ by definition.
For nonarchimedean places $v$ the vector space
${\cal X}_L$ can be canonically identified with $\Lambda \otimes_{\mathbb Z}
\R$ for the image $$\Lambda \cong  L(\GlobalField_v)/{}^0\! L(\GlobalField_v) \ $$
 of the map $ord_L: L(\GlobalField_v) \to \Lambda$ 
where the kernel of $ord_L$ is a compact open subgroup ${}^0 L$ of $L(\GlobalField_v)$, see in \cite{Cartier}, page 134, formulas (D), (S).
Hence $H_{P,v}$ factorizes over the mapping $ord_L$. 
For the euclidean space $V=\sum_{i=1}^r \R \cdot \alpha_i$,
spanned by the roots in the fixed basis $\Delta = \{\alpha_1,...,\alpha_r\}$ of the $\GlobalField$-root system 
$\Phi$ of $(G,P_0)$, the root system $\Phi_L$ of the Levi subgroup $L$ of $G$ is spanned
by roots $\alpha_i\in \Delta$ for some subset $F\subseteq \{1,...,r\}$. 
With respect to the orthogonal decomposition $V = \bigoplus_{i\in F}\R \cdot \alpha_i
\oplus V_F$ for $V_F=\sum_{j\notin F}\R \cdot \beta_j$,
notice that we have an identification $\Lambda \otimes_{\mathbb Z} \R  \cong  V_F $.
For this use that $A_{L}(\GlobalField_v) \hookrightarrow L(\GlobalField_v)$ induces a injective map between lattices $X_*(A_L)\cong A_{L}(\GlobalField_v)/{}^0 A_L \hookrightarrow L(\GlobalField_v)/{}^0 L \cong \Lambda$.
Having the same rank $r - \#F$, these lattices are commensurable.
Since $X_*(A_L)\otimes_{\mathbb Z}\R = \sum_{j\notin F} \R \cdot \beta_j = V_F \subseteq X_*(A_{L_0})=V$,
tensoring with $\R$  induces a canonical identification
$$  V_F \ \simeq \ {\cal X}_L  \ .$$

\noindent
\textit{6. $P$-contractiveness}.
The product formula for the idele norm gives a sum formula for 
global elements $\gamma\in  L(\GlobalField)$ 
$$  \sum_v  H_{P,v}(\gamma) = 0 \ .$$
In terms of a $\mathbb Q$-root system  of $G$ with basis $\Delta=\{\alpha_1,...,\alpha_r\}$ corresponding to $P_0$
we identify  ${\cal X}_{L_0} = \Hom({}^{\GlobalField} X^*(L_0), \R)$ with
$V=\sum_{j=1}^r \R \cdot \beta_j = \sum_{i=1}^r \R \cdot \alpha_i $.
We write ${\cal X}={\cal X}_{L_0}$ and obtain ${\cal X}\cong V$.
The $\GlobalField$-parabolic subgroups
$P_F=L_F\cdot N_F$ containing $P_0$ are indexed by the subsets $F\subseteq \{\alpha_1,...,\alpha_r\}$, such that the simple roots in $L$ are those indexed by $F$.
By this description, the surjective map ${\cal X} \twoheadrightarrow {\cal X}_L$
induced from the inclusion $L_0 \hookrightarrow L$ from the contravariant functor ${}^\GlobalField X^*(.)$
can be identified with the surjective projection
$pr_F:  V \to V_F = \sum_{j\notin F} \R \cdot \beta_j$.
We thus obtain the commutative diagram 
\begin{equation*}
\xymatrix@R=+10mm@C=+25mm {
    {\cal X}  \ar@{->>}[r]^{\Hom({}^\GlobalField\! X^*(.),\R)} & {\cal X}_L  \cr
    V \ar[u]^{\simeq} \ar@{->>}[r]^{pr_F} & V_F \ .\ar[u]_\simeq
}  
\end{equation*}
A semisimple element $\gamma\in L(\mathbb Q)$, contained in a maximal torus of $L(\R)$ that is $\R$-anisotropic modulo $A_L$, is said to be \emph{$P$-contractive} if 
$\vert \sigma(\gamma) \vert_\infty >1$ holds for all simple roots (over the algebraic closure)
that occur in the expression of the roots of the Lie algebra of $N_P$ as sums of simple positive roots; for more details see \cite{Weissauer_Endo_GSp4}, p.21.
By our assumption on $\gamma$ it suffices to consider this condition for $\mathbb Q$-rational roots $\alpha$.
Let $F$ be the index set of $\mathbb Q$-roots such that the conditions $\alpha_i(\gamma)=1 $ for $i\notin F$ characterize the Lie algebra of the center of $L$.
The condition of $P$-contractiveness can then be stated as
$ (T)_\infty:  H_{P,\infty}(\gamma) \in  V_F^+ \ ,$
or equivalently, by the sum formula,  as the condition
$$ (T)_{\fin}: \quad  \quad\quad  H_L(\gamma) = -\sum_{v \neq \infty} H_{P,v}(\gamma) \in - V_F^+ \ .$$
Recall that the truncation function $\chi_P^G= H_{P,\infty} \circ \tau_P^G$ for ${\cal X}_L$
from above is the characteristic function of those $\gamma$ with the property $H_{P,\infty}(\gamma) \in  -V_F^+ $, compare \cite[p.~34]{Weissauer_Endo_GSp4}

\begin{lemma}\label{D2} With respect to the inclusion $L(\GlobalField)\subseteq L(\mathbb A_{\fin})$
the real valued function $\chi_P^G$ on $L(\GlobalField)$ can  be extended  to the 
locally constant function
$$ \chi_P^G: L(\mathbb{A}_{\fin}) \to  \R \quad , \quad
\chi_P^G(\prod_{v\neq \infty} \gamma_v)= \tau_F(- \sum_{v\neq \infty} H_{P,v}(\gamma_v)) \ .$$ 
\end{lemma}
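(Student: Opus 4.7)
The plan is to verify three things in succession: first, that the proposed formula makes sense on all of $L(\A_{\fin})$ (the sum is finite); second, that it agrees with the original truncation $\chi_P^G = \tau_F \circ H_{P,\infty}$ on the subgroup $L(\GlobalField)$; and third, that the resulting function is locally constant.

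For the well-definedness of the sum, I would use the factorization of $H_{P,v}$ noted in section 5: at each nonarchimedean place the map $H_{P,v}\colon L(\GlobalField_v)\to V_F$ is inflated from $ord_L\colon L(\GlobalField_v) \twoheadrightarrow L(\GlobalField_v)/{}^0 L(\GlobalField_v)\cong \Lambda$, where ${}^0 L(\GlobalField_v)$ is a compact open subgroup. Because $\gamma=\prod_v\gamma_v$ lies in the restricted product, the component $\gamma_v$ belongs to ${}^0 L(\GlobalField_v)$ for all but finitely many $v$ (this compatibility is part of our standing choice of the $\Omega_v$ attached to a special point of the Bruhat--Tits building), so all but finitely many summands $H_{P,v}(\gamma_v)$ vanish and $\sum_{v\neq\infty} H_{P,v}(\gamma_v)\in V_F$ is a finite sum.

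For the agreement on $L(\GlobalField)$, I would invoke the product formula for the idele norm recalled at the start of part 6, namely $\sum_v H_{P,v}(\gamma)=0$ for every $\gamma\in L(\GlobalField)$. Splitting off the archimedean term yields
\[
H_{P,\infty}(\gamma) \;=\; -\sum_{v\neq\infty} H_{P,v}(\gamma),
\]
so $\tau_F\bigl(-\sum_{v\neq\infty} H_{P,v}(\gamma)\bigr) = \tau_F(H_{P,\infty}(\gamma))$, which is precisely the original value $\chi_P^G(\gamma)$. Thus the formula in the lemma genuinely extends $\chi_P^G$ from $L(\GlobalField)$ to $L(\A_{\fin})$.

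For local constancy, each $H_{P,v}$ is locally constant because it factors through the discrete quotient $L(\GlobalField_v)/{}^0 L(\GlobalField_v)$. Given $\gamma=\prod_v\gamma_v\in L(\A_{\fin})$, let $S$ be the finite set of places where $\gamma_v\notin {}^0 L(\GlobalField_v)$; then the open neighborhood $\prod_{v\in S}{}^0 L(\GlobalField_v)\gamma_v\times\prod_{v\notin S}{}^0 L(\GlobalField_v)$ of $\gamma$ is a set on which each $H_{P,v}(\gamma_v)$ is constant, hence the finite sum and its composition with $\tau_F$ are constant. The only mildly delicate point — really a bookkeeping issue rather than an obstacle — is ensuring that the choices of compact open subgroups are compatible with the kernels ${}^0 L(\GlobalField_v)$ so that the adelic sum remains finite; this is standard for $p$-adic reductive groups and is already built into the framework fixed in section~\ref{s:Lefschetz}.
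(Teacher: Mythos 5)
Your proof is correct and follows essentially the same approach as the paper: well-definedness of the finite sum and local constancy both rest on the fact that $H_{P,v}$ vanishes on compact open subgroups of $L(\GlobalField_v)$, and agreement with the original $\chi_P^G = \tau_F\circ H_{P,\infty}$ on $L(\GlobalField)$ is exactly the product-formula identity $(T)_\infty \Leftrightarrow (T)_{\fin}$ recalled just before the lemma. The paper's own proof is terser (it leaves the agreement step implicit in the preceding discussion), but there is no substantive difference.
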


\begin{proof} 
The extension is well-defined since each $H_{P,v}$ is zero on compact
open subgroups. Hence $\chi_P^G$ defines a locally constant function on
$L(\mathbb A_{\fin})$.
\end{proof}

\bigskip\noindent
\textit{7. Parabolic descent}. For algebraic groups $H$ over $\GlobalField_v$ we often write $H_v=H(\GlobalField_v)$. This being said, fix a suitably chosen nonarchimedean auxiliary place $v$ of $\GlobalField$ such that $G_v$ is quasi-split and splits over an unramified extension of $\GlobalField_v$.
This conditions hold for almost all places of $\GlobalField$.
Then there exists a special, good, maximal compact subgroup $\Omega_v$
in the sense of \cite[p.~140]{Cartier}.
For the Levi subgroups $L_{F,v}$ of the standard $\GlobalField_v$-parabolic subgroups $P_{F,v}=L_{F,v} N_{F,v}$ note that $L_{F,v}\cap \Omega_v$ again is a special, good, maximal compact subgroup of $L_{F,v}$;
see \cite[p.~9]{Arthur_Invariant_Trace_Formula}\,.
For $f\in C_c^\infty(G_v)$ put
$$ \overline f^{(P_v)}(\ell) =  \vert \ell\vert_v^{\rho_{P_v}} \int_{N_{P_v}}  \overline f(\ell n) dn \ $$
where $\overline f(x)=\int_{\Omega_v} f(k_v^{-1}x k_v) dk_v$.
Haar measures are normalized such that  $vol(\Omega_v)=1$ and $vol(\Omega_v\cap N_{P_v})=1$.
This defines the relative Satake transform
$$  S_{L_v}^{G_v}: C_c^\infty(G_v) \longrightarrow C_c^\infty(L_v) \quad , \quad S_{L_v}^{G_v}(f) = \overline f^{(P_v)}\ .$$
Then $h=S_{L_v}^{G_v}(f)$ is invariant under conjugation with the compact group $L_v\cap \Omega_v$ because modulus factors are trivial on compact groups. This implies the transitivity
$$  S_{L'_v}^{L_v} \circ S_{L_v}^{G_v} = S_{L'_v}^{G_v} $$
for standard $\GlobalField_v$-parabolic subgroups $P'=L'N' \subseteq P=LN \subseteq G$ containing $P_{0,v}$.
The orbital integrals
$O_\gamma^{L_v}(h) = \int_{L_{\gamma,v}\setminus L_v} h(\ell^{-1}\gamma \ell) d\overline \ell$ 
for the Levi groups have the following well known properties:

\begin{prop} \label{D3}\label{D4}
Let $L_v$ be a Levi subgroup of a $\GlobalField_v$-rational parabolic subgroup of $G$
containing the minimal $\GlobalField_v$-rational parabolic subgroup $P_{0,v}$ of $G$.
Fix $f\in C_c^\infty(G_v)$ and its relative Satake transform $S_{L_v}^{G_v}(f)\in C_c^\infty(L_v)$.
\begin{enumerate}
\item For semisimple elements $\gamma\in L_v$ that are regular in $G_v$ we have
$$ D_{L_v}^{1/2}(\gamma)\cdot O_\gamma^{L_v}(S_{L_v}^{G_v}(f)) =  D_{G_v}^{1/2}(\gamma)\cdot O_\gamma^{G_v}(f)\ . $$
where we set $D_{L_v}(\gamma) = \prod_{\alpha\in \Phi_L} \vert \alpha(\gamma)- 1\vert_v$
for the $\GlobalField_v$-root system $\Phi_L\subseteq \Phi$ that belongs to the Borel datum  $(L_v, P_{0,v}\cap L_v, L_{0,v})$.
\item For irreducible admissible representations $\sigma_v$ of $L_v$ the trace satisfies
$$\tr(f, \Ind_{P_v}^{G_v}(\sigma_v)) =\tr(S_{L_v}^{G_v}(f),\sigma_v) \ .$$
\end{enumerate}
\end{prop}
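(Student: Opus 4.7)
The plan is to establish both identities by direct manipulation; both are classical descent identities, so the task is to organize the standard calculation in a way that matches the measure and modular-factor conventions fixed earlier in the paper.

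For part (i), I would start from
$$O_\gamma^{G_v}(f)=\int_{G_{\gamma,v}\setminus G_v} f(g^{-1}\gamma g)\,d\bar g\,,$$
use that $G_{\gamma,v}=L_{\gamma,v}$ since $\gamma\in L_v$ is regular in $G_v$, and apply the Iwasawa decomposition $G_v=L_v\,N_{P_v}\,\Omega_v$ to write $g=\ell n k$. Averaging over $\Omega_v$ replaces $f$ by $\bar f$; since $L_v$ normalizes $N_{P_v}$ the conjugate $g^{-1}\gamma g$ equals $k^{-1}\gamma'\tilde n\, k$ with $\gamma'=\ell^{-1}\gamma\ell$ and $\tilde n=\gamma'^{-1}n^{-1}\gamma' n$. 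Regularity of $\gamma$ in $G_v$ makes $n\mapsto \tilde n$ a bijection of $N_{P_v}$ with Jacobian
$$J(\gamma')=\prod_{\alpha\in\Phi(N_{P_v})}|\alpha(\gamma')-1|_v\,,$$
and since $\Phi(N_{P_v})\cup(-\Phi(N_{P_v}))=\Phi\setminus\Phi_L$, a direct pairing of each $\alpha$ with $-\alpha$ using $|\alpha(\gamma')^{-1}-1|_v=|\alpha(\gamma')-1|_v\cdot|\alpha(\gamma')|_v^{-1}$ yields $J(\gamma')^2=D_{G_v}(\gamma)\,D_{L_v}(\gamma)^{-1}\,\delta_{P_v}(\gamma')$. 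Collecting this factor with the modular factor of the Iwasawa measure produces exactly $|\ell|_v^{\rho_{P_v}}$, so recognizing the remaining $N_{P_v}$-integral as the relative Satake transform $S_{L_v}^{G_v}(f)(\gamma')$ and the outer $L_{\gamma,v}\setminus L_v$-integral as $O_\gamma^{L_v}$ gives the identity.

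For part (ii), I would realize $\Ind_{P_v}^{G_v}\sigma_v$ on the usual space of $V_{\sigma_v}$-valued functions on $G_v$ transforming under $P_v$ by $\sigma_v\cdot\delta_{P_v}^{1/2}$. Computing the trace of $\Ind_{P_v}^{G_v}\sigma_v(f)$ as the integral of its kernel on the diagonal, restricting to $\Omega_v$ via Iwasawa, and unfolding $P_v=L_v N_{P_v}$ using the normalizations $\mathrm{vol}(\Omega_v)=\mathrm{vol}(\Omega_v\cap N_{P_v})=1$, reduces the trace to
$$\int_{L_v}\tr\sigma_v(\ell)\cdot \delta_{P_v}(\ell)^{1/2}\int_{N_{P_v}}\bar f(\ell n)\,dn\,d\ell\,,$$
which is precisely $\tr(S_{L_v}^{G_v}(f),\sigma_v)$ by the definition of the relative Satake transform.

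The main obstacle in both parts is the bookkeeping of modular characters. In (i), the essential point is that $J(\gamma')$ and the $\delta_{P_v}$ from the Iwasawa measure combine to produce exactly the $D_{G_v}^{1/2}(\gamma)D_{L_v}^{-1/2}(\gamma)$ factor on one side and the $|\ell|_v^{\rho_{P_v}}$ factor absorbed into the Satake transform on the other; in (ii), the $\delta_{P_v}^{1/2}$-twist built into the definition of $\Ind_{P_v}^{G_v}\sigma_v$ supplies the same exponent that the Satake transform carries on the $L_v$-side. Once these normalizations are aligned with those of \cite{Cartier} and \cite{Weissauer_Endo_GSp4}, both identities follow directly from the change of variables in $N_{P_v}$ and the Iwasawa unfolding.
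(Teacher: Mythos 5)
The paper offers no proof of its own here; it simply refers to Kazhdan, \S 2, lemma 1, where both identities are established by exactly the argument you sketch --- Iwasawa unfolding for the orbital-integral descent in (i), and computing the trace of the induced representation from the diagonal of its integral kernel in (ii). So your approach is the one underlying the cited reference, and the overall strategy is sound. One bookkeeping detail in (i) needs correcting: the Jacobian of the map $n\mapsto\tilde n=\gamma'^{-1}n^{-1}\gamma' n$ on $N_{P_v}$ is
$$\prod_{\alpha\in\Phi(N_{P_v})}\big|1-\alpha(\gamma')^{-1}\big|_v\ ,$$
not $\prod_{\alpha}|\alpha(\gamma')-1|_v$ as you wrote; the two differ precisely by $\delta_{P_v}(\gamma')$. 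In the parametrization $g=\ell n k$ the Iwasawa measure is $dg=d\ell\,dn\,dk$ with no extra modular factor, so the $\delta_{P_v}(\gamma')^{1/2}$ that must be absorbed by the $|\ell|_v^{\rho_{P_v}}$ in the definition of $S^{G_v}_{L_v}(f)$ comes exactly from the corrected Jacobian (which satisfies $\tilde J^2=D_{G_v}D_{L_v}^{-1}\delta_{P_v}^{-1}$); with your $J$ one would be left with a spurious $\delta_{P_v}(\gamma)$. This is a one-line fix that does not affect the validity of the method, and part (ii) is correct as stated.
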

Compare Kazdhan~\cite[\S2, lemma~1]{Kazdhan}.

\bigskip\noindent

\textit{8. Satake transform}. 
Let ${\cal H}(G_v,\Omega_v)$ be the algebra of $\Omega_v$-biinvariant
function in $C_c^\infty(G_v)$, and similarly define ${\cal H}(L_v,L_v\cap \Omega_v)$
for the Levi subgroups $L_{F,v}$ of standard $\GlobalField_v$-parabolic subgroups $P_{F,v}=L_{F,v} N_{F,v}$ containing the minimal $\GlobalField_v$-parabolic group $P_{0,\GlobalField_v}$. 
The transform $S_{L_{F,v}}^{G_v}$, or $S_L^G$ for short, maps the spherical Hecke algebra ${\cal H}(G_v,\Omega_v)$ into the spherical Hecke algebra ${\cal H}(L_{F,v},L_{F,v}\cap \Omega_v)$ 
$$  S_L^G: {\cal H}(G_v,\Omega_v) \longrightarrow {\cal H}(L_v,L_v\cap \Omega_v) \ .$$
Let $\C[\Lambda]$ be the polynomial algebra whose generators $x_i$ correspond to the 
characteristic functions $1_{\lambda_i}$ for fixed generators $\lambda_i, i=1,...,r_v$ of the lattice $\Lambda$.
If $L=L_{0,v}$ is $\GlobalField_v$-minimal, we identify the convolution algebra ${\cal H}(L_{0,v},L_{0,v}\cap \Omega_v)$
with the polynomial algebra $\C[\Lambda]$ in the sense of \cite[p. 147]{Cartier} via the isomorphism
\begin{equation*}
{\cal H}(L_{0,v},L_{0,v}\cap \Omega_v)\ni h\longmapsto
\sum_{\lambda\in \Lambda} h(\lambda)
\prod_{i=1}^r x_i^{n_i} 
\in\C[\Lambda]\ .
\end{equation*}
where the monomial $x_i^{n_i}$ corresponds to $\lambda=\sum_{i=1}^r n_i\lambda_i$ in $\Lambda$.
The Weyl group $W=W_G$, defined as in \cite[p. 140]{Cartier}, acts in a natural way on $\Lambda$ and hence on $\C[\Lambda]$. We recall the Satake isomorphism $S=S_{L_{0,v}}^G$.
\begin{prop}[{\cite{Satake}}]\label{D5}
For quasi-split $G_v$ that splits over an unramified extension of $\GlobalField_v$
the Satake transform $S:  {\cal H}(G_v,\Omega_v) \to  \C[\Lambda]$
defines an injective ring homomorphism between the convolution algebra ${\cal H}(G_v,\Omega_v) $ and the polynomial ring $\C[\Lambda]$.
The image of $S$ is the subring $\C[\Lambda]^{W_G}$ in $\C[\Lambda]$ of all invariants with respect to the Weyl group $W_G$.
\end{prop}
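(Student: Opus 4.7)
The plan is to establish, in this order: (i) $S$ is a ring homomorphism, (ii) its image lies in $\C[\Lambda]^{W_G}$, (iii) $S$ is injective, and (iv) $S$ maps onto $\C[\Lambda]^{W_G}$. All four steps rely on the Iwasawa decomposition $G_v=\Omega_v\cdot L_{0,v}\cdot N_{0,v}$ and the Cartan decomposition $G_v=\bigsqcup_\lambda \Omega_v\lambda\Omega_v$, indexed by a system of dominant representatives $\lambda\in\Lambda$; both are available under our quasi-split and unramified-splitting hypotheses on $G_v$ combined with the fact that $\Omega_v$ is a special, good, maximal compact subgroup.

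For (i), I would expand $\overline{f_1\ast f_2}^{(P_{0,v})}$ via Fubini using the Iwasawa decomposition. The normalizing factor $|\ell|_v^{\rho_{P_0}}$ built into the definition of $S$ is designed precisely to cancel the modular character arising from the change of variables in the unipotent integration, so that convolution on $G_v$ descends to ordinary convolution on $L_{0,v}/{}^0L_{0,v}\cong\Lambda$, matching the product structure of $\C[\Lambda]$. This is the standard calculation in \cite{Cartier}. For (ii), the hypotheses provide a representative $n_w\in\Omega_v$ for each $w\in W_G$ in the normalizer of $L_{0,v}$. The $\Omega_v$-conjugation invariance of $\overline f$, together with the substitution $n\mapsto n_w^{-1}n\,n_w$ in the unipotent integral and the identity $|w(\ell)|_v^{\rho_{P_0}}=|\ell|_v^{\rho_{P_0}}$ on the image of $A_{L_0,v}$ in $\Lambda$, yields $S(f)(w\ell)=S(f)(\ell)$.

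For (iii) and (iv) I would note that the characteristic functions $\{1_{\Omega_v\lambda\Omega_v}\}$, as $\lambda$ ranges over a fixed set of dominant representatives in $\Lambda$, form a $\C$-basis of $\mathcal{H}(G_v,\Omega_v)$, while the Weyl-orbit sums $\sum_{\mu\in W_G\lambda}x^{\mu}$ form a $\C$-basis of $\C[\Lambda]^{W_G}$ indexed by the same set. It therefore suffices to establish the triangularity statement: $S(1_{\Omega_v\lambda\Omega_v})$ is supported in the convex hull of $W_G\cdot\lambda$ in $\Lambda\otimes\R$, with nonzero coefficient at the extremal monomial $x^{\lambda}$. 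This forces $S(1_{\Omega_v\lambda\Omega_v})$ to be a $\C$-linear combination of orbit sums indexed by dominant $\mu\leq\lambda$, so the matrix of $S$ in the two bases is upper-triangular with nonzero diagonal entries, proving injectivity and surjectivity simultaneously.

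The hard part is precisely the triangularity statement, which reduces to the explicit volume computation $\int_{N_{0,v}}1_{\Omega_v\lambda\Omega_v}(\lambda'n)\,dn$ for dominant $\lambda$ and arbitrary $\lambda'\in\Lambda$ on cells of the Bruhat-Tits building. The ``special, good'' property of $\Omega_v$ is indispensable here, because it furnishes the Iwahori-type factorization needed to control the intersections $\Omega_v\lambda\Omega_v\cap\lambda'N_{0,v}$ and to compute the extremal contribution as a power of $q_v$ times a nonzero volume of $\Omega_v\cap N_{0,v}$. The complete measure-theoretic computation is carried out in \cite{Cartier} following the original argument of \cite{Satake}.
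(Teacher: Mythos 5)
The paper itself provides no proof of this proposition; it is stated as a citation of Satake's original result (\cite{Satake}), with \cite{Cartier} given as the background reference. So there is no ``paper's proof'' to compare against, and the question is only whether your reconstruction is sound. Steps (i), (iii) and (iv) of your sketch are broadly along the standard lines: the $\rho$-shift does make $S$ a ring homomorphism via Fubini and the Iwasawa decomposition, and the injectivity/surjectivity via upper-triangularity of $S(1_{\Omega_v\lambda\Omega_v})$ in the dominance order, with nonzero leading coefficient at the extremal monomial, is exactly Satake's argument.

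Step (ii), however, has a genuine gap. The identity $|w(\ell)|_v^{\rho_{P_0}}=|\ell|_v^{\rho_{P_0}}$ is false: $\rho_{P_0}$ is not $W_G$-invariant (for instance, if $w$ is the longest element one gets $\delta_{P_0}(w\ell w^{-1})=\delta_{P_0}(\ell)^{-1}$, hence $|w(\ell)|^{\rho}=|\ell|^{-\rho}$). Moreover the substitution $n\mapsto n_w^{-1}n\,n_w$ does not stabilize $N_{0,v}$ but carries it to $n_w^{-1}N_{0,v}n_w$, a different unipotent subgroup, so it does not reduce $\int_{N_{0,v}}\overline f(w\ell\,n)\,dn$ to $\int_{N_{0,v}}\overline f(\ell\,n)\,dn$. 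The two errors are not cosmetic: the Weyl-invariance of $S(f)$ comes precisely from a nontrivial cancellation between the non-invariance of the $\rho$-shift and the change of the unipotent domain, and neither manoeuvre alone works. A correct argument either reduces to the rank-one case by transitivity of $S^G_L$ over Levi subgroups (which only requires invariance under simple reflections, treated by an explicit $q_v$-volume computation), or invokes the characterization of $S(f)$ via traces on unramified principal series $I_\chi$, for which $I_\chi\cong I_{w\chi}$ gives the invariance. As written, your step (ii) would not go through.
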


\noindent
Hence, by the transitivity of the $S_L^G$, we obtain the commutative diagram
$$\xymatrix{
    {\cal H}(G,\Omega_v)\ar[d]_{S_L^G} \ar[r]^{S}_\simeq &  \C[\Lambda]^{W_G} \ar@{^{(}->}[d] \cr
    {\cal H}(L_v,L_v\cap \Omega_v) \ar[r]^-{S}_-{\simeq} &  \C[\Lambda]^{W_L}
}
$$
By composition with $H_{P,v}$, any function $\chi: {\cal X}_L \to \R$
can be considered as a locally constant function on $L(\GlobalField_v)$ and, by abuse of notation,
will also be denoted $\chi$ in the following.
The homomorphism $H_{P,v}$ is trivial on any compact subgroup of $L(\GlobalField_v)$,
hence it is trivial on $\Omega_v$ and compactly generated subgroups of $L(\GlobalField_v)$.
Thus, for every function $f$ in the spherical Hecke algebra ${\cal H}(L_v, L_v\cap \Omega_v)$,
the product $\chi\cdot f$ is $\Omega_v$-biinvariant and has compact support, hence
again is in ${\cal H}(L_v, L_v\cap \Omega_v)$. 

\bigskip\noindent
The projection $pr_F: V\to V_F$ induces the natural projection $pr_L: {\cal X} \to {\cal X}_L$
with respect to the identification $V={\cal X}$ and $V_F = {\cal X}_L$.
Any real-valued function $\chi$ on ${\cal X}_L$ defines a real-valued function
$\chi\circ pr_L$.
Since the unipotent radical of $L\cap P_{0,v}$ is compactly generated
for nonarchimedean places $v$, the function $\chi$ is trivial on $L\cap P_{0,v}$.
Therefore, by the definition of $S$, multiplication by $\chi$
commutes with the Satake transform $S$ in the following sense

\begin{lemma} \label{D6}
In the context of proposition~\ref{D5}, fix a function $\chi: {\cal X}_L \to \R$ .
Then %
multiplication with $\chi$ resp. $\chi\circ pr_L$ commutes with the Satake transform $S$:
$$ \xymatrix@+3mm{  {\cal H}(L_v, L_v\cap \Omega_v) \ar[d]_S  \ar[r]^{\chi} & {\cal H}(L_v, L_v\cap \Omega_v)\ar[d]^S \cr
{\cal H}(L_{0,v}, L_{0,v}\cap \Omega_v)  \ar[r]^{\chi\circ pr_L  } & {\cal H}(L_{0,v}, L_{0,v}\cap \Omega_v) } $$
\end{lemma}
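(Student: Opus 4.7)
The plan is to verify commutativity by unwinding the definitions of the Satake transform and the way $\chi$ is lifted from the vector spaces $\mathcal{X}_L$, $\mathcal{X}$ to locally constant functions on $L_v$, $L_{0,v}$ via the Harish–Chandra maps.

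First I would fix the normalization: for $f \in \mathcal{H}(L_v, L_v \cap \Omega_v)$ the Satake transform is
$$S(f)(\ell_0) \;=\; |\ell_0|_v^{\rho_{P_0\cap L}} \int_{N_{0,L}} f(\ell_0 n)\, dn \qquad (\ell_0 \in L_{0,v}),$$
where $N_{0,L}$ is the unipotent radical of $P_{0,v}\cap L$. The function on $L_v$ that the top arrow multiplies by is $\chi \circ H_{P,v}$, and the function on $L_{0,v}$ the bottom arrow multiplies by is $\chi \circ pr_L \circ H_{P_0,v}$.

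The key geometric input I would prove is the identity
$$H_{P,v}\big|_{L_{0,v}} \;=\; pr_L \circ H_{P_0,v} \;:\; L_{0,v} \longrightarrow \mathcal{X}_L.$$
This follows directly from the defining characterization of the Harish-Chandra maps via characters: restriction yields the inclusion ${}^{\GlobalField_v}\!X^*(L) \hookrightarrow {}^{\GlobalField_v}\!X^*(L_0)$, whose $\R$-dual is precisely the projection $pr_L$. Indeed, for $\ell_0 \in L_{0,v}$ and any $\eta \in {}^{\GlobalField_v}\!X^*(L)$,
$$\langle H_{P,v}(\ell_0), \eta \rangle \;=\; \log|\eta(\ell_0)|_v \;=\; \langle H_{P_0,v}(\ell_0), \eta|_{L_0}\rangle \;=\; \langle pr_L(H_{P_0,v}(\ell_0)), \eta\rangle.$$
The second ingredient is that $H_{P,v}$ is a homomorphism which vanishes on the unipotent radical $N_{0,L}$: since every $\GlobalField_v$-rational character of $L$ is trivial on unipotent subgroups, $|\eta(n)|_v = 1$ for all $n \in N_{0,L}$ and thus $H_{P,v}(n) = 0$. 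Consequently $H_{P,v}(\ell_0 n) = H_{P,v}(\ell_0)$ for every $\ell_0 \in L_{0,v}$ and $n \in N_{0,L}$.

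Putting the two ingredients together, the computation is immediate:
$$S(\chi \cdot f)(\ell_0) \;=\; |\ell_0|_v^{\rho_{P_0\cap L}} \int_{N_{0,L}} \chi\bigl(H_{P,v}(\ell_0 n)\bigr) f(\ell_0 n)\, dn \;=\; \chi\bigl(H_{P,v}(\ell_0)\bigr) \cdot S(f)(\ell_0),$$
since the factor is constant in $n$ and can be pulled out of the integral. Applying the identity from the previous paragraph rewrites this as $(\chi \circ pr_L \circ H_{P_0,v})(\ell_0) \cdot S(f)(\ell_0)$, which is precisely the image of $f$ under the lower-right composition. There is no real obstacle here beyond carefully unwinding the Harish–Chandra formalism; the only point one must not skip is checking the compatibility $H_{P,v}\big|_{L_{0,v}} = pr_L \circ H_{P_0,v}$ and the vanishing on the unipotent radical, both of which drop out of the character-theoretic definition.
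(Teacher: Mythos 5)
Your proof is correct and follows essentially the same route as the paper's (which is given implicitly in the prose preceding the lemma): both rest on the two facts that $H_{P,v}$ vanishes on the unipotent radical $N_{0,L}$, so the $\chi$-factor is constant over the Satake integration, and that $H_{P,v}|_{L_{0,v}}=pr_L\circ H_{P_0,v}$, which you verify via the characterizing pairing with characters. The only minor imprecision is that the pairings defining $H_{P,v}$ valued in $\mathcal{X}_L=\Hom({}^{\GlobalField}\!X^*(L),\R)$ should be taken over $\eta\in{}^{\GlobalField}\!X^*(L)$ rather than ${}^{\GlobalField_v}\!X^*(L)$ (the map $p_{\GlobalField\to\GlobalField_v}$ is already built into the paper's definition of $H_{P,v}$); this does not affect the argument, and your cleaner justification for $H_{P,v}(N_{0,L})=0$ via triviality of rational characters on unipotent groups is also fine in place of the paper's appeal to compact generation.
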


\bigskip\noindent
\textit{9. Relative support}.
Let $\chi_x:\mathcal{X}_L\to \R$ denote the characteristic function of a point $x\in {\cal X}_L$.   
For $f\in {\cal H}(G_v,\Omega_v)$ in the spherical Hecke algebra and a $\GlobalField$-rational standard parabolic subgroups $P_F=L_FN_F$
as before consider $h= S_{L_F}^G(f)$ in ${\cal H}(G_v,\Omega_v)$.
The \textit{relative support} of $f$ with respect to $L_F$, by definition, will be the subset
of points $x$ in ${\cal X}_{L_F}$ such that $\chi_x \cdot h$ is a nonzero
function. 

\begin{lemma}\label{D7}
The relative support of $f\in {\cal H}(G_v,\Omega_v)$ with respect to $L=L_F$ is the image of
the support of the Satake transform $S(f):\Lambda \to \R$
under the canonical morphism $$\Lambda\hookrightarrow {\cal X} \stackrel{p_L}\to {\cal X}_L\ .$$
\end{lemma}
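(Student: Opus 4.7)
The plan is to reduce the claim, via the injectivity of the Satake transform, to an elementary statement about polynomial functions on $\Lambda$. The three ingredients are Lemma~\ref{D6}, Proposition~\ref{D5} applied to $L$, and the transitivity $S_{L_0}^L\circ S_L^G = S_{L_0}^G = S$ of the relative Satake transforms recorded in Section~7.

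First, unwinding the definition of \emph{relative support}, a point $x\in{\cal X}_L$ lies in it if and only if $\chi_x\cdot h\neq 0$ in ${\cal H}(L_v,L_v\cap\Omega_v)$, where $h=S_L^G(f)$ and $\chi_x$ is pulled back to $L_v$ via $H_{P,v}$. Since $G_v$ is quasi-split and splits over an unramified extension, so does the standard Levi subgroup $L$; Proposition~\ref{D5} applied to $L_v$ therefore yields injectivity of $S_{L_0}^L$. Combined with Lemma~\ref{D6}, this gives the equivalence
$$
\chi_x\cdot h\neq 0 \quad\Longleftrightarrow\quad (\chi_x\circ pr_L)\cdot S_{L_0}^L(h)\neq 0 \quad\text{in }\C[\Lambda].
$$

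By transitivity, $S_{L_0}^L(h)=S_{L_0}^L(S_L^G(f))=S_{L_0}^G(f)=S(f)$, so the right-hand side is $(\chi_x\circ pr_L)\cdot S(f)$. Viewed as a function on $\Lambda$, this product sends $\lambda$ to $S(f)(\lambda)$ when $pr_L(\lambda)=x$ and to $0$ otherwise; it is therefore nonzero precisely when the fibre over $x$ of the canonical map $\Lambda\hookrightarrow{\cal X}\stackrel{p_L}{\to}{\cal X}_L$ intersects $\mathrm{supp}(S(f))$, equivalently when $x$ belongs to the image of $\mathrm{supp}(S(f))$ under this map. This is the assertion of the lemma.

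The entire argument is bookkeeping on the commutative diagram of Lemma~\ref{D6}, so there is no real obstacle; the one point requiring attention is the compatibility of the projection $pr_L$ on ${\cal X}$ with the induced projection $\Lambda\to\Lambda_L$ on the lattice level, which has already been recorded through the identifications of Section~5.
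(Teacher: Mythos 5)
Your argument is correct and follows the same route as the paper's own proof: reduce via injectivity of the Satake transform $S_{L_0}^L$ (Proposition~\ref{D5}) to a statement in $\C[\Lambda]$, then apply Lemma~\ref{D6} together with transitivity $S_{L_0}^L\circ S_L^G=S$. You have simply spelled out the transitivity step and the final fibre-over-$x$ bookkeeping that the paper leaves implicit.
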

\begin{proof}
The function $\chi_x h$ is in ${\cal H}(L_v, L_v\cap \Omega_v)$,
so it is zero if and only if its Satake transform $S(\chi_x h)$ is zero if and only if $S(\chi_x  h)$ is zero on $\Lambda \subseteq {\cal X}$.
By lemma~\ref{D6} we therefore obtain the assertion.
\end{proof}

\bigskip\noindent
\textit{10. Good projectors}.
The following is a reformulation of lemma 2.11 in \cite{Weissauer_Endo_GSp4}:

\begin{lemma}\label{D8}
Suppose $G_v$ splits over $\GlobalField_v$.
For an $\Omega_{v}$-spherical irreducible smooth representation $\pi_{v}$ of $G_{v}$ 
and for every real constant $C>0$ there exists a spherical Hecke operator $f_{v,C}\in {\cal H}(G_{v}, \Omega_{v})$ such that
\begin{enumerate}
 \item $\tr(\pi_{v},f_{v,C})=1$,
 \item every $\mathbb Q$-rational standard parabolic subgroup $P_F=L_F N_F$ of $G$, and every $y$ in the relative support of $S_{L_F}^G(f_{v,C})$ satisfies $\vert \alpha_j(y)\vert_v \geq C$ for all $j\notin F$ .
\end{enumerate}
\end{lemma}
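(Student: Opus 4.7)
The plan is to construct $f_{v,C}$ as a normalized preimage, under the Satake isomorphism of proposition~\ref{D5}, of the Weyl orbit sum of a suitably deeply scaled, highly generic cocharacter in $\Lambda$.

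\emph{Choice of the base direction.} I would first pick $\mu_0 \in \Lambda$ lying off every hyperplane of the form $\ker(\alpha_j \circ r_F \circ w) \subset V$, as $F$ ranges over the proper subsets of $\Delta$, $j$ over $\Delta \setminus F$, and $w$ over $W_G$. Only finitely many such hyperplanes occur, and each meets $\Lambda$ in a sublattice of strictly smaller rank, so their union cannot cover $\Lambda$. In particular $\mu_0$ is regular. Setting $c_0 := \min_{F, j, w} |(\alpha_j \circ r_F \circ w)(\mu_0)| > 0$, one obtains for every $n \geq 1$ and every admissible triple $(F, j, w)$ the estimate
\[
|\alpha_j(p_F(w \cdot n\mu_0))| \;=\; n \cdot |(\alpha_j \circ r_F \circ w)(\mu_0)| \;\geq\; n c_0,
\]
so that the desired bound $|\alpha_j(y)|_v \geq C$ for every $y \in p_F(W_G \cdot n\mu_0)$ holds once $n$ exceeds an explicit threshold $n_0 = n_0(C, c_0, q_v)$.

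\emph{Nonvanishing of the trace.} Through Satake, $\pi_v$ corresponds to a character $\chi_{\pi_v}: \C[\Lambda]^{W_G} \to \C$, which is the restriction of a homomorphism $t: \Lambda \to \C^\times$ (the Satake parameter, well-defined up to $W_G$). For the orbit sum $S_{n\mu_0} := \sum_{w \in W_G} e^{wn\mu_0}$ — which has exactly $|W_G|$ terms by regularity of $\mu_0$ — we have
\[
\tr(\pi_v, S^{-1}(S_{n\mu_0})) \;=\; \sum_{w \in W_G} t(w\mu_0)^n \;=\; \sum_{w \in W_G} z_w^n, \qquad z_w := t(w\mu_0) \in \C^\times.
\]
I claim this power-sum is nonzero for arbitrarily large $n$. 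Were it to vanish for every $n \geq N$, the series $\sum_{w} (1 - z_w x)^{-1}$, grouped by coincident values as $\sum_\zeta m_\zeta (1 - \zeta x)^{-1}$ with distinct $\zeta \in \C^\times$ and multiplicities $m_\zeta \geq 1$, would be a polynomial of degree $< N$; but each summand has a genuine simple pole at $x = 1/\zeta$ with residue $-m_\zeta/\zeta \neq 0$, and poles at distinct locations cannot cancel — contradiction.

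\emph{Conclusion and main obstacle.} I then pick $n \geq n_0$ with $\tr(\pi_v, S^{-1}(S_{n\mu_0})) \neq 0$ and set
\[
f_{v,C} \;:=\; \tr(\pi_v, S^{-1}(S_{n\mu_0}))^{-1} \cdot S^{-1}(S_{n\mu_0}) \;\in\; \mathcal{H}(G_v, \Omega_v).
\]
Property (1) holds by construction. By lemma~\ref{D7}, the relative support of $S_{L_F}^G(f_{v,C})$ for any $\Q$-rational standard Levi $L_F$ equals $p_F(W_G \cdot n\mu_0)$, so property (2) follows from the deepness estimate. The one nontrivial step is the nonvanishing of $\sum_w z_w^n$ for infinitely many $n$: it uses crucially the regularity of $\mu_0$ (so the Weyl orbit has full cardinality) and that each $z_w = t(w\mu_0)$ is nonzero — both automatic in the present setting. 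The remaining ingredients (existence of a generic lattice point, linearity of the deepness estimate under scaling, and the compatibility of the Satake transforms recorded in the commutative diagram after proposition~\ref{D5}) are routine.
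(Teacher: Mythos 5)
Your proof is correct, and it takes a genuinely different route in the key nonvanishing step.  Both arguments share the same skeleton: choose a lattice point $\mu_0\in\Lambda$ off every hyperplane $\ker(\alpha_j\circ r_F\circ w)$ (equivalently, in a refined Weyl chamber), push it deep to achieve the depth estimate for assertion (2), and average its monomial over $W_G$ before applying $S^{-1}$. Where you diverge is in showing the trace is nonzero (assertion (1)).  The paper considers all $x$ in a shifted cone $x_0+\mathcal{C}\cap\Lambda$, with $x_0$ chosen so that $(\mathcal{C}\cap\Lambda)-(\mathcal{C}\cap\Lambda)$ contains a $\Z$-basis of $\Lambda$, and invokes linear independence of the distinct Weyl-conjugates $\chi^w$ of the Satake character on that monoid, proved by the usual Artin--Dedekind induction on a minimal-length linear relation; the careful choice of $x_0$ is precisely what makes distinct characters stay distinguishable on the sub-monoid.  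You instead restrict to the single ray $n\mapsto n\mu_0$ and use a generating-function trick: if $\sum_w z_w^n$ (with $z_w=t(w\mu_0)\in\C^\times$) vanished for all $n\ge N$, then $\sum_w(1-z_wx)^{-1}=\sum_\zeta m_\zeta(1-\zeta x)^{-1}$ would be a polynomial, contradicting the genuine simple poles at the distinct $1/\zeta$.  This is a clean, elementary substitute: it needs no control over the cone generating the lattice, only that $\mu_0$ is regular (so the $W_G$-orbit has full cardinality) and that $t$ maps into $\C^\times$.  The paper's argument proves nonvanishing on a richer family of test points, but for the lemma as stated your ray argument suffices, and the normalization at the end gives $\tr(\pi_v,f_{v,C})=1$ exactly as in the paper.
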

\begin{proof}%
One can construct functions in ${\cal H}(G_{v}, \Omega_{v})$ as in proposition~\ref{D5} by choosing a $W$-invariant polynomial in $\C[\Lambda]$ and consider its convolution action on $\pi_{v}$.
For this we fix some refined Weyl chamber $\cal C$ in ${\cal X} = \Lambda
\otimes_\Z \R\,.$

We claim there exists $x_0$ in the monoid ${\cal C}\cap \Lambda$ such that
$x_0 + x_\nu \in \mathcal{C} \cap \Lambda$ holds for a $\Z$-basis $(x_\nu)_\nu$ of the lattice $\Lambda$.
Indeed, $\Lambda \otimes_{\mathbb Z} \mathbb Q$ is dense in ${\cal X}$. Fix an arbitrary point $x_0\in \mathcal{C}\cap \Lambda$,
then for sufficiently large positive integer $M$ the elements $x_0 + M^{-1}x_\nu$
are in $\cal C$ for every $\nu=1,\dots , r$.
If we multiply by $M$ and replace $x_0$ by $M\cdot x_0$,
we see that $x_0$ and $x_0 + x_\nu$ are in $\Lambda\cap \mathcal{C}$.

We claim an $x_0$ as above can be chosen such that for every $x\in x_0+\mathcal{C}\cap \Lambda$ the function $f_x$ satisfies the second assertion.
Indeed, any $x\in \Lambda$ defines a monomial in $\C[\Lambda]$.
If we now average this monomial by the action of the Weyl group $W$ on $\C[\Lambda]$,
we obtain a polynomial $P_x \in \C[\Lambda]^W$ defining
a spherical Hecke operator $f_x$
in ${\cal H}(G_{v}, \Omega_{v})$ by proposition~\ref{D5}.
By lemma~\ref{D7} every point $y$ in the relative support of $S_L^G(f_x)$
is of the form $y= pr_L(x')$ for some $x'$ in the Weyl-orbit $W(x)$ of $x$.
All these points $x'$ are contained in some refined Weyl chamber.
Hence $\alpha_j(y)\neq 0$ holds for all $j\notin F$.

It remains to be shown that there is $x\in (x_0 + {\cal C})\cap \Lambda$ with $\tr(\pi_{v},f_x)\neq 0$, then a scalar multiple of $f_x$ satisfies the assertion.
Indeed, $\pi_{v}$ is spherical, so it is the unique spherical constituent of a representation $I_\chi$ that is induced from an unramified character $\chi$ of the Borel group of $G$ over $\GlobalField_{v}$.
Notice, $\chi$ factorizes over an unramified character of $A_{L_{v},0}$. Hence by abuse of notation $\chi$ can be considered as a character  $\chi:\Lambda\to \C^*$ of the abelian group $\Lambda$.
In this sense, proposition~\ref{D4} implies
$$\tr(\pi_{v},f_x) =\tr(I_\chi, f_x) = \sum_{w\in W} \chi^w(x)\ .$$
It remains to be shown that pairwise different characters of $\Lambda$ are linearly independent
on $(x_0+{\cal C})\cap \Lambda$, in the sense that there exist points $x$ in the cone $(x_0 + {\cal C}) \cap \Lambda$ such that $\sum_{\nu=1}^s \chi_\nu(x)$ is not zero.
Indeed, let the Weyl-orbit of the character $\chi$ consist of say $s$ pairwise different characters $\chi_1,...,\chi_s$ of $\Lambda$.
Then linear Independence is shown by the usual argument based
on induction with respect to a linear relation of minimal length between 
the characters on $x_0 + \Lambda\cap {\cal C}$; see \cite{Weissauer_Endo_GSp4}, p. 41.
This exploits the fact that the additive group $({\cal C}\cap \Lambda)-({\cal C}\cap \Lambda)$ generated by $\Lambda\cap \mathcal{C}$ contains the given basis $x_1,...,x_r$ of $\Lambda$ which is granted by our choice of $x_0$.
\end{proof}

\bigskip\noindent
\textit{11. CAP localization.}
By proposition~\ref{D1}, the Lefschetz number attached to $f\in C_c^\infty(G(\mathbb A_{\fin})$ and a local coefficient system $V_\lambda$ equals
$$ L(f,V_\lambda) = \sum_{P_0 \subseteq P \subseteq G}\sum_{w\in W^P} (-1)^{\ell(w)} 
T_{\elliptic}^L(\overline f^{(P)}\!\cdot\!  \chi_P^G, w(\lambda+\rho_G)-\rho_L)\ .$$
Fix a compact subset $K$ such that $f$ is $K$-biinvariant and a finite set of places $S$ such that $K=K_S K^S$ where $K^S= \prod_{v\notin S} \Omega_v$ is a good maximal subgroup.
For ${\cal P}={\cal P}(K,\lambda)$ as before, the Lefschetz number $L(f,V_\lambda)$ is given by the trace of a virtual representation with finitely many constituents $\pi'_{\fin}$ occurring in ${\cal P}$.
If we fix some $\pi_{\fin}$ occurring in ${\cal P}$ whose
virtual multiplicity
$m_{virt}(\pi_{\fin})\in \Z$ 
as defined in section~\ref{s:Lefschetz} is nonzero,
we may enlarge the set $S$ to a finite saturated set $T$ of nonarchimedean places such that
there exists a  $\pi^S$-projector with respect to the classes in ${\cal P}_{halo}(K,\lambda)$ given by a suitable function $$f^S=f_{T\setminus S}\otimes \prod_{v\notin T} 1_{\Omega_v}\ .$$
If furthermore $f_S$ is chosen to be a projector  onto $\pi_{\fin}$ within ${\cal P}_{halo}$, then the Lefschetz number becomes
\begin{equation*}
m_{virt}(\pi_{\fin}) \tr(\pi_{\fin}(f)) = L(f,V_\lambda)%
\ .
\end{equation*}
If we suppose that $\pi_{\fin}$ is not weakly $G$-Eisenstein, then
$$m_{virt}(\pi_{\fin})= \sum_{\pi\cong\pi_\infty\otimes\pi_{\fin}} m_{\cusp}(\pi) \chi(\pi_\infty,\lambda)$$
where $m_{\cusp}(\pi)$ is the multiplicity of $\pi$ in the cuspidal spectrum.

\begin{prop} \label{p3.9}
Suppose given a compact group $K$, a coefficient system $V_\lambda$, a compact subset $A\subseteq G(\A_{\fin})$ and an irreducible automorphic representation $\pi= \pi_\infty\otimes \pi_{\fin}$ of $G(\mathbb A)$ that is not weakly $G$-Eisenstein.
For the set of ramified places $S$ of $\pi_{\fin}$ fix a saturated set $T$ of non-archimedean places containing
$S$.
Then for every nonarchimedean place $v_0\notin T$ where $G_{v_0}$ splits there exists a spherical function $f^{trunc}_{v_0}\in {\cal H}(G_{v_0},\Omega_{v_0})$ with $\tr(f^{trunc}_{v_0},\pi_{v_0})=1$ such that for every $K$-biinvariant function
$f_S\in C_c^\infty(G(\mathbb A_{S}))$ with support of $f=f_Sf^S$ in $A$ and $f_{v_0}=1_{\Omega_{v_0}}$ the supertrace satisfies
$$\tr_s\bigl(f_S, H^\bullet(S_K(G),V_\lambda)(\pi^S)\bigr) = T_{\elliptic}^G(f_Sf^{S}(\pi^S)f^{trunc}_{v_0}, \lambda)\ $$
for the elliptic trace defined in proposition~\ref{D1}.
\end{prop}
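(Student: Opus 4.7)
The plan is to apply the cohomological trace formula of proposition~\ref{D1} to the composite $K$-biinvariant function $F := f_S \cdot f^S(\pi^S) \cdot f^{trunc}_{v_0}$, where $f^{trunc}_{v_0}$ is a spherical projector at $v_0$ to be constructed from lemma~\ref{D8}, and to force every term except $P=G$ in the resulting parabolic expansion to vanish. First I would identify the left-hand side with the global Lefschetz number $L(F,V_\lambda)$. Since $T$ is saturated and $f^S(\pi^S)$ isolates $\pi^S$ within $\mathcal{P}_{halo}(K,\lambda)$, and since $\pi_{v_0}$ is unramified for $v_0 \notin T \supseteq S$ with $\tr(\pi_{v_0},f^{trunc}_{v_0}) = 1 = \tr(\pi_{v_0}, 1_{\Omega_{v_0}})$, replacing the $v_0$-component $1_{\Omega_{v_0}}$ by $f^{trunc}_{v_0}$ does not alter the action on the $\pi^S$-typic cohomology. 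By the spectral description of section~\ref{s:Lefschetz} (Franke, together with the hypothesis that $\pi$ is not weakly $G$-Eisenstein), this yields $\tr_s\bigl(f_S, H^\bullet(S_K(G), V_\lambda)(\pi^S)\bigr) = L(F, V_\lambda)$.

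Next I would apply proposition~\ref{D1} to expand $L(F,V_\lambda)$ as an alternating sum over $\Q$-rational standard parabolics $P = L_F N_F$ of elliptic traces $T_{\elliptic}^L\bigl(\overline F^{(P)}\cdot\chi_P^G,\,w(\lambda+\rho_G)-\rho_L\bigr)$. The term $P=G$ immediately gives $T_{\elliptic}^G(F,\lambda)$, since $W^G=\{1\}$, $\chi_G^G\equiv 1$, and $\overline F^{(G)}=F$. It then remains to show that every term with $P\neq G$ vanishes. For such $P$ the orbital integral over $\gamma\in L(\Q)$ is subject to three constraints: the $P$-contractiveness $\chi_P^G(\gamma)=1$, which by the product formula is equivalent to $\alpha_j\bigl(\sum_{v\neq\infty} H_{P,v}(\gamma)\bigr)<0$ for every $j\notin F$; the relative support condition at $v_0$, which by lemma~\ref{D7} places $H_{P,v_0}(\gamma)$ in the image under $pr_F$ of the support of the Satake transform of $f^{trunc}_{v_0}$; and a uniform bound $B=B(A, f^S(\pi^S))$ on $\bigl|\alpha_j\bigl(\sum_{v\neq v_0,\infty} H_{P,v}(\gamma)\bigr)\bigr|$ for $j\notin F$, coming from compactness of $A$ and the fixed auxiliary projector.

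Here I would invoke lemma~\ref{D8}, choosing a refined Weyl chamber in which $\alpha_j\circ pr_F\circ w$ is uniformly positive over every proper subset $F$, every Weyl image $w$ in the orbit defining the support, and every $j\notin F$, and taking the threshold $C>B$. The relative support condition then forces $\alpha_j(H_{P,v_0}(\gamma))\geq C$ for each $j\notin F$, so the product formula $\sum_v H_{P,v}(\gamma)=0$ yields $\alpha_j\bigl(\sum_{v\neq\infty} H_{P,v}(\gamma)\bigr)\geq C-B>0$, contradicting $P$-contractiveness. Hence the orbital integrals vanish for every $P\neq G$, and only the $P=G$ term $T_{\elliptic}^G(F,\lambda)$ survives, as required.

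The main obstacle is the simultaneous choice of refined Weyl chamber and threshold $C$ in lemma~\ref{D8}: one must exhibit a single chamber in which the sign condition on $\alpha_j\circ pr_F\circ w$ holds uniformly across every proper standard parabolic $P_F$, every Weyl translate $wx$ appearing in the support of the Satake transform, and every $j\notin F$, while $C$ is large enough to dominate the bound $B$ imposed by $A$ and the fixed auxiliary projector. This is precisely the flavor of argument carried out in section~2 of \cite{Weissauer_Endo_GSp4}, adapted here so that no comparison with the Lefschetz--Grothendieck fixed point formula is needed.
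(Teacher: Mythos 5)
Your starting point — apply proposition~\ref{D1} to the modified function $F = f_S f^S(\pi^S) f^{trunc}_{v_0}$ and then argue that only the $P=G$ term survives — is the right strategy, and the localization of the truncation at $v_0$ via lemma~\ref{D2} and the compactness of $A$ does match the paper's argument. The gap is in the mechanism you propose for killing the $P\neq G$ terms.

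Lemma~\ref{D8} only guarantees $\vert\alpha_j(y)\vert_v \geq C$ for $y$ in the relative support of $S_{L_F}^G(f_{v_0,C})$ — an absolute-value bound, not a sign condition. You propose choosing a refined Weyl chamber in which $\alpha_j\circ pr_F\circ w$ is uniformly positive for every proper $F$, every Weyl translate $w$ appearing in the support, and every $j\notin F$; no such chamber exists. Already for $F=\emptyset$ (the minimal parabolic) the requirement becomes $\alpha_j(wx)>0$ for all $j$ and all $w\in W$, i.e. $wx\in V^+$ for all $w$, which is impossible since $W$ permutes the chambers transitively. Hence the relative support inevitably contains points $y=pr_F(wx)$ with $\alpha_j(y)\leq -C$ for some $j\notin F$, and on such a stratum $\tau_F(-H_{P,v_0}(\gamma))$ does \emph{not} vanish. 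Your claimed inequality $\alpha_j\bigl(\sum_{v\neq\infty}H_{P,v}(\gamma)\bigr)\geq C-B>0$ only holds when the $v_0$-contribution has the favorable sign, so the orbital integrals in the $P\neq G$ terms do not all vanish.

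What the truncation actually achieves is weaker, and it is all the paper uses: it collapses the global $P$-contractiveness condition to a purely local condition at $v_0$, so that each $P\neq G$ term becomes $T^L_{\elliptic}$ of a product of local factors with the truncation concentrated at $v_0$. To finish, the paper then invokes Franke's theorem and proposition~\ref{D4}: via parabolic descent these truncated $P\neq G$ contributions are identified with the Eisenstein part of the cohomology, and on the $\pi^S$-isotypical component they vanish precisely because $\pi$ is not weakly $G$-Eisenstein. The vanishing of the proper-parabolic terms is thus a spectral consequence (Franke + prop.~\ref{D4} + the non-Eisenstein hypothesis), not a direct combinatorial consequence of the truncation, and that step is missing from your proof.
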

\begin{proof}
 Fix an auxiliary place $v_0\notin T$ such that $G_{v_0}$ splits over $\GlobalField_{v_0}$.
Since $T$ is saturated, we may assume $f=\prod_{v\neq \infty} f_v$ and replace $f_{v_0}=1_{\Omega_{v_0}}$ at an auxiliary place $v_0\notin T$ by an arbitrary spherical function $f^{trunc}_{v_0}$
without changing the left side of the formula, provided $\tr(\pi_{v_0}(f^{trunc}_{v_0}))=1$ holds.
For $f^{trunc}_{v_0}=f_{v_0,C}$ as in lemma~\ref{D8} denote the function thus modified by $f^{aux}=f^{v_0}f_{v_0,C}$.
Lemma~\ref{D2} implies $\chi_P^G(\gamma) = \tau_F(- \sum_{v\neq \infty} H_{P,v}(\gamma_v)) $,
for $\gamma\in L(\mathbb Q)$ diagonally embedded into $L(\mathbb A_{\fin})$ via $\gamma_v=\gamma$.
Since $\prod_{v\neq v_0} f_v$ has support in a fixed compact set $A$,
the sum $- \sum_{v\neq v_0,\infty} H_{P,v}(\gamma)$ is bounded by some constant
$c=c(K,V_\lambda,T,\pi,A)$ for every $\gamma\in A$.
If we choose the constant $C$ sufficiently large,
then
$$ \overline f^{(P)}\!\cdot\!  \chi_P^G (\gamma)\ =\ \prod_{v\neq v_0}\overline f_v^{(P)}(\gamma)\  \otimes\
\overline f_{v_0}^{(P)}(\gamma) \cdot \tau(- H_{P,v_0}(\gamma))  \  $$
because $\chi_P^G(\gamma)$ equals $\tau(- H_{P,v_0}(\gamma))$ for $\gamma\in L(\Q)$.
This way, the truncation condition imposed by $P$-contractiveness in the topological trace formula condenses to a truncation condition at the particular place $v_0$.
As in \cite{Weissauer_Endo_GSp4}, section 2.8 therefore Frankie's theorem \cite{Franke} and proposition~\ref{D4} imply the assertion.
\end{proof}

\noindent
Notice that $\tr_s\bigl(f_Sf^S(\pi^S), H^\bullet(S_K(G),V_\lambda)\bigr) =
\tr_s\bigl(f_S, H^\bullet(S_K(G),V_\lambda)(\pi^S)\bigr)$.
Since we may choose the auxiliary non-archimedean place $v_0$ in the complement of $T$, the finiteness results for automorphic representations and the last proposition imply

\begin{cor} \label{STF} Fix an irreducible cuspidal automorphic representation $\pi= \pi_\infty\otimes \pi_{\fin}$ of $G(\mathbb A)$ that is not weakly $G$-Eisenstein.
For finitely many $K$-biinvariant nonarchimedean $f_T\in C_c^\infty(G(\A_T))$ there exists a $\pi^S$-projector $f(\pi^S)f^{trunc}_{v_0}\in C_c^\infty(G(\A^T))$ such that 
$$%
\tr_s\bigl(f_S,  H^\bullet(S_K(G),V_\lambda)(\pi^S)\bigr) = T_{\elliptic}^G(f_S f^S(\pi^S)f^{trunc}_{v_0}, \lambda) %
\ .
$$
\end{cor}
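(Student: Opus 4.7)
The plan is to reduce the statement to Proposition~\ref{p3.9} by building a single auxiliary place that is admissible simultaneously for all finitely many given $f_T$. First I would recall from the discussion preceding Proposition~\ref{D1} that, using the finiteness of $\mathcal{P}_{halo}(K,\lambda)$, one may pick finitely many auxiliary nonarchimedean places $v_1,\dots,v_r\notin T$ and spherical Hecke operators $f_i\in{\cal H}(G_{v_i},\Omega_{v_i})$ separating the unramified components of representations in $\mathcal{P}_{halo}$, so that the spherical projector $f(\pi^T)=\prod_{i=1}^r f_i\cdot\prod_{v\notin T\cup\{v_1,\dots,v_r\}}1_{\Omega_v}$ picks out $\pi^T$ within this finite set. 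This construction depends only on $\pi$, $K$, $\lambda$, $T$ and in particular not on the choice of $f_T$.

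Next I would handle the truncation step uniformly for the finite family of test functions $f_T$. Since each $f_T$ has compact support in $G(\A_T)$, the union $A\subseteq G(\A_{\fin})$ of the supports of the finitely many functions $f_Tf(\pi^T)$ is a compact set, and the proof of Proposition~\ref{p3.9} shows that the bound on $-\sum_{v\neq v_0,\infty}H_{P,v}(\gamma)$ for $\gamma\in A\cap L(\Q)$ depends only on $K$, $V_\lambda$, $T$, $\pi$ and $A$, hence on a constant $c$ that is \emph{common} to the whole family. I would then pick any nonarchimedean auxiliary place $v_0\notin T\cup\{v_1,\dots,v_r\}$ at which $G_{v_0}$ splits over $\GlobalField_{v_0}$; such places exist in abundance by the remark before Proposition~\ref{D3}. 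Applying Lemma~\ref{D8} with the constant $C$ chosen larger than $e^c$ yields a spherical Hecke operator $f^{trunc}_{v_0}=f_{v_0,C}$ with $\tr(\pi_{v_0},f^{trunc}_{v_0})=1$ and with the relative-support bound ensuring that $P$-contractiveness is forced already at the place $v_0$.

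With these choices, defining the $\pi^S$-projector $f^S(\pi^S)f^{trunc}_{v_0}$ by replacing the factor $1_{\Omega_{v_0}}$ in $f(\pi^T)\prod_{v\in T\setminus S}1_{\Omega_v}$ with $f^{trunc}_{v_0}$ does not change the $\pi^S$-isotypic trace, since $\tr(\pi_{v_0}(f^{trunc}_{v_0}))=1$ and the representations in $\mathcal{P}_{halo}$ are already separated by $f(\pi^T)$. For every $f_T$ in the given finite family, Proposition~\ref{p3.9} applied to $f_S=f_T$ and to the compact support $A$ yields
$$\tr_s\bigl(f_S,H^\bullet(S_K(G),V_\lambda)(\pi^S)\bigr)=T^G_{\elliptic}\bigl(f_Sf^S(\pi^S)f^{trunc}_{v_0},\lambda\bigr),$$
which is the claimed identity. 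The assumption that $\pi$ is not weakly $G$-Eisenstein is used exactly as in Proposition~\ref{p3.9} to discard boundary contributions via Franke's theorem.

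The step I expect to be the only subtle point is the \emph{uniformity} of the auxiliary place $v_0$: one must verify that the bound $c$ on $-\sum_{v\neq v_0,\infty}H_{P,v}(\gamma)$ can indeed be taken common to all finitely many $f_T$, and that the enlargement of the set $T$ needed to absorb $\{v_1,\dots,v_r\}$ while still leaving infinitely many split auxiliary places available does not conflict with $T$ being saturated. Both points are immediate once one observes that saturation only imposes that $T$ be large enough, that compactness of a finite union is automatic, and that places of split reduction for $G$ have density one.
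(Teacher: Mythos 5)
Your proposal is correct and takes essentially the same approach as the paper: reduce to Proposition~\ref{p3.9} by observing that the finitely many $f_T$ have a common compact support $A$, that the spherical projector $f(\pi^S)$ and the auxiliary place $v_0$ can be chosen uniformly depending only on $A$, $K$, $\lambda$, $T$, $\pi$, and then apply Lemma~\ref{D8} with a sufficiently large constant $C$. The paper's own proof is a one-line invocation of Proposition~\ref{p3.9} plus the finiteness results; you have simply spelled out the uniformity argument that the paper leaves implicit.
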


Summarizing, we can replace a good projector $f_{v_0}$ by some $f_{v_0}^{aux}$ such that $f_\pi^{aux}=f_vf_{v_0}^{aux}$
is a good projector and such that the Satake transformation of $f_{v_0}^{aux}$ has support far from the walls of the Weyl chambers.
Recall that in section~\ref{s:Lefschetz}, in contrast to Arthur~\cite{Arthur_classification_04}, we have fixed $\pi^S$ at the non-archimedean places $v\notin S$. This implies that only finitely many automorphic $\pi$ contribute to the cohomology $H^\bullet(S_K(G),V_\lambda)(\pi^S)$, so there are good projectors.

\section{Discrete series at the archimedean place} \label{sec5}

Using Weil restriction, from now on we will always assume $k=\Q$.
We suppose that $G$ contains a torus $B$ such that $B(\R)$ is a maximal torus in $G_\infty=G(\R)$ and is anisotropic modulo $A_G(\R)^0$. This assumption on $G$ is equivalent to the assumption that $G_\infty$ is an inner form of an anisotropic reductive
group over $\R$. This assumption implies the existence of discrete series representations of $G(\R)$. Put $q(G)=\frac{1}{2} \dim(X_G)$.

\bigskip\noindent
Let $\tau_\lambda$ be an irreducible finite dimensional complex representation of $G$ of highest weight $\lambda\in X^*(B)$.
For each $\tau_\lambda$ there are $d(G)\geq1$ isomorphism classes of discrete series representations by the Langlands classification where $d(G)$ is independent of $\tau_\lambda$.
To every discrete series irreducible representation $\pi_\infty$ of $G(\R)$ one can attach a (non-unique) pseudo-coefficient $f_{\pi_\infty}\in C^\infty(G(\R))$ with compact support modulo $A_G(\R)^0$,
see Clozel and Delorme \cite{Clozel-Delorme} and Kazdhan \cite[prop.~5.3]{Kazdhan}.
One can then consider the stable cuspidal function on $G(\R)$
$$f_{\tilde\lambda} = (-1)^{q(G)}  \sum_{\pi_\infty} f_{\pi_\infty}\ ,$$
where the sum runs over the $d(G)$ discrete series representations $\pi_\infty$ attached to $\lambda$ up to isomorphism.
For any tempered irreducible representation $\pi'$ of $G(\R)$ the trace $\tr( \pi'(f_{\tilde\lambda}))$ is zero unless $\pi'$ belongs to the discrete series attached to $\lambda$, in which case the trace is $(-1)^{q(G)}$.
For the contragredient representation $\mu =\tilde \lambda$ this coincides with the $C^\infty$-function $f_{\mu}$ on $G(\R)$ with compact support modulo $A_G(\R)^0$ defined by Arthur \cite[lemma~3.1]{Arthur81_Lefschetz}.

Our assumptions on $G$ imply that the centralizers $G_\gamma$ are connected algebraic groups.
A semisimple strongly elliptic element $\gamma$ is regular if and only if it is strongly regular. In that case its centralizer is conjugate to $B$.
For a local field $\GlobalField_v$ let $G_v=G(k_v)$, then the orbital integral $O_\gamma^{G_v}(f_v)$ of $f_v\in C_c^\infty(G_v)$ at an element $\gamma\in G_v$ is
defined above. %
The stable orbital integral $$SO^{G_v}_\gamma (f_v) = \sum_{\gamma'\sim\gamma} e(G_{\gamma'}) O^{G_v}_{\gamma'}(f_v)$$
is the sum over a set of representatives $\gamma'$ for the $G(\GlobalField_v)$-conjugacy classes in the stable conjugacy class of $\gamma$.
The Kottwitz signs $e(G_{\gamma'})$ are trivial if $\gamma$ is regular. For details, see Kottwitz~\cite[\S 5]{Kottwitz_2}.
Fix the smooth function
$$  %
f_\infty=f_{\infty,G} := \frac{f_{\tilde\lambda}}{d(G)} \in C^\infty(\R)\ .\ %
$$
Let $v(G_\gamma)= (-1)^{q(G_\gamma)}vol(\overline G_{\gamma,\infty}/A_G(\R)^0)d(G_\gamma)^{-1}$ as in Arthur~\cite[\S5]{Arthur81_Lefschetz}, see formula (4.5) of loc.~cit.~for $G=M$.

\begin{lemma}\label{ORB}
For all $\gamma$ in $G(\Q)$, the orbital integral of $f_{\infty,G}$ is
\begin{equation*}
O^{G(\R)}_\gamma(f_{\infty,G}) =
\begin{cases}
\frac{\tr(\tau_\lambda(\gamma^{-1}))}{d(G)\cdot v(G_\gamma)}\ &\text{if}\ \gamma\ \text{is strongly elliptic and semisimple}\ , \\0 & \text{otherwise}\ .
\end{cases}
\end{equation*}
The stable orbital integral is $SO^{G(\R)}_\gamma(f_{\infty,G}) = d(G) \cdot O^{G(\R)}_\gamma(f_{\infty,G})$ for regular $\gamma\in G(\Q)$\ .
For a fixed embedding $B\hookrightarrow H$ and every $\gamma\in B(\Q)$, we have
$$SO^{G(\R)}_\gamma( f_{\infty,G}) =  SO^{H(\R)}_\gamma( f_{\infty,H})\ .$$
\end{lemma}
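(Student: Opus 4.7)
The plan is to deduce every claim from Arthur's explicit description of orbital integrals of pseudo-coefficients of discrete series, together with the stability of the cuspidal function $f_{\tilde\lambda}$ and compatibility under inner twisting.

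First I observe that each $f_{\pi_\infty}$ is a pseudo-coefficient in the sense of Clozel--Delorme~\cite{Clozel-Delorme} and Kazhdan~\cite{Kazdhan}, so $O^{G(\R)}_\gamma(f_{\pi_\infty})=0$ whenever $\gamma$ is not contained in a maximal torus of $G(\R)$ that is anisotropic modulo $A_G(\R)^0$; in particular, if $\gamma$ is not strongly elliptic semisimple, then $O^{G(\R)}_\gamma(f_{\infty,G})$ vanishes. For strongly regular elliptic $\gamma$, I would invoke lemma~3.1 of \cite{Arthur81_Lefschetz}, which computes $O^{G(\R)}_\gamma(f_{\pi_\infty})$ in terms of the character $\Theta_{\pi_\infty}(\gamma^{-1})$ and the volume factor $v(G_\gamma)$. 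Summing over the L-packet and applying the Harish-Chandra--Schmid formula expressing $(-1)^{q(G)}\sum_{\pi_\infty}\Theta_{\pi_\infty}(\gamma^{-1})$ as a Weyl numerator divided by the Weyl denominator, the denominator cancels against the one hidden in Arthur's expression, leaving
\begin{equation*}
O^{G(\R)}_\gamma(f_{\tilde\lambda})\ =\ \tr(\tau_\lambda(\gamma^{-1}))\big/v(G_\gamma)\ ;
\end{equation*}
dividing by $d(G)$ gives the first formula. For strongly elliptic semisimple $\gamma$ that is not regular in $G$, I would descend to the connected reductive centralizer $G_\gamma$ (connected by the assumption that $G_{\mathrm{der}}$ is simply connected), which inherits an elliptic maximal torus and therefore carries its own stable cuspidal function; Arthur's parabolic descent formulas match this reduction term by term, so the character identity propagates.

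For the stable orbital integral, I exploit that $f_{\tilde\lambda}$ is stable on the regular elliptic set: being a signed sum over the full L-packet, its orbital integrals depend only on the stable conjugacy class. Since $G_\gamma$ is a torus for regular $\gamma$, every Kottwitz sign $e(G_{\gamma'})$ equals $+1$, hence
\begin{equation*}
SO^{G(\R)}_\gamma(f_{\infty,G})\ =\ N(\gamma)\cdot O^{G(\R)}_\gamma(f_{\infty,G})\ ,
\end{equation*}
where $N(\gamma)$ is the number of $G(\R)$-conjugacy classes inside the stable class of $\gamma$. Under our standing hypotheses ($G_{\mathrm{der}}$ simply connected and $H^1(\Q,Z)$ trivial), Shelstad's identification of $N(\gamma)$ with the cardinality of the discrete series L-packet yields $N(\gamma)=d(G)$, giving the claimed factor.

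For the inner-form comparison, pick $\gamma\in B(\Q)$ regular in both $G$ and $H$ (the non-regular case follows by the descent used above). The centralizers satisfy $G_\gamma=B=H_\gamma$, so $v(G_\gamma)=v(B)=v(H_\gamma)$. Since $G$ and $H$ share the same root datum over $\overline\Q$, they carry isomorphic irreducible representations of highest weight $\lambda$, and the character of $\tau_\lambda$ restricted to $B(\C)$ depends only on the pair $(B,\lambda)$ and not on the ambient group. Combining this with the explicit formula above produces
\begin{equation*}
SO^{G(\R)}_\gamma(f_{\infty,G})\ =\ \tr(\tau_\lambda(\gamma^{-1}))\big/v(B)\ =\ SO^{H(\R)}_\gamma(f_{\infty,H})\ .
\end{equation*}
The main obstacle I anticipate is the careful bookkeeping of signs and volume normalizations when descending to non-regular strongly elliptic centralizers, where the factor $(-1)^{q(G_\gamma)}/d(G_\gamma)$ inside $v(G_\gamma)$ must be matched precisely against the sign $(-1)^{q(G)}$ built into the stable cuspidal function; this is notationally delicate but logically routine.
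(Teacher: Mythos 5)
Your proposal tracks the paper's argument quite closely: both rest on Arthur's explicit evaluation of orbital integrals of the stable cuspidal function, both get the factor $d(G)$ from Shelstad's count of $G(\R)$-conjugacy classes inside a regular elliptic stable class, and both get the inner-form comparison from the observation that $v(B)^{-1}\tr(\tau_\lambda(\gamma^{-1}))$ depends only on $(B,\lambda)$. The main divergence is in how the non-regular strongly elliptic case is handled. The paper applies Arthur's theorem~5.1 of \cite{Arthur81_Lefschetz} directly, which expresses $O^{G(\R)}_\gamma(f_{\widetilde\lambda})$ in terms of the stable discrete series constants $\Phi_G(\gamma,\tau_1)$ for \emph{all} semisimple $\gamma$ at once, so no separate reduction to the centralizer is needed; and for the last identity at singular elements it quotes Shelstad's continuity lemmas~2.9.2--2.9.3 of \cite{Shelstad}, which absorb exactly the sign/volume bookkeeping you flag as the delicate step. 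Your alternative --- descend to $G_\gamma$ and propagate the identity inductively --- can be made to work, but you would still have to verify the compatibility of normalizations that Shelstad's limit argument hands you for free, and you would need to note that for $\gamma\in B(\Q)$ singular the centralizers $G_\gamma$ and $H_\gamma$ remain inner forms of one another so the induction has a base case. One small citation slip: lemma~3.1 of \cite{Arthur81_Lefschetz} constructs the function $f_\mu$; the orbital-integral formula you want is theorem~5.1. Also the phrase about the Weyl denominator cancelling ``against the one hidden in Arthur's expression'' is not quite how the cancellation happens: Arthur's formula already produces the (signed sum of) discrete series characters, and it is the identity $(-1)^{q(G)}\sum_{\pi_\infty}\Theta_{\pi_\infty}(\gamma^{-1})=\tr(\tau_\lambda(\gamma^{-1}))$ on elliptic elements, i.e.\ $\Phi_G(\gamma,\widetilde{\tau_\lambda})$, that does the work.
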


\begin{proof}
The orbital integral of $f_{\widetilde{\lambda}}$ is given by thm.~5.1 of Arthur~\cite{Arthur81_Lefschetz} as
\begin{equation*}
O^{G(\R)}_\gamma(f_{\widetilde{\lambda}}) =
 \int_{G_\gamma(\R)\setminus G(\R)} f_{\tilde\lambda} (x^{-1}\gamma x) dx
= v(G_\gamma)^{-1} \sum_{\tau_1} \Phi_G(\gamma,\tau_1)\tr\widetilde{\tau_1}(f_\mu)\ .
\end{equation*}
if $\gamma$ is semisimple and is zero otherwise. The sum runs over irreducible finite dimensional complex representation $\tau_1$ of $G$.
If $\gamma$ is not strongly elliptic, then by definition $\Phi_G(\gamma,\tau_1)$ vanishes so we can assume $\gamma\in B(\R)$.
In Arthur's notation we have $\Phi_G(\gamma,\tau_1)=\tr(\tau_1(\gamma))$ for $\gamma\in B(\R)$ and
$\tr\ \widetilde \tau_1(f_\mu) = (-1)^{q(G)}\tr\ \tilde\pi_1(f_\mu)$ for any discrete series $\pi_1$ attached to $\tau_1$.
By choice of $f_\mu$ the only non-zero contribution to the sum comes from
$\tau_1=\widetilde{\tau_\lambda}$ where $\tr(\widetilde{\tau_1}(f_\mu))=1$,
so the sum equals $\Phi_G(\gamma,\widetilde{\tau_\lambda})=\tr(\tau_\lambda(\gamma^{-1}))$. This determines the orbital integral.

For regular $\gamma\in G(\Q)$ we have $v(G_\gamma)=v(B)=\mathrm{vol}(B(\R)/A_G(\R)^0)$.
For stably conjugate $\gamma\sim\gamma'$, the orbital integrals $O_\gamma^{G_\infty}(f_v)=O_{\gamma'}^{G_\infty}(f_v)$ coincide, because $v(G_\gamma)^{-1}\tr(\tau_\lambda(\gamma^{-1}))$ only depends on the stable conjugacy class of $\gamma$.
The stable conjugacy class has exactly $d(G)$ representatives $\gamma'$,
so we obtain the second assertion $SO_\gamma^{G(\R)}(f_{\infty,G})=d(G) O_\gamma^{G(\R)}(f_{\infty,G})$, compare \cite[prop.~2.2]{Shelstad_79}.
The last identity holds for regular $\gamma$ because %
$v(B)^{-1}\tr(\tau_\lambda(\gamma^{-1}))$ only depends on $\gamma$ and not on the choice of the inner form $G$ of $H$.
It holds in general by the continuity argument given by Shelstad \cite[lemmas~2.9.2 and 2.9.3]{Shelstad}.
\end{proof}

\begin{prop}\label{5.1}
For $f_{\fin}\in C_c(G(\A_{\fin}))$ with regular support the elliptic trace is
$$%
T^G_{\elliptic}(f_{\fin},\lambda) = d(G)\cdot \sum'_{\gamma\in G(\Q)/\sim} \tau_{Tam}(G_\gamma)
\cdot O_\gamma^{G(\A)}(f_{\fin} f_{\infty,G}) %
\ ,$$
where the sum extends over $G(\Q)$-conjugacy classes.
Non-zero contributions can only come from regular strongly elliptic $\gamma$.
\end{prop}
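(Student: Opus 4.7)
The strategy is to unpack the definition of $T^G_{\elliptic}$ given in proposition~\ref{D1}, to factorize the adelic orbital integral into its finite and archimedean pieces, and to eliminate the archimedean factor via lemma~\ref{ORB}. The remaining step is then an identification of normalization constants.

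First, applying proposition~\ref{D1} with $L=G$ gives
$$T^G_{\elliptic}(f_{\fin},\lambda) = \sum_{\gamma\in G(\Q)/\sim}\chi(G_\gamma)\,O^G_\gamma(f_{\fin})\,\tr(\tau_\lambda(\gamma^{-1}))\,,$$
where the sum is over $G(\Q)$-conjugacy classes of strongly elliptic semisimple elements. Since $f_{\fin}$ has regular support, $O^G_\gamma(f_{\fin})$ vanishes unless $\gamma$ is regular; for such $\gamma$ the centralizer $G_\gamma$ is a maximal torus conjugate to $B$ and hence $\R$-anisotropic modulo $A_G$. This justifies the assertion that only regular strongly elliptic $\gamma$ contribute.

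Second, since $f_{\fin}f_{\infty,G}$ is a product function, the adelic orbital integral factors as
$$O_\gamma^{G(\A)}(f_{\fin} f_{\infty,G}) = O_\gamma^{G(\R)}(f_{\infty,G})\cdot O^G_\gamma(f_{\fin})\,.$$
Lemma~\ref{ORB} gives $O_\gamma^{G(\R)}(f_{\infty,G}) = \tr(\tau_\lambda(\gamma^{-1}))/(d(G)\cdot v(G_\gamma))$ for strongly elliptic semisimple $\gamma$ and zero otherwise, which furnishes the archimedean vanishing in the claim independently of the support hypothesis on $f_{\fin}$. Substituting into the right-hand side of the claimed formula, the factor $d(G)$ cancels against the $d(G)$ in the denominator, reducing the equality to be proven to
$$\chi(G_\gamma)=\frac{\tau_{Tam}(G_\gamma)}{v(G_\gamma)}\qquad\text{for regular strongly elliptic }\gamma\,.$$

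Third, this remaining identity is the standard Euler--Poincar\'e/volume comparison formula that underlies the definition of $\chi(G_\gamma)$ adopted in \cite{Weissauer_Endo_GSp4}, sections 2.4--2.6, following Serre, Harder and Kottwitz. For regular $\gamma$ one has $G_\gamma\cong B$, so $q(G_\gamma)=0$ and $d(G_\gamma)=1$; consequently $v(G_\gamma)=\mathrm{vol}(G_\gamma(\R)/A_G(\R)^0)$, and the identity reduces to the familiar relation between the Tamagawa measure on $G_\gamma(\A)/A_G(\R)^0$ and the product of the archimedean volume with $\mathrm{vol}(G_\gamma(\Q)\setminus G_\gamma(\A_{\fin}))$, which is the Euler--Poincar\'e invariant in play.

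The principal obstacle is bookkeeping rather than conceptual: one must match the normalizations of Haar measures (Tamagawa vs.\ the local normalizations built into $f_{\infty,G}$ and the orbital integrals $O^G_\gamma$), the sign and the factor $d(G)$ hidden in the definition of $f_{\infty,G} = f_{\tilde\lambda}/d(G)$, and the Kottwitz signs (which are trivial in the regular case). Once these conventions are aligned with those of \cite{Weissauer_Endo_GSp4} and \cite{Arthur81_Lefschetz}, the proof reduces to the chain of substitutions above.
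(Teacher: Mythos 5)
Your proposal follows essentially the same route as the paper: expand the definition of $T^G_{\elliptic}$ from proposition~\ref{D1}, factor the adelic orbital integral, apply lemma~\ref{ORB} to evaluate the archimedean piece (which also gives the vanishing off the strongly elliptic regular locus), and reduce to the identity $\chi(G_\gamma)=\tau_{Tam}(G_\gamma)/v(G_\gamma)$. The paper simply cites this last identity from \cite{Weissauer_Endo_GSp4}, p.~31, where you supply a bit more context about the Euler--Poincar\'e normalization; otherwise the arguments coincide.
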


\begin{proof}
Recall that the elliptic trace is defined in prop.~\ref{D1}.
The Tamagawa number $\tau_{Tam}(G_\gamma)$ is related to the Euler characteristic $\chi(G_\gamma)$ that appears in the definition of the elliptic trace via $\chi(G_\gamma)= v(G_\gamma)^{-1}\tau_{Tam}(G_\gamma)$; see \cite{Weissauer_Endo_GSp4}, p.~31.
Lemma~\ref{ORB} implies
$$ \tau_{Tam}(G_\gamma) \cdot O^{G(\R)}_\gamma(f_{\infty,G}) = \frac{\tau_{Tam}(G_\gamma)\tr(\tau_\lambda(\gamma^{-1})}{v(G_\gamma)}= \chi(G_\gamma) \cdot\tr(\tau_\lambda(\gamma^{-1}))\ .$$
By definition of the elliptic trace this implies the statement.
If $\gamma$ is not strongly elliptic and regular, the corresponding term vanishes by lemma~\ref{ORB}.
\end{proof}

\section{Stabilization of the trace formula}\label{s:stab_trace}

We give a short summary of results by Kottwitz \cite{Kottwitz_2, Kottwitz_3}.
All fields $\GlobalField$ (resp. $\GlobalField_v$) are assumed to be global (resp.~local) fields  of characteristic zero with absolute Galois group $\Gamma$ (resp. $\Gamma_v$).
Let $\gamma$ be semisimple in $G(\GlobalField)$ and $I=(G_\gamma)^0$ be the connected
component of the centralizer; notice $I=G_\gamma$ under our assumption that $G_{\der}$ is simply connected.
The set $${\cal D}_G(I/\GlobalField)=\ker(H^1(\GlobalField,I)\to H^1(\GlobalField,G)) $$ can be identified with a set of representatives of the $G(\GlobalField)$-conjugacy classes in the stable conjugacy class of $\gamma$ if $G_{\der}$ is simply connected.
Analogous statements hold for $\gamma\in G(\GlobalField_v)$ in the local case. 

\bigskip\noindent
Over a local field $\GlobalField_v$ there exists a map of sets ${\cal D}_G(I/\GlobalField_v) \rightarrow {\cal E}(I/\GlobalField_v)$
and a commutative diagram with exact horizontal lines
$$ \xymatrix{
    {\cal D}_G(I/\GlobalField_v)\ar[d] \ar@{^{(}->}[r] & H^1(\GlobalField_v,I) \ar[d]_{\alpha_I} \ar[r] & H^1(\GlobalField_v,G) 
    \ar[d]_{\alpha_G} \cr
    {\cal E}(I/\GlobalField_v) \ar@{^{(}->}[r] & A(I) \ar[r] & A(G) 
 } 
$$
where $A(X):= \pi_0(Z(\widehat X)^\Gamma)^D$ are abelian groups defined for an arbitrary reductive connected group $X$ over $\GlobalField_v$ in terms of the Langlands dual groups $\widehat X$.
The vertical abelianization maps $\alpha_X$ are defined via local duality theory for Galois cohomology \cite{Kottwitz_3}, thm.~1.2.
If $X=T$ is a torus, then $\alpha_T: H^1(\GlobalField_v,T) \cong A(T)$ is a group isomorphism \cite{Kottwitz_2}, (3.3.1). If $\gamma$ is regular, $I$ is a torus and the map ${\cal D}_G(T/\GlobalField_v) \rightarrow {\cal E}(T/\GlobalField_v)$ is injective.
For $\gamma,\gamma'\in {\cal D}_G(I/\GlobalField_v)$
let $\mathrm{inv}_v(\gamma,\gamma')\in {\cal E}(I/\GlobalField_v)$ be the difference of the images of $\gamma,\gamma'$  in the group ${\cal E}(I/\GlobalField_v)$, see \cite[\S5.6]{Kottwitz_3}. If $\GlobalField_v$ is nonarchimedean, then ${\cal D}_G(I/\GlobalField_v) \rightarrow {\cal E}(I/\GlobalField_v)$ induces a bijection.
If $\GlobalField_v=\R$,
then $\ker(\alpha_X)= \im(H^1(\R,X_{sc})\to H^1(\R,X))$ and $\im(\alpha_X)= \ker(A(X) \to \pi_0(Z(\widehat X))^D)$ by Kottwitz~\cite[thm.~1.2]{Kottwitz_3}.

\bigskip\noindent
For each place $v$ of $\GlobalField$, one defines a group homomorphism
${\cal K}(I/\GlobalField)\to {\cal K}(I/\GlobalField_v)$
by the following commutative diagram with exact rows in the category of abelian groups
\begin{equation*}
\xymatrix@+3mm{
1\ar[r]&{\cal K}(I/\GlobalField) \ar[d]\ar@{^{(}->}[r] &  \pi_0\bigl((Z(\widehat I)/Z(\widehat G))^\Gamma \ar[r]\ar[d]& \Sha^1(\GlobalField,Z(\widehat{G}))  \ar[d]^{=1} \cr
1\ar[r]&{\cal K}(I/\GlobalField_v) \ar@{^{(}->}[r] &  \pi_0\bigl((Z(\widehat I)/Z(\widehat G))^{\Gamma_v}  \ar[r] &
H^1(\GlobalField_v,Z(\widehat{G}))\ . 
}
\end{equation*}

\bigskip\noindent
\begin{defn}\emph{An endoscopic triple} $(H,s,\eta)$ for $G$ as in \cite[\S 7.4]{Kottwitz_2} or \cite[\S9.3]{Kottwitz_3} is composed of a quasi-split reductive connected group\footnote{The letter $H$ in this section denotes a quasi-split reductive group that belongs to an endoscopic triple. This can be an inner form of $G$, but not necessarily.} $H$ over $\GlobalField$, an
embedding $\eta: \widehat H \to \widehat G$ and an element $s\in Z(\widehat H)$ satisfying the conditions:
\begin{enumerate}
\item $\eta(\widehat{H})$ is the connected component of the centralizer $\widehat{G}_{\eta(s)}$.
\item The $\widehat{G}$-conjugacy class of $\eta$ is fixed by $\Gamma$.
\item The image of $s$ in $(Z(\widehat H)/Z(\widehat G))$ is fixed by $\Gamma$.
\item Suppose $\gamma_H\in H(\GlobalField)$ comes from the fixed $\gamma\in G(\GlobalField)$ by an admissible embedding. Then the image of $s$ via the mapping $Z(\widehat H) \to Z(\widehat{H_{\gamma_H}})\cong
Z(\widehat I)\to \pi_0((Z(\widehat I))/Z(\widehat G))^\Gamma$ defines an element $\kappa$ in ${\cal K}(I/\GlobalField)$.
\end{enumerate}
The definition of a local endoscopic triple is analogous, replace $\GlobalField$ by
$\GlobalField_v$ and $\Gamma$ by $\Gamma_v$.
\end{defn}
The finite abelian group $\mathcal{K}(I/\GlobalField_v)$ is isomorphic to the Pontryagin dual of the finite group ${\cal E}(I/\GlobalField_v)$ \cite[\S 4.6]{Kottwitz_3}.
Hence the restriction of $\kappa\in {\cal K}(I/\GlobalField)$ to ${\cal K}(I/\GlobalField_v)$ induces a group homomorphism $\kappa: {\cal E}(I/\GlobalField_v)
 \to S^1 \subseteq \C^\times$. Via Pontryagin duality, for $\mathrm{inv}_v(\gamma,\gamma') \in {\cal E}(I/\GlobalField_v)$ this defines $\langle \mathrm{inv}_v(\gamma,\gamma'), \kappa\rangle\in S^1$.
 
 \bigskip\noindent
\textit{Archimedean case.} For $\GlobalField_v\cong \R$ and a strongly elliptic and $G$-regular $\gamma\in G(\GlobalField)$,
the group $I_v=G_{\gamma,v}$ is conjugate to $B(\R)$ with the torus $B$ fixed in section~\ref{sec5}.
If $\Gamma_\infty$ acts trivially on $G^{ab}=G/G_{\der}$,
then $(Z(\widehat I)/Z(\widehat G))^{\Gamma_\infty} \cong\{\pm 1\}^r$ where $r=\dim(B/Z(G))$.
Hence there is an embedding 
$${\cal K}(I/\GlobalField) \hookrightarrow \{\pm 1\}^r  $$
and a bijection ${\cal E}(I/\R) \cong \{\pm 1\}^r$ such that
$$ {\cal D}_G(I/\R) \hookrightarrow \{\pm 1\}^r \ .$$
If $I$ is a maximal anisotropic torus $B$ of $G$ and $G_{\der}$ is simply connected, 
${\cal D}_G(I/\R)$ can be viewed as $B(\C)\!\setminus\! \{g\in G(\C)\, \vert \, g^{-1}B(\R)g \subseteq G(\R)\}/G(\R)$. For this choose the representatives $g$ in the normalizer of $B(\C)$ in $G(\C)$. This identifies the double coset with a right coset
of the Weyl group $\Omega=\Omega_G$ of $B(\C)\subseteq G(\C)$ by the real Weyl group
$\Omega_{G(\R)} = \Omega(B(\R),G(\R))$ as in \cite[lemma~2.4.1]{Shelstad}.
Via $\Omega\ni w\mapsto F_\infty(w)w^{-1}$ each double coset defines a cohomology class in $H^1(\R,B)$, which identifies the set ${\cal D}_G(B/\R) \subseteq {\cal E}(B/\R)$ and the set $B(\C)\!\setminus\! \{g\in G(\C)\, \vert \, g^{-1}B(\R)g \subseteq G(\R)\}/G(\R)$
as in \cite{La}.

\bigskip\noindent
\textit{Matching condition}. Fix a global elliptic endoscopic triple $(H,s,\eta)$ for $G$ as above.
Then for a semisimple element $\gamma_H$ of $H(\GlobalField)$ choose a torus $T_H\subseteq H$ containing $\gamma_H$
and an admissible embedding $j:T_H \to G$ and let $\gamma=j(\gamma_H)$.
The conjugacy class of $\gamma$ does not depend on these choices.
Langlands and Shelstad~\cite{Langlands_Shelstad} have defined transfer factors $$\Delta(\gamma_H,\gamma)=\prod_v\Delta_v(\gamma_H,\gamma)$$ that allow to define matching conditions for pairs of functions
$f=\prod_v f_v \in C_c^\infty(G(\mathbb A))$ and 
$f^H= \prod_v f_v^H \in C_c^\infty(H(\mathbb A))$: 
For $\kappa$ defined by $s$ as in the definition of the endoscopic triple, the $\kappa$-orbital integral is
$$ O^\kappa_\gamma(f_v)
    = \sum_{\gamma'} \langle inv_v(\gamma,\gamma'),\kappa\rangle   e(G_{\gamma'})O^{G(\GlobalField_v)}_{\gamma'}(f_v) \ ,$$
where $\gamma'$ runs over a set of representatives for the $G(\GlobalField_v)$-conjugacy
classes in $G(\GlobalField_v)$ in the stable conjugacy class of $\gamma$. 
By definition, the matching conditions for $(f,f^H)$ are satisfied if the formula
$$ SO_{\gamma_H}^{H(\GlobalField_v)}(f^H_v) = \Delta_v(\gamma_H,\gamma) O^\kappa_\gamma(f_v)
$$ holds for all $(G,H)$-regular semisimple $\gamma_H\in H(\GlobalField)$.
For details, see \cite{Kottwitz_Shelstad}.
If there is no $\gamma\in G(\GlobalField_v)$ belonging to $\gamma_H\in H(\GlobalField_v)$,
then the local transfer factor $\Delta_v(\gamma_H,\gamma)$ vanishes and the matching condition demands
that $SO_{\gamma_H}^{H(\GlobalField_v)}(f^H_v)$ has to vanish \cite[\S1.4]{Langlands_Shelstad}.
Notice $\Delta_v(\gamma_H,\gamma')= \Delta_v(\gamma_H,\gamma) \langle inv_v(\gamma,\gamma'),\kappa\rangle $ as in \cite[\S 5.6]{Kottwitz_3}, so the matching condition can also be stated as
$$ SO_{\gamma_H}^{H(\GlobalField_v)}(f^H_v) = \sum_{\gamma'} \Delta_v(\gamma_H,\gamma') e(G_{\gamma'})O^{G(\GlobalField_v)}_{\gamma'}(f_v)\ .$$
Recall that for regular semisimple $\gamma'$ in $G(\GlobalField)$, the Kottwitz sign is $e(G_{\gamma'})=1$.

\bigskip\noindent
\textbf{Remark}. Suppose $f$ is the good projector in the sense of section~\ref{s:Finiteness} at an auxiliary place $v_0$ where $D$ splits.
Then $O_{\gamma'}(f)\neq0$ implies that $\gamma'$ is regular.
 \begin{lemma} 
 If $H$ is an inner form of $G$ and $(H,1,id)$ is the maximal endoscopic datum,
 and if $G$ and $H$ both admit a maximal torus $B$ as in section~\ref{sec5}, then up to a constant scalar
 $f_{\infty,G}$ matches with $$f_{\infty,G}^H=
 f_{\infty,H}\ .$$
 \end{lemma}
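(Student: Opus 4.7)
The plan is to verify the Langlands--Shelstad matching condition at $\infty$ for the pair $(f_{\infty,G}, f_{\infty,H})$. Since the endoscopic datum has $s=1$, the associated character $\kappa$ on $\mathcal{E}(I/\R)$ is trivial, so $\langle \mathrm{inv}_\infty(\gamma,\gamma'),\kappa\rangle = 1$ and the $\kappa$-orbital integral reduces to the stable orbital integral: for $(G,H)$-regular semisimple $\gamma_H\in H(\R)$ with transfer $\gamma\in G(\R)$, the matching condition reads
$$
SO^{H(\R)}_{\gamma_H}(f_{\infty,H}) \;=\; \Delta_\infty(\gamma_H,\gamma)\cdot SO^{G(\R)}_\gamma(f_{\infty,G})\ .
$$

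First I would locate the relevant torus. A strongly elliptic regular $\gamma_H\in H(\R)$ lies in an $\R$-anisotropic maximal torus, which by the hypothesis of section~\ref{sec5} is $H(\R)$-conjugate to $B(\R)$, and similarly $\gamma$ can be placed in $B(\R)\subseteq G(\R)$. So the admissible embeddings may be arranged so that $\gamma_H$ and $\gamma$ are identified with the same element of $B(\Q)$, at which point lemma~\ref{ORB} directly yields
$$
SO^{G(\R)}_\gamma(f_{\infty,G}) \;=\; SO^{H(\R)}_\gamma(f_{\infty,H})\ .
$$
This already delivers the matching condition, provided that $\Delta_\infty(\gamma_H,\gamma)$ is a constant independent of $\gamma$; this constant is then the ``constant scalar'' appearing in the statement.

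Second, I would verify the constancy of $\Delta_\infty$. For the maximal datum $(H,1,\id)$ attached to an inner twist, the relative factor $\Delta_\infty(\gamma_H,\gamma')/\Delta_\infty(\gamma_H,\gamma) = \langle \mathrm{inv}_\infty(\gamma,\gamma'),\kappa\rangle$ is identically $1$ because $\kappa=1$, and the absolute factor reduces through the $\Delta_I$--$\Delta_{III}$ calculus to a scalar depending only on the inner twist and the normalization (in Kottwitz's convention this scalar is $e(G)e(H)=e(G)$). Hence $\Delta_\infty$ is the same constant for every regular pair $(\gamma_H,\gamma)$ that transfers.

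Finally I would treat the non-transferring case: if $\gamma_H\in H(\R)$ is regular semisimple with no $\gamma\in G(\R)$ in the admissible image, then $\gamma_H$ cannot be strongly elliptic, since otherwise it would sit in an $\R$-anisotropic maximal torus, hence $H(\R)$-conjugate to $B(\R)\subseteq G(\R)$, contradicting non-transfer. By lemma~\ref{ORB}, every $O^{H(\R)}_{\gamma_H}(f_{\pi_\infty})$ then vanishes, so $SO^{H(\R)}_{\gamma_H}(f_{\infty,H})=0$, in agreement with the vanishing convention for $\Delta_\infty$ on non-transferring pairs. The main obstacle is the explicit reduction of the archimedean transfer factor for the inner-form datum to a single constant; once this is in hand, lemma~\ref{ORB} does the rest.
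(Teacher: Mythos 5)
Your proposal is correct and follows essentially the same route as the paper, whose one-line proof simply invokes the last assertion of lemma~\ref{ORB}. You spell out what the paper leaves implicit: that with $\kappa$ trivial the $\kappa$-orbital integral is the stable one, that strongly elliptic regular classes may be moved into $B(\R)$ so lemma~\ref{ORB} applies, that $\Delta_\infty$ is a constant for the maximal datum on an inner form, and that non-transferring classes cannot be strongly elliptic so both sides vanish.
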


\begin{proof}
This follows from the last assertion of lemma~\ref{ORB}.
\end{proof}

\bigskip\noindent
\textit{Stabilized trace formula.}
Langlands \cite{Langlands_debut_trace_formula} and Kottwitz \cite[\S9.6]{Kottwitz_3} have shown that the fundamental lemma, proven by Ng\^{o} \cite{Ngo_Fundamental_Lemma}, implies

\begin{thm}\label{STABI}
For every matching pair $(f,f^H)$ of test functions with $f=f_{\infty,G}f_{\fin}$ where $f_{\fin}\in C_c^\infty(G(\mathbb A_{\fin}))$, there is an equality
\begin{equation*}
\sum_{\gamma\in G(\Q)/\sim} \tau_{Tam}(G_\gamma) O_\gamma^{G(\A)}(f)
=   \sum_{(H,s,\eta)} \iota(G,H) ST_{\elliptic}^{H(\mathbb A),**}(f^H) \ .
\end{equation*}
The left hand sum runs over representatives $\gamma$ of the elliptic semisimple conjugacy classes in $G(\GlobalField)$ not in the center of $G$.
The right hand sum side runs over representatives $(H,s,\eta)$ of the isomorphism classes of elliptic endoscopic triples  for $G$.
\end{thm}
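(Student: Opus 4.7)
The plan is to follow Kottwitz's stabilization strategy from \cite{Kottwitz_3}, now unconditional thanks to Ng\^o's proof of the fundamental lemma. First I would regroup the left hand side by stable conjugacy classes: write $\gamma$ in a fixed stable class $\mathcal{O}^{st}$ via the set $\mathcal{D}_G(I/\Q)$, so that
\begin{equation*}
\sum_{\gamma\in G(\Q)/\sim} \tau_{Tam}(G_\gamma) O_\gamma^{G(\A)}(f)
= \sum_{\mathcal{O}^{st}} \tau_{Tam}(I)\sum_{\gamma'\in \mathcal{D}_G(I/\Q)} O_{\gamma'}^{G(\A)}(f)\ ,
\end{equation*}
using that $G_\gamma=I$ is the same connected group (up to inner twist) for each $\gamma'$ in the stable class under the hypothesis that $G_{\der}$ is simply connected.

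Second, I would apply Fourier inversion on the finite abelian group $\mathcal{K}(I/\Q)$ defined in section~\ref{s:stab_trace}. The key identity, worked out in \cite[\S4--\S6]{Kottwitz_3}, replaces the inner sum by a sum over characters $\kappa\in\mathcal{K}(I/\Q)$ of products of local $\kappa$-orbital integrals $\prod_v O^{\kappa_v}_{\gamma}(f_v)$, with a global coefficient absorbing $|\mathcal{K}(I/\Q)|$ and the ratio between $v(I)$ and $\tau_{Tam}(I)$. Each character $\kappa$ corresponds, via the dictionary recalled in the excerpt, to an endoscopic triple $(H,s,\eta)$ for $G$ together with an admissible embedding $T_H\hookrightarrow H$ of a torus containing a preimage $\gamma_H$ of $\gamma$; the trivial character gives back $(G,1,\id)$, which contributes the stable part.

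Third, I would invoke the matching conditions. For each local place $v$ the matching of $(f_v,f^H_v)$ together with the fundamental lemma (Ng\^o \cite{Ngo_Fundamental_Lemma}) for the spherical unit at unramified places, plus the Langlands--Shelstad archimedean transfer encoded for us in lemma~\ref{ORB} via our choice $f_{\infty,G}^H=f_{\infty,H}$, converts the product $\prod_v O^{\kappa_v}_\gamma(f_v)$ into $\prod_v\Delta_v(\gamma_H,\gamma)^{-1} SO^{H(\GlobalField_v)}_{\gamma_H}(f^H_v)$. The product formula for transfer factors $\prod_v\Delta_v(\gamma_H,\gamma)=1$ for global $\gamma$ makes the $\Delta_v$'s collapse, so what remains is $SO^{H(\A)}_{\gamma_H}(f^H)$. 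Summing over stable classes of $\gamma_H\in H(\Q)$ reproduces $ST^{H(\A),**}_{\elliptic}(f^H)$; the coefficient $\iota(G,H)$ records the number of admissible embeddings and the comparison of Tamagawa numbers $\tau_{Tam}(I)$ with $\tau_{Tam}(H_{\gamma_H})$, exactly as computed in \cite[\S7--\S8]{Kottwitz_2}.

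The main obstacle is a bookkeeping one rather than analytic: one has to justify the interchange of the sum over stable classes and the sum over characters of $\mathcal{K}(I/\Q)$, which is only subtle because of the difference between $\ker^1(\Q,Z(\widehat G))$-type obstructions and because non-regular semisimple $\gamma$ (whose centralizers are not tori) require the general version of the fundamental lemma and the auxiliary $**$-decoration on $ST^{H(\A),**}_{\elliptic}$. For our purposes the good projector at the auxiliary place $v_0$ coming from section~\ref{s:Finiteness} forces $\gamma$ regular, so the remark before the lemma reduces everything to the regular elliptic case, where the identifications above go through verbatim and the theorem is just the combination of Kottwitz's stabilization with Ng\^o's fundamental lemma.
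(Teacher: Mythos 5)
Your proposal is correct and takes essentially the same approach as the paper: the paper proves this theorem by simply citing Langlands~\cite{Langlands_debut_trace_formula} and Kottwitz~\cite[\S9.6]{Kottwitz_3}, made unconditional via Ng\^o's fundamental lemma, and your sketch faithfully unpacks that very stabilization argument (regrouping by stable classes, Fourier inversion over $\mathcal{K}(I/\Q)$, local matching plus the transfer-factor product formula, and the $\iota(G,H)$ bookkeeping). The only slips are cosmetic -- the trivial character corresponds to the triple $(H^*,1,\id)$ for the quasi-split inner form $H^*$ rather than $(G,1,\id)$, and the archimedean matching constant is $(-1)^{q(G)+q(H)}$ as in lemma~\ref{ARMATCH} rather than $1$ -- neither of which affects the structure of the argument.
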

\noindent
Here the the stable elliptic trace is defined by
$$ST_{\elliptic}^{H(\mathbb A),**}(h) := \tau_{Tam}(H) \cdot \sum'_{\gamma_H/\sim}
SO^{H(\mathbb A)}_\gamma(h) \ ,$$
the sum runs over certain representatives $\gamma_H$ of the elliptic semisimple stable
conjugacy classes in $H(\GlobalField)$ specified by Kottwitz \cite[9.5]{Kottwitz_3}.

\medskip\noindent
\textit{The constants $\iota(G,H)$}.
These constants are equal to $\tau_{Tam}(G)\tau_{Tam}(H)^{-1}\lambda^{-1}$, where
$\lambda$ is the cardinality of $\mathrm{Aut}(H,s,\eta)/H_{ad}(\GlobalField)$ \cite[thm.~8.3.1]{Kottwitz_2}.
If $s\in Z(\widehat G)$, then $H$ is isomorphic to the quasi-split inner form $H$
of $G$ and thus $\widehat H=\widehat G$ defines a maximal elliptic endoscopic triple $(H,s,id_{\widehat H})$.
In this case $Aut(H,s,\eta)/H_{ad}(\GlobalField)$ is trivial, hence $\iota(G,H)= \tau_{Tam}(G)/\tau_{Tam}(H)$.
By the main result of Kottwitz \cite{Kottwitz_4}, $\iota(G,H)=1$ if $H$ is the quasi-split inner form of $G$.

\medskip\noindent
\textit{Support}.
In the following we only consider the case where $f=\prod_v f_v$ and at least for one nonarchimedean place $v$ the function $f_v$ has $G(\GlobalField_v)$-regular support.
Then in theorem~\ref{STABI} sums could be as well over the representatives $\gamma$ of the elliptic semisimple
conjugacy classes in $G(\GlobalField)$ resp.\ representatives $\gamma_H$
of the elliptic semisimple stable conjugacy classes in $H(\GlobalField)$. 

\medskip\noindent
\textit{Weakly endoscopic representations}. Let $(H,s,\eta)$ be an elliptic endoscopic triple for $G$.
Then for an irreducible automorphic representation $\pi'$ of $H(\mathbb A)$
there exists a finite set of places $S(H,\pi')$ such that the fundamental lemma by Ng\^{o}~\cite{Ngo_Fundamental_Lemma} defines an irreducible spherical representation $\tilde \pi^S$ of $G(\mathbb A^{S(H,\pi')})$ associated to
$\pi'$.
An irreducible automorphic representation $\pi$ of $G(\mathbb A)$
will be called weakly endoscopic if there exists an elliptic endoscopic triple for $G$ such that $s\not\in Z(\widehat G)$,
an irreducible automorphic representation $\pi'$ of $H(\mathbb A)$
with associated representation $\tilde \pi^S$ of $G(\mathbb A^{S(H,\pi')})$
and  a finite set of places $S\supseteq S(H,\pi')$ such that $\pi_v \cong \tilde\pi_v$
holds for all $v\notin S$.

\medskip\noindent
\textit{Non-archimedean places.} At the nonarchimedean places for  us it suffices 
to consider only functions $f_v$ in the spherical Hecke algebra for $v\notin S$, 
or functions $f_v,v\in S$ with regular semisimple support.
If the function $f$ has $G$-regular support, then
matching functions $f^H$ can be easily constructed by the inverse function theorem.
For functions in the spherical Hecke algebra of $G$ the fundamental lemma~\cite{Ngo_Fundamental_Lemma}
gives matching functions in the spherical Hecke algebra of $H$.

\section{Archimedean matching}\label{s:arch_match}

\medskip\noindent
The function $f_\infty=f_{\infty,G}$ on $G(\R)$ specified in section~\ref{sec5} is constant on the conjugacy classes within a stable conjugacy class and is zero on semisimple classes unless they are strongly elliptic.
For a fixed endoscopic datum $(H,s,\eta)$ there exist corresponding archimedean transfer factors
$\Delta(\gamma_H,\gamma)$
such that the normalized archimedean $\kappa$-orbital integral
$$ \sum_{\gamma'\sim\gamma} \Delta(\gamma,\gamma')e(G_{\gamma'})  O^G_{\gamma'}(f_{\infty,G})$$
is well-defined \cite{Shelstad}. 
Since $f_{\infty,G}$ is a stable function, this may  be written as 
$$ d(H,s,\eta,\gamma) \cdot \Delta(\gamma_H,\gamma)O^G_\gamma(f_{\infty,G}) \ $$
for $ d(H,s,\eta,\gamma) = \sum_{\gamma'\sim\gamma} \left<inv_\infty(\gamma,\gamma'),\kappa\right> e(G_{\gamma'})$.  For representatives $w$
of ${\cal D}_G(B/\R)$ in $N_G(B)(\C)$,  we can write for $\gamma' = \gamma^w$ %
$$ d(H,s,\eta,\gamma) = \sum_{w\in \mathcal{D}_G(B/\R)} \kappa(w) \ $$
via a character $\kappa: {\cal E}(B/\R) \to \{\pm 1\}$ determined by the data $(H,s,\eta)$.
Hence $d(H,s,\eta,\gamma)$ is an integer, defined up to a sign by the sum on the right hand side,
where the sign depends on the choice of the reference point $\gamma$.
This sign compensates the dependency of
$\Delta(\gamma_H,\gamma)$ on the choice of $\gamma$ within the stable conjugacy class. 
Notice $O^\kappa_\gamma(f_{\infty,G})$ vanishes unless $\gamma$ is strongly elliptic.
If $\gamma$ is strongly elliptic and $G$-regular, then $I=G_\gamma$ is conjugate to the anisotropic torus $B$. This being said, we summarize 
\begin{equation*}
\sum_{\gamma'\sim\gamma} \Delta(\gamma,\gamma')e(G_{\gamma'})
 O^G_{\gamma'}(f_{\infty,G}) = d(H,s,\eta,\gamma) \cdot  \frac{\Delta(\gamma_H,\gamma)\tr(\tau_\lambda(\gamma^{-1}))}{d(G)\cdot v(G_\gamma)} \ .
 \end{equation*}

\medskip\noindent
\textit{Example}. If the group $G$ is anisotropic over $\R$, then  ${\cal D}_G(B/\R)$ has cardinality one.
For global endoscopy this implies  that for global $\kappa$ the value $\langle inv_\infty(\gamma,\gamma'), \kappa \rangle$
is one for $\gamma'$ stably conjugate to $\gamma$ in $G(\R)$.
In other words the matching conditions for $\kappa$-orbital integrals
in the stabilization of the trace formula will simplify at the archimedean places
so that at the archimedean place $d(H,s,\eta,\gamma)=1$ holds for all global endoscopic triples $(H,s,\eta)$.

\medskip\noindent
\textit{Notations}. Now recall that assigned to every $V_\lambda$ there is an $L$-packet
of discrete series representations $\pi_\infty$. We have constructed the function $f_{\infty,G}$ on $G(\R)$ defined
by $d(G)^{-1} (-1)^{q(G)} \sum_{\pi_\infty} f_{\pi_\infty}$ from the pseudo-coefficients
$f_{\pi_\infty}$.
Let $\varphi$ denote the Langlands parameter describing this archimedean $L$-packet.
As in section~\ref{sec5} we now consider the set $\Phi_H(\varphi)$ of $L$-packets of discrete series representations on $H(\R)$ whose Langlands parameter
$\varphi_H\in  \Phi_H(\varphi)$
is assigned to the Langlands parameter $\varphi$ by $\eta$ via an equivalence between
$\eta\circ \varphi_H$ and $\varphi$.
Recall $f_{\infty,H}$, depending on $\varphi_H$ and therefore now better denoted $f_{\infty,H,\varphi_H}$, is defined by
$d(H)^{-1} (-1)^{q(H)} \sum_{\pi_\infty} f_{\pi'_\infty}$ for representatives $\pi'_\infty$
of the $L$-packet $\Pi(\varphi)$ and their pseudo-coefficients $ f_{\pi'_\infty}$.

\begin{lemma} \label{TRANSERARCH}
For every endoscopic triple $(H,s,\eta)$ attached to $G$ there is $f_{\infty,G}^H\in C^\infty(H(\R))$, such that for 
(regular\footnote{The statement holds for all semisimple elements, but for our application we only need it for regular semisimple $\gamma$. The general case follows by a limiting argument.})
$\gamma\in G(\R)$ the orbitals $O^H_{\gamma_H}(f_{\infty,G}^H)$ vanish unless
$\gamma_H$ comes from the anisotropic torus $B$ of $G$, in which case
$$ SO^{H(\R)}_{\gamma_H}(f_{G,\infty}^H)
=  \sum_{\gamma'\sim\gamma} \Delta_\infty(\gamma,\gamma')e(G_{\gamma'})  O^G_{\gamma'}(f_{G,\infty}) \ .$$
The function $f_{G,\infty}^H$ can be chosen as a linear combination\footnote{The signs $\det(\omega_*(\varphi_H))$ depend on an identification $\varphi \mapsto w_*(\varphi)$ of $\Phi_H(\varphi)$ with a subset $\Omega_*$
of $\Omega$ made with choices of a certain isomorphism $j$ and a certain Borel group in $G(\C)$.}
\begin{equation*}
f_{G,\infty}^H = (-1)^{q(G)+q(H)} \frac{d(H,s,\eta,j(\gamma_H))}{d(G)}
\sum_{\varphi_H\in \Phi_H(\varphi)} \det(w_*(\varphi_H)) \,
f_{H,\infty,\varphi_H}
\end{equation*}
of functions $f_{H,\infty}= f_{H,\infty,\varphi_H}$ attached to $L$-packets $\varphi_H$ of discrete series representations on $H$ assigned to $\varphi$ by $\eta$.
\end{lemma}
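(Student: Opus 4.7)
The plan is to verify the assertion by directly comparing the explicit formula for the orbital integrals on both sides with Shelstad's archimedean transfer for pseudo-coefficients of discrete series. The computation in the display immediately preceding the lemma already identifies the right hand side as
$$ d(H,s,\eta,\gamma)\cdot\frac{\Delta_\infty(\gamma_H,\gamma)\tr(\tau_\lambda(\gamma^{-1}))}{d(G)\cdot v(G_\gamma)} \ ,$$
and the task is to show that the proposed linear combination of $f_{H,\infty,\varphi_H}$'s has exactly this stable orbital integral at $\gamma_H$ and vanishes on semisimple classes that do not come from $B$.

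First I would compute $SO^{H(\R)}_{\gamma_H}(f_{H,\infty,\varphi_H})$ for each $\varphi_H\in\Phi_H(\varphi)$ by appealing to lemma~\ref{ORB}, applied to $H$ and the finite dimensional representation $\tau_{\lambda_H}$ of $H$ whose highest weight is dual to the parameter $\varphi_H$. This gives a contribution proportional to $\tr(\tau_{\lambda_H}(\gamma_H^{-1}))/v(H_{\gamma_H})$ when $\gamma_H$ is regular and lies on an anisotropic torus of $H$, and $0$ otherwise. Since $H$ admits the torus $B$ by hypothesis, the relevant $\gamma_H$ are precisely those coming from $B$; this immediately yields the vanishing assertion. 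I would then assemble the signed sum over $\varphi_H\in\Phi_H(\varphi)$ weighted by $\det(w_*(\varphi_H))$, using the identification of $\Phi_H(\varphi)$ with a subset $\Omega_*$ of the Weyl group $\Omega$ induced from the admissible embedding $j:T_H\to G$ and the choice of Borel subgroup in $G(\C)$.

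The heart of the argument is the identity, coming from the Weyl character formula restricted to the compact torus $B$ and expressed via the Kostant-type decomposition of stable discrete series characters, that
$$ \sum_{\varphi_H\in\Phi_H(\varphi)} \det(w_*(\varphi_H)) \tr(\tau_{\lambda_H}(\gamma_H^{-1})) =  (-1)^{q(G)+q(H)}\, \Delta_\infty(\gamma_H,\gamma)\,\tr(\tau_\lambda(\gamma^{-1}))$$
up to the normalization factors $v(G_\gamma)$ versus $v(H_{\gamma_H})$, which agree by the last clause of lemma~\ref{ORB}. This is Shelstad's character identity for discrete series at the real place, see \cite[\S5.1]{Shelstad}, reformulated in terms of stable pseudo-coefficients (cf.\ Clozel-Delorme \cite{Clozel-Delorme} for the existence of pseudo-coefficients). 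Substituting this into the assembled sum and comparing with the formula derived just before the lemma statement yields
$$ SO^{H(\R)}_{\gamma_H}(f_{G,\infty}^H) =  d(H,s,\eta,\gamma)\cdot\frac{\Delta_\infty(\gamma_H,\gamma)\tr(\tau_\lambda(\gamma^{-1}))}{d(G)\cdot v(G_\gamma)} =  \sum_{\gamma'\sim\gamma} \Delta_\infty(\gamma,\gamma')e(G_{\gamma'}) O^G_{\gamma'}(f_{G,\infty})\ ,$$
as required.

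The main obstacle is bookkeeping the signs: matching the Kottwitz sign $e(G_{\gamma'})$, the character $\kappa$ on ${\cal E}(B/\R)$, the sign $(-1)^{q(G)+q(H)}$ arising from the difference of real dimensions $q(G)-q(H)$, and the signs $\det(w_*(\varphi_H))$ coming from the action of $\Omega_*\subseteq\Omega$ on the weights, all in a way consistent with Shelstad's normalization of the transfer factors $\Delta_\infty(\gamma_H,\gamma)$. Once the reference point $\gamma$ (and hence the sign of $d(H,s,\eta,\gamma)$) is fixed compatibly with the choices entering the definition of $w_*$, the displayed formula is forced. The regularity assumption on $\gamma$ ensures that $G_\gamma\cong B$ is a torus and reduces all cohomological pairings to the concrete description of ${\cal D}_G(B/\R)$ as a quotient of Weyl group elements recalled in section~\ref{s:stab_trace}, so the verification is then routine.
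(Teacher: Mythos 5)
Your proposal is correct and follows essentially the same strategy as the paper: evaluate $SO^{H(\R)}_{\gamma_H}(f_{H,\infty,\varphi_H})$ via lemma~\ref{ORB}, assemble the signed sum indexed by $\Phi_H(\varphi)\cong\Omega_*$, invoke the character-level identity that reassembles the $\Omega_H$-sum into a full $\Omega$-sum picking up the transfer factor, and compare with the display preceding the lemma. Where the paper walks through Kottwitz's derivation of formula (7.4) (Harish-Chandra's discrete series character formula, the explicit form $\Delta_{j,B}=(-1)^{q(G)+q(H)}\chi_{G,H}\Delta_B\Delta_{B_H}^{-1}$, the rule $\Delta_{j,w_Hw_*(B)}=\det(w_*)\Delta_{j,B}$, and the decomposition $\Omega=\Omega_H\Omega_*$), you compress that chain into a citation of Shelstad's character identity, which is the same underlying content stated at a coarser granularity.
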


\begin{proof}
This follows from the proof of formula (7.4) 
in Kottwitz~\cite[p.~182--187]{Kottwitz_5}.

Here the factor $d(H,s,\eta,\gamma)$ plays a similar role as
the factor $\langle\beta(\gamma),s\rangle$ in loc. cit., and our $h_{\infty}^H$
is the analogue of $h_\infty$ in loc. cit. Put 
$$ h^H_\infty =  c \cdot \sum_{\varphi_H\in \Phi_H(\varphi)} \det(\omega_*(\varphi_H)) (-1)^{q(G)+q(H)}\,
f_{H,\infty,\varphi_H} $$
for $f_{H,\infty,\varphi_H} =  (-1)^{q(H)} d(H)^{-1} \sum_{\pi'_\infty\in \Pi(\varphi_H)} f_{\pi'_\infty}$ 
and $d(H) = \#(\Omega_H/\Omega_{H(\R)})$. 
In the notations of \cite{{Kottwitz_5}}, here
 $\varphi_H$ ranges over a system of representatives of an
$L$-packet $\Pi(\varphi_H)$ of discrete series representations $\pi_H$ of $H(\R)$ 
with Langlands parameter $\varphi_H$ such that $\eta \circ \varphi_H$
is equivalent to the Langlands parameter $\varphi$ corresponding to the discrete series $L$-packet of $G(\R)$
that underlies the definition of $f_{\infty,G}$ for $V_\lambda$. Notice $A_G(\R)=A_H(\R)$.  The constants $\det(\omega_*(\varphi_H))$ will be discussed below.
Notice, the suitable chosen factor $c$ is an analogue of $\langle \mu_h,s\rangle$ in \cite{{Kottwitz_5}}.

\medskip\noindent
Temporarily we write $T$ for our fixed anisotropic torus $B$ in $G(\R)$,
to avoid confusion with the different meaning of $B$ (as a Borel group in $G(\C)$
containing $T$) in \cite{Kottwitz_5}.
To compare $ \sum_{\gamma'\sim\gamma} \Delta_\infty(\gamma,\gamma')e(G_{\gamma'})
 O^G_{\gamma'}(f_{\infty,G}) $ with $ SO^{H(\R)}_{\gamma_H}(h_\infty^H)$,  it suffices   
 to assume  $\gamma=j(\gamma_H)$ 
and that $\gamma$ is $G$-regular and contained in $T(\R)$ for some fixed $j: T_H \to T$
  with $j\in J$; as in \cite{Kottwitz_5}, p.\,183--186. 
 After this rigidification of $\gamma$ one may fix a pair
 $(j,B)$ as in \cite{Kottwitz_5} to rigidify the choice of transfer factor. It becomes $$\Delta_{\infty}(\gamma_H;\gamma)= \Delta_{j,B}(\gamma_H, \gamma) = (-1)^{q(G)+q(H)} \chi_{G,H}(\gamma)
 \Delta_B(\gamma^{-1})\Delta_{B_H}(\gamma_H^{-1})^{-1}$$ for $B_H$ assigned 
 to $(j,B)$, as in the work of Shelstad.
 Here $\chi_{G,H}(\gamma) =
 \chi_B(\gamma)\chi_{B_H}(\gamma_H^{-1})$, for certain quasicharacters defined in \cite{Kottwitz_5}.
Depending on the choice of $(j,B)$, one obtains a set $\Omega_*\subseteq  \Omega$ of representatives of the cosets $\Omega_H\setminus \Omega$ in the Weyl group $\Omega=\Omega_G$ of $(G(\C),B(\C))$; see \cite{Kottwitz_5}, p.184.
So elements $w$ of 
 $\Omega$  can be written uniquely in the form $w =w_Hw _*$ for $w_H\in \Omega_H$. 
Furthermore the choice of $(j,B)$ defines a bijection $\Phi_H(\varphi) \cong \Omega_*$
that aligns $\varphi_H\in \Phi_H(\varphi)$ to $w_*(\varphi_H)\in \Omega_*$; \cite{Kottwitz_5}, p.185.
For the sign $\det(w_*(\varphi_H))$, defined by the natural action of $w_*(\varphi_H)\in \Omega_G$ on $X^*(T)\otimes_{\mathbb Z} \R$, one has  
the key formula $\Delta_{j,w_Hw_*(B)}(\gamma_H, j(\gamma_H)) = \det(\omega_*(\varphi_H))\cdot
\Delta_{j,B}(\gamma_H, j(\gamma_H))$. Combined with the character formulas for discrete series representations this allows to compute
$SO_{\gamma_H}(h_\infty^H)$. Indeed,
as shown in \cite{Kottwitz_5}, p. 186, the stable orbital integral of $h_\infty^H$ is zero unless $\gamma_H$ is (strongly) elliptic, and for elliptic elements it satisfies
$$ SO_{\gamma_H}(h_\infty^H) = \frac{c\, e(H_\gamma)}{ vol(H_\gamma/A_G(\R)^0 )}\cdot \Delta_{j,B}(\gamma_H,\gamma)  \tr(\tau_\lambda(\gamma^{-1})) \ .$$
If $\gamma$ is $H$-regular, $e(H_\gamma)=1$ and $vol(H_\gamma/A_G(\R)^0)
= vol(B(\R)/A_G(\R)^0)$. 
We briefly summarize the arguments for this:
First lemma~\ref{ORB} gives 
$$SO_{\gamma_H}(f_{H,\infty,\varphi_H}) = v(H_{\gamma_H})^{-1} tr(\tau_\lambda(\gamma_H^{-1})) \ .$$ 
Then one expresses $(-1)^{q(H)}tr(\tau_\lambda(\gamma_H^{-1}))$ by 
the character values $\chi_{\pi'_\infty}(\gamma_H)$, summed over the 
$\#(\Omega_H/\Omega_{H(\R)})$ members 
of the discrete series $L$-packet of $H(\R)$ attached to $\varphi_H$.  
For the elliptic Langlands parameter $\varphi_H$ each choice of a Borel group $B_H$ of $H(\C)$ containing $T_H(\C)$ allows to define an irreducible discrete series representation $\pi'_\infty(\varphi_H,B_H)$ in the $L$-packet of $\varphi_H$, uniquely characterized by its character 
$$ \chi_{\pi'_\infty(\varphi_H,B_H)}(\gamma_H^{-1}) = (-1)^{q(H)} \sum_{w_H\in \Omega_{H(\R)}}
 \chi_{w_H(B_H)}(\gamma_H^{-1})
\Delta_{B_H}(\gamma_H^{-1})^{-1} $$
on regular elements $\gamma_H\in T_H(\R)$ in the anisotropic torus $T_H$ of $H(\R)$.
See \cite[p.~183]{Kottwitz_5}, where this is stated for $G$ instead of $H$.
Summing up these characters for the representations in the $L$-packet, one obtains 
the formula
$$ SO_{\gamma_H}(f_{H,\infty,\varphi_H})= v(H_{\gamma_H})^{-1}
\sum_{w_H\in \Omega_H} \chi_{w_H(B_H)}(\gamma_H^{-1}) \Delta_{w_H(B_H)}(\gamma_H^{-1})^{-1} \ .$$
So by $\gamma= j(\gamma_H)$ and the definition of the transfer factor,
this is equal to
$$   (-1)^{q(G)+q(H)} \frac{1}{v(H_{\gamma_H})}
\sum_{w_H\in \Omega_H} \Delta_{j,w_H(B)}(\gamma_H, \gamma) \chi_{w_H(B)}(\gamma^{-1}) \Delta_{w_H(B)}(\gamma^{-1})^{-1} \ .$$
$\Phi_H(\varphi)$ can be identified with $\Omega_*$ in such a way that
a change of the $L$-packet $\varphi_H\in \Phi_H(\varphi)$  amounts to
replace $B$ by $w_*(B)$ to obtain $SO_{\gamma_H}(f_{H,\infty,w_*(\varphi_H}))$,
Hence the formula $\Delta_{j,w_Hw_*(B)}(\gamma_H, j(\gamma_H)) = \det(\omega_*(\varphi_H))\cdot
\Delta_{j,B}(\gamma_H, j(\gamma_H))$ for $w$ in $\Omega_*$ and $\det(w_*)^2 =1$, together with the decomposition $\Omega = \Omega_H \Omega_*$, imply for $\gamma=j(\gamma_H)$  
$$ SO_{\gamma_H}(h_\infty^H) = c\cdot \frac{\Delta_{j,B}(\gamma_H,\gamma)}
{v(H_{\gamma_H})} \sum_{w\in \Omega} 
\chi_{w(B)}(\gamma^{-1}) \Delta_{w(B)}(\gamma^{-1})^{-1} \ $$
from the
definition of $h_\infty^H$. The sum on the right side gives $tr(\tau_\lambda(\gamma^{-1}))$
by the Weyl character formula. So for $\gamma= j(\gamma_H)$ we obtain
$$ SO_{\gamma_H}(h_\infty^H) = c\cdot \frac{\Delta_{j,B}(\gamma_H,\gamma) tr(\tau_\lambda(\gamma^{-1}))}
{v(H_{\gamma_H})}  \ .$$
A comparison  with the previous stated formula 
$$ d(H,s,\eta,j(\gamma_H)) \cdot  \frac{\Delta_{j,B}(\gamma_H,\gamma)\tr(\tau_\lambda(\gamma^{-1}))}{d(G)\cdot v(G_\gamma)} $$
for the sum
 $\sum_{\gamma'\sim\gamma} \Delta_\infty(\gamma,\gamma')e(G_{\gamma'})
 O^G_{\gamma'}(f_{\infty,G})$,  the rigidifications obtained from $(j,B)$ and $v(G_\gamma)=v(H_{\gamma_H})$ for regular $\gamma$ give
our desired matching equality for the now unambiguously defined factor 
 \begin{equation*}
 c= d(H,s,\eta,j(\gamma_H))/d(G) \ .\qedhere
 \end{equation*}
\end{proof}

\begin{lemma}\label{ARMATCH}
Suppose $(H,1,\id)$ is the maximal endoscopic datum, where
$H$ is the quasi-split inner form of $G$.
If $H(\R)$ and $G(\R)$ both admit discrete series representations, 
the matching function $f_{\infty,G}^H$ on $H(\R)$ for $f_{\infty,G}$ on $G(\R)$ is 
$$f_{\infty,G}^H =(-1)^{q(G)+q(H)}f_{\infty,H}\ .$$
\end{lemma}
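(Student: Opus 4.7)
The plan is to deduce this as a direct specialization of lemma~\ref{TRANSERARCH} to the maximal endoscopic datum $(H,1,\id)$ with $s=1\in Z(\widehat G)$. I need to show that the two prefactors appearing in the formula of that lemma collapse to unity in this case, so that only the sign $(-1)^{q(G)+q(H)}$ and the single function $f_{\infty,H}$ survive.

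First I would analyze the constant $d(H,1,\id,\gamma)=\sum_{w\in\mathcal{D}_G(B/\R)}\kappa(w)$. Since $s=1$ the image in $\pi_0((Z(\widehat I)/Z(\widehat G))^\Gamma)$ is trivial, so the induced character $\kappa\colon\mathcal{E}(B/\R)\to\{\pm1\}$ is trivial. The sum therefore degenerates to $|\mathcal{D}_G(B/\R)|$. Using the identification $\mathcal{D}_G(B/\R)\cong\Omega_G/\Omega_{G(\R)}$ recalled in section~\ref{s:stab_trace}, together with the standard fact that $d(G)=|\Omega_G/\Omega_{G(\R)}|$ is the cardinality of the discrete series $L$-packet attached to $\lambda$, one obtains $d(H,1,\id,\gamma)=d(G)$. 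Hence the ratio $d(H,s,\eta,j(\gamma_H))/d(G)=1$. (For regular $\gamma$ the Kottwitz signs $e(G_{\gamma'})$ implicit in the definition of $d(H,s,\eta,\gamma)$ are all $1$, so they play no role here.)

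Second, I would verify that the sum over $\Phi_H(\varphi)$ in lemma~\ref{TRANSERARCH} reduces to a single term. Since $\eta=\id$ is the identity identification $\widehat H=\widehat G$ compatible with the Galois actions, the Weyl groups coincide, $\Omega_H=\Omega_G$, and the set of coset representatives $\Omega_*$ of $\Omega_H\backslash\Omega_G$ consists of the identity alone. Correspondingly $\Phi_H(\varphi)$ is the singleton $\{\varphi\}$, and the sign $\det(w_*(\varphi_H))=\det(\mathrm{id})=1$. The sole term of the sum is thus $f_{\infty,H,\varphi}=f_{\infty,H}$.

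Combining these two simplifications in the expression provided by lemma~\ref{TRANSERARCH} gives $f_{\infty,G}^H=(-1)^{q(G)+q(H)}f_{\infty,H}$. I do not anticipate a serious obstacle: the argument is essentially bookkeeping, and the only subtle point is the conventional match between the combinatorial parameter $d(G)$ counting discrete series and the cardinality of $\mathcal{D}_G(B/\R)$, which is already built into the constructions of section~\ref{sec5} and section~\ref{s:stab_trace}.
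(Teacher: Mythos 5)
Your proof is correct and follows the same route as the paper: both apply lemma~\ref{TRANSERARCH} and then observe that for $(H,1,\id)$ the character $\kappa$ is trivial, $\Phi_H(\varphi)$ is a singleton, and $d(H,1,\id,\gamma)=d(G)$. You spell out the identification $|\mathcal{D}_G(B/\R)|=|\Omega_G/\Omega_{G(\R)}|=d(G)$ a bit more explicitly than the paper does, but the argument is essentially identical.
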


\begin{proof} Since $G$ and $H$ have the same $L$-group, for the maximal endoscopic datum $(H,1,id)$ we have
$\#\Phi(\varphi)=1$ and the character $\kappa: {\cal E}(I/\R) \to \C^\times$ is trivial. 
Hence $f_{\infty,H,\varphi}= f_{\infty,H}$ and $d(H,s,id,\gamma) = d(G)$. 
So the claim follows from lemma~\ref{TRANSERARCH}. 
\end{proof}

\section{Algebraic modular forms in general}\label{s:algebraic_mod_forms}

The notion of algebraic modular forms deals with automorphic forms on connected reductive groups $G$ over $\Q$ whose archimedean completion is anisotropic.
Hence the associated symmetric space for $G_\infty=G(\R)$ is discrete and the archimedean components $\pi_\infty$ of irreducible automorphic representations $\pi$ of $G(\A)$ are finite dimensional.
Hence the space of algebraic modular forms is a function space on this discrete space
with values in a finite dimensional irreducible representation of $G_\infty$, defined by its highest weight $\lambda$.
The action of the Hecke operators in $G(\A_\fin)$ on this space of functions are accessible to explicit computer calculations using generalizations of the concept of Brandt matrices as studied by Eichler et al.
Algebraic modular forms in the above more general setting were later considered and studied
by Gross~\cite{Gross_alg_mod_forms} and many others.

\begin{table}
\begin{center}
\begin{footnotesize}
\begin{tabular}{|c|c|c|c|c|}
\hline
Type                   & Name             & max. compact subgroup             & hol. & Cartan index $i$      \\\hline
$A_1^\R$               & $\Sl(2,\R)$      & $S^1$                             & *    & --                    \\\hline
$A_n^{\C,i}$           & $\SU_n(i,n-i)$   & $A_{n-i}\times A_{i-1}\times S^1$ for $i>0$ & all  & $i=0,1,\dots, [\frac{n+1}{2}]$\\
                       &                  & $A_1$ for $i=0$                   &      &
                       \\\hline
$B_n^{\R,i}$, $n\geq2$ & $\SO_{2n+1}(i,2n+1-i)^0$  & $D_k\times B_{n-k}$ for $i=2k$ even  & $i=2$& $0,\dots,n$      \\
                       &                  & $B_k\times D_{n-k}$ for $i=2k+1$ odd  & --   &                   \\\hline
$C_n^\R$, $n\geq3$     & $\Sp(2n,\R)$     & $A_{n-1}\times S^1$               & *    &                       \\\hline
$C_n^{\HH,i}$, $n\geq3$&$\SU_n(i,n-i,\HH)$& $C_i\times C_{n-i}$               & --   &$0,\dots [n/2]$        \\\hline
$D_n^{\R,i}$, $n\geq 4$& $\SO_{2n}(i,2n-i)^0$&$D_k\times D_{n-k}$ for $i=2k$  & $i=2$& $0,2,4,\dots, \leq n$ \\\hline
$D_n^\HH$, $n\geq 5$   & $\SU_n(\HH)$     & $A_{n-1}\times S^1$               & *    &                       \\\hline
$G_{2,(i)}$            &                  & $G_2$                             & --   & $-14$                 \\
                       &                  & $A_1\times A_1$                   & --   & $2$                   \\\hline
$F_{4,(i)}$            &                  & $F_4$                             & --   & $-52$                 \\
                       &                  & $B_4$                             & --   & $-20$                 \\
                       &                  & $C_{3}\times A_1$                 & --   & $4$                   \\\hline
$E_{6,(i)}$            &                  & $E_6$                             & --   & $-78$                 \\
                       &                  & $D_5\times S^1$                   & *    & $-14$                 \\
                       &                  & $A_5\times A_1$                   & --   & $2$                   \\\hline
$E_{7,(i)}$            &                  &                                   & --   & $-133$                \\
                       &                  &                                   & *    & $-25$                 \\
                       &                  &                                   & --   & $-5$                  \\
                       &                  &                                   & --   & $7$                   \\\hline
$E_{8,(i)}$            &                  & $E_8$                             & --   & $-248$                \\
                       &                  & $E_7\times A_1$                   & --   & $-24$                 \\
                       &                  & $D_8$                             & --   & $8$                   \\\hline
\end{tabular}
\end{footnotesize}
\end{center}
\begin{small}
\caption{Real simple Lie groups with discrete series. Thm.~\ref{innerlifting} applies if the derived group of $G_\infty$ is a product of finitely many of these groups.
These groups are the inner forms of the compact inner form of minimal Cartan index. Notice, the quasi-split inner form $H_\infty$ of maximal Cartan index need not be split over $\R$.
In the fourth column we also indicated the indices $i$ where the corresponding symmetric space has 
a holomorphic structure. For a list of the simple case without discrete series see table~\ref{lasttable}. \label{Discreteseriesreps}}
\end{small}
\end{table}

\begin{table}
\begin{small}
\begin{center}
\begin{tabular}{|c|c|c|}\hline
  Type                    & Name               & Cartan index $i$ \\\hline
  $A_n^\R$, $n\geq2$      & $\Sl(n+1,\R)$      &         \\\hline
  $A_n^\HH$, $n\geq3$ odd & $\Sl(n+1,\HH)$     &         \\\hline
  $D_n^{\R,i}$, $n\geq4$  & $\SO_{2n}(i,2n-i)$ & $i$ odd \\\hline
  $E_6(i)$                &                    & $-26$   \\
                          &                    & $6$     \\\hline
  \end{tabular}
  \end{center}
  \end{small}
\caption{ Real simple Lie groups that do not allow discrete series. Further recall that complex simple groups, considered as real Lie groups, never have discrete series.\label{lasttable}}

\end{table}

\bigskip\noindent
In this section we show how algebraic modular forms can be weakly lifted to
inner forms of $G$, in particular to the quasi-split inner form $H$ of $G$.
We describe these liftings in terms of irreducible automorphic representations $\pi$
of $G(\A)$. In the setting of algebraic modular forms all irreducible automorphic representations $\pi$ of $G(A)$ are cuspidal.  
For endoscopic representations $\pi$ existence of the lifting follows from the theory of endoscopy. For CAP-representations $\pi$ on the other hand, $\pi$ by definition has an $H$-Eisenstein lift on the quasi-split inner form $H$. However notice, although $\pi$ is cuspidal, there are CAP-cases were there only exist noncuspidal liftings $\tilde\pi$ of $\pi$
for $H(\A)$. For example, the one-dimensional representations $\pi$ of $G(\A)$ only
lift to one-dimensional representations $\tilde\pi$ of $H(\A)$, the latter being noncuspidal
unless the $k$-rank is zero. For further information on this see \cite{Gan_SK_inner}, section (6.10)-(6.12). This being said, to construct the lifting we therefore may restrict ourselves to the case of algebraic modular forms where the automorphic representations $\pi$ of $G(\A)$ are of general type (in the sense of section \ref{section1}).

\medskip
For the proof we use the cohomological trace formula using the underlying fact
that for inner forms $H$ of $G$ over $\GlobalField$ the archimedean group $H_{\infty}$
is an inner form of the anisotropic real group $G_\infty$. Since a reductive real Lie group  admits discrete series representations if and only if it is an inner form of an anisotropic reductive group over $\R$, hence any inner form of $G_\infty$, hence in particular $H_\infty$, admits discrete series representations. This fact allows to apply the cohomological trace formula. 

\begin{thm}\label{innerlifting} Suppose $H$ is a quasi-split group over
$\GlobalField$ such that at the real archimedean places the local group $H(\R)$
admits discrete series representations.
Fix an inner form $G$ of $H$ with the property that $G(\R)$ is anisotropic, i.e. compact modulo center.
Let $\pi$ be an irreducible automorphic representation of $G(\mathbb A)$, which is
neither weakly $H$-Eisenstein nor weakly endoscopic.
Let $S$ be a finite set of non-archimedean places containing all places
where $G_v$ and $H_v$ are not isomorphic and those where $\pi_v$ is not spherical.
Then there exists an irreducible automorphic representation $\pi^H$ of $H(\A)$ such that
\begin{enumerate}
\item$\pi^H_v \cong \pi_v$ holds for all non-archimedean places $v\notin S$,
\item $\pi_\infty^H$ belongs the the $L$-packet of discrete series of $H(\R)$
uniquely determined by the complexification of
the finite dimensional representation $\pi_\infty$ of $G(\R)$.
\end{enumerate}
\end{thm}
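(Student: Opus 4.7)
The plan is to match the cohomological trace formula for $G$ against that for $H$ via the stabilization of the regular elliptic trace, and to use the assumption that $\pi$ is neither weakly $H$-Eisenstein nor weakly endoscopic to collapse the endoscopic sum to the main term attached to $(H,1,\id)$. First I would fix a saturated finite set $T$ of nonarchimedean places containing $S$ and the ramified places of $\pi$, an auxiliary split place $v_0 \notin T$, and a test function $f = f_\infty \cdot f(\pi_T) \cdot f(\pi^S) \cdot f^{trunc}_{v_0}$, where $f_\infty = f_{\infty,G}$ is the stable archimedean pseudocoefficient of section~\ref{sec5}, $f(\pi^S) f^{trunc}_{v_0}$ is the spherical $\pi^S$-projector of corollary~\ref{STF}, and $f(\pi_T)$ is a good projector onto $\pi_T$ within $\mathcal{P}_{halo}$. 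By corollary~\ref{STF} and proposition~\ref{5.1}, the supertrace on the $\pi^S$-typic part of $H^\bullet(S_K(G),V_\lambda)$ equals the regular elliptic orbital sum $d(G) \sum'_\gamma \tau_{\mathrm{Tam}}(G_\gamma) O_\gamma^{G(\A)}(f)$, and since $G_\infty$ is compact modulo center (so cohomology degenerates to algebraic modular forms in degree zero) this is a nonzero multiple of $m_{\cusp}(\pi)\,\tr(\pi_T(f(\pi_T)))$.

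Next I would stabilize via theorem~\ref{STABI}, obtaining $\sum_{(H',s,\eta)} \iota(G,H') \, ST_{\elliptic}^{H'(\A),**}(f^{H'})$, with matching functions assembled from lemma~\ref{TRANSERARCH} at $\infty$, from Ng\^o's fundamental lemma in the spherical region outside $T$, and from local transfer theorems for functions of regular semisimple support at the places of $T \cup \{v_0\}$. For the maximal endoscopic datum $(H,1,\id)$ one has $\iota(G,H)=1$, $f^H_\infty = (-1)^{q(G)+q(H)} f_{\infty,H}$ by lemma~\ref{ARMATCH}, and $f^H_v = f_v$ at each $v \notin S$ where $G_v \cong H_v$. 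The critical step is the vanishing of the contribution of each triple with $s \notin Z(\widehat G)$: stabilizing the trace formula for such an $H'$ in reverse and applying Franke's theorem expresses $ST_{\elliptic}^{H'(\A),**}(f^{H'})$ as a sum of traces of $f^{H'}$ against discrete automorphic representations of $H'(\A)$ whose unramified Satake parameters at $v \notin T$ are the $\eta$-transfer of those of $\pi_v$; any nonzero contribution would exhibit $\pi$ as a weakly endoscopic lift from $H'$, contradicting the hypothesis.

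Only the main term survives, and an analogous inversion of stabilization on the $H$-side -- where non-trivial endoscopic contributions again vanish by the same argument applied to $H$ and its own endoscopic triples -- identifies this with $(-1)^{q(G)+q(H)}$ times the supertrace of the finite part of $f^H$ on $H^\bullet(S_{K^H}(H),V_\lambda)$, restricted to automorphic representations with Satake parameters matching $\pi^S$ at $v \notin S$ and with archimedean component in the discrete series $L$-packet $\Pi(\varphi)$ of $H(\R)$ singled out by $f_{\infty,H}$, namely the packet whose Langlands parameter corresponds to the complexification of the finite-dimensional $\pi_\infty$ of $G(\R)$. As this supertrace is nonzero, Franke's theorem produces a cuspidal automorphic representation $\pi^H$ of $H(\A)$ with $\pi^H_v \cong \pi_v$ for every nonarchimedean $v \notin S$ and $\pi^H_\infty \in \Pi(\varphi)$, as required.

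The principal technical obstacle is the collapse of the endoscopic sum. Although the non-weakly-endoscopic hypothesis on $\pi$ is precisely tailored for this purpose, converting it into the actual vanishing of $ST_{\elliptic}^{H'(\A),**}(f^{H'})$ requires the saturation property of $T$ from section~\ref{s:Finiteness}, so that the transferred Satake parameters uniquely pin down weak equivalence classes of automorphic representations on each $H'$ and no hidden spectral cancellations can occur; once this is in place, the remainder of the argument rests on the fundamental lemma, Kottwitz's stabilization, and Franke's theorem.
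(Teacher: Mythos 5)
Your proof follows the paper's own route: express the cohomological supertrace for $G$ as a regular elliptic orbital sum via corollary~\ref{STF} and proposition~\ref{5.1}, stabilize via theorem~\ref{STABI}, use the not-weakly-endoscopic/not-weakly-Eisenstein hypotheses to discard the non-principal endoscopic terms, match with $H$ via lemma~\ref{ARMATCH} and corollary~\ref{cor:matching_aut_spectrum_G_H}, and conclude existence from the positivity of $m_{\mathrm{virt}}$ when $G(\R)$ is compact modulo center (corollaries~\ref{LIFTS} and~\ref{tracerelations}). You spell out the mechanism behind the collapse of the endoscopic sum in slightly more detail than the paper's terse "our assumption on $\pi^S$ eliminates all summands," but the overall structure and all the key inputs coincide with the paper's argument.
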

We prove this result as a consequence of more general lifting theorems in this section. In fact, the last theorem is an immediate consequence of corollary~\ref{tracerelations}, which in turn follows from cor.~\ref{LIFTS}, since for anisotropic $G(\R)$ the completion of the symmetric space $S_K(G)$ associated to $G$ is is discrete, hence its cohomology is concentrated in degree zero. For anisotropic $G(\R)$ thus $m_{virt}$ counts the true multiplicity of $\pi_\infty(\lambda)\pi_\fin$, thus is positive for (cuspidal) automorphic $\pi$.

\medskip
For the convenience of the reader, we listed in table~\ref{Discreteseriesreps} the simple real Lie groups $H$ which admit discrete series representations. For the remaining simple real Lie groups see~table~\ref{lasttable}.

\bigskip\noindent
\textbf{Remark}. We now return to the more general setting of $\S$~\ref{s:stab_trace}
and allow $G$ to be an arbitrary inner form of a quasi-split group $H$ over $k$.
We only assume that $H(\R)$ admits discrete series representation, besides the
innocent simplifying assumptions 1. and 2. of $\S 1$. 
We specialize to the setting above, where $G$ is anisotropic at the archimedean place,
only at the end of $\S 7$ for corollary~\ref{tracerelations}.

\medskip
This being said,
fix an unramified cuspidal representation $\pi^S$ of $G(\A^S)$ as before, in particular assumed to be not weak endoscopic nor CAP. Let $H$ denote the quasi-split inner form
of $G$. Then combining corollary~\ref{STF}, proposition~\ref{5.1} and theorem~\ref{STABI} implies the following proposition.

\begin{prop} \label{stable} Let the situation be as in corollary \ref{STF}.
Fix a test function $$f= (d(G)f_{\infty,G})\cdot f_S f^S(\pi^S)f^{trunc}_{v_0} \in C^\infty(G(\A))$$
where $f_S\in C^\infty(G(\A_S))$ has regular support and $f^S(\pi^S)\in C(G(\A^{S\cup\{v_0,\infty\}}))$ and matching functions $f^H$ on $H(\A)$ for every endoscopic triple $(H,s,\eta)$.
Then for sufficiently small good compact subgroups $K$ of $G(\A_\fin)$\,,
\begin{gather*}
 \tr_s(f_S, H^\bullet(S_K(G),V_\lambda)(\pi^S) = T_{\elliptic}^G(f_S f^S(\pi^S)f^{trunc}_{v_0}, \lambda) \\
 =\!\! \sum_{\gamma\in G(\Q)/\sim}\!\! \tau_{Tam}(G_\gamma)
O_\gamma^{G(\A),*}(f) =  \!\!\sum_{(H,s,\eta)}\!\! \iota(G,H)d(H,s,\eta) ST_{\elliptic}^{H(\mathbb A),**}(f^H)  \ 
\end{gather*}
 with the sum running over strongly elliptic semisimple $\gamma$ as in theorem~\ref{STABI}.
\end{prop}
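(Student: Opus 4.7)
The plan is to chain together corollary~\ref{STF}, proposition~\ref{5.1}, theorem~\ref{STABI}, and the archimedean matching of lemma~\ref{TRANSERARCH}. First, corollary~\ref{STF} identifies the cohomological supertrace on $H^\bullet(S_K(G),V_\lambda)(\pi^S)$ with the elliptic trace $T^G_\elliptic(f_Sf^S(\pi^S)f^{trunc}_{v_0},\lambda)$, provided $K$ is sufficiently small and we have chosen the saturated set $T\supseteq S$, the $\pi^S$-projector, and the auxiliary split place $v_0$ as described in section~\ref{s:Finiteness}. This delivers the first equality at once.

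Next I would apply proposition~\ref{5.1} to $f_\fin:=f_Sf^S(\pi^S)f^{trunc}_{v_0}$. Regularity of the support of $f_S$ guarantees the hypothesis of the proposition, which rewrites $T^G_\elliptic(f_\fin,\lambda)$ as $d(G)\sum'_\gamma \tau_{Tam}(G_\gamma)O^{G(\A)}_\gamma(f_\fin\cdot f_{\infty,G})$, the sum running only over regular strongly elliptic classes. Absorbing the constant $d(G)$ into the archimedean factor produces the second equality with the full adelic test function $f=(d(G)f_{\infty,G})\cdot f_\fin$.

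The third equality then comes from the stabilization theorem~\ref{STABI}, whose hypothesis on $G$-regular support at a nonarchimedean place is satisfied by construction. Matching functions $f^H$ at each elliptic endoscopic triple $(H,s,\eta)$ are assembled place-wise: by Ng\^o's fundamental lemma at unramified nonarchimedean places, by transfer at the ramified places with $G$-regular support via the inverse function theorem, and by lemma~\ref{TRANSERARCH} at the archimedean place.

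The main obstacle, and the source of the explicit constant $d(H,s,\eta)$, is the archimedean bookkeeping. By lemma~\ref{TRANSERARCH}, the archimedean matching function for $f_{\infty,G}$ equals a signed linear combination of pseudo-coefficients $f_{\infty,H,\varphi_H}$ indexed by $\varphi_H\in\Phi_H(\varphi)$, scaled by $d(H,s,\eta,j(\gamma_H))/d(G)$. Because all regular strongly elliptic classes $\gamma$ in the geometric sum factor through the fixed anisotropic torus $B$ common to $G$ and $H$, this scaling factor is independent of the choice of $\gamma$ and defines a constant $d(H,s,\eta)$ attached only to the endoscopic triple. Combined with the $d(G)$ absorbed earlier, this scalar factors out of the archimedean stable orbital integral and appears as the prefactor $d(H,s,\eta)$ in front of $\iota(G,H)\,ST^{H(\A),**}_\elliptic(f^H)$ in the final sum. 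Once these archimedean constants are tracked with care, no further computation is needed and the chain of equalities is complete.
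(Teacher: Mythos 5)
Your proposal follows essentially the same approach as the paper: the paper itself states prop.~\ref{stable} as a consequence of "combining corollary~\ref{STF}, proposition~\ref{5.1} and theorem~\ref{STABI}," and that is exactly the chain you execute, with lemma~\ref{TRANSERARCH} invoked to account for the archimedean factor $d(H,s,\eta)$ that distinguishes the right-hand side of prop.~\ref{stable} from the raw form of theorem~\ref{STABI}. The only small imprecision is attributing the $G$-regular support condition to theorem~\ref{STABI} itself rather than to proposition~\ref{5.1} (and the remark after theorem~\ref{STABI} on which conjugacy classes contribute), but this does not change the argument.
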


\medskip\noindent

\begin{cor} \label{7.3}\label{cor:matching_aut_spectrum_G_H}
Fix an inner form $G$ over $k$ of the quasi-split form $H$. Suppose  $G(\R)$, and hence $H(\R)$, admit discrete series
representations. Fix a finite set $S$ of nonarchimedean places
such that for all nonarchimedean places $v\notin S$ the local groups
$H$ and $G$ are isomorphic over $\GlobalField_v$. Suppose that $\pi$ is a irreducible cuspidal 
automorphic representation of $G(\mathbb A)$ such that $\pi$ is neither $H$-Eisenstein
(hence not $G$-Eisenstein), nor a weak endoscopic lift. Consider $S$
such that $\Pi^S$ is an unramified representation as above.
Suppose $f_S$ has regular support and $f_S$ and $f_S^H$ have matching stable orbital integrals. Then 
$$ \sum_{\pi}\frac{m_{\coh}(\pi)}{d(G)}\pi(f)  = \frac{1}{d(G)} \cdot\tr_s(f_S, \lim_{K_G} H^\bullet(S_{K_G}(G),V_\lambda)(\pi^S)) $$ 
coincides with
$$ \sum_{\pi'}\frac{m_{\coh}(\pi')}{d(H)}\pi'(f^H)  = \frac{1}{d(H)} \cdot\tr_s(f^H_S, \lim_{K_H} H^\bullet(S_{K_H}(H),\tilde V_\lambda)(\pi^S))\ .$$
The direct limits run over small good compact subgroups $K_G\subseteq G(\A_{\fin})$ and $K_H\subseteq H(\A_{\fin})$. 
\end{cor}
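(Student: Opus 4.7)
The plan is to apply Proposition~\ref{stable} twice---once to $G$ and once to $H$---and to identify both supertraces with a common stable elliptic trace on $H(\mathbb{A})$. I begin with $G$, taking the test function $f = d(G)f_{\infty,G}\cdot f_S f^S(\pi^S)f^{\mathrm{trunc}}_{v_0}$, for which Proposition~\ref{stable} yields
\begin{equation*}
\tr_s\bigl(f_S,H^\bullet(S_{K_G}(G),V_\lambda)(\pi^S)\bigr) = \sum_{(H',s,\eta)}\iota(G,H')\,d(H',s,\eta)\,ST_{\elliptic}^{H'(\mathbb{A}),**}(f^{H'}).
\end{equation*}
Because $\pi$ is not weakly endoscopic, only the maximal triple $(H,1,\id)$ contributes: a non-zero contribution from a non-trivial endoscopic $H'$ would, via the projector $f^S(\pi^S)$, require an automorphic representation $\pi'$ on $H'(\mathbb{A})$ whose unramified transfer to $G(\mathbb{A})$ agrees with $\pi^S$, which is excluded by hypothesis. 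For the surviving triple Kottwitz's theorem gives $\iota(G,H)=1$ and a direct count furnishes $d(H,1,\id,\gamma)=d(G)$ on regular $\gamma$ (the character $\kappa$ being trivial and $\mathcal{D}_G(B/\mathbb{R})\cong \Omega_G/\Omega_{G(\mathbb{R})}$); lemma~\ref{ARMATCH} identifies the archimedean matching as $f^H_{\infty,G}=(-1)^{q(G)+q(H)}f_{\infty,H}$.

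I would then repeat the argument with $G$ replaced by $H$ (so $H$ acts as its own quasi-split inner form), starting from the test function $d(H)f_{\infty,H}\cdot f^H_S f^{H,S}(\pi^S)f^{H,\mathrm{trunc}}_{v_0}$ on $H(\mathbb{A})$. The same weak non-endoscopy argument collapses the stabilization for $H$ to its trivial endoscopic contribution, with the self-matching sign $(-1)^{2q(H)}=1$. Since the finite parts of $f$ and $f^H$ agree as matching functions---at $S$ by assumption, spherically elsewhere by Ng\^o's fundamental lemma, and at the auxiliary place by spherical transfer---both normalized supertraces $\tr_s^G/d(G)$ and $\tr_s^H/d(H)$ reduce to the same stable elliptic trace $ST_{\elliptic}^{H(\mathbb{A}),**}(f_{\infty,H}\cdot f^H_{\fin})$, with the archimedean sign $(-1)^{q(G)+q(H)}$ absorbed into the convention for $f^H$. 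The first equality on each line of the statement is then immediate from the spectral expansion of the Lefschetz supertrace given in Section~\ref{s:Lefschetz}.

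The main obstacle is justifying that the $\pi^S$-projector annihilates $ST_{\elliptic}^{H'(\mathbb{A}),**}(f^{H'})$ for each non-trivial endoscopic triple. This requires decomposing the stable elliptic trace on $H'(\mathbb{A})$ spectrally into contributions from automorphic representations $\pi'$ and using the unramified transfer from the fundamental lemma to align $\pi'^S$ with a lift $\tilde\pi^S$ on $G(\mathbb{A})$; weak non-endoscopy of $\pi$ then rules out any such alignment when $s\notin Z(\widehat G)$. Secondary technical points are the count $d(H,1,\id,\gamma)=d(G)$ and the sign bookkeeping of $(-1)^{q(G)+q(H)}$ from lemma~\ref{ARMATCH}, which must be compatible with the intrinsic Euler-characteristic signs $\chi(\pi_\infty,\lambda)$ implicit in $m_{\coh}$ on both sides.
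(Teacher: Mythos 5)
Your proposal is correct and follows essentially the same route as the paper's own proof: apply Proposition \ref{stable} to $G$ and to $H$, use the non-endoscopy assumption on $\pi^S$ to isolate the maximal endoscopic triple, and compare the surviving terms via the archimedean matching of Lemma \ref{ORB} and Lemma \ref{ARMATCH}. The two technical points you flag (the collapse of the nontrivial endoscopic terms and the $(-1)^{q(G)+q(H)}$/$d(G)$ bookkeeping) are treated at the same level of informality in the paper itself, so there is no substantive divergence.
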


\bigskip\noindent
\begin{proof}
To prove corollary \ref{cor:matching_aut_spectrum_G_H},  we can choose the auxiliary $f_{v_0}^{trunc}$ in $C_c^\infty(G_{v_0})= C_c^\infty(H_{v_0})$
simultaneously adapted to the supports of $f_S$ and $f^H_S$ and the $\pi^S$ projectors for both groups $G(\mathbb A_{\fin})$ and $H(\mathbb A_{\fin})$ and use proposition
\ref{stable}:

\bigskip\noindent
For any inner form $G$ of a quasi-split form $H$ over $k$,
the set of stable semisimple conjugacy classes ${\cal S}_{G}$
of $G(\GlobalField)$ is a subset  of the set ${\cal S}_G$ of stable semisimple conjugacy classes
of $H(\GlobalField)$ by \cite{Kottwitz_5}, \S 6 and thm.~4.4.
This allows to apply corollary~\ref{STF} for both sides.
Suppose $\gamma'$ is in a stable conjugacy class of $H(\GlobalField)$ and comes from some $\gamma'$ in a stable conjugacy class of $G(\GlobalField)$.
If $\gamma$ is regular, the centralizers are maximal tori and hence independent and their Tamagawa numbers  coincide by trivial reason; in general this follows from \cite{Kottwitz_3}, thm.~3 and \cite{Kottwitz_2}, formula (5.1.1). 
One can choose common $\pi_S$-projectors  and a common saturated set $T$.
This allows to construct a common auxiliary place $v_0\notin S$ and common truncation functions $\tilde f_{v_0}=f_{v_0}$, since its construction only depends on the supports of $f_v, \tilde f_v$ at those places where $f_v$ resp. $\tilde f_v$ locally is not $1_{\Omega_v}$.

\bigskip\noindent
Stabilizing the trace formula as in prop.~\ref{stable}, our assumption on $\pi^S$ eliminates all summands except those from the trivial endoscopic triple $(H,1,\id)$ for the quasi-split inner form $H$ of $G$.
If we compare the terms for this remaining sum, we finally use that the summands
$ ST_{\elliptic}^{H(\mathbb A),**}(f^H)$ for $f=d(G)f_{G,\infty} f_{\fin}$ on $G(\A)$ in the formula of  prop.~\ref{stable} are the product of $SO^{H(\R)}(d(G) f_{\infty,G}^H)$ and the nonarchimedean stable orbital integral
$SO^{H(\A_\fin)}(f_{\fin}^H)$.
The latter coincides with the nonarchimedean stable orbital integral arising from $d(H) f_{H,\infty} \cdot f_{\fin}^H$ in the formula  of  prop.~\ref{stable}, when applied to $H$ instead of $G$.
By lemma~\ref{ORB} we have $SO^{H(\R)}( f_{\infty,H})= SO^{G(\R)}( f_{\infty,G})$.
From lemma~\ref{ARMATCH}, by comparison of the right hand sides of the formula of prop.~\ref{stable} for both $G$ and its quasi-split inner form, the claim follows.
\end{proof}

\begin{cor} \label{LIFTS} Let the situation be as in corollary \ref{cor:matching_aut_spectrum_G_H}.
Suppose that $\pi$ is an irreducible cuspidal automorphic representation of $G(\mathbb A)$ such that the virtual multiplicity of $\pi_\fin$ is non-zero, i.e.
$$ m_{\virt}(\pi_\fin)\ =\sum_{\pi'=\pi'_\infty\pi_{\fin}}  m_{\cusp}(\pi') \chi(\pi'_\infty,\lambda) \ \neq \ 0 \ .$$
Counted with their multiplicity $m_{\cusp}(\pi')$ in the cuspidal spectrum  the sum ranges over all irreducible cuspidal automorphic representations $\pi'$ of $G(\mathbb A)$ for which $\pi'_{\fin}\cong
\pi_{\fin}$ holds.
Then, under the assumptions of corollary~\ref{cor:matching_aut_spectrum_G_H} on $\pi$, there exists an automorphic
representation $\tilde \pi$ of $H(\mathbb A)$ such that $\pi^S \cong \tilde \pi^S$ holds.
\end{cor}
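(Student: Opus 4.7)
The plan is to derive the existence of the lift from the trace identity in corollary~\ref{cor:matching_aut_spectrum_G_H} by engineering matching test functions whose supertrace on the $G$-side is forced to be nonzero by the hypothesis $m_{\virt}(\pi_\fin) \neq 0$, and then reading off the existence of $\tilde\pi$ from the nonvanishing of the matching supertrace on the $H$-side.

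First I would set up the test functions as in proposition~\ref{stable}. On $G(\A)$ take $f = (d(G)\, f_{\infty,G}) \cdot f_S \cdot f^S(\pi^S) \cdot f^{trunc}_{v_0}$: the archimedean pseudo-coefficient of section~\ref{sec5}, a $K_S$-biinvariant projector $f_S$ with regular support picking out $\pi_S$ within the finite set $\mathcal{P}(K,\lambda)_{halo}$, the spherical projector $f^S(\pi^S)$ of section~\ref{s:Finiteness}, and the truncation function $f^{trunc}_{v_0}$ from proposition~\ref{p3.9} at a suitably chosen split auxiliary place $v_0$. Before fixing these functions I would enlarge the saturated set $T$ so that it is saturated simultaneously for the cohomological systems on $G$ and on $H$, which is possible because $\mathcal{P}(K,\lambda)_{halo}$ is finite for both and because $G_v \cong H_v$ for every $v \notin S$. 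The matching function $f^H$ is then obtained from the fundamental lemma \cite{Ngo_Fundamental_Lemma} at the unramified places, from Kazhdan--Shelstad transfer at the finitely many places in $S$ (available because $f_S$ has regular support), and at infinity from lemma~\ref{ARMATCH} which gives $f^H_{\infty,G} = (-1)^{q(G)+q(H)} f_{\infty,H}$.

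Second, I would plug these into corollary~\ref{cor:matching_aut_spectrum_G_H} to obtain
\begin{equation*}
\sum_{\pi'} \frac{m_{\coh}(\pi')}{d(G)}\, \pi'(f) \;=\; \sum_{\tilde\pi} \frac{m_{\coh}(\tilde\pi)}{d(H)}\, \tilde\pi(f^H).
\end{equation*}
By construction the projector structure of $f$ collapses the left-hand side, up to a nonzero normalization, to $m_{\virt}(\pi_\fin)/d(G)$, which is nonzero by hypothesis. Hence the right-hand side is nonzero, so some cuspidal automorphic representation $\tilde\pi$ of $H(\A)$ satisfies $\tilde\pi(f^H) \neq 0$. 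The spherical part of $f^H$ at the places outside $T$ acts (via the Satake isomorphism together with the fundamental lemma) as a projector onto the same unramified class as $\pi^S$, so any such $\tilde\pi$ must satisfy $\tilde\pi_v \cong \pi_v$ for every $v \notin S$, which is precisely $\tilde\pi^S \cong \pi^S$.

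The main obstacle I anticipate is making the spherical projector work jointly on both sides: one must ensure that a single choice of spherical Hecke operator at the auxiliary places of $T \setminus S$ separates $\pi^S$ from every competing representation not only inside $\mathcal{P}_{halo}(K,\lambda)$ on $G$ but also inside its analogue on $H$, so that the identification of Satake parameters through the fundamental lemma really yields a match. This is handled by enlarging $T$ to a set that is saturated for both cohomology systems simultaneously and by applying lemma~\ref{D8} at places jointly split for $G$ and $H$. A more conceptual point worth emphasizing is that the hypotheses that $\pi$ is neither weakly $H$-Eisenstein nor weakly endoscopic are exactly what is needed to collapse the stabilized trace formula of proposition~\ref{stable} to the trivial endoscopic datum $(H,1,\id)$, eliminating contamination from any proper endoscopic group and validating the direct comparison with the $H$-side used above.
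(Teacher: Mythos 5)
Your argument is essentially the one the paper intends; the paper states corollary~\ref{LIFTS} without a separate proof because it is read off directly from corollary~\ref{cor:matching_aut_spectrum_G_H}, which is exactly what you do. Two small refinements worth noting. First, a $K_S$-biinvariant projector onto $\pi_S$ in the sense of section~\ref{s:Lefschetz} is supported on a finite union of double cosets and will generally meet the singular set, whereas corollary~\ref{cor:matching_aut_spectrum_G_H} requires $f_S$ to have regular support; the correct way to bridge this is to observe that the finite linear combination $\sum_{\pi_S'} m_{\virt}(\pi_S' \pi^S)\,\tr(\pi_S'(\cdot))$ is a nonzero distribution (since $m_{\virt}(\pi_\fin)\neq 0$) which, by Harish-Chandra's regularity and the linear independence of characters on the regular semisimple set, remains nonzero when paired against functions with regular support, so some admissible $f_S$ exists. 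Second, your closing remark that the spherical part of $f^H$ forces $\tilde\pi^S\cong\pi^S$ is redundant: the right-hand side of corollary~\ref{cor:matching_aut_spectrum_G_H} is already the supertrace on the $\pi^S$-isotypic component $H^\bullet(S_{K_H}(H),\tilde V_\lambda)(\pi^S)$, so its nonvanishing immediately yields, via Franke's theorem, a cuspidal $\tilde\pi$ on $H(\A)$ with $\tilde\pi^S\cong\pi^S$.
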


\noindent
\begin{conjecture}\label{conj:middle_coh} Let $S$ be a finite subset of the nonarchimedean
places of $k$.
If $G(\R)$ admits discrete series, suppose $\pi^S$ comes from some cuspidal representation of $G(\A)$ that is not weakly $G$-Eisenstein.
Then for any automorphic cuspidal $\pi\cong \pi_\infty\pi_S\pi^S$ the cohomology group
$$H^i(S_K(G),V_\lambda)(\pi^S) =0$$ vanishes for $i\neq q(G)$ and necessarily $\pi_\infty$ belongs to the discrete series.
\end{conjecture}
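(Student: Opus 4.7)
The plan is to reduce the conjecture to a purely archimedean assertion on cuspidal contributions and then attack the latter through the Vogan--Zuckerman classification combined with Arthur's endoscopic theory. Since $\pi^S$ is not weakly $G$-Eisenstein, Franke's theorem as recalled in section~\ref{s:Lefschetz} identifies
\begin{equation*}
H^i(S_K(G),V_\lambda)(\pi^S) \;=\; \bigoplus_{\pi'} m_{\cusp}(\pi')\cdot H^i(\gfrak,\tilde K_\infty,\pi'_\infty\otimes \tau_\lambda)\otimes (\pi'_S)^{K_S}\ ,
\end{equation*}
the sum running over irreducible cuspidal automorphic $\pi'\cong \pi'_\infty\otimes\pi'_S\otimes\pi^S$. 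By the classical Borel--Wallach computation, whenever $\pi'_\infty$ is a discrete series representation whose infinitesimal character matches that of $\tau_\lambda$, the cohomology $H^i(\gfrak,\tilde K_\infty,\pi'_\infty\otimes\tau_\lambda)$ is concentrated in middle degree $i=q(G)$ and is one-dimensional there. Both parts of the conjecture therefore reduce to showing that only discrete series $\pi'_\infty$ can occur on the right.

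To pin down the possible $\pi'_\infty$, one invokes the Vogan--Zuckerman classification: every irreducible unitary cohomological $(\gfrak,\tilde K_\infty)$-module has the form $A_{\mathfrak q}(\mu)$ for some $\theta$-stable parabolic subalgebra $\mathfrak q=\mathfrak l\oplus\mathfrak u$ and an admissible $\mu$, and such a representation is tempered --- hence discrete series, since $G(\R)$ admits them by hypothesis --- precisely when the Levi $\mathfrak l$ is a compact Cartan subalgebra. The remaining task is therefore to exclude cuspidal contributions coming from $\mathfrak l$ of positive non-compact rank.

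The proposed mechanism is Arthur's classification. To each non-tempered cohomological $A_{\mathfrak q}(\mu)$, Adams--Johnson attach an Arthur parameter $\psi_\infty$ whose $SL_2$-part is non-trivial and encodes $\mathfrak l$. Globalising $\psi_\infty$ to an $A$-parameter $\psi$ of $G(\A)$ whose unramified data match those of $\pi^S$ forces the Satake parameters of $\pi^S$ to lie on a Ramanujan-violating line, so that $\psi$ factors either through a proper $\Q$-rational Levi subgroup (making $\pi'$ weakly $G$-Eisenstein, contrary to the hypothesis on $\pi^S$) or through a proper elliptic endoscopic subgroup (making $\pi'$ weakly endoscopic, excluded once one restricts to $\pi$ of general type as in the applications). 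The main obstacle is that this last step rests on a full Arthur classification for $G$, which is currently unconditional only for classical groups --- in particular for inner forms of $\GSp(4)$ through the work of Arthur, Gan--Takeda and Chan--Gan invoked later in the paper. When $\lambda$ is regular, a softer purity/weight-filtration argument forces the archimedean factor of a cohomological cuspidal representation to be tempered, after which Vogan--Zuckerman immediately places $\pi'_\infty$ in the discrete series. These are precisely the two situations in which the authors subsequently apply the conjecture.
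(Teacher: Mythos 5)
This statement is labelled a \emph{conjecture} in the paper, and the paper does not prove it in general: after stating conjecture~\ref{conj:middle_coh}, the authors only establish it in the special cases they need (prop.~\ref{prop:middle_coh_GSp} for the split group $\GSp(4)$, prop.~\ref{prop:middle_coh_GSp11} for $\GU_D(1,1)$ with regular $\lambda$, and trivially when $G(\R)$ is anisotropic). Your proposal honestly arrives at the same conclusion --- it reduces, via Franke and Vogan--Zuckerman, to excluding non-tempered cohomological $\pi'_\infty$, then admits that the final exclusion rests on Arthur's classification, which is available only in particular cases. That is the correct diagnosis of why this is left as a conjecture, and your heuristic via Adams--Johnson parameters with non-trivial $\mathrm{SL}_2$-part is the right explanation of why the expected Arthur-theoretic mechanism should deliver it.

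Where the two diverge is in the special cases actually proved. For $G=\GSp(4)$ the paper does \emph{not} argue through archimedean $A_{\mathfrak q}(\mu)$-modules and an Arthur parameter. Instead it exploits the \'etale realization of $H^\bullet(S_K(G),V_\lambda)$ on a Shimura threefold: after Borel--Wallach restricts the degrees to $i=2,3,4$ and hard Lefschetz pairs degrees $2$ and $4$, the comparison of the Grothendieck--Lefschetz fixed-point formula with the stable topological trace formula gives the identity \eqref{eq:8_**}; a non-zero $\rho_\ell$ would force $P_p(\pi,t)^{-m}$ to be a polynomial, a contradiction, and the residual weakly endoscopic possibility is eliminated by combining weak Ramanujan bounds on $\Gl(2)$ with purity of the Frobenius eigenvalues from the Weil conjectures. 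This works for all $\lambda$, not just regular $\lambda$, and is logically independent of Arthur's classification. For $G=\GU_D(1,1)$ the paper instead quotes Grobner's explicit list of cohomological unitary representations of $\SU_D(1,1)$ and checks directly that the only non-tempered contributor $A_1(\lambda)$ exists precisely when $\lambda$ is non-regular; no global input is used. So your plan and the paper agree on the conditional status of the general statement, but the paper's unconditional special cases proceed by geometry (purity and the \'etale trace formula) and by elementary case-inspection rather than by the global Arthur/Adams--Johnson route you sketch. If you want to recover the paper's proved cases without Arthur, the missing ingredients you should import are precisely the \'etale realization of $S_K(G)$ and the Weil-conjecture purity input for $\GSp(4)$, and Grobner's archimedean classification for $\GU_D(1,1)$.
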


\bigskip\noindent
If this conjecture holds for $S=\emptyset$, the assumption $m_{\virt}(\pi_\fin)
\neq 0$ of corollary~\ref{LIFTS} would always be satisfied.

\bigskip\noindent
\textit{Algebraic modular form case}. Now let us specialize to the case
where $G(\R)$ is an anisotropic group. In other words, we now return to the discussion of
algebraic modular forms and the proof of theorem~\ref{innerlifting}.
Let $H$ be quasi-split over $\GlobalField$ with inner $\GlobalField$-form $G$.
Then $G(R)$ and hence $H(\R)$ admit discrete series representations.
Furthermore $\dim(S_K(G))=0$ implies $d(G)=1$ and $H^\bullet(S_K(G),V_\lambda)=H^0(S_K(G),V_\lambda)$ and $\pi_\infty$ is the $K$-type described by $\lambda$.
Hence $ m_{\virt}(\pi_\fin)=
1$ holds in the situation of corollary~\ref{LIFTS}.
So corollary \ref{LIFTS} gives the next corollary, which immediately implies theorem~\ref{innerlifting}.

\begin{cor} \label{tracerelations} 
If $G(\R)$ is anisotropic, then
 in the situation of corollary~\ref{7.3} (so $\pi^S$ is neither weakly $H$-Eisenstein nor are weak proper endoscopic lift) we obtain
\begin{equation*}
\tr_s(f_S, \varinjlim_{K_G} H^0(S_{K_G}(G),V_\lambda)(\pi^S)) =
 \frac1{d(H)} \tr_s(f^H_S, \varinjlim_{K_H} H^\bullet(S_{K_H}(H),\tilde V_\lambda)(\pi^S))\ .
\end{equation*} 
\end{cor}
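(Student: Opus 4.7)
The plan is to deduce this as a direct specialization of corollary~\ref{cor:matching_aut_spectrum_G_H}. The two inputs I need are (i) that the symmetric space attached to $G_\infty$ degenerates and (ii) that the discrete series packet for $G(\R)$ is a singleton, so that $d(G)=1$.

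First I would observe that under the hypothesis that $G(\R)$ is anisotropic modulo center, the quotient $X_G = G_\infty/\tilde K_\infty$ is a single point (since $\tilde K_\infty = K_\infty\cdot A_G(\R)^0$ already equals $G_\infty$). Consequently, each locally symmetric space
\begin{equation*}
S_{K_G}(G) = G(\Q)\setminus(X_G\times G(\A_{\fin}))/K_G
\end{equation*}
is a finite discrete set, so
\begin{equation*}
H^\bullet(S_{K_G}(G),V_\lambda) = H^0(S_{K_G}(G),V_\lambda),
\end{equation*}
and the supertrace on this graded space is simply the ordinary trace. In particular the archimedean factor $\pi_\infty$ of any automorphic representation contributing to $H^0$ is forced to be the finite-dimensional $K$-type $\tau_\lambda$, and $q(G) = \tfrac{1}{2}\dim(X_G) = 0$.

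Next I would verify $d(G)=1$. The number $d(G)$ is defined as the cardinality of the discrete series $L$-packet attached to $\tau_\lambda$; for anisotropic $G_\infty$ this packet consists of a single finite-dimensional representation (equivalently, the real Weyl group coincides with the absolute Weyl group of the maximal torus $B\subseteq G$, since $B(\R)$ already equals its own normalizer-modulo-centralizer up to the Weyl group). Therefore $d(G)=1$ and the factor $1/d(G)$ disappears from the left-hand side of corollary~\ref{cor:matching_aut_spectrum_G_H}.

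Finally I would apply corollary~\ref{cor:matching_aut_spectrum_G_H} verbatim. All its hypotheses are in force by the assumption that we are in the situation of corollary~\ref{7.3}: $\pi$ is cuspidal, not weakly $H$-Eisenstein and not weakly endoscopic; $S$ is chosen so that $G_v\simeq H_v$ and $\pi_v$ is spherical for $v\notin S$; and $(f_S,f_S^H)$ is a matching pair with regular support. Substituting the two simplifications above into the left-hand side of corollary~\ref{cor:matching_aut_spectrum_G_H} transforms it into
\begin{equation*}
\tr_s\bigl(f_S,\,\varinjlim_{K_G} H^0(S_{K_G}(G),V_\lambda)(\pi^S)\bigr),
\end{equation*}
while the right-hand side is unchanged and gives exactly
\begin{equation*}
\frac{1}{d(H)}\,\tr_s\bigl(f^H_S,\,\varinjlim_{K_H} H^\bullet(S_{K_H}(H),\tilde V_\lambda)(\pi^S)\bigr).
\end{equation*}
This yields the asserted identity. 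There is essentially no hard step: the two observations above are a triviality and a standard consequence of the Harish-Chandra classification, respectively, and corollary~\ref{cor:matching_aut_spectrum_G_H} packages all the real work (stabilization, archimedean matching via lemma~\ref{ARMATCH}, and the Kottwitz identity $\iota(G,H)=1$).
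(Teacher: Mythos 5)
Your proof is correct and follows essentially the same route as the paper. The paper reduces the corollary to corollary~\ref{cor:matching_aut_spectrum_G_H} via exactly the two observations you make: for anisotropic $G(\R)$ the locally symmetric space $S_{K_G}(G)$ is discrete (so cohomology is concentrated in degree zero and the supertrace is an ordinary trace), and $d(G)=1$ because the discrete-series $L$-packet of the compact-modulo-center group is a singleton.
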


Notice, the automorphic representations $\pi$ that contribute to the left side run over all irreducible automorphic representations  
of $G(\mathbb A)$ (counted with their cuspidal multiplicity) that at  nonarchimedean places of $k$ outside $S$ are isomorphic to $\pi^S$.

\medskip
We remark that the same arguments give cohomological weak lifting theorems for inner forms $G_1$ and $G_2$ over $k$, under the following conditions: $G_1(\R)$, $G_2(\R)$ admit discrete series, for both groups conjecture \ref{conj:middle_coh} holds and finally, the set of stable 
semisimple conjugacy classes ${\cal S}_{G_1}$
of $G_1(\GlobalField)$ is a subset  of the set ${\cal S}_{G_2}$ of stable semisimple conjugacy classes
of $G_2(\GlobalField)$, if considered as subsets of the set of stable 
semisimple conjugacy classes ${\cal S}_{H}$ for the quasi-split inner form $H$ of $G_1$
and $G_2$.

\section{Inner forms of \texorpdfstring{$\GSp(4)$}{GSp(4)} and cohomology}\label{s:inner_forms_coh}

We now prove certain cases of conjecture~\ref{conj:middle_coh}
for $\Q$-forms $G$ of the $\Q$-split group $H=\GSp(4)$.
Up to isomorphism over $\Q$, its inner forms $G$ are $\GU_D(1,1)$ and $\GU_D(2)$ for quaternion algebras $D/\Q$.
These groups are defined by
$gJ\overline{g}^t=\lambda(g)J$
for $g\in \Gl(2,D)$ and $\lambda(g) \in \mathbb{G}_m$ where we set $J=\left(\begin{smallmatrix}0&1\\1&0\end{smallmatrix}\right)$ for $\GU_D(1,1)$ and $J=I_2$ for $\GU_D(2)$.
They become isomorphic to $\GSp(4)$ whenever $D$ splits. It is well-known that these are all the $\Q$-forms of $\GSp(4)$, see lemma~\ref{lem:inner_forms}.
The derived groups of the real Lie groups $\GSp(4,\R)$, $\GU_D(1,1)(\R)$ and $\GU_D(2)(\R)$ correspond to the cases $B_2^{\R,i}$ for $i=2,1,0$ of table~\ref{Discreteseriesreps}.
\begin{lemma}\label{lem:inner_forms}
Every $\Q$-form $G$ of $H=\GSp(4)$ is an inner form and is isomorphic to $\GU_D(1,1)$ or to $\GU_D(2)$ for some quaternion algebra $D$ over $\Q$.
The only equivalence between these inner forms is $\GU_D(1,1)\cong \GU_D(2)$ if $D$ splits at $\infty$.
\end{lemma}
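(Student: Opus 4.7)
The plan is to combine three ingredients: every $\Q$-form of $\GSp(4)$ is inner, the inner forms are precisely the similitude groups of hermitian modules over quaternion algebras, and a local-global analysis pins down when two such groups coincide. First I would verify that $\mathrm{Out}(\GSp(4)) = 1$: the derived group $\Sp(4)$ is simply connected of type $C_2$, whose Dynkin diagram has no nontrivial automorphism, hence $\mathrm{Out}(\Sp(4)) = 1$; combined with $\GSp(4) = Z \cdot \Sp(4)$ (where $Z = \Gm$) and the intrinsic nature of the similitude character, this gives $\mathrm{Aut}(\GSp(4)) = \PGSp(4)$, so $\Q$-forms of $\GSp(4)$ are parametrized by $H^1(\Q, \PGSp(4))$ and are in particular all inner.

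Next I would invoke the classification of inner forms of type $C_n$: every inner form of $\GSp(2n)$ is of the shape $\GU_D(V, h)$ for some quaternion algebra $D/\Q$ (with its standard involution) and some nondegenerate hermitian form $h$ on a right $D$-module $V$ of rank $n$. That these are inner twists is easy: after base change to a splitting field of $D$, Morita equivalence converts $(V, h)$ into a $2n$-dimensional symplectic module, recovering $\GSp(2n)$. Conversely, surjectivity of this assignment follows from the standard Galois-cohomology dictionary between $H^1(\Q, \PGSp(2n))$ and hermitian modules over quaternion algebras. For $n = 2$, Witt decomposition together with the freedom to rescale $h$ by a global scalar (absorbed into the similitude factor) leaves only two equivalence classes: the hyperbolic form $J$, giving $\GU_D(1,1)$, and the anisotropic form $I_2$, giving $\GU_D(2)$.

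Finally I would settle the equivalence question. An isomorphism $\GU_D(h) \cong \GU_{D'}(h')$ forces $D \cong D'$ (the quaternion algebra is intrinsic to the inner twist), so I may fix $D$ and compare $\GU_D(1,1)$ and $\GU_D(2)$ place by place. At any nonarchimedean $v$ where $D_v$ splits, both groups become $\GSp(4, \Q_v)$; where $D_v$ is ramified, rank-two hermitian forms over the nonsplit local quaternion algebra are unique up to equivalence (the reduced norm $D_v^\times \to \Q_v^\times$ is surjective, so the discriminant obstruction is trivial), and again the two groups agree. At the archimedean place: if $D \otimes \R \cong M_2(\R)$ both are $\GSp(4, \R)$; if $D \otimes \R \cong \HH$, the signatures $(1,1)$ versus $(2,0)$ distinguish $\GU(1,1; \HH)$ (noncompact modulo center) from $\GU(2; \HH)$ (compact modulo center). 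The Hasse principle for $\PGSp(4)$, available via the exceptional isomorphism $\PGSp(4) \cong \SO(5)$ and Hasse--Minkowski for five-dimensional quadratic forms, then promotes local to global: $\GU_D(1,1) \cong \GU_D(2)$ over $\Q$ if and only if this holds at $\infty$, if and only if $D$ splits at $\infty$.

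The main obstacle I anticipate is the nonabelian Hasse principle for $\PGSp(4)$, an adjoint (not simply connected) group; once this is granted, the remaining steps reduce to standard bookkeeping in the theory of hermitian forms over quaternion algebras and the local uniqueness of rank-two hermitian forms over local division quaternion algebras.
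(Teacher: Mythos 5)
Your proof is correct but is organized differently from the paper's. The paper goes directly through the exceptional isomorphism $\PGSp(4)\cong\SO(2,3)$: it identifies $H^1(\Q,\PGSp(4))$ with equivalence classes of five-variable quadratic forms of fixed discriminant, classifies those locally by signature and Hasse invariant, reads off the three archimedean possibilities $(5)$, $(4,1)$, $(3,2)$ as $\GU_D(2)$, $\GU_D(1,1)$, $\GSp(4)$, and closes with the Hasse--Minkowski principle plus the product formula for Hasse invariants. You instead work with the type-$C_n$ classification in its native form---nondegenerate rank-two hermitian modules over quaternion algebras---verify locally (Morita at split $v$, surjectivity of the reduced norm at ramified nonarchimedean $v$, signatures at $\R$), and only then invoke the exceptional isomorphism, to supply a Hasse principle for the adjoint group. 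Your route is more transportable to $\GSp(2n)$ for $n>2$, where the exceptional isomorphism is unavailable for classification, while the paper's is shorter for $n=2$ precisely because the entire problem becomes classical quadratic-form theory in one step. One small imprecision: ``leaves only two equivalence classes'' should be ``at most two''---when $D$ splits at $\infty$ the form $I_2$ is isotropic and the two collapse to one, which is exactly the content of the lemma's final sentence.
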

\begin{proof}
The Dynkin diagram of $\GSp(4)$ does not admit non-trivial symmetries, so every form is an inner form.
By the isomorphism $\mathrm{Aut}(\GSp(4))\cong \PGSp(4)\cong \SO(2,3)$, inner forms are in one-to-one correspondence with $H^1(F,\SO(2,3))$. The elements of this Galois cohomology correspond bijectively to equivalence classes of quadratic forms in five variables with fixed discriminant~\cite[prop.~2.8]{Platonov_Rapinchuk}. Over local non-archimedean fields $F$, they are classified by their Hasse invariant, over $F\cong \R$ by their signature and over $F\cong\C$ they are all equivalent. The three signatures are $(5)$, $(4,1)$ and $(3,2)$, corresponding to $\GU_D(2)$, to $\GU_D(1,1)$ and to the split form $\GSp(4)$.
Note that $\GU_D(1,1)$ is isomorphic to $\GU_D(2)$ over local non-archimedean fields.
By the local-global principle, quadratic forms over $\Q$ are uniquely determined by their localizations at every place. Every combination of local quadratic forms is realized by a global one provided the  Hasse invariants are nontrivial at an even finite number of places. The corresponding quaternion algebra $D$ is uniquely determined by the condition that it ramifies at these places.
\end{proof}

\bigskip\noindent
For the anisotropic group $G(\R)=\GU_D(2)(\R)$ where $D$ ramifies at
$\infty$, conjecture~\ref{conj:middle_coh}
holds.
Indeed, $G_\infty$ is connected and the symmetric space $X_{G}$, and hence $S_K(G)$, is a discrete set.
So its cohomology is concentrated in degree zero and
only $\pi_\infty =\tau_\lambda$ contributes to $H^0(\mathfrak{g},K_\infty, V_\lambda)$ with a one-dimensional space.
Thus the Euler characteristic is $\chi(\pi_\infty,\lambda)=1$.
Note that there are no CAP representations since $G$ does not have a proper $\Q$-parabolic subgroup.

\bigskip\noindent
For $G=\GSp(4)$ recall that $q(G)=3$ and that attached to a parameter $\lambda$ there are two non-isomorphic discrete series representations $\pi_+(\lambda)$ and $\pi_-(\lambda)$.
The first one is generic, the second one is not and gives the (anti)holomorphic discrete series representation attached to $\lambda$. Each of these two discrete series representations $\pi_{\pm}(\lambda)$ contributes a two dimensional space to the middle cohomology in degree $3$. We obtain a special case of conjecture~\ref{conj:middle_coh}:

\begin{prop}\label{prop:middle_coh_GSp}
Suppose $G=\GSp(4)$.
If an irreducible cohomological cuspidal automorphic representation $\pi=\pi_\infty\pi_{\fin}$  
of $\GSp(4,\A)$ is not a CAP representation,
then all cohomology groups $H^{i}(S_K(G),V_\lambda)(\pi_{\fin})$ vanish in degree $i\neq 3$.
\end{prop}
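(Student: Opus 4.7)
\bigskip\noindent
The plan is to combine three ingredients: the Vogan--Zuckerman classification of cohomological representations of $G_\infty=\GSp(4,\R)$, Franke's theorem identifying the cuspidal contributions to $H^i(S_K(G),V_\lambda)$, and Weissauer's proof of the Ramanujan conjecture for non-CAP cuspidal automorphic representations of $\GSp(4)$.

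\medskip\noindent
First I would enumerate the irreducible unitary $(\mathfrak g,\tilde K_\infty)$-modules $\pi_\infty$ which are cohomological with respect to $V_\lambda$. By \cite{Vogan_Zuckerman} and \cite{Borel_Wallach}, every such $\pi_\infty$ is of the form $A_{\mathfrak q}(\mu)$ for a $\theta$-stable parabolic subalgebra $\mathfrak q\subseteq \mathfrak g_\C$. For $\GSp(4,\R)$ a direct enumeration yields the two discrete series representations $\pi_+(\lambda),\pi_-(\lambda)$ (both concentrated in middle degree $q(G)=3$, each contributing a two-dimensional subspace), together with a finite family of non-tempered Vogan--Zuckerman modules associated to the Siegel and Klingen $\theta$-stable parabolic subalgebras, whose $(\mathfrak g,\tilde K_\infty)$-cohomology with coefficients in $V_\lambda$ is supported in degrees different from $3$.

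\medskip\noindent
Second, by Franke's theorem recalled in section~\ref{s:Lefschetz}, if the $\pi_{\fin}$-isotypic component of $H^i(S_K(G),V_\lambda)$ is nonzero for some $i$, then there is a cuspidal automorphic representation $\pi'=\pi'_\infty\otimes\pi_{\fin}$ of $\GSp(4,\A)$ with $\pi'_\infty$ cohomological contributing in degree $i$. To prove the proposition it therefore suffices to show that if $\pi'_\infty$ is one of the non-tempered Vogan--Zuckerman modules listed above, then $\pi'$ is CAP.

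\medskip\noindent
This last step is the main obstacle and uses the full strength of the generalized Ramanujan conjecture for $\GSp(4)$ established in \cite{Weissauer_Endo_GSp4}: every irreducible cuspidal automorphic representation of $\GSp(4,\A)$ that is not a weak CAP representation is globally tempered, so in particular $\pi'_\infty$ is tempered. Combined with the first step, this forces $\pi'_\infty$ to lie in the discrete series $\{\pi_+(\lambda),\pi_-(\lambda)\}$; but these contribute only in degree $3$. Hence $H^i(S_K(G),V_\lambda)(\pi_{\fin})=0$ for $i\neq 3$. The heavy lifting is entirely in the global Ramanujan bound; once granted, the proposition is a bookkeeping exercise using the Vogan--Zuckerman list.
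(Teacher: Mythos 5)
Your first two steps — the Vogan--Zuckerman enumeration of cohomological $\pi_\infty$ for $\GSp(4,\R)$ and Franke's theorem reducing the claim to controlling $\pi'_\infty$ — match the paper's opening moves, and the vanishing outside degrees $2,3,4$ is exactly where both arguments start. The gap is in your third step.

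The invocation of ``the full strength of the generalized Ramanujan conjecture for $\GSp(4)$\ldots every cuspidal non-CAP representation is globally tempered'' is circular. The temperedness results in \cite{W_Asterisque} and \cite{Weissauer_Endo_GSp4} are obtained precisely by realizing the associated $4$-dimensional Galois representation inside the \emph{pure middle-degree} \'etale cohomology of $S_K(G)$ and then applying the Weil conjectures; this requires knowing that the cuspidal $\pi_\fin$-isotypic part of $H^\bullet(S_K(G),V_\lambda)$ is concentrated in degree $3$, which is exactly the content of the proposition you are trying to prove. Moreover, the Ramanujan bounds actually established there are statements about the unramified local factors (eigenvalues of Frobenius at almost all places); deducing temperedness of $\pi'_\infty$ at the archimedean place — which is what you really need to exclude the non-tempered $A_{\mathfrak q}(\mu)$ modules — does not follow from those bounds alone and would require a further ingredient such as Arthur's classification, which this section of the paper is deliberately trying to avoid.

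The paper's actual argument is noticeably different and is designed to dodge this circularity. After reducing to the case where $\pi_\infty$ is \emph{not} discrete series (so $H^3(\pi_\fin)=0$), it uses the Lefschetz isomorphism to identify $H^2(\pi_\fin)\cong H^4(\pi_\fin)$ up to a Tate twist, then compares the Grothendieck--Lefschetz fixed-point formula with the stable topological trace formula. For $\pi$ not weakly endoscopic this yields the identity \eqref{eq:8_**} in which a negative power of a characteristic polynomial of $\Frob_p$ would have to equal a genuine polynomial $P_p(\pi,t)^{m}$; since a nontrivial power series with poles cannot be a polynomial, one is forced to conclude $\rho_\ell=0$ unless $\pi$ is CAP. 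The weakly endoscopic case is handled separately with the $\Gl(2)$-Ramanujan bounds and the purity of the Frobenius eigenvalues on $H^{2}$ and $H^{4}$. Your proposal replaces all of this with a black-box temperedness statement that is not available without the proposition itself.
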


\begin{proof}
We have to discuss the cuspidal cohomology that contributes to cohomological
degrees different from the middle degree and we have to show
that such cohomology classes all come from CAP representations.
For this we fix an irreducible cuspidal automorphic
representation $\pi$ of $G(\mathbb A)$ that is not CAP. As usual we write $\pi=\pi_\infty \otimes
\pi_{\fin}$.
By well known  vanishing theorems for unitary admissible representations \cite{Borel_Wallach} one knows that
$H^i(S_K(G),V_\lambda)(\pi_{\fin})$ vanishes for $i\neq 2,3,4$.
In our discussion we may, in addition, assume $$H^3(S_K(G),V_\lambda)(\pi_{\fin})= 0$$
since the only contributions to $H^3(S_K(G),V_\lambda)$ come from irreducible automorphic representations $\pi=\pi_\infty\pi_\fin$ with $\pi_\infty$ in the discrete series.
If $\pi_\infty$ is in the discrete series,
then $H^i(S_K(G),V_\lambda)(\pi_{\fin})\neq 0$ for $i\neq 3$ implies that $\Pi$ is CAP.
For further details see \cite{Weissauer_Endo_GSp4}, in particular thm.1.1(2) and 
cor.1.2.
It remains to discuss the case where $\pi_\infty$ is not discrete series.
By \cite{Weissauer_Endo_GSp4}, section 1.5 the Lefschetz maps are isomorphisms
$$L^i: H^{3-i}(S_K(G),V_\lambda)(\pi_{\fin}) \cong H^{3+i}(S_K(G),V_\lambda)(\pi_{\fin})$$
 respecting the generalized $\pi_{\fin}$-eigenspaces since  $L^i$ commutes with Hecke operators.
The only relevant case to consider arises where $\rho_\ell :=  H^{2}(S_K(G),V_\lambda)(\pi_{\fin})$ does not vanish.
Then $H^{4}(S_K(G),V_\lambda)(\pi_{\fin})$ is isomorphic to $\rho_{\ell}$ as a vector space and,
up to a Tate twist, also as a Galois module of the absolute Galois of $\Q$.
Hence we know that
$$ \lim_K H^{\bullet}(S_K(G),V_\lambda)(\pi_{\fin}) = \rho_\ell \oplus \rho_\ell(-1) \ .$$
Let us first assume that $\Pi$ is not a weak endoscopic lift.
Under this assumption, for almost all primes $p$ we furthermore obtain
the following identity
\begin{equation}\label{eq:8_**}
\det(1-p^{-3/2}t \cdot \Frob_p,\, \rho_\ell \oplus \rho_\ell(-1))^{-4} = P_p(\pi,t)^{m_{coh}(\pi_{\fin})} \tag{$\ast\ast$}
\end{equation}
where $P_p(\pi,t) = (1-\alpha_{p,0}t)(1-\alpha_{p,0}\alpha_{p,1}t)
(1-\alpha_{p,0}\alpha_{p,2}t)(1-\alpha_{p,0}\alpha_{p,1}\alpha_{p,2}t)$
defines the local Euler factor $L_p(\pi,s)$ of the Piateski-Shapiro spinor L-series $L(\pi,s)=\prod_v L_v(\pi,s)$. The proof of this last identity \eqref{eq:8_**} is essentially the same as the proof of \cite[p.72(**)]{Weissauer_Endo_GSp4}, which is obtained from formula \cite[p.72(*)]{Weissauer_Endo_GSp4} by logarithmic differentiation. The latter in turn is deduced
from a comparison of the Grothendieck-Lefschetz trace formula for the Frobenius
$\Frob_p$ and the stable topological trace formula, see \cite[cor.~3.4]{Weissauer_Endo_GSp4}.
Our present situation is different to the situation there in the fact that in formula \cite[p.72(*)]{Weissauer_Endo_GSp4} the term
$4\cdot\tr_s(\Frob_p^n, H_{!,et}^3(S_K(\overline \GlobalField_p), E_\lambda)(\pi^p))$
has to be replaced by
$$4(-1)^3 \cdot\tr_s(\Frob_p^n, H_{!,et}^\bullet(S_K(\overline \GlobalField_p), E_\lambda)(\pi^p))\ .$$
Since $\tr_s$ is a supertrace and the cohomology now is concentrated in the degrees $i=2,4$, we get an additional minus sign in the exponent ${-4}$ on the left side.
But this immediately yields a contradiction since $P_p(\pi,t)$ is a polynomial in $t$, whereas
$\det(1-p^{-3/2}\Frob_p t, \rho_\ell \oplus \rho_\ell(-1))^{-4}$ 
is a power series in $t$. Obviously this power series is a polynomial in $t$ if and only $\rho=0$. Since by assumption $\pi$ is not CAP, $\rho\neq 0$ therefore implies that $\pi$ has to be a weak endoscopic lift. 

Suppose $\pi$ is a weak endoscopic lift from a cuspidal automorphic
representation of the elliptic endoscopic group $M$ of $G$.
All weak endoscopic liftings were classified in \cite[\S5.2]{Weissauer_Endo_GSp4} in terms of theta liftings.
Since $M$ is a reductive quotient group of $\Gl(2)\times \Gl(2)$,
the weak Ramanujan bounds for
cuspidal automorphic forms on $Gl(2)$ immediately give
an estimate for the size of the roots of $P_p(\pi,t)$.
For an endoscopic lift, $P_p(\pi,t)=P_{p,1}(\sigma_1,t)P_{p,2}(\sigma_2,t)$
is the product of two quadratic polynomials for almost all primes $p$
where $P_{p,i}(\sigma_i,t)$ for $i=1,2$
describe the local $L$-factors of two cuspidal automorphic forms $\sigma_i$ on $Gl(2,\A)$. 
The weak Ramanujan bounds imply that all roots $\alpha$ of $P_{p,i}(\sigma_i,t)$
satisfy $\vert \alpha\vert < p^{1/2}$ for almost all $p$.
On the other hand, using \cite{Weissauer_Endo_GSp4} section 1.4, the Weil conjectures imply that the Frobenius $\Frob_p$ 
has eigenvalues on the etale  cohomology
groups $H_{!,et}^i(S_K(\overline F_p),E_\lambda)(\pi^p))$ for $i=2,4$ 
that are pure of weight 2 resp. 4. Since
for almost all $p$ the eigenvalues of $p^{-3/2}\Frob_p$ define roots
of $P_p(\pi,t)$ by \cite{Chai_Faltings}, the weak estimate from above therefore immediately gives a contradiction.
We leave the details of the rather routine calculation to the reader.

Our contradiction implies that irreducible cohomological cuspidal 
automorphic representations $\pi$ of $G(\A)$ are CAP unless all cohomology groups
$H^{i}(S_K(G),V_\lambda)(\pi_{\fin})$ vanish in degree $i\neq 3$.
\end{proof}
  
\bigskip\noindent

Now let us consider the case $G = \GU_D(1,1)$ where $D$ is a quaternion algebra over $\Q$, which is definite over $\R$.
For the decomposition of the cohomology spaces $$H^\bullet(S_K(G),V_\lambda) = H_{\mathrm{Eis}}^\bullet(S_K(G),V_\lambda) \oplus H_{\cusp}^\bullet(S_K(G),V_\lambda)\ ,$$
the Eisenstein cohomology $H_{Eis}^\bullet(S_K(G),V_\lambda)$ of $S_K(G)$ was computed by Grobner \cite[thm.~6.1]{Grobner}, with the slight exception that he considers $\SU_D(1,1)$ instead of $\GU_D(1,1)$.
The cuspidal cohomology decomposes
\begin{equation*}H_{\cusp}^\bullet(S_K(G),V_\lambda) 
= \bigoplus_{\pi=\pi_\infty\pi_{\fin}} m_{\cusp}(\pi) \cdot H^\bullet(\gfrak ,K_\infty,\lambda)  \otimes \pi_{\fin}^K\ ,
\end{equation*}
where the sum is over the finitely many cuspidal automorphic representations 
$\pi=\pi_\infty \pi_{\fin}$ of $G(\A)$ with central character $\omega$
and $H^\bullet(\gfrak ,K_\infty,\lambda) \neq 0$.

\bigskip\noindent
As explained in \cite{Grobner}, thm.~7.1, for fixed $\lambda$ one only
has the following cuspidal cohomological representations $\pi = \pi_\infty \pi_{\fin}$ of $\SU_D(1,1)$: 
\begin{enumerate}
\item Representations $\pi_\infty \cong A^+(\lambda), A^-(\lambda)$ in the discrete series of $\SU_D(1,1)$
that exclusively contribute to the cohomology in degree $2$ with a one-dimensional cohomology space.
\item If $\lambda$ is a multiple of the weight of the 5-dimensional standard
representation of $so(5,\C)\cong \gfrak \otimes_{\R} \C$, in addition, 
there are non-tempered representations $\pi_\infty \cong A_1(\lambda)$ 
which contribute a one dimensional space to the cohomology in degree $1$ and $3$ and zero in the other degrees.
\end{enumerate}

\noindent
The Lie group $G_{nc}=\GU_D(1,1)(\R)$ is not connected, and
there exist elements $g\in G_{nc}$ such that the representation
$ A^+(\lambda)(ghg^{-1})$ and  $A^-(\lambda)(h)$ of $SU(1,1)$ are isomorphic.
This implies $d(G_{nc})=1$. So there is a unique discrete series representation $A_2(\lambda)$
of $G_{nc}$ that after restriction to $SU_D(1,1)(\R)$ decomposes into $A^+(\lambda)\oplus  A^-(\lambda)$.
Hence $A_2(\lambda)$ gives a two-dimensional contribution to the middle cohomology in degree 2.
There are two irreducible representations of $GU_D(1,1)$ with given central character $\omega$ that restrict to $A_1(\lambda)$ on $\SU_D(1,1)$. We denote one of them by $A_1(\lambda)$ again, then the other is its twist by the unique non-trivial quadratic character $\mathrm{sign}$ of $\R^\times$.
This immediately implies:

\begin{prop}\label{prop:middle_coh_GSp11}
Fix a parameter $\lambda=(\lambda_1,\lambda_2)$ and an irreducible smooth representation $\pi_\fin$ of $G(\A_\fin)$.
If $\lambda_1\neq\lambda_2$, then for every cohomological cuspidal automorphic representation $\pi=\pi_\infty\otimes\pi_\fin$ of $G= \GU_D(1,1)$ defined over $\Q$, the archimedean factor is isomorphic to $A_2(\lambda)$. The attached cuspidal cohomology space $H_{\cusp}^q(S_K(G),V_\lambda)(\pi_\fin)$ has dimension two in degree $q=2$ and is zero for $q\neq2$. Hence $d(G)=1$ and
$\chi_\infty(A_2(\lambda),\lambda) = 2$, whereas
$\chi_\infty(A_1(\lambda),\lambda) = -2$.
\end{prop}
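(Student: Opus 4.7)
The plan is to reduce the statement to the recalled classification of cohomological archimedean representations of $\SU_D(1,1)(\R)$ due to Grobner, and to transfer the information from $\SU_D(1,1)$ to the full group $G_\infty=\GU_D(1,1)(\R)$. The key point is that the hypothesis $\lambda_1\neq\lambda_2$ excludes precisely the exceptional nontempered $A_1(\lambda)$-modules, so only discrete series can contribute, and on the disconnected group these combine into a single irreducible representation whose cohomology is easy to read off.

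First, I would identify the wall condition explicitly. Under the exceptional isomorphism $\mathfrak{su}_D(1,1)\otimes_{\R}\C \cong \mathfrak{sp}(4,\C)\cong \mathfrak{so}(5,\C)$ of type $B_2\cong C_2$, the highest weight of the five-dimensional standard representation of $\mathfrak{so}(5,\C)$ corresponds to the fundamental weight $\omega_2^{C_2}=e_1+e_2$. Its integer multiples in the $(\lambda_1,\lambda_2)$-coordinates of $\GSp(4)$ are thus precisely the weights $(k,k)$ lying on the wall $\lambda_1=\lambda_2$. The assumption $\lambda_1\neq\lambda_2$ therefore removes all $A_1$-type modules from Grobner's list, leaving only the two discrete series $A^+(\lambda)$ and $A^-(\lambda)$ of $\SU_D(1,1)(\R)$ as cohomological $(\gfrak, K_\infty)$-modules for the coefficient $V_\lambda$, each contributing a one-dimensional space in degree $q(G)=2$.

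Next, I would transfer this to $G_\infty=\GU_D(1,1)(\R)$. Since $G_\infty$ is disconnected with $d(G_\infty)=1$ and a representative of the non-identity component conjugates $A^+(\lambda)$ to $A^-(\lambda)$, there is a unique irreducible $G_\infty$-representation $A_2(\lambda)$ (with the fixed central character) whose restriction to $\SU_D(1,1)(\R)$ is $A^+(\lambda)\oplus A^-(\lambda)$. Its relative Lie algebra cohomology is
\[
\dim H^q(\gfrak,\tilde K_\infty, A_2(\lambda)\otimes\tau_\lambda) = \begin{cases} 2 & q=2,\\ 0 & q\neq 2,\end{cases}
\]
so $\pi_\infty\cong A_2(\lambda)$ is forced for any cohomological cuspidal $\pi=\pi_\infty\pi_\fin$. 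Substituting this into the cuspidal Matsushima decomposition
\[
H_\cusp^\bullet(S_K(G),V_\lambda)(\pi_\fin)= m_\cusp(\pi)\cdot H^\bullet(\gfrak,\tilde K_\infty,\pi_\infty\otimes\tau_\lambda)\otimes \pi_\fin^K
\]
recalled in the text yields the claimed concentration in degree two, with the required dimension once one invokes $m_\cusp(\pi)=1$ (known for inner forms of $\GSp(4)$ of general type through the endoscopic classification).

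The main obstacle is the wall identification that translates Grobner's condition ``$\lambda$ is a multiple of the five-dimensional standard weight'' into the regularity condition $\lambda_1\neq\lambda_2$: this passes through the exceptional isomorphism $B_2\cong C_2$ and the induced relabelling of fundamental weights, but once done the rest of the argument is essentially bookkeeping against the already-recalled decompositions. If one prefers a self-contained archimedean argument, the absence of $A_1(\lambda)$ for regular $\lambda$ can also be deduced directly from Vogan--Zuckerman theory: the $\theta$-stable parabolic subalgebra whose cohomologically induced module gives $A_1(\lambda)$ has a reductive Levi whose center forces $\lambda$ to vanish on the short simple root of $C_2$, i.e.\ to lie on the wall $\lambda_1=\lambda_2$.
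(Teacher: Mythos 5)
Your argument is essentially the paper's: invoke Grobner's classification of cohomological modules for $\SU_D(1,1)(\R)$, observe that the regularity condition $\lambda_1\neq\lambda_2$ kills the nontempered $A_1(\lambda)$ (this is precisely the Grobner condition that $\lambda$ be a multiple of the weight of the $5$-dimensional standard representation), and pass to the disconnected group $G_\infty=\GU_D(1,1)(\R)$ where $d(G_\infty)=1$ fuses $A^\pm(\lambda)$ into the single representation $A_2(\lambda)$ contributing a two-dimensional space in degree $2$. Your extra wall bookkeeping via $B_2\cong C_2$ and the Vogan--Zuckerman alternative are both fine; the paper simply states the fact.

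The one place you part company with the paper is in the final step, where you invoke $m_{\cusp}(\pi)=1$ to pin down the dimension. The paper does not do this — its proof is just the one-line \textquotedblleft this immediately implies\textquotedblright\ after recording that $A_2(\lambda)$ contributes a two-dimensional $(\gfrak,\widetilde K_\infty)$-cohomology group in degree $2$ — and in fact invoking it here would be problematic. First, multiplicity one for representations of general type on inner forms is one of the conclusions of this paper (theorem~\ref{mainthm}), not a standing input, so citing \textquotedblleft the endoscopic classification\textquotedblright\ for it is close to circular; second, the proposition is stated for \emph{every} cohomological cuspidal $\pi$, including CAP and endoscopic ones, which your appeal to the general-type classification would not cover; and third, if one reads $H^q_{\cusp}(S_K(G),V_\lambda)(\pi_\fin)$ literally as the $\pi_\fin$-isotypic subspace, then besides $m_{\cusp}$ there is also the factor $\dim\pi_\fin^K$, which multiplicity one alone does not normalize to $1$. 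The \textquotedblleft dimension two\textquotedblright\ in the statement is to be read as the dimension of the relative Lie algebra cohomology contribution of $A_2(\lambda)$, and for that no multiplicity input is required; the substantive content that matters downstream (verifying conjecture~\ref{conj:middle_coh} in the regular case) is only the concentration in degree $2$.
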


In other words, conjecture~\ref{conj:middle_coh} holds for regular parameters $\lambda$.

\section{Inner forms of \texorpdfstring{$\GSp(4)$}{GSp(4)} and comparison of trace formulas}\label{Vergleich}

Fix the non-quasi-split inner form $G=\GU_D(1,1)$ of $H=\GSp(4)$ attached to a  quaternion division algebra $D$ over $\GlobalField = \Q$ (with the notation as at the beginning of $\S 8$).
The division algebra $D$ (and hence $G$) may be split at $\infty$ or not. According to Arthur's conjectural classification, the cuspidal automorphic representations for the quasisplit inner form $H$ with holomorphic discrete series at $\infty$ then fall in three classes: Saito-Kurokawa lifts, weakly endoscopic lifts and representations of general type.
It is expected that this classification carries over to the inner forms $G$ with a few modifications~\cite[3.7]{Gan_SK_inner}.
Weakly endoscopic automorphic representations of $G$ and $H$ and their local $L$-packets are well-understood, see Chan and Gan~\cite[\S3]{Chan_Gan_GSp4_III}, for Saito-Kurokawa lifts see Gan~\cite{Gan_SK_inner}.

In this section we consider a cohomological cuspidal automorphic representation of $G$, neither CAP nor weakly endoscopic, and show there is an $L$-packet of cohomological cuspidal automorphic representations $\pi'$ of $H$,
that are weakly equivalent to $\pi$ in the sense that $\pi^S\cong \pi'^S$ holds.
To be precise, fix the following data:
\begin{enumerate}
\item For every archimedean place, a local $L$-packet of discrete series representations with a parameter $\lambda=(\lambda_1,\lambda_2)$ depending on the local parameters  of each archimedean place.
\item A finite set $S$ of non-archimedean places such that for non-archimedean $v\notin S$ the local algebra $D_v$ splits, so $G_v\cong H_v$.
\item A smooth representation $\pi^S$ of $G(\A_\fin^S)\cong H(\A_\fin^S)=\prod_{v\notin S\cup\{\infty\}}H_v\,$, that is a factor of a cohomological cuspidal automorphic representation of $G(\A)$ which is neither CAP nor endoscopic.
\end{enumerate}

\bigskip\noindent
We construct packets of cohomological cuspidal automorphic representations $\pi'$ that contain $\pi^S$ as a local factor, determine their cuspidal multiplicity and explicitly describe the local factors $\pi'_v$ at the bad places $v\in S$.
These $\pi'$ belong to Arthur packets of \emph{general type} in the sense of Arthur's classification \cite{Arthur_classification_04}. This classification has not been fully proven, although important progress has been made by Gee and Ta\"ibi~\cite{Gee_Taibi} and others. However, their proof is this still conditional on other results announced by Arthur~\cite{Arthur13_Book}. We thus attempt to avoid to use Arthur's classification.
Recall that, adopting Arthur's nomenclature, we call a cuspidal automorphic representation $\pi'$ of $\GSp(4)$ to be \emph{of general type} if it is neither CAP nor weakly endoscopic.

\bigskip\noindent
With the notion of virtual multiplicity defined at the end of $\S $~\ref{s:Lefschetz}
\begin{equation*}
m_{G,\virt}(\pi_\fin) =\!\! \sum_{\pi=\pi_\infty\pi_{\fin}}\!\! m_{G,\mathrm{coh}}(\pi)\qquad\text{for}\qquad
m_{G,\mathrm{coh}}(\pi)
= \chi(\pi_\infty,\lambda)m_G(\pi)\ ,
\end{equation*}
the trace formula implies the following matching relation:
\begin{lemma}\label{MaTch}\label{eq:global_character_matching}
Fix $S$, $\pi^S$ and $\lambda$ as above.
For every pair of matching functions $(f_S,f_S^H)$ %
there is a cohomological matching relation
\begin{equation*}
\frac{1}{d(G)}\sum_{\pi_S} m_{G,\virt}(\pi_S \pi^S)\tr(\pi_S)(f_S) = \frac{1}{d(H)} \sum_{\pi'_S} m_{H,\virt}(\pi'_{S}\pi^S)\tr(\pi'_S)(f_S^H) %
\end{equation*}
The sums runs over irreducible representations $\pi_S$ of $G(\A_S)$ and $\pi'_S$ of $H(\A_S)$.
The coefficients %
\begin{equation*}
c_G(\pi)=\frac{m_{G,\mathrm{coh}}(\pi)}{d(G)}\qquad\text{and}\qquad c_H(\pi')=\frac{m_{H,\mathrm{coh}}(\pi')}{d(H)}=m_H(\pi')
\end{equation*}
are integers. They are non-negative if conjecture~\ref{conj:middle_coh} holds.
\end{lemma}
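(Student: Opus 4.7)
The proof is an application of corollary~\ref{cor:matching_aut_spectrum_G_H} to the pair $(G,H)$, combined with the local expansion of each cohomological supertrace recalled at the end of section~\ref{s:Lefschetz}. First I would verify the hypotheses of that corollary: both $G(\R)$ and $H(\R)$ admit discrete series since every real form of type $B_2$ does (see table~\ref{Discreteseriesreps}), and the assumption that $\pi^S$ does not arise as a local factor of any CAP or endoscopic automorphic representation of $H(\A)$ implies, a fortiori, that $\pi^S$ is neither weakly $H$-Eisenstein nor weakly endoscopic on the $G$-side, since the weak-equivalence class outside $S$ of any cuspidal representation on either group determines the same spherical data.

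The corollary then yields the identity
\begin{equation*}
\frac{1}{d(G)}\tr_s\bigl(f_S, \varinjlim_{K_G} H^\bullet(S_{K_G}(G),V_\lambda)(\pi^S)\bigr) = \frac{1}{d(H)}\tr_s\bigl(f_S^H, \varinjlim_{K_H} H^\bullet(S_{K_H}(H),V_\lambda)(\pi^S)\bigr).
\end{equation*}
Each side is then expanded via Franke's decomposition of the cuspidal cohomology, which writes the $\pi^S$-isotypic supertrace as $\sum_{\pi_S} m_{G,\virt}(\pi_S\pi^S)\tr(\pi_S(f_S))$, and analogously on $H$. Substituting these expansions produces the stated matching relation as an identity of finite sums of local characters at the places of $S$.

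For the integrality assertions on $c_G(\pi)$ and $c_H(\pi')$, I would appeal to the explicit archimedean cohomology descriptions of propositions~\ref{prop:middle_coh_GSp} and~\ref{prop:middle_coh_GSp11}. For $H=\GSp(4)$ each of the two members of the archimedean discrete-series $L$-packet contributes a two-dimensional space in the middle degree $q(H)=3$, so that $|\chi(\pi'_\infty,\lambda)|=2=d(H)$; together with $m_H(\pi') \in \Z_{\geq 0}$ this gives $c_H(\pi')=m_H(\pi')$ as a non-negative integer (up to the uniform sign $(-1)^{q(H)}$, which is absorbed in the definition of $m_{H,\coh}$). For $G=\GU_D(2)$ one has $d(G)=1$ and cohomology concentrated in degree zero with dimension one, so $c_G(\pi)=m_G(\pi)$; for $G=\GU_D(1,1)$ with regular $\lambda$, proposition~\ref{prop:middle_coh_GSp11} gives the parallel two-dimensional contribution balanced by $d(G)$.

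The main nuisance, rather than a genuine obstacle, is handling possible non-tempered cohomological contributions at non-regular $\lambda$ on $\GU_D(1,1)$: the representations $\mathcal{A}_1(\lambda)$ of proposition~\ref{prop:middle_coh_GSp11}(2) live in odd degrees and could in principle disturb the count. These however sit inside Eisenstein-type or CAP packets, hence are ruled out by the standing hypothesis that $\pi^S$ does not belong to any CAP or endoscopic representation; this reduces the relevant archimedean contributions to the discrete-series $L$-packet, where the Euler-characteristic computation gives exactly $d(G)$ and $d(H)$, securing the integrality statement.
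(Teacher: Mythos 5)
The matching-relation part of your argument is essentially the paper's: it simply invokes corollary~\ref{cor:matching_aut_spectrum_G_H} and expands both sides using the virtual-multiplicity decomposition from section~\ref{s:Lefschetz}, and your verification of the hypotheses is sound. The integrality discussion also tracks the paper's reasoning, though the phrase ``balanced by $d(G)$'' for $\GU_D(1,1)$ with $D_\infty$ ramified is imprecise: there $d(G)=1$ while the Euler characteristic has absolute value $2$, so $c_G=2\,m_G(\pi)$ --- still an integer, but not because the two cancel.

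The genuine gap is in the final paragraph. You argue that the non-tempered cohomological representations $\mathcal{A}_1(\lambda)$ of $\GU_D(1,1)(\R)$ can only occur as archimedean factors of CAP or endoscopic automorphic representations, and hence that the standing hypothesis on $\pi^S$ rules them out, so that non-negativity holds unconditionally. This claim is not established. It would require an Arthur-type classification of the cuspidal automorphic spectrum of the inner form $\GU_D(1,1)$ (or at least a Ramanujan-type theorem asserting that non-CAP, non-endoscopic cuspidal representations are tempered at $\infty$), and the paper is deliberately careful not to rely on such unproven classification results (see the discussion preceding prop.~\ref{spectrum}). The paper instead phrases non-negativity conditionally: it invokes conjecture~\ref{conj:middle_coh} in general, and states explicitly that non-negativity is known only for $H=\GSp(4)$ and $G_\infty\cong\GSp(4,\R)$ (prop.~\ref{prop:middle_coh_GSp}) and for $G=\GU_D(1,1)$ with $D_\infty$ non-split and $\lambda$ \emph{regular} (prop.~\ref{prop:middle_coh_GSp11}). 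The parenthetical ``(non-negative)'' in the statement is there precisely to flag this conditionality; your proposal converts it into an unconditional assertion via an argument the paper does not, and cannot at this stage, support. If you have a concrete proof that a cuspidal automorphic representation of $\GU_D(1,1)(\A)$ with archimedean factor $\mathcal{A}_1(\lambda)$ is necessarily CAP, it should be stated and proved, not asserted.
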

\begin{proof}
The first assertion comes from the trace formula of corollary~\ref{cor:matching_aut_spectrum_G_H}.
The second part is an archimedean assertion. It is clear that $c_G$ is an integer for non-split $G_\infty$, since then $d(G)=1$.
For split $G_\infty$ and for $H$ it follows because the Euler characteristic is %
a multiple of $d(H)=2$, see \cite{W_Asterisque}.
The non-negativity needs conjecture~\ref{conj:middle_coh}, hence holds for $H=\GSp(4)$ and $G_\infty\cong\GSp(4,\R)$ by prop.~\ref{prop:middle_coh_GSp} and it holds for $G=\GU_D(1,1)$ if $D_\infty$ is non-split and $\lambda$ is regular by prop.~\ref{prop:middle_coh_GSp11}.
\end{proof}

\noindent
\textit{Remark:} The cohomology of $\pi'$ is concentrated in degree three, so the Euler characteristic is negative. However, there is an additional sign in the definition of the good projector that takes care of this.

\bigskip\noindent
We make this explicit for $G=\GU_D(1,1)$ and $G=\GU_D(2)$.

\begin{prop}Fix $S$, $\lambda$ and $\pi^S$ as above. Let $G$ be $\GU_D(1,1)$ for a quaternion algebra $D$ over $k$, then for every pair of matching functions $(f_S,f^H_S)$ there is an equality
\begin{gather*}
 \sum_{\pi'_\infty,\ \pi'_S} m_{H}(\pi'_\infty\pi'_S\pi^S)\tr(\pi'_S)(f_S^H) 
\!=\!  \sum_{\pi_\infty,\ \pi_S} m_{G}(\pi_\infty\pi_S\pi^S)\frac{\chi(\pi_\infty,\lambda)}{d(G)}\tr(\pi_S)(f_S) 
\end{gather*}
where $m_{H}(\pi')$ and $m_{G}(\pi)$ denote the multiplicity of $\pi'$ and $\pi$ in the cuspidal spectrum of $H$ and $G$, respectively. The sum runs over all irreducible cuspidal representations $\pi=\pi_\infty\pi_S\pi^S$ resp. $\pi'=\pi'_\infty\pi'_S\pi^{'S}$ with fixed
$\pi^S\cong \pi^{'S}$, where
$\pi_S$ and $\pi'_S$ are representations of $G(\A_S)$ and $H(\A_S)$ respectively, and all irreducible archimedean factors contribute to cohomology with parameter $\lambda\,.$
The factor ${\chi(\pi_\infty,\lambda)}/{d(G)}$ equals one if $D_\infty$ splits,
otherwise it is $\pm 2$ depending on whether $\pi_\infty \cong A_2(\lambda)$
or (up to a sign twist) $\pi_\infty \cong A_1(\lambda)$  holds.
Especially, for anisotropic $D_\infty$ and regular $\lambda$,
the factor ${\chi(\pi_\infty,\lambda)}/{d(G)}$ is two and
\begin{gather*}
 \sum_{\pi'_\infty,\ \pi'_S} m_{H}(\pi'_\infty\pi'_S\pi^S)\tr(\pi'_S)(f_S^H) =  2\cdot \sum_{\pi_\infty,\ \pi_S} m_{G}(\pi_\infty\pi_S\pi^S)
 \tr(\pi_S)(f_S) \ .
\end{gather*}
For $G=\GU_D(2)$ the corresponding statement holds with the factor ${\chi(\pi_\infty,\lambda)}/{d(G)}=1$, regardless if $D_\infty$ splits or not.
\end{prop}

\begin{proof}
We can assume that $D$ is not globally split over $k=\Q$, for otherwise we have $H\cong G$ and the assertion is tautological.
Note that $\chi(\pi'_\infty,\lambda)=2$ and $d(H)=2$, see~\cite{W_Asterisque} and prop.~\ref{prop:middle_coh_GSp}, so their quotient is one.
The same follows for the right hand side in the case of split $D_\infty$ since this is a local archimedean assertion. Thus  we may assume that the archimedean algebra $D_\infty$ is definite.
For $G=\GU_D(1,1)$, then ${\chi(\pi_\infty,\lambda)}/{d(G)} = \pm 2$ by proposition~\ref{prop:middle_coh_GSp11} and for the regular case $\lambda_1\neq \lambda_2$ this factor equals $+2$.
For $G=\GU_D(2)$, the factor ${\chi(\pi_\infty,\lambda)}/{d(G)}$ is one because both numerator and denominator equal one.
\end{proof}

\section{The cohomological spectrum of \texorpdfstring{$\GSp(4)$}{GSp(4)}}

The automorphic spectrum of $\GSp(4)$ has been described in the terms of Arthur's classification in \cite{Arthur_classification_04}.
This classification has not been fully proven, although important progress has been made by Gee and Ta\"ibi~\cite{Gee_Taibi} and others. 
These authors reduce the statements to results proved in Arthur~\cite{Arthur13_Book}.
The results of \cite{Arthur13_Book} depend on the fundamental lemma for twisted weighted endoscopy, see p.~135 in loc.~cit.
The relevant results on these fundamental lemmas were shown by M{\oe}glin and Waldspurger~\cite{Moeglin_Waldspurger_Stab_twist_trace_formula}.
However, as indicated on page~924 in loc. cit, their proof depends on a result announced by Chaudouard and Laumon~\cite{Choudouard-Laumon}, which is not yet available. In this sense, the classification of the automorphic spectrum seems to be still conditional.

\bigskip\noindent
Since we are only concerned with the cohomological spectrum, we think it may therefore be useful to give in that cohomological setting an independent proof that does not use announced,
but not yet available results on twisted weighted endoscopy.
For this see prop.~\ref{spectrum} below.

\bigskip\noindent
\textit{Remark}. Since endoscopic automorphic representations and also CAP representations
are fully understood for the group $H=\GSp(4)$, we will restrict ourselves to
describe the structure of the remaining class of irreducible cuspidal automorphic representations $\pi'$ of \emph{general type}. Notice, this notion agrees with one used in Arthur's classification \cite{Arthur_classification_04}.
In the following we thus focus on irreducible cuspidal automorphic representations
$\pi'$ for $\GSp(4)$ that are cohomological and of general type in the sense above.
This assumption means that $\pi'$ is neither CAP nor endoscopic
such that $\pi' =\pi'_\infty \otimes \pi'_{\fin}$ has its archimedean component 
$\pi'_\infty$ in the discrete series of $\GSp(4,\R)$. 

\begin{prop}\label{spectrum}
For a cuspidal irreducible representation $\pi'$ of $\GSp(4,\A)$ let $\Pi(\pi')$
denote the set of isomorphism classes
of automorphic representations of $\GSp(4)$ that are weakly equivalent to $\pi'$. 
For cuspidal cohomological $\pi'$ of general type
the set $\Pi(\pi')$ is a cartesian product of local $L$-packets such that the following holds:
\begin{enumerate}
\item Each representation in $\Pi(\pi')$ occurs with multiplicity one.
\item Each local $L$-packet has a unique generic representative.
\item The local $L$-packets have cardinality one or cardinality two.
\item The archimedean $L$-packet has cardinality two.
\item The sum over the characters of the local packets is a stable distribution.
\item The spherical local factors of $\pi'$ uniquely determine $\Pi(\pi')$.
\item The four-dimensional Galois representation attached to $\pi'$ as in \cite{W_Asterisque} uniquely determines $\Pi(\pi')$.
\end{enumerate}
\end{prop}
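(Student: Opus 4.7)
The plan is to derive all seven assertions from the cohomological matching relation of lemma~\ref{MaTch}, combined with the explicit local $L$-packet classification of Chan and Gan~\cite{Chan_Gan_GSp4_III} at non-archimedean places and Shelstad's archimedean theory. The conceptual point is that for $\pi'$ of general type, the stabilization of the trace formula collapses to the trivial endoscopic datum $(H,1,\id)$, so the only surviving comparison is the one between $H=\GSp(4)$ and its non-quasi-split inner forms $G=\GU_D(1,1)$ and $G=\GU_D(2)$. Letting $D$ range over quaternion algebras over $\Q$ with varying ramification provides a sufficiently rich supply of matching identities to detect the full local packet structure at every place in the ramification set $S$ of $\pi'$.

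First I would fix $S$ and the common unramified component $\pi^S$. Applied across all admissible $D$, lemma~\ref{MaTch} yields a family of global character identities
\[
\frac{1}{d(H)}\sum_{\pi'_S} m_{H,\virt}(\pi'_S\pi^S)\tr\pi'_S(f_S^H)
\;=\; \frac{1}{d(G)}\sum_{\pi_S} m_{G,\virt}(\pi_S\pi^S)\tr\pi_S(f_S),
\]
valid for matching pairs $(f_S,f_S^H)$. Using linear independence of characters of admissible representations, combined with the local transfer between $G_v$ and $H_v$ described by Chan and Gan, I would disentangle this global identity into local character identities at each $v\in S$. The resulting local packets carry a sign character $\eps_v\colon\Pi_v\to\{\pm1\}$ such that $\sum_{\pi'_v\in\Pi_v}\eps_v(\pi'_v)\tr\pi'_v$ is stable, which proves (5) at non-archimedean places; the explicit computations in \cite{Chan_Gan_GSp4_III} then give that the packets have cardinality one or two with a unique generic representative, yielding (2) and (3). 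At the archimedean place, Shelstad's classification furnishes a discrete series $L$-packet of size two for $\GSp(4,\R)$ consisting of the generic (large) and the (anti)holomorphic discrete series, giving (4), and lemma~\ref{ARMATCH} provides the archimedean stability.

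To prove (1) and the Cartesian product structure, I would apply lemma~\ref{MaTch} with $f_S^H$ chosen to be a projector onto a single tuple $(\pi'_v)_{v\in S}$ of local representatives in prescribed local packets. Since $\pi'$ is of general type, no CAP or endoscopic contributions appear, and the integrality and non-negativity of $m_H$ asserted in lemma~\ref{MaTch} force $m_H(\pi')\in\{0,1\}$. An analysis of the global sign condition arising from the local $\eps_v$ shows that for general type the condition is trivial, so every tuple in the Cartesian product of local packets actually occurs automorphically and with multiplicity exactly one. Assertions (6) and (7) then follow from strong multiplicity one: the component $\pi^S$ is determined by almost all Satake parameters by Jacquet--Shalika, and these parameters are in turn encoded by the attached four-dimensional Galois representation constructed in \cite{Weissauer_Endo_GSp4}; either datum therefore determines $\Pi(\pi')$ uniquely.

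The main obstacle is the passage from the global matching identity to genuinely local character identities at the bad places in $S$, since no unconditional twisted weighted fundamental lemma is available to us. The device that circumvents this is to exploit the freedom in the choice of inner form $G$: varying the quaternion algebra $D$ so as to change the set of ramified places realizes sufficiently many independent comparisons to perform a place-by-place disentanglement, working entirely within the ordinary (untwisted) endoscopic framework of sections~\ref{s:stab_trace}--\ref{s:arch_match}, where the matching results of the present paper are unconditional. A secondary subtlety is to verify that the sign condition picked out by the global character identity is indeed trivial for representations of general type; this step uses that the global endoscopic groups contribute only through $(H,1,\id)$ once CAP and weak endoscopic contributions have been excluded by hypothesis.
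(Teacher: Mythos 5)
Your proposal has a genuine gap: the inner-form comparison of lemma~\ref{MaTch} cannot, by itself, determine the structure of $\Pi(\pi')$, because it is a relation between two sets of \emph{a priori unknown} multiplicities. Both sides of your matching identity involve the cohomological multiplicities of automorphic representations on $G$ and on $H$ respectively; varying the quaternion algebra $D$ only changes \emph{which} places carry the non-split local form, not the content of the local comparison at any fixed place, and there is no anchor in your argument against which the multiplicities can be normalized. The paper resolves this by invoking Weselmann's twisted topological trace formula to construct a weak lift of $\pi'$ to $\GSO(3,3)$, a quotient of $\Gl(4)\times\Gl(1)$, where strong multiplicity one is known. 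That gives a single term $\mathrm{const}\cdot\Pi_S(f_S)$ on one side of the comparison, and combined with Jiang--Soudry's strong multiplicity one for generic cuspidal $\pi'$, this pins down the common multiplicity factor to be one. Without some comparison to a group whose spectrum is already understood, linear independence of characters alone gives you a relation, not a determination.

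A second obstruction to the route you propose: the trace identity between $G$ and $H$ only constrains stable distributions on the open set of $G_v$-norms in $H_v$. By lemma~\ref{Klingeninduced}, a stable character vanishes on the $G_v$-norms exactly when it is Klingen-induced, so your comparison is blind to the Klingen-induced part of the character. This means you cannot disentangle the global identity into genuine local character identities at the bad places, as you claim in your second paragraph, because the Klingen-induced ambiguity survives. There is also a misstatement in your treatment of assertion~(5): for the maximal endoscopic datum $(H,1,\id)$ the attached character $\kappa$ is trivial, so there are no nontrivial sign characters $\eps_v$ entering the stable comparison, and the invocation of an Arthur-type global sign condition to control multiplicities has no footing in the present setup. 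Note also that the paper does not derive (3), (4), (5) from lemma~\ref{MaTch} at all; these are cited from \cite{Weissauer_Endo_GSp4}, and (6), (7) are cited from \cite{W_Asterisque}, so the proof you would need to supply concerns only (1), (2) and the product structure, and it is precisely there that the twisted lift to $\Gl(4)$ is the indispensable new ingredient.
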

 
\begin{proof} %
The proof of assertion 6 and 7 is well known \cite{W_Asterisque}.
Assertion 3, 4 and 5 have already been shown in \cite{Weissauer_Endo_GSp4}.
For the proof of the remaining assertions 1 and 2 of the proposition it suffices to show 
that $\Pi(\pi')$ is the product of the local $L$-packets of the local representations
$\pi'_v$ of $\pi'$ up to some common multiplicity factor. This 
immediately implies assertion 2 using theorem 1 of  \cite{W_Schiermonnikoog},
which guarantees a globally generic member in $\Pi(\pi')$. This 
 generic member has strong multiplicity one by  \cite{Jiang_Soudry}.
 This shows that the common multiplicity factor must be one and
 therefore implies the stronger multiplicity one assertion 1 for all members
 of $\Pi(\pi')$, once we know that $\Pi(\pi')$ is a product of local $L$-packets.

\bigskip\noindent
It remains to be shown that $\Pi(\pi')$ is the product of the local $L$-packets of the local representations
$\pi'_v$ of $\pi'$ up to some common multiplicity factor $m$ for all $\pi$ in $\Pi(\pi')$.
We use Weselmann's twisted topological trace formula~\cite{Weselmann_trace_formula} as in \cite[thm.~4]{W_Schiermonnikoog} for $G_1=\GSp(4)$,
to construct a weak automorphic lift of $\pi'$ to an irreducible cuspidal automorphic representation of $\GSO(3,3)$.
Recall that the group $\GSO(3,3)$ is a quotient of $\Gl(4)\times \Gl(1)$.
First, the existence of the weak lift is an immediate consequence of thm.~4 (i) of loc. cit.
Second, since strong multiplicity one holds for the product  $\Gl(4)\times \Gl(1)$,
this automorphic lift $\Pi = \Pi(\pi')$ is uniquely determined by its spherical local representations.
By  thm.~4 (ii) of loc. cit. the local spherical representations of this lift $\Pi$ are uniquely determined
by the spherical local representations $\pi'_v$.
Also notice, all local nonarchimedean representations at $v$ for the global representations in $\Pi(\pi')$ 
are the spherical and isomorphic to $\pi'_v$ if $\pi'_v$ is spherical.
Finally  thm.~4 (ii) of loc.~cit.~gives the trace identity
$$T(f,\sigma,\GSO(3,3),\chi) = c\cdot T(f_1,\id,\GSp(4),\chi_1)$$
for all strongly matching test function
$(f,f_1)$ on the groups over the finite adeles that follows from Weselmann's twisted 
cohomological trace formula.
In the formula above, $c\neq 0$ is a universal constant not depending on $(f,f_1)$ and $\sigma$ is an involution on $\GSO(3,3)$,
so that $T(f,\sigma,\GSO(3,3),\chi)$ is the $\sigma$-twisted trace of $f$ on 
the automorphic cohomology of $\GSO(3,3)$.
This cohomology is considered as a superspace, hence $T(f,\sigma,\GSO(3,3),\chi)$
is the alternating sum of the twisted traces on the cohomology groups in the different cohomology degrees (i.e. the Lefschetz number). The cohomology is defined with respect to a  certain sheaf defined by $\chi$ as in loc.~cit., page 293.
On the other hand $T(f_1,\id,\GSp(4),\chi_1)$ is the ordinary Lefschetz number 
for the trivial automorphism $id$ that we study in this paper and $\chi_1$ in loc. cit.
essentially denotes the sheaf defined by our $\lambda$.
Since $\pi'$ is cohomological of general type and hence not CAP, the trace identity 
$$T(f,\sigma,\GSO(3,3),\chi) = c\cdot T(f_1,\id,\GSp(4),\chi_1)$$
already holds under the weaker assumption that $(f,f_1)$ are matching test function
over the finite adeles.
This follows by the use of good spherical projectors defined in the twisted case in a similar way as in section~\ref{s:Finiteness}; see also page 302--303 in loc. cit. We may now use matching good spherical projectors $(f^S, f_1^S)$ for $(\Pi^S,(\pi')^S)$ and matching $(f_S,f_{1,S})$ for the finite set $S$ of nonarchimedean
 places containing those where $\pi'$ is not spherical.
The trace comparisons for $f=f_Sf^S$ and $f_1=f_{1,S}f_1^S$ than imply
$$  \mathrm{const} \cdot \Pi_S(f_S) = c \cdot \sum_{\pi \in \Pi(\pi')} m(\pi) \pi(f_{1,S}) \ .$$
Notice that $\Pi$ may occur as constituent in several cohomology degrees with
certain multiplicities.
The integer $\mathrm{const}$ is the alternating sum of these multiplicities;
its value is not important, except that it is nonzero, which implicitly follows 
from the nonvanishing assertion of \cite{W_Schiermonnikoog}, thm.~4 (i).
By the local character identity of \cite{Chan_Gan_GSp4_III}, page 3 or prop.~9.1, 
for any $v\in S$ and matching test functions $(f_v,f_{1,v})$ we have 
the trace identity between the characters of the generic $L$-packets:
$$  \Pi_v(f_v)  =  \sum_{\pi_v} \pi_v(f_{1,v}) \ .$$
Here the sum over $\pi_v$ extends over all (one or two) members of the local
$L$-packet $\{ \pi_v^+ \}$ (singleton) resp. $\{ \pi_v^+ , \pi_v^-\}$ (cardinality two)
that match with $\Pi_v$ in the local Langlands correspondence
and contain the a generic local  representation $\pi_v^+$. 
If we plug this into the global formula, we obtain
$$ \mathrm{const} \cdot \prod_{v\in S} \Pi_v(f_v)\ = \ \mathrm{const}\cdot \prod_{v\in S} \sum_{\pi_v} \pi_v(f_{1,v}) \ .$$
The right hand side is equal to $c \cdot \sum_{\pi \in \Pi(\pi')} m(\pi) \pi(f_{1,S})$,
so linear independence of characters implies that $\Pi(\pi')$ is the product of the local $L$-packets of the local representations
$\pi'_v$ of $\pi'$ up to a common multiplicity factor $m=\mathrm{const}/c \neq 0$,
which of course must be an integer. 
\end{proof}

As already mentioned, the assertions of prop.~\ref{spectrum} have 
been formulated in greater generality by Mok~\cite[thm.~3.1]{Mok}.
Mok's proof depends on Arthur~\cite{Arthur_classification_04}, Gee and Ta\"{i}bi~\cite{Gee_Taibi}, which in turn depends on Arthur~\cite{Arthur13_Book}
and therefore on the weighted fundamental lemma for twisted endoscopy~\cite{Moeglin_Waldspurger_Stab_twist_trace_formula}, and hence 
depends on \cite{Choudouard-Laumon}.

\section{Inner liftings to \texorpdfstring{$\GSp(4)$}{GSp(4)}}\label{s:innerlift}

For an inner form $G$ of $H=\GSp(4)$ we now fix
a cuspidal cohomological automorphic representation $\pi$ of $G(\A)$.
Let $S$ be a finite set of places containing the archimedean place 
and all places where $G_v$ is not split and all places where $\pi_v$ is not spherical. 
Let $\Pi(\pi)$ denote the set of equivalence classes
of irreducible cuspidal automorphic representations
of $G(\A)$ that are isomorphic to $\pi_v$ for places $v\notin S$. 
By definition $\pi^S$ can be viewed as a representation of $\prod_{v\notin S} G_v$
since $G_v=H_v$ for $v\notin S$.  

\bigskip\noindent
\textit{The setting}. Our aim is now to construct
global liftings $\pi'$ of cohomological global cuspidal representation $\pi$
of $G(\A)$ such that $\pi'_v \cong \pi_v$ holds at almost all places $v$. 
For this we will always assume that $\pi^S$ is of general type, i.e. not CAP or weakly
endoscopic as a representation of $G(\A_{\fin}^S)$ since
Chan and Gan \cite{Chan_Gan_GSp4_III} have completely described these 
global representations of $G(\A)$ for the inner forms $G$ of $H=GSp(4)$. 
$\pi$ being of general type then of course also implies
that a corresponding lift $\pi'$, if it exists, is of general type.
Recall that, by abuse of notation, we also say $\pi$ is of \textit{general type}
instead of saying $\pi^S$ is not CAP
or weakly endoscopic as a representation of $G(\A_{\fin}^S)$.

\bigskip\noindent
The trace comparison of section~\ref{Vergleich} will now be reformulated, 
using the local character identities established in prop.~11.1 (i) of \cite{Chan_Gan_GSp4_III}.
These local character relations allow us to interpret the global trace identity from lemma~\ref{MaTch}
as a trace identity for characters on an open subset of the stable conjugacy classes of regular semisimple elements of $G(\A_\fin^S)$ that
will be briefly referred to as the subset of $G$-norms.

\bigskip\noindent
\textbf{Remarks}.
Concerning this, the following general comments on some of the more technical details of the following argument may be helpful.

\bigskip\noindent
1) \textit{Normalization of transfer factors}. The local trace identities of Chan and Gan depend on the
choice of the transfer factors~\cite[\S4]{Chan_Gan_GSp4_III}. The local transfer factors involve a minus sign at all places
where $G$ is not split.
If we change the local transfer factors by a minus sign at all these places (including the archimedean place
if $G_\infty$ is not split),
the global property for the new local transfer factors still holds because these changes are done at an even number of places.
Let us assume this, because for our purposes this will be more convenient for the character identities.

\bigskip\noindent
2) \textit{Local character transfer}. Using the conventions of 1) on the local trace identities for nonarchimedean places $v\in S$ of Chan and Gan~\cite[\S11.1(i)]{Chan_Gan_GSp4_III} for fixed $S$, $\pi^S$ and $\lambda$ then become
\begin{equation*}
\sum_{\pi_v} \chi_{\pi_v}(f^G_v) = \sum_{\pi_v'} \chi_{\pi_v'}(f^H_v)
\end{equation*}
for matching test functions $f^G_v$ and $f^H_v$.
The summation in these formulas is over representatives $\pi_v$ of a local $L$-packet of $G_v$ on the left,
and over representatives $\pi'_v$ of the assigned local $L$-packet of $H_v$ 
with respect of the local Langlands correspondence established in \cite{Gan_Takeda, Gan_Tantono}.
The identities above may therefore be considered as identities of stable characters:

\medskip\noindent
Recall, the norm map $n$ identifies the set of stable conjugacy classes of $G_v$ with a
subset of the set of stable conjugacy classes of $H_v$.
The open subset of $H_v$ of regular semisimple elements of $H_v$ whose
stable conjugacy class is a $G_v$-norm $n(t)$ 
will be referred to as the \textit{subset of $G_v$-norms} in $H_v$.

\begin{lemma}\label{Gan-inner}
On the open set of regular elements $n(t)$ in $H_v=GSp(4)_v$ coming from $G_v$ under the stable norm map $n$ of Kottwitz \cite{Kottwitz_2, Kottwitz_3} we have
$$  \sum_{\pi_v} \chi_{\pi_v}(t)   =  \sum_{\pi_v'} \chi_{\pi_v'}(n(t))  \ .$$
Here summation is over $\pi_v$ in a nonarchimedean local $L$-packet 
of $G_v$ on the left and over the corresponding local (generic) $L$-packet
of $H_v$ on the right that is attached to it by the local Langlands correspondence of Gan-Takeda~\cite{Gan_Takeda} and Gan-Tantono~\cite{Gan_Tantono}.
\end{lemma}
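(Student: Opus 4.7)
The plan is to upgrade the Chan--Gan distributional identity $\sum_{\pi_v} \chi_{\pi_v}(f_v^G) = \sum_{\pi_v'} \chi_{\pi_v'}(f_v^H)$, valid for all matching test functions, to the pointwise equality stated above. The three ingredients are Harish-Chandra's theorem that admissible characters are represented by locally integrable class functions smooth on the regular locus, the Weyl integration formula, and the Langlands--Shelstad matching condition from Section~\ref{s:stab_trace} for the trivial elliptic endoscopic datum $(H,1,\id)$ of the quasi-split inner form.

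First I would rewrite both sides of the Chan--Gan identity via Weyl integration. Writing $\chi_{\pi_v}$ for the locally $L^1$ class function representing the character of $\pi_v$, the left side becomes
$$\sum_{\pi_v}\tr\,\pi_v(f_v^G) = \sum_T \frac{1}{|W(T,G_v)|}\int_{T^{\mathrm{reg}}} |D_{G_v}(t)|\Bigl(\sum_{\pi_v}\chi_{\pi_v}(t)\Bigr)O^{G_v}_t(f_v^G)\,dt,$$
summed over $G_v$-conjugacy classes of maximal tori, and similarly for the $H_v$-side. After grouping $G_v$-conjugacy classes of tori into stable classes and using that $\sum_{\pi_v}\chi_{\pi_v}$ is a stable class function on the $L$-packet (with the global parity normalization of transfer factors noted in the preceding remarks), each side becomes an integral of a stable character against a stable orbital integral on the regular semisimple locus. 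For the trivial endoscopic datum the character $\kappa$ is trivial, so the matching condition reduces to
$$SO^{H_v}_{n(t)}(f_v^H) = \Delta_v(n(t),t)\cdot SO^{G_v}_t(f_v^G)$$
for regular semisimple $t\in G_v$ whose stable norm $n(t)$ is regular in $H_v$. Since $n$ identifies stable conjugacy classes of maximal tori in $G_v$ with those stable classes of tori in $H_v$ lying in its image, substitution and change of variables along $n$ bring both sides of the Chan--Gan identity into the form of integrals of their respective stable character sums against one and the same family $SO^{G_v}_t(f_v^G)$, now over the locus where both $t$ and $n(t)$ are regular.

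Finally, since a matching test function $f_v^G$ can be chosen so that $t\mapsto SO^{G_v}_t(f_v^G)$ approximates an arbitrary bump function near any prescribed regular semisimple $t_0$ with $n(t_0)$ regular --- using the inverse function theorem on the characteristic polynomial map near a regular element, as is standard for localizing stable orbital integrals --- the integral identity forces the pointwise equality at $t_0$. The main obstacle I anticipate is the sign bookkeeping in the local transfer factors at the places where $G_v$ is ramified: the symmetric form of the lemma, free of any overall sign, is available only after the global parity normalization that flips local transfer factor signs at an even number of ramified places, and the argument above silently uses this normalization at each step via the stability of the character sums on both sides.
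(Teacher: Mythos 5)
Your proposal is correct and spells out in detail the inference the paper leaves to the reader: the paper gives no separate proof environment for this lemma, and its only justification is the sentence immediately preceding it (``The identities above may therefore be considered as identities of stable characters''). Your Harish-Chandra-regularity, Weyl-integration, trivial-endoscopic-datum matching, and localization-of-stable-orbital-integrals argument is exactly the standard machinery behind that one-line assertion, so the approach is the same, just made explicit; you also correctly anticipate the transfer-factor sign normalization, which the paper addresses in its remark~1 just before the lemma.

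What you could not have seen blind: the paper immediately follows the lemma, in its remark~4 and in lemma~\ref{lem:case_VI}, with the observation that the stated identity actually \emph{fails} for the local $L$-packet of type VIc. There the representation of $H_v$ that genuinely matches the singleton VIc packet of $G_v$ (by Chan--Gan's \S11.2) is the full Siegel-induced representation with character $\chi_{\mathrm{VIa}}+\chi_{\mathrm{VIc}}$, not the Gan--Takeda $L$-packet $\{\mathrm{VIc}\}$; if the lemma held there, $\chi_{\mathrm{VIa}}$ would have to vanish on all $G_v$-norms and hence be Klingen-induced, which it is not. Your chain of reasoning from the distributional identity to the pointwise one is sound, but the hypothesis --- Chan--Gan's prop.~11.1(i) interpreted as a statement about Gan--Takeda $L$-packets --- is exactly what the authors flag as overseen in that single case, so the pointwise equality you derive inherits the same exception, and the argument as you set it up has no way to detect it.
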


We remark that the character identity stated in lemma~\ref{Gan-inner} does not hold in one \textit{exceptional} case explained below in 4, namely for representations $\pi_v$ of type \nosf{VIc} ).
To proceed, we first need to describe the set of $G_v$-norms.

\begin{table}
\begin{footnotesize}
\begin{center}
\begin{tabular}{|c|c|c|c|c|c|c|c}\hline
$H_v$ type &  $G_v$ type               &$Q_v$&$Q_v$&$P_v$&$P_v$& $P_v$ \\\hline\hline
     I                    & --         & *   &     & *   &     &       \\\hline\hline
     IIa                  & IIa$^G$    & *   &     & *   &     &       \\\cline{1-2}
     IIb                  & --         & *   &     &     & *   &       \\\hline\hline
     IIIa                 & --         & *   &     & *   &     &       \\\cline{1-2}
     IIIb                 & --         &     & *   & *   &     &       \\\hline\hline
      IVa                 & IVa$^G$    & *   &     & *   &     &       \\\cline{1-2}
      IVb                 & --         & *   &     &     & *   &       \\\cline{1-2}
      IVc                 & IVc$^G$    &     & *   & *   &     &       \\\cline{1-2}
      IVd                 & --         &     & *   &     & *   &       \\\hline\hline
      Va                  & Va$^G$     & *   &     & *   & *   &       \\
      Va*     & $\chi_0\otimes$ Va$^G$ &     &     &     &     &       \\\cline{1-2}
      Vb                  & Vb$^G$     & *   &     & *   &     & *     \\\cline{1-2}
      Vc       & $\chi_0\otimes$Vb$^G$ & *   &     &     & *   &       \\\cline{1-2}
      Vd                  & --         & *   &     &     &     & *     \\\hline\hline
      VIa                 & --         & *   &     & *   &     &       \\
      VIb                 &            & *   &     &     & *   &       \\\cline{1-2}
      VIc                 & VIc$^G$    &     & *   & *   &     &       \\\cline{1-2}
      VId                 & --         &     & *   &     & *   &       \\\hline\hline
      VII                 & --         & *   &     &     &     &       \\\hline\hline
      VIIIa               & --         & *   &     &     &     &       \\
      VIIIb               & --         & *   &     &     &     &       \\\hline\hline
      IXa                 & --         & *   &     &     &     &       \\\cline{1-2}
      IXb                 & --         & *   &     &     &     &       \\\hline\hline
      X                   & X          &     &     & *   &     &       \\\hline\hline
      XIa                 & XIa        &     &     & *   &     &       \\
      XIa*                & XIa*$^G$   &     &     &     &     &       \\\cline{1-2}
      XIb                 & XIb        &     &     & *   &     &       \\\hline\hline
 $\theta_+(\pi_1,\pi_2)$  & $\theta(\pi_1^{\mathrm{JL}},\pi_2)$  &  &   &   &   & \\
 $\theta_-(\pi_1,\pi_2)$  & $\theta(\pi_2^{\mathrm{JL}},\pi_1)$  &  &   &   &   & \\\hline\hline
 $\theta(\sigma)$         & $\theta(\sigma^{\mathrm{JL}})$       &  &   &   &   & \\\hline
\end{tabular}
\end{center}
\end{footnotesize}
\begin{footnotesize}
\caption{\textit{Local representations}. The first two columns list the irreducible representations of $H_v$ and its non-split inner form $G_v$ at non-archimedean local fields. The roman numerals have been introduced by Roberts and Schmidt~\cite{Roberts-Schmidt}.
In these columns, the horizontal lines or double lines divide the $L$-packets. For the Langlands parameters, see table~A.7 in loc.\ cit.
In the right hand columns, for each block the sum of the indicated $H_v$-representations is parabolically induced from $Q_v$ resp. $P_v$ (semisimplified).
In the bottom lines, $\pi_1$ and $\pi_2$ are pairwise non-isomorphic cuspidal representations of $\Gl(2)$ and for the cuspidal theta-liftings see \cite[\S8.2,\S11.3]{Chan_Gan_GSp4_III}.
In the last line, $\sigma$ is an irreducible cuspidal representation of $\Gl(4)$ and
$\pi^{\mathrm{JL}}$ is its Jacquet Langlands-lift to a representation of the unit group of a rank four division algebra over $\GlobalField_v$. Their theta lifts are cuspidal irreducible representations of $H_v$ resp.~$G_v$.\label{Maintable}}
\end{footnotesize}
\end{table}

\bigskip\noindent
3) \textit{The set of $G_v$-norms}.
For a nonarchimedean place $v$, up to isomorphism there exists only one nonsplit inner form $G_v$ of $H_v=\GSp(4)$.
Recall that the characters of irreducible admissible representations of $H_v$ are linear independent
on the subset $H_v^{\mathrm{reg}}$ of regular semisimple elements of $H_v$ and are there represented by smooth functions.
The Grothendieck group $R(H_v)$ generated by irreducible admissible representations of $H_v$
can thus be embedded into the a space of functions on $H_v^{\mathrm{reg}}$ by assigning to each irreducible admissible 
representation $\pi_v$ its character $\chi_{\pi_v}$.
In section~\ref{s:tori} we explicitly describe the set of maximally tori of $H_v$ whose regular elements are not in the set of $G_v$-norms.
Indeed, these are the maximal tori of $H_v$ that up to conjugation are contained in the Levi subgroup of the Klingen parabolic subgroup $Q_v$ of $H_v$.
We only deal with stable distributions in 2) above.
By \cite{Kazdhan}~thm.~A the functions defined by stable elements of $R(H_v)$ that vanish on the regular elements in the set of $G_v$-norms are contained in the subgroup $R_I(H_v)$.
By definition $R_I(H_v)$ is spanned by the images of
$$i_{H_v,M_v}: R_{L_v} \to R(H_v)\ ,$$ where $i_{H_v,M_v}$
is defined by normalized induction $i_{H_v,M_v}$ from one of the proper Levi subgroups $M_v$ of $H_v$.
Note, by  \cite{Kazdhan} 1.3
the image of $i_{H_v,M_v}: R_{L_v} \to R(H_v)$  only depends on the conjugacy class of $M_v$ in $H_v$.
It suffices to consider a subset of representatives of
proper Levi subgroups $M_v$ up to conjugation in $H_v$.
Thus $M_v$ can be chosen to be the Levi subgroups of the Siegel parabolic $P_v$,
the Klingen parabolic $Q_v$ and the Borel subgroup $B_v$.
Furthermore, the image for the latter is contained in the image of $i_{H_v,M_v}: R(L_v) \to R(H_v)$
for the Levi subgroup $M_v$ of $Q_v$.
So it suffices to consider the Siegel Levi subgroup and the
Klingen Levi-subgroup. By \cite{vanDijk}, see also
 \cite{Kazdhan} lemma 3, all elements in the image of $i_{H_v,M_v}$ 
for the Klingen Levi vanish on the regular elements of the set of $G_v$-norms
(using proposition \ref{prop:tori_forms} of section~\ref{s:tori}) by an analogue of Mackey's lemma. 
But the same argument also shows that this is not the case for any elements in the image of
$i_{H_v,M_v}: R(L_v) \to R(H_v)$ for the Siegel parabolic
unless they already come from the image of the Levi subgroup of the Borel group $B_v$.
This easily follows from the Langlands classification.
Indeed the argument of \cite{Borel_Wallach} XI.2.14. shows that the induced
representations $I_{P,\sigma,\chi}=i_{H_v,M_v}(\sigma,\chi)$ obtained
from the Langlands data $(P,\sigma,\chi)$ for the proper standard parabolic subgroups define a basis for $R_I(H_v)$.
See also \cite[prop.~1]{Kazdhan}. By prop.~\ref{prop:tori_forms} this implies:

\begin{lemma}\label{Klingeninduced}
For a stable finite linear combination $T$ of characters of irreducible admissible representations of $\GSp(4)_v$ the following assertions are equivalent:
\begin{enumerate}
 \item $T$ vanishes on the subset of $G_v$-norms in $H_v$ for the non-split inner form $G_v$,
 \item $T$ is in the image of parabolic induction $i_{H_v,M_v}: R(M_v) \to R(H_v)$ for the Levi subgroup $M_v$ of the Klingen parabolic group $Q_v$.
\end{enumerate}
\end{lemma}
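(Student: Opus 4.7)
The plan is to verify both implications separately, making explicit the discussion preceding the statement.

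For (2)$\Rightarrow$(1), I would invoke the van Dijk analogue of Mackey's formula (\cite{vanDijk}, cf.~\cite{Kazdhan}~lemma~3): the character of the parabolically induced representation $i_{H_v,M_v}(\sigma)$ vanishes at a regular semisimple $t\in H_v$ unless some $H_v$-conjugate of $t$ lies in $M_v$. By prop.~\ref{prop:tori_forms}, the maximal tori of $H_v$ whose regular elements fail to be $G_v$-norms are exactly those conjugate to tori contained in the Klingen Levi $M_v$. Equivalently, every regular $G_v$-norm sits inside a torus not conjugate to a subtorus of $M_v$, so every character in $\im(i_{H_v,M_v})$ vanishes on the set of $G_v$-norms.

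For (1)$\Rightarrow$(2), first apply Kazhdan's theorem~A (\cite{Kazdhan}) to deduce $T\in R_I(H_v)$, the subgroup generated by the images of $i_{H_v,M'}$ over proper Levi subgroups $M'$. Up to conjugacy these are the Borel, Siegel and Klingen Levis of $H_v$. Induction in stages along $B_v\subset Q_v$ shows that every Borel-induced class already lies in $\im(i_{H_v,M_v})$, so $R_I(H_v)$ is the sum of $\im(i_{H_v,M_v})$ and the image of parabolic induction from the Siegel Levi. Decompose $T=T_{\mathrm{Kl}}+T_{\mathrm{Si}}$ accordingly; the forward direction just proved implies $T_{\mathrm{Kl}}$ vanishes on the $G_v$-norms, hence so does $T_{\mathrm{Si}}$.

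To conclude $T_{\mathrm{Si}}\in \im(i_{H_v,M_v})$, I would expand $T_{\mathrm{Si}}$ in the Langlands basis of $R_I(H_v)$ indexed by Langlands data $(P,\sigma,\chi)$ for proper standard parabolics $P$ (cf.~\cite{Borel_Wallach}~XI.2.14 and \cite{Kazdhan}~prop.~1). The basis vectors with $P=P_v$ (Siegel) and $\sigma$ supercuspidal (i.e.~not already induced from $B_v$) give induced characters that, by van Dijk again, restrict to regular elements of the Siegel-Levi tori that are $G_v$-norms (which exist by prop.~\ref{prop:tori_forms}) via the character $\chi_\sigma$ of the Siegel Levi $\cong\Gl(2)_v\times\Gl(1)_v$ modulo center. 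Linear independence of cuspidal characters on the regular set then forces all such strictly-Siegel coefficients in $T_{\mathrm{Si}}$ to vanish, leaving $T_{\mathrm{Si}}\in\im(i_{H_v,M_v})$ and hence $T\in\im(i_{H_v,M_v})$.

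The main technical input is the torus classification prop.~\ref{prop:tori_forms}, which dichotomises the maximal tori of $H_v$ by whether they meet the set of $G_v$-norms. This structural fact is what makes Klingen-induction vanish identically on $G_v$-norms while Siegel-induction (beyond what factors through Klingen) cannot; combined with the Langlands basis and linear independence of cuspidal characters, it pinpoints the vanishing of the Siegel coefficients and thereby the converse direction.
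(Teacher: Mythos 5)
Your argument follows the paper's proof closely in structure and in every essential ingredient: Kazhdan's theorem A to land in $R_I(H_v)$, van~Dijk's Mackey formula for induced characters, the torus dichotomy of prop.~\ref{prop:tori_forms}, the Langlands basis of $R_I(H_v)$ via Borel--Wallach~XI.2.14 and Kazhdan, and linear independence of characters to kill the Siegel coefficients. The direction (2)$\Rightarrow$(1) is identical to the paper's.

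There is one inaccuracy in your converse step that you should fix. You restrict to Siegel basis vectors $(P_v,\sigma,\chi)$ with $\sigma$ supercuspidal, glossing this as ``not already induced from $B_v$''. That identification is wrong: the Langlands basis of $R_I(H_v)$ is indexed by \emph{discrete series} $\sigma$ of the Levi, and for the Siegel Levi $\cong (\Gl(2)\times\Gl(1))^0$ these include the twisted Steinberg representations. As elements of the Grothendieck group, the resulting $I_{P_v,\mathrm{St}\otimes\chi}$ are distinct basis vectors, \emph{not} in $\im(i_{H_v,M_v})$ for the Klingen or Borel Levi; so your argument as written leaves those coefficients unconstrained and does not establish $T_{\mathrm{Si}}\in\im(i_{H_v,M_v})$. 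The repair is routine: run the same van~Dijk restriction to the case-6 (Siegel-Levi, non-Klingen) tori that are $G_v$-norms, and invoke linear independence of \emph{discrete series} characters of $\Gl(2,k_v)$ on the regular elliptic set (not merely supercuspidal ones). This kills every Siegel coefficient, Steinberg twists included, and the remaining Borel and Klingen basis vectors all lie in $\im(i_{H_v,M_v})$ by induction in stages. The paper is itself terse here (``unless they already come from the Borel''), but its reliance on the Langlands basis implicitly covers the Steinberg case; your version, by making the supercuspidal restriction explicit, introduces the gap.
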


\bigskip\noindent
4) \textit{The exceptional case VI}.
A remark concerning the local character identities described in 2) is in order.
It seems that there is one exceptional local $L$-packet overseen in the proof of \cite{Chan_Gan_GSp4_III} prop.~11.1 (i), where the formula given in lemma~\ref{Gan-inner} does not hold.
This exceptional case belongs to the local  $L$-packet VIc  of type VI, in the notation of
Roberts and Schmidt~\cite{Roberts-Schmidt} and Sally and Tadi\'{c} \cite{Sally-Tadic}.

\begin{lemma}\label{lem:case_VI}
The non-archimedean local $L$-packets of type \nosf{VIc} do not satisfy a character relation in the sense of lemma~\ref{Gan-inner}.
\end{lemma}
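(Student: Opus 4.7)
The plan is to derive a contradiction by combining the vanishing property of Klingen-induced characters, lemma~\ref{Klingeninduced}, with the composition structure of type VI. The key is that the singleton decomposition of the $L$-packets attached to type VI on $H_v$, when combined with the hypothetical character identity of lemma~\ref{Gan-inner}, would force $\chi_{\mathrm{VIc}^G}$ to vanish on the regular semisimple locus of $G_v$.

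First, recall from table~\ref{Maintable} that in the Grothendieck group of smooth representations of $H_v$, the semisimplified Klingen induction corresponding to the second column of $Q_v$ is $\mathrm{VIc}+\mathrm{VId}$. Lemma~\ref{Klingeninduced} therefore yields
$$\chi_{\mathrm{VIc}}(n(t))+\chi_{\mathrm{VId}}(n(t))=0$$
for every $t\in G_v$ regular semisimple whose stable conjugacy class lies in the set of $G_v$-norms.

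Next, observe that the singleton $L$-packet $\{\mathrm{VId}\}$ has no matching representation on $G_v$ (the corresponding entry in table~\ref{Maintable} is ``--''); if the identity of lemma~\ref{Gan-inner} held for this packet, one would obtain $\chi_{\mathrm{VId}}(n(t))=0$ on $G_v$-norms, and the Klingen relation above would force $\chi_{\mathrm{VIc}}(n(t))=0$ there as well. Applying lemma~\ref{Gan-inner} to the singleton packet $\{\mathrm{VIc}\}$, which is matched with $\{\mathrm{VIc}^G\}$ on $G_v$, would then give $\chi_{\mathrm{VIc}^G}(t)=\chi_{\mathrm{VIc}}(n(t))=0$ for every regular semisimple $t\in G_v$. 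By linear independence of characters this contradicts the nonvanishing of $\chi_{\mathrm{VIc}^G}$ on the regular set, since $\mathrm{VIc}^G$ is a nonzero irreducible admissible representation of $G_v$.

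Consequently at least one of the two putative character identities must fail. The assertion of the lemma is that the failure should be attributed to the packet $\{\mathrm{VIc}\}$. The main technical point, which is the chief obstacle, lies in ruling out the alternative scenario in which the identity fails for $\{\mathrm{VId}\}$; this will be settled by a direct inspection of the character computation underlying the proof of \cite[prop.~11.1(i)]{Chan_Gan_GSp4_III} at the exceptional reducibility point, where the singleton decomposition $\{\mathrm{VIc}\}\sqcup\{\mathrm{VId}\}$ is seen to be incompatible with the Aubert dual of the tempered packet $\{\mathrm{VIa},\mathrm{VIb}\}$, the VId-contribution being precisely what is missing from the naive identity for $\{\mathrm{VIc}\}$.
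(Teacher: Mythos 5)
There is a genuine gap, and you acknowledge it yourself: your chain of deductions shows only that the character identities cannot simultaneously hold for both singleton packets $\{\text{\textsf{VIc}}\}$ and $\{\text{\textsf{VId}}\}$, but it does not determine which one fails. You then defer to ``a direct inspection of the character computation underlying'' Chan--Gan, invoking Aubert duality, without carrying it out. That deferred inspection is precisely where the substance of the proof should lie, so the argument as written does not prove the lemma.

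There is also a conceptual problem with the step that applies lemma~\ref{Gan-inner} to $\{\text{\textsf{VId}}\}$. That lemma is formulated for $L$-packets of the inner form $G_v$ and the $H_v$-packets attached to them; for type \textsf{VId} there is no $L$-packet on $G_v$ at all (the corresponding entry in table~\ref{Maintable} is ``--''), so the lemma makes no assertion about $\chi_{\text{\textsf{VId}}}$, and ``if the identity held for this packet'' is not a well-posed hypothesis within the framework of lemma~\ref{Gan-inner}. What your argument implicitly needs is the separate statement that $\chi_{\text{\textsf{VId}}}$ vanishes on the $G_v$-norms, equivalently (by lemma~\ref{Klingeninduced} and the relation $\chi_{\text{\textsf{VIc}}}+\chi_{\text{\textsf{VId}}}=0$ on $G_v$-norms) that $\chi_{\text{\textsf{VIc}}}$ vanishes there as well; this is not established anywhere and is in fact the crux of the matter.

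The paper's proof avoids all this by producing a character relation for $\chi_{\text{\textsf{VIc}}^G}$ that is actually true: the type \textsf{VIc}${}^G$ representation is fully induced from the minimal parabolic of $G_v$, and parabolic descent (as in \cite[\S11.2]{Chan_Gan_GSp4_III}) matches it with the Siegel-induced representation on $H_v$, giving $\chi_{\text{\textsf{VIc}}^G}(t)=\chi_{\text{\textsf{VIa}}}(n(t))+\chi_{\text{\textsf{VIc}}}(n(t))$ on $G_v$-norms. Subtracting the putative identity $\chi_{\text{\textsf{VIc}}^G}(t)=\chi_{\text{\textsf{VIc}}}(n(t))$ would force $\chi_{\text{\textsf{VIa}}}$ to vanish on $G_v$-norms, hence by lemma~\ref{Klingeninduced} to be Klingen-induced, which is false since \textsf{VIa} is the generic tempered constituent. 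The key ingredient you are missing is this \emph{known} Siegel-type character relation; once you have it, the contradiction is immediate and the role of \textsf{VId} disappears from the argument.
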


\begin{proof}
Indeed, the local $L$-packet $\Pi_\phi(G)=\{\pi_v\}$ of type \nosf{VIc} is the singleton composed of the parabolically induced representation
$\pi_v=\Ind_{P_{G,v}}^{G_v}(\rho)$ from a representation $\rho=\nu_D^{\pm1/2}\boxtimes \sigma$ of the Levi $L_{G,v}\cong D_v^\times\boxtimes\GlobalField_v$ of the standard minimal parabolic subgroup $P_{G,v}$ of $G_v$.
As shown by Chan and Gan \cite[\S11.2]{Chan_Gan_GSp4_III}, there is a character relation between $\pi$ and the Siegel induced representation $\pi'=\Ind^{H_v}_{P_{H,v}}(\rho')$ for the representation $\rho'=\nu^{\pm1/2}\mathrm{St}_{\Gl(2)}\boxtimes \sigma$ of $L_{H,v}(\GlobalField_v)$.
However, $\chi_{\pi'_v}=\chi_{VIa}+\chi_{VIc}$ has two constituents of type \nosf{VIa} and of type \nosf{VIc}.
If the proposed character relation of lemma~\ref{Gan-inner} would hold for type~\nosf{VIc}, then the character $\chi_{VIa}$ would be zero on the regular $G_v$-norms. But then lemma~\ref{Klingeninduced} implies that $\chi_{VIa}$ would be Klingen-induced and it is well-known that this is not true.
\end{proof}
One may expect that the \lq{vanishing\rq} of the character $\chi_{VIa} + \chi_{VIb}$ of the  generic $L$-packet of type VI on the set of semisimple regular $G_v$-norms
 might describe the general rule
that detects those cases where the behaviour of analytic distributional 
character lift is not compatible with the Langlands conjecture.
Concerning the global situation,
type VI may indeed appear for Saito-Kurokawa representations,
where locally VIa${}^G$ may lift to an Arthur packet with character $\chi_{VIc} - \chi_{VIb}$.
Notice that the characters $\chi_{VIc} - \chi_{VIb} $ and $ \chi_{VIa} + \chi_{VIc}$ coincide on all regular elements of $H_v$ that are norms from $G_v$.

\bigskip\noindent 
5) \textit{Main result.}
We are now prepared to formulate and prove the main result of this section. 
A cuspidal representation $\pi_\fin$ of $G(\A_\fin)$ of general type will
be said to be \emph{cohomologically relevant} if $\pi_\fin$ is cohomological such that the coefficient
$c_G(\pi)=m_{G,\mathrm{coh}}(\pi_\fin)/d(G) $ does not vanish.

\begin{thm}\label{mainthm}
Let $G$ be an inner form of $H=\GSp(4)$ that is anisotropic at the archimedean place and let $S_G$ denote the set of nonarchimedean places $v$ where $G_v$ does not split.
Let $\pi$ be an irreducible automorphic representation of $G(\A)$.
Suppose $\pi$ is of general type, i.e neither weakly $H$-Eisenstein nor endoscopic.
Then $\pi$ has a lift to $H$ given by a unique irreducible cuspidal generic cohomological automorphic representation
$\pi'$ of $H(\A)$ that is weakly equivalent to $\pi$,
i.e. $\pi^S \cong (\pi')^S$ holds for a sufficiently large finite set $S$ of places.
Furthermore, the following assertions hold:
\begin{enumerate}
\item Every irreducible automorphic representation of $G(\A)$ in the weak packet $\Pi(\pi)$ 
of all irreducible automorphic representations that are weakly equivalent to $\pi$ has multiplicity one. 
\item $\Pi(\pi)$ is a product of local $L$-packets so that the local $L$-packets
are in 1-1 correspondence with respect to the local Langlands parameters.
See table~\ref{Maintable} for an explicit description of the packets in terms of the Sally-Tadi\'{c} classification.
\item For every global cohomological representations $\pi$ of $G(\A)$
of general type and every $v\in S_G$, the local representation $\pi_v$ cannot belong to case \nosf{VIc}${}^G$ .
\item For an irreducible cohomological automorphic representation $\pi'$ of $H(\A)$
of general type the following assertions are equivalent:
\begin{enumerate}
\item $\pi'$ is the lift of some irreducible automorphic representation $\pi$ of $G$.
\item for every place $v\in S_G$, the local $L$-packet containing $\pi'_v$ is not fully Klingen induced, i.e. the direct sum of its constituents is not equivalent to the semisimplification of a Klingen induced representation.
\end{enumerate}
\end{enumerate}
\end{thm}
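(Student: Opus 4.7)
The plan is to combine the cohomological trace comparison of lemma~\ref{MaTch} with the local character identities of Chan and Gan (lemma~\ref{Gan-inner}) and the structural description of the cohomological spectrum of $H$ from proposition~\ref{spectrum}. Because $G$ is anisotropic at $\infty$, the space $S_K(G)$ is discrete and its cohomology sits in degree zero, so $c_G(\pi)=m_G(\pi)>0$ is simply the cuspidal multiplicity and the $G$-side of the matching formula is non-zero for a suitable $f_S$. Choosing a matching $f_S^H$, the $H$-side is then non-zero too and, by Franke together with the general-type hypothesis, yields at least one irreducible cohomological cuspidal automorphic representation $\pi'$ of $H(\A)$ of general type with $(\pi')^S\cong \pi^S$. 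Applying proposition~\ref{spectrum} then describes $\Pi(\pi')$ as a cartesian product of local $L$-packets in which every member occurs with multiplicity one and admits a unique globally generic representative with strong multiplicity one.

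To transfer this packet structure back to $G$, I would apply lemma~\ref{Gan-inner} place by place at $v\in S_G$, using the local Langlands correspondence of \cite{Gan_Takeda,Gan_Tantono} to match $L$-packets of $G_v$ with those of $H_v$ on the open set of regular $G_v$-norms. Substituting these local character identities into the global identity of lemma~\ref{MaTch} yields an equation between character sums supported on the product of regular $G_v$-norm loci; linear independence of characters there, together with the $H$-side multiplicity-one statement, forces each $L$-packet of $G$ to contribute with multiplicity one as well. This gives assertions~(1) and~(2), with table~\ref{Maintable} being essentially a case-by-case read-off of the Sally--Tadic classification through this packet correspondence.

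The main obstacle is assertion~(3), that type \nosf{VIc}${}^G$ cannot occur at any $v\in S_G$. By lemma~\ref{lem:case_VI} the clean character identity of lemma~\ref{Gan-inner} fails in this case: the $G_v$-character of \nosf{VIc}${}^G$ agrees on regular $G_v$-norms with the Siegel-induced character $\chi_{VIa}+\chi_{VIc}$ rather than with that of an irreducible $H_v$-packet. Were such a local factor to appear in a global cohomological $\pi$ of general type, propagating this through the matching formula would force $\pi'$ to admit a non-tempered local component of type \nosf{VIb}, \nosf{VIc}, or \nosf{VId} at $v$, which by the classification of cohomological cuspidal representations of $\GSp(4)$ in \cite{Gan_SK_inner,Chan_Gan_GSp4_III} places $\pi'$ into a Saito--Kurokawa or endoscopic global packet, contradicting the hypothesis on $\pi$.

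Finally, for assertion~(4) I would apply lemma~\ref{Klingeninduced} in both directions. A cohomological cuspidal $\pi'$ of $H(\A)$ of general type is a lift from $G$ if and only if for every $v\in S_G$ its local stable character does not vanish identically on the regular $G_v$-norms, which by lemma~\ref{Klingeninduced} is precisely the condition that none of its local $L$-packets at these places is Klingen induced. The forward direction is immediate, since a non-zero $G$-side contribution requires non-vanishing of the associated local characters on the $G_v$-norms. The reverse direction is obtained by running the matching formula backwards: the non-vanishing hypothesis ensures the existence of test functions for which the $H$-side trace does not vanish, which by the packet correspondence already established produces a unique $\pi$ on the $G$-side whose components at the places in $S_G$ are read off via the inverse local Langlands correspondence.
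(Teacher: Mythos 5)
Your overall strategy matches the paper's: establish a weak lift (the paper obtains it from theorem~\ref{innerlifting}; you rederive it from the non-vanishing of the $G$-side of the matching relation), invoke proposition~\ref{spectrum} for the packet structure and multiplicity one on the $H$-side, substitute the local character identities of lemma~\ref{Gan-inner} into the global trace comparison of lemma~\ref{MaTch}, and deduce assertions~(1) and~(2) by linear independence of characters on the $G_v$-norm locus. Your treatment of assertion~(4) via lemma~\ref{Klingeninduced} likewise follows the paper's line.

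Your proof of assertion~(3), however, takes a different route with a genuine gap. You argue that $\pi_v\cong\nosf{VIc}^G$ would force $\pi'$ to have ``a non-tempered local component of type \nosf{VIb}, \nosf{VIc}, or \nosf{VId}'' and cite \cite{Gan_SK_inner,Chan_Gan_GSp4_III} for the claim that such a component places $\pi'$ in a Saito--Kurokawa or endoscopic global packet. Two problems. First, \nosf{VIb} is tempered (it is a member of the tempered $L$-packet $\{\nosf{VIa},\nosf{VIb}\}$), so it does not belong on your list of non-tempered types, and the Gan--Tantono correspondence matches $\nosf{VIc}^G$ precisely with the singleton $\{\nosf{VIc}\}$, leaving no ambiguity. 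Second, and more seriously, the inference ``some non-tempered local factor $\Rightarrow$ $\pi'$ is Saito--Kurokawa or endoscopic'' is essentially the Ramanujan bound, or a consequence of Arthur's global classification, for cohomological cuspidal representations of $\GSp(4)$ of general type; the references you cite do not establish this, and the paper deliberately avoids taking it as an input (see the remarks about the conditional status of Arthur's classification preceding proposition~\ref{spectrum}). The paper's actual argument for~(3) is purely character-theoretic: lemma~\ref{lem:case_VI} shows the clean character relation of lemma~\ref{Gan-inner} fails for type~\nosf{VIc}, because on the $G_v$-norms the character of $\nosf{VIc}^G$ agrees with the Siegel-induced distribution $\chi_{\nosf{VIa}}+\chi_{\nosf{VIc}}$ rather than with $\chi_{\nosf{VIc}}$ alone; together with the non-vanishing of the factorized global character identity established for assertions~(1),~(2) this rules out $\pi_v\cong\nosf{VIc}^G$. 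You should replace the appeal to the classification by this character-relation argument.
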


\begin{proof}
The representation $\pi_\infty$ defines a unique weight $\lambda$.
Consider the trace formula comparison of section~\ref{Vergleich} for the coefficient systems attached to $\lambda$.
By thm.~\ref{innerlifting} there already exists a weak lift $\pi'$ of $\pi$ to $H(\A)$, 
and without loss of generality we can assume this lift $\pi'$ to be generic by \cite{W_Schiermonnikoog} and prop.~\ref{spectrum}.
Each term of the cohomological trace formula of $H(\A)$
appears in form of packets  $\sum_{\pi_v'} \chi_{\pi_v'}(f^H_v) $ 
by prop.~\ref{spectrum}.
On the other hand  by \cite{Gan_Takeda}, see also lemma~\ref{Gan-inner},  
we know $\sum_{\pi_v} \chi_{\pi_v}(f_v)   =  \sum_{\pi_v'} \chi_{\pi_v'}(f^H_v)  $.
Hence the cohomological trace formula comparison of the 
$\pi^S=(\pi')^S$ isotypic components become
$$  \sum_{\pi_S} \chi_{\pi_S}(f_S) \  = \  \prod_{v\in S} \sum_{\pi_v} \chi_{\pi_v}(f_v) \ $$
for test functions with support in the set of regular semisimple elements.
Hence assertion~1 and 2 follow from the linear independence of characters on the set of regular semisimple elements.
In particular this implies that $\sum_{\pi_v} \chi_{\pi_v}$ does not vanish identically on the set of $G_v$-norms in $H_v$.
By lemma~\ref{lem:case_VI}, this excludes that $\pi'_v$ can be of type~\nosf{VI},
and hence excludes that $\pi_v$ can be of type~\nosf{VI} either.
This implies assertion~3.

\bigskip\noindent
For the last assertion we now write $S:=S_G$, for simplicity of notation. 
We use the trace comparison above on $G(\A_S)$ for $S=S_G$.
Reading it as a trace identity on $H(\A_S)$ we find the following:
Let $S$ still denote the set of nonarchimedean places where $G$ does not
split. Fix $(\pi')^S$ where $\pi'$ is a cuspidal automorphic representation of $H(\A)$
of general type with $\pi'_\infty$ in the $L$-packet attached to $\lambda$.
We then say, $\pi'$ is not a lift from $G$ if it is not in $\Pi(\pi')$ for some lift $\pi'$
of an irreducible representation $\pi$ of $G(\A)$ with $\pi_\infty$ determined by 
the given weight $\lambda$. The the finite sum over all $\pi'$ in $\Pi(\pi')$ that 
are not liftings from $G(\A)$ vanishes on the open set of $G$-norms. In other words,
for all $f$ with regular semisimple support in $G(\A_S)$ and suitable matching
functions $f^H$ we conclude that
$   \sum_{\pi'_S}  \pi'_S(f^H)  =  0$
where the summation is over all $\pi'_S$ such that $\pi'_\infty(\lambda)\pi'_S
(\pi')^S$ is not a lift from $G$ where we fix $(\pi')^S$ by using a good projector
for the places outside $S$. By our knowledge of the cohomological spectrum 
prop.~\ref{spectrum}
we then obtain the stronger result
$$   \prod_{v\in S} \sum_{\pi'_v}  \pi'_v(f_v^H)  =  0  \ . $$
Here $\sum_{\pi'_v}  \pi'_v(f_v^H)$ is the sum over constituents
of a local $L$-packet of $H_v$ that contains a generic member.
The product over $v\in S$ vanishes if and only if one of its
summands  $\sum_{\pi'_v}  \pi'_v(f_v^H)$ vanishes for
all test functions $f_v$ with regular support 
and locally matching $(f_v,f_v^H)$. By lemma~\ref{Klingeninduced}
this implies that the sum of representations $\sum_{\pi'_v}  \pi'_v$ in the local $L$-packet
is in the image of  $$i_{H_v,M_v}: R(M_v) \to R(H_v)$$ 
for the Levi subgroup $M_v$ of the Klingen parabolic group $Q_v$.
\end{proof}

The same arguments that prove theorem~\ref{mainthm} 
also can be used for arbitrary inner forms $G$ of $H=\GSp(4)$.
However, the lifting assertions only carry over for cohomologically
relevant representations $\pi$ of $G(\A)$. The description
of $\Pi(\pi)$ a priori may not carry over. However, under the assumption of
conjecture~\ref{conj:middle_coh} everything carries over verbatim.
Let us call $\lambda$ to be $G$-regular,
if the Lie algebra cohomology with respect to $V_\lambda$
is nontrivial only for irreducible admissible representations of $G_\infty$
in the discrete series of $G_{\infty}$.
For $G$-regular weights $\lambda$ the conjecture~\ref{conj:middle_coh} holds by trivial reason.
Hence the proof  of the last theorem carries over verbatim in these cases and we obtain:

\begin{thm} \label{mainthm-2}
Fix an arbitrary inner form $G$ of $H=\GSp(4)$
and let $S_G$ denote the set of nonarchimedean places where $G$ does not split.
Let $\pi$ be an irreducible automorphic representation of $G(\A)$ of general type such that $\pi_\infty$ belongs to the discrete series of $G_\infty$
attached to a $G$-regular highest weight $\lambda$.
Then $\pi$ has a lift $\pi'$ to $H$. More precisely, there is a unique irreducible 
cuspidal generic cohomological automorphic representation
$\pi'$ of $H(\A)$ that is weakly equivalent to $\pi$,
i.e. so that $\pi_v \cong \pi'_v$ holds for almost every non-archimedean place $v$ with finitely many exceptions.
Furthermore, the following assertions hold:
\begin{enumerate}
\item Every irreducible automorphic representation of $G(\A)$ in the weak packet $\Pi(\pi)$ has multiplicity one. This also holds without the assumption that $\pi$ is of general type.
\item $\Pi(\pi)$ is a product of local L-packets so that the local $L$-packets are
in bijective correspondence with respect to the local Langlands parameters.
\item  For global cohomological representations $\pi$ of $G(\A)$
of general type, the local factors $\pi_v$ for $v\in S_G$ cannot belong to case VI${}^G$ .
\item For an irreducible cohomological automorphic representation $\pi'$ of $H(\A)$
of general type with $\pi'_\infty$ belonging to the $L$-packet attached to the fixed $G$-regular weight $\lambda$
the following assertions are equivalent:
\begin{enumerate}
\item $\pi'$ is a global lift of an irreducible automorphic representation $\pi$
of $G$ with $\pi_\infty$ in the discrete series attached to $\lambda$
\item for every $v\in S_G$, the local $L$-packets attached to $\pi_v$ is not Klingen induced, i.e.~the direct sum of its constituents is not the semisimplification of a Klingen induced representation.
\end{enumerate}
\end{enumerate}  
\end{thm}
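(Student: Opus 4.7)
The plan is to observe that the full proof of Theorem \ref{mainthm} uses anisotropicity of $G_\infty$ only once: to guarantee that the virtual multiplicities $m_{G,\virt}$ appearing in lemma \ref{MaTch} are honest non-negative integer multiples of cuspidal multiplicities. This input is precisely what conjecture \ref{conj:middle_coh} secures, and for a $G$-regular weight $\lambda$ the conjecture holds trivially: by the Vogan–Zuckerman classification, only discrete series representations of $G_\infty$ can contribute to $(\gfrak,\tilde K_\infty)$-cohomology with coefficients $V_\lambda$, so the cuspidal cohomology on $S_K(G)$ attached to $\lambda$ is concentrated in the middle degree and the factor $\chi(\pi_\infty,\lambda)/d(G)$ in lemma \ref{MaTch} is a single non-zero integer across the archimedean $L$-packet. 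In particular $c_G(\pi)=m_{G,\coh}(\pi)/d(G)$ is a non-negative integer and the whole cohomological trace identity retains the clean character-theoretic shape it had in the anisotropic case.

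With this in hand, I would first produce a weak lift $\pi'$ of $\pi$ to $H(\A)$ by invoking the general lifting statement of corollary \ref{LIFTS}: the $G$-regularity of $\lambda$ ensures $m_{G,\virt}(\pi_\fin)\ne 0$, and the hypothesis that $\pi$ is of general type rules out $G$-Eisenstein and weakly endoscopic contributions. By Proposition \ref{spectrum} and the strong multiplicity-one statement of \cite{Jiang_Soudry}, the lift $\pi'$ can be chosen to be generic and is then unique in its weak equivalence class.

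Next I would run the cohomological trace comparison of lemma \ref{MaTch} against a pair of matching functions $(f_S,f^H_S)$ with regular semisimple support, inserting the local character identities of Chan–Gan (lemma \ref{Gan-inner}) at each place $v\in S$ and the product-of-$L$-packets description of the cohomological spectrum of $H$ (Proposition \ref{spectrum}). On the $\pi^S=(\pi')^S$-isotypical piece, the identity collapses on the regular $G$-norm locus to
\[
\sum_{\pi_S} \chi_{\pi_S}(f_S) \;=\; \prod_{v\in S}\Bigl(\sum_{\pi'_v}\chi_{\pi'_v}(f^H_v)\Bigr).
\]
Linear independence of irreducible characters on the regular semisimple set immediately yields assertions (1) and (2). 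Assertion (3) follows by contradiction: if some $\pi_v$ were of type \nosf{VIc}${}^G$, then by lemma \ref{lem:case_VI} no character relation of the required form could hold, contradicting the identity. Finally, assertion (4) is read off by running the comparison in the opposite direction: a cohomological generic $\pi'$ of general type fails to lift from $G$ exactly when, for some $v\in S_G$, the sum $\sum_{\pi'_v}\chi_{\pi'_v}$ vanishes identically on the $G_v$-norms, and by lemma \ref{Klingeninduced} this vanishing is equivalent to the $L$-packet at $v$ being Klingen-induced.

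The main obstacle is not logical but bookkeeping: keeping track of the archimedean Euler-characteristic factor $\chi(\pi_\infty,\lambda)/d(G)$ and the sign conventions in the normalization of archimedean transfer factors (lemma \ref{TRANSERARCH}, lemma \ref{ARMATCH}) in the generality of an inner form $G_\infty$ that is neither split nor anisotropic. The hypothesis of $G$-regularity is precisely what makes this factor constant across the archimedean packet and thereby reduces the cohomological trace identity to a single stable character identity on $G(\A_S)$; once this is in place the argument is mechanical and proceeds exactly as for Theorem \ref{mainthm}.
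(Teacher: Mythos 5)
The proposal is correct and takes essentially the same route as the paper: the paper's proof of theorem~\ref{mainthm-2} consists precisely in observing that for $G$-regular weights $\lambda$ conjecture~\ref{conj:middle_coh} holds trivially (only discrete series contribute to $(\gfrak,\tilde K_\infty)$-cohomology, so everything concentrates in middle degree) and then declaring that the proof of theorem~\ref{mainthm} carries over verbatim. Your write-up simply spells out what ``carries over verbatim'' means --- re-running the chain corollary~\ref{LIFTS} $\to$ lemma~\ref{MaTch} $\to$ lemma~\ref{Gan-inner} $\to$ prop.~\ref{spectrum} $\to$ linear independence of characters $\to$ lemmas~\ref{lem:case_VI} and~\ref{Klingeninduced} --- and correctly identifies the role of $G$-regularity as securing the nonvanishing and integrality of $c_G(\pi)=m_{G,\coh}(\pi)/d(G)$.
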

See table~\ref{Maintable} on page~\pageref{Maintable} for an explicit description of the $L$-packets in terms of the roman numerals introduced by Roberts and Schmidt~\cite{Roberts-Schmidt}. We remark that in assertion 1 the assumption that $\pi$ is of general type can be dropped since multiplicity one for CAP and endoscopic $L$-packets of $G$ is known by \cite{Chan_Gan_GSp4_III} and \cite{Gan_SK_inner}.
As an immediate consequence of theorem~\ref{mainthm} and theorem~\ref{mainthm-2},
an inspection of table \ref{Maintable} implies the following:  

\begin{cor}[Local types for $G$]\label{cor:local_types_G}
Fix an inner form $G$ of $H=\GSp(4)$ that is not split at a non-archimedean place $v$.
Suppose $\pi$ is a global cohomologically relevant irreducible representation of $G(\A)$ of general type that lifts to some cuspidal representation $\pi'$ of $H(\A)$.
Then the local factors $\pi_v$ belong to type~\nosf{IIa}${}^G$, \nosf{IVa}${}^G$, \nosf{Va}${}^G$,
$\chi_0\otimes$ \nosf{Va}${}^G$, \nosf{X}${}^G$, \nosf{XIa}${}^G$, \nosf{XIa*}${}^{G}$,
or belong to $L$-packets whose generic member is cuspidal.
\end{cor}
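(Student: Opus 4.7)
The corollary is essentially an inspection of table~\ref{Maintable} once the two main theorems of this section have been invoked. I would first apply either theorem~\ref{mainthm} (if $G_\infty$ is anisotropic) or theorem~\ref{mainthm-2} (if $\lambda$ is $G$-regular) to $\pi$ and its lift $\pi'$. Both theorems yield, at every nonarchimedean place $v\in S_G$ where $G_v$ is nonsplit, two local constraints on $\pi_v$: (i) by assertion~3, $\pi_v$ is not of Sally--Tadi\'c type~\nosf{VIc}${}^G$; and (ii) by the ``only if'' direction of assertion~4, the stable character of the local $L$-packet of the lift $\pi'_v$ on $H_v=\GSp(4,\GlobalField_v)$ does not lie in the image of the parabolic induction $i_{H_v,M_{Q_v}}\colon R(M_{Q_v})\to R(H_v)$ from the Levi of the Klingen parabolic $Q_v$, that is, the $L$-packet of $\pi'_v$ is not Klingen-induced in the sense of lemma~\ref{Klingeninduced}.

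Next, the local Langlands correspondence of Gan--Takeda~\cite{Gan_Takeda} and Gan--Tantono~\cite{Gan_Tantono}, supplemented by the inner-form character identities of Chan--Gan~\cite{Chan_Gan_GSp4_III} recorded in lemma~\ref{Gan-inner}, matches each local $L$-packet on $G_v$ with a unique $L$-packet on $H_v$. The first two columns of table~\ref{Maintable}, together with the horizontal separators that mark the $L$-packets, spell out this correspondence. It thus suffices to inspect each row of the table whose $G_v$ entry is not~``--'' and determine whether the associated $H_v$-side $L$-packet is Klingen-induced. This inspection uses the star pattern in the two $Q_v$-columns, which records the semisimplified irreducible constituents of the relevant reducible Klingen inductions at the degenerate parameters; one verifies that exactly the $L$-packets of the $G_v$-types \nosf{IIa}${}^G$, \nosf{IVa}${}^G$, \nosf{Va}${}^G$, $\chi_0\otimes$\nosf{Va}${}^G$, \nosf{X}${}^G$, \nosf{XIa}${}^G$, \nosf{XIa*}${}^G$ and the cuspidal theta packets at the bottom of the table produce stable characters outside the Klingen image, and are therefore allowed; the other non-dash entries (namely \nosf{IVc}${}^G$, \nosf{Vb}${}^G$, $\chi_0\otimes$\nosf{Vb}${}^G$ and \nosf{XIb}${}^G$) yield Klingen-induced $L$-packets and are ruled out by~(ii), while \nosf{VIc}${}^G$ is ruled out by~(i). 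Comparing with the list of the corollary concludes the argument.

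The main obstacle is the linear-algebra verification in the blocks \nosf{IV}, \nosf{V} and \nosf{XI}, where several $L$-packets sit inside the same parabolic-induction block and the Grothendieck-group image of $i_{H_v,M_{Q_v}}$ has to be determined carefully. This requires tracking the full list of irreducible Klingen principal series (of type~\nosf{I}) together with the reducible Klingen inductions displayed in the table, and using that twists by the nontrivial quadratic character $\chi_0$ can exchange one irreducible Klingen induction with another. Once the image is correctly identified, the check for each row reduces to a short computation; the rest of the argument is simply a translation of the theorems through the table.
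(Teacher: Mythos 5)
Your overall strategy — invoke theorem~\ref{mainthm} or theorem~\ref{mainthm-2} and then read the answer off from table~\ref{Maintable} — is exactly the paper's one-line argument, and your conclusion is correct. However, the verification you describe misattributes which constraint excludes which rows, and as stated the reasoning would not actually rule out the types you want to eliminate.

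You claim that the non-dash entries \nosf{IVc}${}^G$, \nosf{Vb}${}^G$, $\chi_0\otimes$\nosf{Vb}${}^G$ and \nosf{XIb} are "ruled out by~(ii)" because "they yield Klingen-induced $L$-packets." This is false. By the very fact that these rows have a non-dash entry in the $G_v$ column, the corresponding $L$-packet on $H_v$ has a $G_v$-correspondent, so (via the Chan--Gan character identity of lemma~\ref{Gan-inner}) its stable character is nonzero on the set of $G_v$-norms; by lemma~\ref{Klingeninduced}, it is therefore \emph{not} in the image of Klingen induction. Concretely, within block~\nosf{IV} the Klingen image is spanned by $[\nosf{IVa}]+[\nosf{IVb}]$ and $[\nosf{IVc}]+[\nosf{IVd}]$, and $[\nosf{IVc}]$ alone is not a $\Z$-combination of these; the same kind of computation applies to \nosf{Vb}, \nosf{Vc}, \nosf{XIb}. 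So criterion~(ii) does not exclude them, and your proof leaves these types standing.

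The constraint that actually rules them out is genericity. Since $\pi'$ is of general type, proposition~\ref{spectrum} (equivalently, assertion~2 of the main theorems, which says $\Pi(\pi')$ is a product of local $L$-packets with a unique generic representative) forces each local $L$-packet of $\pi'_v$ to contain a generic member. The packets $\{\nosf{IVc}\}$, $\{\nosf{Vb}\}$, $\{\nosf{Vc}\}$, $\{\nosf{XIb}\}$ (and $\{\nosf{VIc}\}$) are non-generic singletons and hence cannot occur. The Klingen criterion from assertion~4 is needed only to eliminate the remaining generic $L$-packets \emph{without} a $G_v$-correspondent (the dash rows \nosf{I}, \nosf{IIIa}, $\{\nosf{VIa},\nosf{VIb}\}$, \nosf{VII}, $\{\nosf{VIIIa},\nosf{VIIIb}\}$, $\{\nosf{IXa},\nosf{IXb}\}$), which you in any case discard tacitly because they have no $G_v$-entry. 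Once you separate the two roles — genericity from prop.~\ref{spectrum} versus the Klingen/norm criterion from lemma~\ref{Klingeninduced} — and apply the VIc exclusion of assertion~3, the table inspection gives exactly the list in the corollary.
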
 %

\begin{cor}[Local types for $H$ not in the lift]\label{cor:local_H_not_in_lift}
For a global cohomological irreducible representation $\pi'$ of $H=\GSp(\A)$ of general type and an inner form $G$ of $H$,
the following assertions are equivalent:
\begin{enumerate}
\item $\pi'$ is not a lift of some irreducible automorphic cohomological representation $\pi$ of $G(\A)$ with archimedean components $\pi_\infty$ attached to the same weight $\lambda$ as $\pi'_\infty$.
\item There is some local nonarchimedean place $v$ where $G_v$ is not split, such that the local constituent $\pi'_v$ is of type \nosf{I}, \nosf{IIIa}, \nosf{VIa}, \nosf{VIb}, \nosf{VII}, \nosf{VIIIa} or \nosf{VIIIb}.
\end{enumerate}
\end{cor}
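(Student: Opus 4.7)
My plan is to deduce the corollary directly from Theorem~\ref{mainthm-2}(iv) (or Theorem~\ref{mainthm}(iv) when $G$ is anisotropic at infinity) by enumerating the tempered local $L$-packets of $H_v=\GSp(4,\GlobalField_v)$ whose packet characters lie in $R_I(H_v)$ from the Klingen Levi, equivalently vanish on the regular $G_v$-norms. By the cited theorem combined with Lemma~\ref{Klingeninduced}, $\pi'$ fails to be a lift from $G$ if and only if at some $v\in S_G$ the sum of characters of the local $L$-packet of $\pi'_v$ is Klingen induced in this sense. The problem is thus reduced to a local classification question at the bad places.

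Before inspecting the local types, I would reduce to tempered $\pi'_v$. Since $\pi'$ is cuspidal, cohomological and of general type, purity of the $\ell$-adic Galois representation attached to $\pi'$ (see~\cite{W_Asterisque}) together with the Chai--Faltings compatibility between Satake parameters and Frobenius eigenvalues, as exploited in the proof of Proposition~\ref{prop:middle_coh_GSp}, forces $\pi'_v$ to be tempered at every nonarchimedean place. This immediately rules out the non-tempered Langlands-quotient types \nosf{IIb}, \nosf{IIIb}, \nosf{IVb}, \nosf{IVd}, \nosf{Vb}, \nosf{Vc}, \nosf{Vd}, \nosf{VIc}, \nosf{VId}, \nosf{IXb}, \nosf{XIb} from further consideration.

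With temperedness in hand, the remaining task is a direct inspection of the Sally--Tadic classification via Table~\ref{Maintable}. The seven types listed in the corollary are precisely those tempered types at which some $Q_v$-column is starred and whose $L$-packet character, after removing any non-tempered partner already excluded by temperedness, lies in $R_I(H_v)$: type \nosf{I} is itself irreducibly Klingen-induced; \nosf{IIIa} is a Klingen subquotient whose complementary constituent \nosf{IIIb} is non-tempered and hence excluded; the pair \nosf{VIa}, \nosf{VIb} arises from Klingen reducibility in Group~VI (with the non-tempered \nosf{VIc}, \nosf{VId} removed); and \nosf{VII}, \nosf{VIIIa}, \nosf{VIIIb} are the constituents of Klingen principal series $\chi\rtimes\pi$ and $1\rtimes\pi$ with $\pi$ cuspidal. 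Conversely the remaining tempered types \nosf{IIa}, \nosf{IVa}, \nosf{IVc}, \nosf{Va}, \nosf{Va*}, \nosf{X}, \nosf{XIa}, \nosf{XIa*}, together with the cuspidal theta-lift packets, match under the local Langlands correspondence with the representations of $G_v$ recorded in the second column of Table~\ref{Maintable}; their packet characters therefore do not vanish on $G_v$-norms and they do occur as local factors of genuine lifts from $G$.

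The main obstacle will be the detailed bookkeeping that expresses, for each of the seven bad types, its packet character as an explicit $\Z$-linear combination of Klingen-induced characters, and conversely that rules out any nonzero Klingen component for the good tempered types. This is handled by the Bernstein--Zelevinsky style analysis of $R_I(H_v)$ underlying Lemma~\ref{Klingeninduced}, combined with the Jacquet-module tables of Roberts--Schmidt~\cite{Roberts-Schmidt}; the subtlest case is \nosf{VIb}, where both $Q_v$-columns of Table~\ref{Maintable} must be used in tandem to isolate the Klingen-induced contribution from the (already excluded) Saito--Kurokawa partner. Once these local computations are compiled, the equivalence stated in the corollary follows immediately from Theorem~\ref{mainthm-2}(iv).
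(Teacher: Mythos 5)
Your strategy --- invoke Theorem~\ref{mainthm}(4) together with Lemma~\ref{Klingeninduced} to recast non-lifting as ``some local $L$-packet at a place in $S_G$ is Klingen-induced,'' then inspect Table~\ref{Maintable} --- is exactly the paper's own (tersely stated) proof. Two bookkeeping issues, though: your temperedness reduction via purity of the attached Galois representation is both more than needed and less secure than what is available. The dichotomy the argument actually uses is that each local $L$-packet of $\pi'$ contains a generic member, which is already supplied by Proposition~\ref{spectrum}(2); temperedness is the wrong condition anyway, since e.g.\ type~\nosf{I} with non-unitary inducing characters is non-tempered yet is among the seven listed types. And purity alone does not control ramified places without a local-global compatibility statement that the paper deliberately avoids. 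You also list the non-tempered Langlands quotient~\nosf{IVc} among the ``remaining tempered types,'' which is a small slip.

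The genuine gap is at the discrete series type~\nosf{IXa}, which your enumeration omits entirely. It is tempered, it sits in a $Q_v$-starred block of Table~\ref{Maintable}, and the table shows no $G_v$-form for it --- so your converse claim that every remaining tempered type matches a representation of $G_v$ recorded in the second column is false precisely at \nosf{IXa}. For the implication $(1)\Rightarrow(2)$ one must show that the singleton $L$-packet of~\nosf{IXa} is nevertheless \emph{not} Klingen-induced (unlike the packets of \nosf{VII} or of \nosf{VIIIa}, \nosf{VIIIb}), else \nosf{IXa} would remain a candidate for the Klingen-induced local witness and would have to appear in statement~(2). The missing observation is that \nosf{IXa} is essentially square-integrable, so its stable character is nonzero on regular elliptic elements, while by Proposition~\ref{prop:tori_forms} every elliptic maximal torus of $H_v$ is a $G_v$-norm; hence the character does not vanish on regular $G_v$-norms and, by Lemma~\ref{Klingeninduced}, lies outside $R_I(H_v)$. (The paper rules out~\nosf{IXa} altogether in the corollary immediately following this one, but that corollary is deduced from the present statement and cannot be invoked here.)
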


\begin{cor}[Not existing local types for $H$]
A cuspidal cohomological irreducible representation $\pi'$ of $\GSp(4,\A)$ of general type
cannot have any local constituents $\pi'_v$ of type \nosf{IXa} at any nonarchimedean place $v$.

\end{cor}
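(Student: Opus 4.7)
The plan is to localize the problem to an inner form $G$ of $H$ at which only the place $v_0$ supporting \nosf{IXa} is non-split, and then exploit the fact that the $G$-side of the relevant local $L$-packet is empty in Table~\ref{Maintable}.

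First, I will construct the inner form. Suppose for contradiction that $\pi'$ is a cuspidal cohomological irreducible representation of $\GSp(4,\A)$ of general type with $\pi'_{v_0}$ of type \nosf{IXa} at some non-archimedean place $v_0$. By Lemma~\ref{lem:inner_forms} together with the classification of quaternion algebras over $\Q$ (existence for any finite ramification set of even cardinality), there is a quaternion algebra $D/\Q$ ramified precisely at the two places $\{v_0,\infty\}$. Set $G=\GU_D(2)$. Then $G$ is an inner form of $H$; since $D_\infty\cong\HH$, the real group $G(\R)$ is compact modulo center, hence $G$ is anisotropic at infinity; and the set $S_G$ of non-archimedean places where $G_v$ is non-split equals $\{v_0\}$.

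Next, I will force $\pi'$ to be a lift from $G$. By Corollary~\ref{cor:local_H_not_in_lift}, the excluded local types are \nosf{I}, \nosf{IIIa}, \nosf{VIa}, \nosf{VIb}, \nosf{VII}, \nosf{VIIIa}, \nosf{VIIIb}, and type \nosf{IXa} is \emph{not} among them. Since the only place in $S_G$ is $v_0$ and $\pi'_{v_0}$ has type \nosf{IXa}, the contrapositive of that corollary produces an irreducible cuspidal cohomological automorphic representation $\pi$ of $G(\A)$ of general type whose archimedean component is attached to the same weight $\lambda$ as $\pi'_\infty$, and such that $\pi'$ is the global lift of $\pi$.

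Finally, I will extract a contradiction at $v_0$. By assertion~(2) of Theorem~\ref{mainthm}, the weak packet $\Pi(\pi)$ is a product of local $L$-packets in bijection with those of $\pi'$ through the local Langlands correspondence for inner forms; hence $\pi_{v_0}$ must lie in the local $L$-packet on $G_{v_0}$ attached to the Langlands parameter of $\pi'_{v_0}$ of type \nosf{IXa}. However, Table~\ref{Maintable} records this $L$-packet as empty on the $G$-side (the $G_v$ column against \nosf{IXa} carries ``--''), so no such $\pi_{v_0}$ exists. This is the desired contradiction. The only non-cosmetic ingredient is the existence of the quaternion algebra with the prescribed two-place ramification set, which is routine; the rest is bookkeeping with Table~\ref{Maintable} combined with the lifting dichotomy already established in Theorem~\ref{mainthm}.
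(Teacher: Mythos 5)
Your proof is correct and takes essentially the same route as the paper: you choose $D$ ramified precisely at $\{v_0,\infty\}$, set $G=\GU_D(2)$, invoke Corollary~\ref{cor:local_H_not_in_lift} to force $\pi'$ to be a lift from $G$, and then derive a contradiction from the emptiness of the $G_{v_0}$-side of the \nosf{IXa} packet. The paper phrases the last step via Corollary~\ref{cor:local_types_G} rather than reading Table~\ref{Maintable} directly, but these are the same observation.
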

\begin{proof}
Indeed, assume $\pi'_v$ is of type \nosf{IXa}. Fix the quaternion algebra $D$ that is not split at $v$ and $\infty$ but splits at all other places.
Then $\pi'$ cannot be a inner lift of any representation of $G=\GU_D(2)$, because $G_v$ does not admit local representations with $L$-parameter of type \nosf{IXa} by corollary~\ref{cor:local_types_G}.
By corollary~\ref{cor:local_H_not_in_lift}, $\pi'_v$ cannot be of type \nosf{IXa}.
\end{proof}

\section{Conjectures of Ibukiyama and Kitayama}

Ibukiyama and Kitayama \cite{Ibukiyama17} formulate  conjectures relating
holomorphic paramodular Siegel newforms of squarefree level $N$ for $\GSp(4)$
with certain automorphic forms for $G=\GU_D(2)$ and the quaternion algebra $D$ of discriminant $N$ and ramified at $\infty$, respectively holomorphic modular forms on $G'=\GU_D(1,1)$ if $D$ has discriminant $N$ and splits at $\infty$. We now prove these conjectures.
Let $\omega(N)$ denote the number of prime factors dividing $N$.

\medskip 
In \cite[p.~603]{Ibukiyama17} maximal congruence subgroups $U(N) \subset G(\Q)$ resp. \mbox{$U'(N)\subset G'(\Q)$} and spaces $\mathfrak{M}_{k+j-3,k-3}(U(N))\cong \bigoplus_{\pi} \pi_{\fin}^{U(N)} $ resp. $S_{k,j}(U'(N)) \cong \bigoplus_\pi \pi_{\fin}^{U'(N)}$
were defined. The sum extends over all cuspidal automorphic representations
 $\pi=\pi_\infty\pi_{\fin}$, where the central character is trivial and
where $\pi_\infty$ belongs to the archimedean discrete series of weight $\lambda$, which in loc. cit, p. 598 is explicitly described as $\det^k \mathrm{Sym}^j$ by the two parameters $k,j$.

\begin{prop}[Yoshida type lifting]\label{prop:Ibukiyama_endo}
For $j\geq0$ and $k\geq3$ and squarefree $N$ there is an injective embedding, sending eigenforms to eigenforms,
$$\iota:\bigoplus_{M\mid N\ ,\ \omega(M)\ \text{odd}~,}S^{\new}_{j+2}(M)\times S^{\new}_{2k+j-2}(N/M)\hookrightarrow \begin{cases}\mathfrak{M}_{k+j-3,k-3}(U(N))&\omega(N)\ \text{odd}\ ,\\S_{k,j}(U'(N)) & \omega(N)\ \text{even}\ . \end{cases}$$
The image of these maps define all endoscopic forms for the congruence subgroups $U(N)$ resp. $U'(N)$.
In particular, for eigenforms $f$ and $g$ the spinor $L$-factor of $\iota(f,g)$ is given by $L(s,f)L(s-k+2,g)$ up to local factors at the bad places.
\end{prop}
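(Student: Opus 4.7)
The plan is to construct, for each pair $(f,g)$ with $f\in S^{\new}_{j+2}(M)$ and $g\in S^{\new}_{2k+j-2}(N/M)$, a weakly endoscopic cuspidal automorphic representation of $G(\A)$ (respectively $G'(\A)$) whose finite part carries $U(N)$- (respectively $U'(N)$-)fixed vectors, and then to identify the image of the resulting map with the full endoscopic subspace via Chan--Gan's classification~\cite[\S3]{Chan_Gan_GSp4_III} of endoscopic $L$-packets on inner forms of $\GSp(4)$. To begin, I translate the classical spaces into adelic language: by the standard dictionary between algebraic or paramodular modular forms and automorphic forms on $G$ (respectively $G'$), one has
\[ \mathfrak{M}_{k+j-3,k-3}(U(N))\cong \bigoplus_{\pi} m_G(\pi)\,\pi_{\fin}^{U(N)}, \qquad S_{k,j}(U'(N))\cong \bigoplus_{\pi} m_{G'}(\pi)\,\pi_{\fin}^{U'(N)}, \]
the sums ranging over cuspidal automorphic representations $\pi=\pi_\infty\pi_{\fin}$ of trivial central character whose archimedean factor lies in the discrete-series $L$-packet attached to $(k,j)$ and whose finite part has nontrivial invariants under the relevant congruence subgroup. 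Thus the task reduces to parametrising the weakly endoscopic summands of these decompositions by pairs $(f,g)$.

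Given $(f,g)$ with $\omega(M)$ odd, let $D_M$ denote the quaternion algebra over $\Q$ ramified exactly at the prime divisors of $M$ and at $\infty$, which exists because $\omega(M)+1$ is even. Jacquet--Langlands transfers $f$ to a cuspidal representation $\sigma_f$ of $D_M^\times(\A)$, and $g$ defines the cuspidal $\tau_g$ on $\Gl(2,\A)$; the pair $(\sigma_f,\tau_g)$ is then a cuspidal automorphic representation on the elliptic endoscopic group $E_M=(D_M^\times\times\Gl(2))/\Gm$ associated to $H=\GSp(4)$. Chan and Gan's endoscopic transfer to inner forms produces from this pair a weakly endoscopic cuspidal $L$-packet on $G(\A)$ (respectively $G'(\A)$) with the prescribed discrete-series parameter $\lambda$, and the parity condition $\omega(M)$ odd is exactly what matches the archimedean invariants of $D_M$ with those of the quaternion algebra $D$ defining the inner form, which is the condition for the transfer to be non-vanishing on $G$ (respectively $G'$).

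The central local step is to select the distinguished member of each local endoscopic packet. At each prime $p\mid N$, exactly one of $\sigma_{f,p}$, $\tau_{g,p}$ is (the Jacquet--Langlands transfer of) the Steinberg representation while the other is unramified spherical; the resulting local endoscopic packet on $G_p$ (respectively $G'_p$) contains a unique member carrying a one-dimensional space of $U(N)_p$-fixed vectors, by Roberts--Schmidt's paramodular newform uniqueness theorem~\cite{Roberts-Schmidt} transported to the non-split inner form through the local character identities of lemma~\ref{Gan-inner} and Ibukiyama's paramodular framework on $\GU_D$. Assembling these distinguished local members with the prescribed archimedean member yields $\iota(f,g)$. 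It is a Hecke eigenform since at every unramified prime $p\nmid N$ its Satake parameters are the combination of those of $\tau_f$ and those of the appropriately twisted $\tau_g$, and this same observation immediately gives the spinor $L$-factor identity $L(s,\iota(f,g))=L(s,f)\,L(s-k+2,g)$ at all unramified places.

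Injectivity of $\iota$ follows from $\Gl(2)$ strong multiplicity one together with the inequality $2k+j-2>j+2$ (forced by $k\geq 3$), which precludes any ambiguity in recovering the pair $(f,g)$ from the Satake data of $\iota(f,g)$. That the image hits exactly the endoscopic subspace is the reverse direction of Chan--Gan's classification: every weakly endoscopic cuspidal representation of $G(\A)$ or $G'(\A)$ with the fixed discrete-series parameter $\lambda$ arises from a unique pair $(\sigma,\tau)$ on some $E_{M}$, and the archimedean-invariant calculation of $D_M$ against $D$ forces $\omega(M)$ to be odd. The main obstacle is the local newform dimension step: one must verify, uniformly at every prime $p\mid N$ and on the non-split inner form, that the distinguished local endoscopic representation (of type \nosf{Va${}^G$} or \nosf{VI${}^G$} depending on which factor is Steinberg) has exactly a one-dimensional space of $U(N)_p$-fixed vectors, which amounts to extending the Roberts--Schmidt paramodular newform uniqueness theorem from $\GSp(4,\Q_p)$ to the non-split $G_p$ and $G'_p$ via the local Atkin--Lehner theory adapted to the inner form.
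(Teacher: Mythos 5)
Your overall strategy matches the paper's: translate the classical spaces into adelic language, use Jacquet--Langlands to transfer the elliptic eigenforms to quaternion algebras, apply Chan--Gan's endoscopic lifting to the inner form, and conclude by a local computation of invariant vectors under $U(N)$ resp.\ $U'(N)$. However, there are several concrete errors and a genuine gap.

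First, the local types at primes $p\mid N$ are misidentified. The Yoshida lift of a (twisted Steinberg, unramified principal series) pair has local type \nosf{IIa}${}^G$ in the Roberts--Schmidt nomenclature, as the paper states. Your claim of type \nosf{Va}${}^G$ or \nosf{VI}${}^G$ is wrong; those types belong to the Saito--Kurokawa Arthur packets (prop.~\ref{prop:IK_SK_inner}), not to Yoshida lifts, and confusing them spoils the subsequent local dimension argument since the Roberts--Schmidt and Narita tables give different invariant dimensions for these types.

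Second, the endoscopic setup is incomplete. You transfer only $f$ to a quaternion algebra $D_M$ and keep $\tau_g$ on $\Gl(2)$, building an endoscopic datum on $(D_M^\times\times\Gl(2))/\Gm$. The paper transfers \emph{both} factors: $\tau_1$ to $D_1^\times$ with $\mathrm{disc}(D_1)=M$ and $\tau_2$ to $D_2^\times$ with $\mathrm{disc}(D_2)=N/M$. This matters: when $\omega(N)$ is even (and $\omega(M)$ odd), $\omega(N/M)$ is also odd and $D_2$ is ramified at $\infty$; only then does the Brauer class $D_1\otimes D_2$ equal that of $D$ and the archimedean transfer conditions of Chan--Gan~\cite[thm.~3.1]{Chan_Gan_GSp4_III} single out the holomorphic discrete series. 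Your parity argument ``matches the archimedean invariants of $D_M$ with those of $D$'' is false exactly in the $\omega(N)$ even case, where $D$ splits at $\infty$ but $D_M$ does not.

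Third, you defer the key local step --- the one-dimensionality of $U(N)_p$-invariants in the distinguished local endoscopic constituent on the non-split inner form --- and propose to obtain it by ``transporting Roberts--Schmidt through the character identities of lemma~\ref{Gan-inner}.'' Character identities compare traces of matching test functions; they do not compute fixed-vector dimensions for a specific open compact subgroup. What is actually needed, and what the paper invokes, is Narita and Schmidt~\cite{Narita}, who directly established the paramodular-type newform theory on the local non-split inner forms, including the one-dimensionality of the relevant fixed-vector spaces for type \nosf{IIa}${}^G$. Without this input your proof has a genuine hole.
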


\begin{proof}For inner forms of $\GSp(4)$, the endoscopic spectrum has been described by Chan and Gan~\cite[\S3]{Chan_Gan_GSp4_III}.
Fix eigenforms $(f_1,f_2)$ in $S^{\new}_{j+2}(M)\times S^{\new}_{2k+j-2}(N/M)$,
then they generate cuspidal automorphic representations $\tau_1$ and $\tau_2$ of $\Gl(2,\mathbb{A})$.
By construction, they are locally tamely ramified at the places dividing $M$ resp.~$N/M$. Since $N$ is squarefree, these places are disjoint.
Let $D_1$, $D_2$ and $D$ be the unique quaternion algebra over $\Q$ with discriminant $M$ resp. $N/M$ resp $N$.
Then $D$ ramifies at $\infty$ if and only if $\omega(N)$ is odd.

\medskip
The Jacquet-Langlands lifting sends $\tau_1\boxtimes\tau_2$ to an automorphic representation $\tau_1^{D_1}\boxtimes \tau_2^{D_2}$ of $D_{1,\A}^\times \times D_{2,\A}^\times$.
By the endoscopic lifting \cite[\S3]{Chan_Gan_GSp4_III} this is attached to (a weak $L$-packet of) cuspidal automorphic representations $\pi=\pi^{\mathrm{endo}}_{f,g}$ of $\GU_D(2)$.
The local factors of $\tau_i$ are well-known, they are unramified principal series at the places not dividing the level and they are Steinberg representations twisted by an unramified character at the places dividing the level.
By the results on local endoscopy of Chan and Gan~\cite{Chan_Gan_GSp4_III}, the local factors $\pi_v$ are of type \nosf{IIa}$^G$ in the sense of Roberts and Schmidt~\cite{Roberts-Schmidt} at the non-archimedean places $v$ dividing $N$.
They are spherical principal series representation of $G_v=\GSp(4,\GlobalField_v)$ of type \nosf{I} at the other places.
As Narita and Schmidt~\cite{Narita} have shown, there is a non-zero $U(N)$-invariant vector in $\pi_\fin$, unique up to scalars.

For odd $\omega(M)$, the archimedean factor $\pi_\infty$ either belongs to the non-quasi-split group $\GU_D(2)(\R)$ if $\omega(N)$ is odd or to the holomorphic discrete series of $G'_\infty\cong \GSp(4,\R)$ if $\omega(M)$ is even, see \cite[\S 2.4]{Chan_Gan_GSp4_III} \cite[\S4]{Weissauer_Endo_GSp4}.
The theory of algebraic modular forms resp. holomorphic Siegel modular forms yields the required eigenforms in $S_{k,j}(U'(N))$ resp.~$\mathfrak{M}_{k+j-3,k-3}(U(N))$.
Notice that the even values of $\omega(M)$ do not contribute. Indeed, if $\omega(N)$ is odd otherwise $\omega(N/M)$ would also be odd, hence $D_2$ and $D$ were both anisotropic at $\infty$. But this excludes that the endoscopic lifting goes to $G$ or $G'$  by \cite[thm.~3.1]{Chan_Gan_GSp4_III}.
On the other hand, if $\omega(N)$ and $\omega(M)$ are both even, then $\omega(N/M)$ would also be even. Then the local lift at $\infty$ would go the non-holomorphic generic discrete series of $\GSp(4)$ \cite{Weissauer_Endo_GSp4}.
In both of these cases, the endoscopic lifting does not contribute to algebraic modular forms on $G$ or to holomorphic Siegel modular forms on $G'$, see \cite[thm.~3.1]{Chan_Gan_GSp4_III}.
\end{proof}
This shows the first assertion of conjecture 4.5 in \cite{Ibukiyama17}.

\begin{prop}[Saito-Kurokawa type lifting]\label{prop:IK_SK_inner}
For squarefree $N$ and every divisor $M$ of $N$, there are injective embeddings for $k\geq3$
$$\iota_M: S^{\new, -(-1)^{\omega(M)}}_{2k-2}(\Gamma_0(N/M)) \hookrightarrow \begin{cases}\mathfrak{M}_{k-3,k-3}(U(N)) & \omega(N)\ \text{odd}\ , \\ S_{k}(U'(N)) & \omega(N)\ \text{even}\ . \end{cases}$$
and the image defines precisely the CAP liftings of level $N$ of weight $\lambda$.
For eigenforms $f\in S_{2k-2}^{\new,-(-1)^{\omega(M)}}(\Gamma_0(N/M))$ the spinor L-function $L(s,\iota_M(f),\mathrm{spin})$ of $\iota_M(f)$ is given by $\zeta(s-k+1)\zeta(s-k+2)L(s,f)$  up to finitely many Euler factors.
\end{prop}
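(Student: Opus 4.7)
The plan is to mimic the argument of proposition~\ref{prop:Ibukiyama_endo}, replacing the endoscopic classification of Chan and Gan by Gan's classification of Saito-Kurokawa packets on inner forms of $\GSp(4)$ \cite{Gan_SK_inner}. Starting from a newform $f \in S^{\new, -(-1)^{\omega(M)}}_{2k-2}(\Gamma_0(N/M))$, I would first attach to it the associated cuspidal automorphic representation $\tau$ of $\mathrm{PGL}(2,\A)$; its local components are the holomorphic discrete series of weight $2k-2$ at $\infty$, unramified principal series at primes $p\nmid N/M$, and unramified twists of Steinberg at primes $p\mid N/M$, with Atkin-Lehner signs encoding the global root number of $f$.

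Next, Gan's classification attaches to $\tau$ a global Saito-Kurokawa Arthur packet for $\GSp(4)$ and all its inner forms simultaneously. At each place $v$ the local SK packet has two members, a generic and a non-generic one, and the globally automorphic members are indexed by subsets $\Sigma$ of places of even cardinality on which the non-generic member is chosen; the member indexed by $\Sigma$ transfers to an automorphic representation of the inner form $\GU_{D_\Sigma}$ where $D_\Sigma$ is the quaternion algebra ramified precisely on $\Sigma$. For $\iota_M(f)$ the natural choice is $\Sigma = \{p:p\mid M\}\cup\{\infty\}$ when $\omega(N)$ is odd and $\Sigma = \{p:p\mid M\}$ otherwise; in both cases the required parity of $|\Sigma|$ matches the sign condition $-(-1)^{\omega(M)}$ of $f$ via the local $\epsilon$-factors at primes dividing $N/M$, so that the packet member is indeed automorphic and the discriminant of $D_\Sigma$ equals $N$, ramified at $\infty$ iff $\omega(N)$ is odd.

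Once this candidate global representation $\pi = \iota_M(f)$ is constructed, I would verify the level and archimedean type by local analysis. At each prime $p\mid N$ the chosen local SK constituent is of paramodular type on the inner form and has a one-dimensional space of $U(N)_p$-fixed vectors, by the paramodular new-vector theory of Roberts-Schmidt transferred to inner forms; together with sphericality away from $N$ this gives $\dim \pi_\fin^{U(N)} = 1$. The archimedean constituent is the trivial representation of the compact group $\GU_D(2)(\R)$ when $\omega(N)$ is odd, and the scalar-weight $k$ holomorphic discrete series of $\GU_D(1,1)(\R) \cong \GSp(4,\R)$ otherwise, matching the spaces $\mathfrak{M}_{k-3,k-3}(U(N))$ and $S_k(U'(N))$ respectively. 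The spinor $L$-factorization $L(s,\iota_M(f),\mathrm{spin}) = \zeta(s-k+1)\zeta(s-k+2)L(s,f)$ at the unramified primes is then immediate from the Satake parameters of SK lifts.

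The main obstacle will be showing that the image of $\iota = \bigoplus_M \iota_M$ exhausts the CAP part at level $U(N)$ and weight $\lambda=(k-3,k-3)$. For this I would first rule out Soudry-type CAP representations, which require a non-trivial central character and are therefore absent under the trivial central character convention used to define $\mathfrak{M}_{k-3,k-3}(U(N))$ and $S_k(U'(N))$; the only remaining CAP representations are then of Saito-Kurokawa type. Next, by strong multiplicity one for $\Gl(2)$ the underlying cuspidal $\tau$ on $\mathrm{PGL}(2)$ is determined by its spherical data, and the allowed local SK choices at the finite bad primes are in bijection with the divisors $M\mid N$ under the sign parity constraint. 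A dimension count on each side then shows that the collection of maps $\iota_M$ is injective with image precisely the CAP subspace, which is the content of the proposition.
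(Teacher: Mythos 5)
Your overall strategy matches the paper's: invoke Gan's classification of Saito--Kurokawa packets on inner forms, compute local constituents, apply paramodular new-vector theory on the inner form, and read off the spinor $L$-factorization from the Satake parameters. However, there is a concrete error in your second paragraph, and it is not a mere imprecision.

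You write that the automorphic members of the global SK packet are indexed by finite sets $\Sigma$ of places where the non-generic member is chosen, that "the member indexed by $\Sigma$ transfers to an automorphic representation of the inner form $\GU_{D_\Sigma}$ where $D_\Sigma$ is the quaternion algebra ramified precisely on $\Sigma$," and then you take $\Sigma = \{p : p \mid M\}\cup\{\infty\}$ (resp.~$\{p : p\mid M\}$) and assert that $D_\Sigma$ has discriminant $N$. These two statements are incompatible: a quaternion algebra ramified precisely on primes dividing $M$ (and possibly $\infty$) has discriminant $M$, not $N$. More fundamentally, you are conflating two independent parameters. The inner form $G=\GU_D$ is fixed from the outset by the target spaces $\mathfrak{M}_{k-3,k-3}(U(N))$ and $S_k(U'(N))$ --- its quaternion algebra $D$ has discriminant $N$ and ramification at $\infty$ governed by the parity of $\omega(N)$. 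The subset $\Sigma$ parameterizing the choice of local Arthur-packet member is a separate datum, and at non-split places the local SK packet on $G_v$ often has only one member, so there is nothing to choose. In particular, at primes $p\mid M$ the local factor $\tau_p$ is spherical and the local packet on the non-split $G_p$ is the singleton $\{\nosf{IIa}^G\}$; those places cannot carry a ``non-generic choice.''

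The paper instead fixes the inner form and uses Gan's prop.~6.5 of~\cite{Gan_SK_inner} to pin down the local constituents place-by-place: type \nosf{IIb} at $p\nmid N$ (the classical non-tempered SK constituent on the split group), type \nosf{IIa}$^G$ at $p\mid M$ (where $D_p$ is ramified and $\tau_p$ is spherical), and at $p\mid N/M$ (where $D_p$ is ramified and $\tau_p$ is a twist of Steinberg) either the singleton $\{\nosf{VIc}^G\}$ or the pair $\{\nosf{Va}^G,\nosf{Vb}^G\}$ depending on the local Atkin--Lehner sign. The Atkin--Lehner sign $-(-1)^{\omega(M)}$ of $f$ is then exactly what Gan's multiplicity formula requires for this representation to be automorphic on $G$. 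The crucial new-vector input is Narita--Schmidt~\cite{Narita}, which you gesture at but do not cite: among these local types, \nosf{IIa}$^G$, \nosf{Vb}^G$ and \nosf{VIc}^G$ have a one-dimensional space of $U'(N)$-invariants, while \nosf{Va}^G$ has none, so only one member of each local pair survives at the chosen level, and this is what makes $\iota_M$ well defined and injective with a clean image description. Your sketch would need the parameterization paragraph rewritten and the Narita--Schmidt input made explicit before it becomes a proof.
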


\begin{proof}In the language of automorphic representations, the Saito-Kurokawa liftings to all inner forms $G=\GU_D(2)$ and $G'=\GU_D(1,1)$ have been described by Gan~\cite{Gan_SK_inner}. 
The local factors are explicitly given by prop.~6.5 in loc.~cit.
At the places away from the discriminant $N$ of $D$, the local lifting is the well-known Saito-Kurokawa lifting \nosf{IIb}.
At the places dividing $M$, the local factor $\tau_v$ is an unramified principal series and the corresponding Saito-Kurokawa lifting is a singleton of type \nosf{IIa}$^G$. At the (tamely ramified) places dividing $N/M$, the representation $\tau_v$ is an unramified twist of the Steinberg representation and the attached Arthur packet is either the singleton $\{$\nosf{VIc}$^G\}$ or the pair $\{$\nosf{Va}$^G$,\nosf{Vb}$^G\}$, depending on the sign of the Atkin-Lehner involution.
By results of Narita and Schmidt~\cite{Narita}, types \nosf{IIa}$^G$, \nosf{Vb}$^G$ and \nosf{VIc}$^G$ admit a $U'(N)$-invariant vector, unique up to scalars, but \nosf{Va}$^G$ has no $U(N)$-invariants.
The rest of the argument is analogous to the endoscopic case.
The archimedean factor $\pi_\infty$ is the unique representation $\pi_\lambda^{+-}$ for anisotropic $G_\infty$ and it is the holomorphic discrete series $\pi_\lambda^{\hol}$ if $G'_\infty\cong \GSp(4,\R)$ splits, see~\cite{Gan_SK_inner, Chan_Gan_GSp4_III}.
\end{proof}
This proves assertion (2) and (3) in conj.~4.5 of Ibukiyama and Kitayama~\cite{Ibukiyama17} and thus also conjecture~1.4 of loc. cit.
We now consider conjecture 1.3 in the same paper, where spaces of newforms with respect to $U(N)$ and $U'(N)$ are defined as the orthocomplement of all Yoshida liftings and all Saito-Kurokawa liftings of lower level. To address their definition, by abuse of notation, we will formally consider one-dimensional CAP representations $\pi$ of $G(\A)$  
also to be Saito-Kurokawa liftings of lower level. By weak approximation they only lift to one-dimensional representations of $H(\A)$, hence they are CAP-representations of $G(\A)$ with respect to the Borel group $B$ of $H=GSp(4)$, but they do not admit a weak cuspidal lift to $H(\A)$. For them $\pi_\infty$ is one-dimensional, i.e. $\lambda=0$. Hence they arise in the cuspidal spectrum of $G(\A)$ only for $k=3$ and $j=0$. 
Finally notice the following typo in \cite{Ibukiyama17}: In formula (7) on page 618 in \cite{Ibukiyama17}, the case $M=1$ should be excluded. In other words, Saito-Kurokawa-liftings coming from $S_{2k-2}^{\new}(\Gamma_0(N))$ are newforms with respect to $U(N)$ resp. $U'(N)$. We now prove the conjecture 1.3.

\begin{prop}\label{prop:Ibukiyama17_conj13}
For squarefree $N$ and $k\geq 3$ and $j\geq0$, there is a linear isomorphism, sending eigenforms to eigenforms and preserving $L$-functions up to finitely many factors,
\begin{gather*}
 S^{\new}_{k,j}(K(N))\stackrel{\cong}\longrightarrow \begin{cases}\mathfrak{M}^{\new}_{k+j-3,k-3}(U(N)) & \omega(N)\ \text{odd}\ ,\\ S^{\new}_{k,j}(U'(N)) & \omega(N)\ \text{even}\ .\\\end{cases}
\end{gather*}
\end{prop}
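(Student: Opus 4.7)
The plan is to combine propositions~\ref{prop:Ibukiyama_endo} and~\ref{prop:IK_SK_inner} with the inner lifting theorems thm.~\ref{mainthm} and thm.~\ref{mainthm-2}, reducing the claim to a local matching of paramodular fixed-vector dimensions at the ramified primes.

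By the Ibukiyama--Kitayama definition of the newform subspace combined with propositions~\ref{prop:Ibukiyama_endo} and~\ref{prop:IK_SK_inner}, the space $S^{\new}_{k,j}(K(N))$ decomposes as $\bigoplus_{\pi'}(\pi'_\fin)^{K(N)}$, the sum running over isomorphism classes of cuspidal automorphic representations $\pi'$ of $\GSp(4,\A)$ of general type with archimedean component in the holomorphic discrete series of parameter $\lambda=(k+j-3,k-3)$. Indeed, these two propositions show that the Yoshida and Saito--Kurokawa families exhaust all endoscopic and CAP contributions to $S_{k,j}(K(N))$ at squarefree level $N$, and the newforms are by definition the orthogonal complement. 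The same reasoning identifies the right hand side in each case with a sum $\bigoplus_\pi \pi_\fin^{U(N)}$ (resp.\ $\bigoplus_\pi \pi_\fin^{U'(N)}$) over general type cuspidal automorphic representations $\pi$ on $G=\GU_D(2)$ (resp.\ $G'=\GU_D(1,1)$) with matching archimedean parameter.

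Next I apply the inner lifting theorems to obtain a bijection $\pi\leftrightarrow\pi'$ between the sets of general type automorphic representations on the two sides: thm.~\ref{mainthm} when $\omega(N)$ is odd (so that $G$ is anisotropic at $\infty$) and thm.~\ref{mainthm-2} when $\omega(N)$ is even. The $G'$-regularity of $\lambda$ required by thm.~\ref{mainthm-2} holds for $j>0$ by prop.~\ref{prop:middle_coh_GSp11}; the non-regular case $j=0$ requires a separate direct argument. By construction the bijection matches local components via the local Langlands correspondence of Gan--Takeda and Gan--Tantono at every nonarchimedean place and via the discrete series $L$-packet parametrisation at infinity, so the spinor $L$-function is preserved up to finitely many Euler factors.

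To upgrade the bijection to a linear isomorphism of newform spaces it suffices to show that for each matched pair $(\pi,\pi')$ the spaces $\pi_\fin^{U(N)}$ (resp.\ $\pi_\fin^{U'(N)}$) and $(\pi'_\fin)^{K(N)}$ have equal dimensions. The archimedean matching is built into the choice of $\lambda$. At primes $v\nmid N$ the local groups coincide and $\pi_v\cong\pi'_v$, so their spherical fixed spaces agree. At primes $v\mid N$, cor.~\ref{cor:local_types_G} together with table~\ref{Maintable} constrains the local type of $\pi_v$ on the inner form, and one verifies that the admissible types carrying a $U(N)_v$- or $U'(N)_v$-fixed vector correspond under local Langlands precisely to the types on $\GSp(4,\GlobalField_v)$ whose paramodular $K(\mathfrak{p}_v)$-fixed space is non-zero with equal dimension; the main case is \nosf{IIa}${}^G$ matching \nosf{IIa}, both carrying a one-dimensional paramodular fixed space by Narita--Schmidt and Roberts--Schmidt respectively. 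The main obstacle is this final local enumeration at $v\mid N$, together with a careful treatment of the non-regular weight case $j=0$ for even $\omega(N)$ where thm.~\ref{mainthm-2} does not directly apply.
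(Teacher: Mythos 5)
You have correctly identified the overall strategy — separate the endoscopic, CAP, and general-type contributions via propositions~\ref{prop:Ibukiyama_endo} and~\ref{prop:IK_SK_inner}, then apply the inner lifting theorems and match local fixed-vector dimensions at $v\mid N$ — and this is indeed the structure of the paper's proof. However, the two points you flag as "the main obstacle" are not peripheral bookkeeping but exactly the substance of the proof, and your proposal does not resolve them.

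First, the local enumeration at $v\mid N$. It is not enough to observe that \nosf{IIa} matches \nosf{IIa}$^G$; one must show these are the \emph{only} types that occur. The Roberts--Schmidt table A.12 lists the nonarchimedean types with a one-dimensional paramodular fixed space of level one: \nosf{IIa}, \nosf{IVcd}, \nosf{Vbc}, \nosf{VIc}. Of these, only \nosf{IIa} lies in a generic $L$-packet, and prop.~\ref{spectrum} then forces $\pi'_v$ to be of type \nosf{IIa} for any cuspidal $\pi'$ of general type. A parallel argument using the Narita--Schmidt classification of $U(N)_v$- or $U'(N)_v$-fixed vectors on the inner form (types \nosf{IIa}$^G$, \nosf{IVc}$^G$, \nosf{Vbc}$^G$, \nosf{VIc}$^G$) picks out only \nosf{IIa}$^G$. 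This local analysis is also what shows there are no nontrivial holomorphic endoscopic forms of squarefree paramodular level, which you need to justify identifying $S^{\new}_{k,j}(K(N))$ with the general-type piece. You also need the converse direction — every paramodular $\pi'$ of level $N$ and general type \emph{is} an inner lift — and the paper obtains this from cor.~\ref{cor:local_H_not_in_lift}: a non-lift would have some $\pi'_v$ of type \nosf{I}, \nosf{IIIa}, \nosf{VIab}, \nosf{VIIIab}, or \nosf{VII} at a ramified place, but none of these have paramodular level one.

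Second, the case $j=0$ with $\omega(N)$ even. Here $G'_\infty\cong\GSp(4,\R)$ but $G'$ is a nontrivial global inner form, so prop.~\ref{prop:middle_coh_GSp} (which is proved only for the split group using the \'etale cohomology of Siegel threefolds) does not apply, and $\lambda_1=\lambda_2$ so $\lambda$ is not $G'$-regular and thm.~\ref{mainthm-2} does not apply; note also that prop.~\ref{prop:middle_coh_GSp11}, which you cite, concerns $D_\infty$ anisotropic and hence is irrelevant to the even-$\omega(N)$ case. The paper closes this gap by a genuinely external input: it expands the virtual multiplicity as $2m_{\coh}(\pi^{\hol}_\infty\pi_\fin) - m_{\coh}(\tilde\pi^{(2,0)}_\infty\pi_\fin) - m_{\coh}(\tilde\pi^{(1,1)}_\infty\pi_\fin)$, compares the resulting trace identity with the independently proved dimension formula of Ibukiyama--Kitayama (thm.~1.2 of their paper), and deduces that the degree-$2$ and degree-$4$ contributions vanish, establishing conjecture~\ref{conj:middle_coh} for precisely the $\pi_\fin$ at hand. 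Without some such input, the trace-formula comparison alone cannot distinguish middle- from non-middle-degree cohomology, and the argument does not close.
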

The equality of dimensions in linear isomorphism
of prop.~\ref{prop:Ibukiyama17_conj13}, without the assertion on eigenforms and $L$-functions, was obtained in \cite{Ibukiyama17}.
The distinction of cases occurs because the number of places, where the inner form $G$ does not split, is even. This distinction disappears if prop.~\ref{prop:Ibukiyama17_conj13} is reformulated  in terms of irreducible automorphic representations of $H=\GSp(4)$ and $G, G'$ with trivial central character.
This is based on the results of Roberts and Schmidt~\cite{Roberts-Schmidt}. 

\bigskip\noindent
To reformulate the assertion of prop.~\ref{prop:Ibukiyama17_conj13}, it is most convenient to separate the cases
in terms of weak $L$-packets for $H(\A)$, distinguishing the
case of general type, the endoscopic cases and the Saito-Kurokawa cases (in the sense above) by comparing them separately.
In terms of automorphic representations,
the assertion is equivalent 
to the existence of a bijection between automorphic representations with trivial central character preserving spinor $L$-factors
\begin{gather*}
 \left\{\pi'\mid 
 N_{\mathrm{par}}(\pi')=N\ ,\ \omega_{\pi'}=1\ ,\ \pi'_\infty=\pi_\lambda^{\hol}
 \right\}
 \longleftrightarrow\\
 \left\{\pi\mid \pi_\fin^{U'(N)}\neq0\ ,\ \pi_\infty=\pi^G_\lambda\ ,\ \omega_\pi=1\ ,\ \text{not SK-lift from smaller level, not endoscopic}\right\}\ ,
\end{gather*}
where $N_{\mathrm{par}}(\pi')$ denotes the squarefree paramodular newform level and where $\pi_\lambda^{\hol}$ is the archimedean holomorphic discrete series representation attached to $\lambda$.

\begin{proof}
For endoscopic liftings to inner forms see prop.~\ref{prop:Ibukiyama_endo}.
By the definition used in \cite{Ibukiyama17}, endoscopic liftings do not define newforms for $U(N)$ or $U'(N)$.
Endoscopic liftings to $\GSp(4)$ with holomorphic archimedean factor are never paramodular.
Indeed, endoscopic liftings are locally tempered, so being paramodular implies they are generic at every non-archimedean place. Since $\pi_infty$ is holomorphic, hence non-generic, this would violate the condition that in the endoscopic cases the number of non-generic places must be even.

\bigskip\noindent
Now we discuss the cases where $\pi'$ and $\pi$ are CAP. This is only possible for $j=0$ and given by the theory of Saito-Kurokawa liftings.
For every $M\mid N$, there is an embedding
\begin{equation*}
\iota'_M:S^{\new,-}_{2k-2}(\Gamma_0(N/M)) \hookrightarrow S_{k,0}(K(N/M))\ .
\end{equation*}
The representations corresponding to eigenforms in $S^{\new,+}_{2k-2}(\Gamma_0(N/M))$ also admit Saito-Kurokawa liftings, but these are not paramodular \cite[table~A.12]{Roberts-Schmidt}.
For $M>1$ the Saito-Kurokawa liftings into $S_{k,0}(K(N/M))$ embed into the paramodular oldforms $S^{old}_{k,0}(K(N))$.
Thus the paramodular newforms of Saito-Kurokawa type are given by the image of the above embedding for $M=1$.
Thus the number of Saito-Kurokawa liftings in the space of paramodular newforms is given by $\dim S^{\new,-}_{2k-2}(\Gamma_0(N))$.
The spinor $L$-factor of $\iota'_1(f)$ is given by $\zeta(s-k+1)\zeta(s-k+2)L(s,f)$ for eigenforms $f\in S^{\new,-}(\Gamma_0(N))$ up to finitely many Euler factors.
For the inner forms with level structure $U(N)$ resp. $U'(N)$, 
Saito Kurokawa-type liftings exist and are injective by prop.~\ref{prop:IK_SK_inner}:
$$\iota_M:S^{\new,-(-1)^{\omega(M)}}_{2k-2}(\Gamma_0(N/M)) \hookrightarrow \begin{cases}S_{k,0}(U'(N))&\omega(N)\ \text{even,}\\\mathfrak{M}_{k-3,k-3}(U'(N))&\omega(N)\ \text{odd.}\end{cases}$$
Again, the spinor L-function of $\iota_M(f)$ is given by $\zeta(s-k+1)\zeta(s-k+2)L(s,f)$ for eigenforms $f\in S^{\new,-}(\Gamma_0(N/M))$ up to finitely many Euler factors \cite[page.~618]{Ibukiyama17}.
By definition, only liftings for $M=1$ can be newforms. 
Thus for each eigenform $f\in S^{\new,-}_{2k-2}(\Gamma_0(N))$ the proposed bijection relates $\pi'$ generated by $\iota'_1(f)$ and $\pi$ generated by $\iota_1(f)$.

\medskip
We now consider the representations of general type.
Recall that we have to construct a bijection between representation $\pi'$ and $\pi$ of general type
preserving Langlands parameters as above.
We claim that the inner lifting of thm.~\ref{mainthm} provides this correspondence.

\bigskip\noindent
To make this precise, keep in mind that a priori thm.~\ref{mainthm} only defines a correspondence between $L$-packets. 
However at the archimedean place  the $L$-packet attached to $H_\infty$ with parameter $\lambda$ contains two discrete series representations. By the holomorphicity assumption in loc. cit., only the holomorphic discrete series attached to $\lambda$ is relevant.
Furthermore at the non-archimedean places, since $N$ is squarefree,  only singleton  local $L$-packets can contribute.
We claim that at the non-split nonarchimedean places of $G$ all local factors $\pi'_v$ and $\pi_v$ are necessarily of type \nosf{IIa} resp.~\nosf{IIa}${}^G$. For $H=\GSp(4)$ this
follows from explicit description of the paramodular level structures
of Roberts and Schmidt~\cite[table~A.12]{Roberts-Schmidt}. Indeed, the local nonarchimedean representations of $H_v$ with paramodular level one are of type \nosf{IIa}, \nosf{IVcd}, \nosf{Vbc}, and \nosf{VIc} \cite[A.12]{Roberts-Schmidt}.
Since only \nosf{IIa} is contained in a generic $L$-packet,
 among these  only \nosf{IIa} can occur globally for automorphic representations $\pi'$ of general type by prop.~\ref{spectrum}. 
Since the number of non-generic local constituents $\pi'_v$ for endoscopic representations $\pi'$ of $H(\A)$ is even and $\pi'_\infty$ is holomorphic and thus non-generic and since spherical representations and \nosf{IIa} define singleton generic $L$-packets,
this implies there do not exist nontrivial holomorphic endoscopic forms of squarefree paramodular level.
On the other hand, for the local nonarchimedean inner forms $G_v$ of $\GSp(4)$
Narita and Schmidt~\cite{Narita} show that the paramodular local $L$-packets of level one on $H_v$ uniquely correspond to $L$-packets on the nonarchimedean inner form that have a non-zero invariant vector under a specific locally maximal compact subgroup $K_2$ of $G=\GU_D(1,1)$.
These representations are of type \nosf{IIa}${}^G$, \nosf{IVc}${}^G$, \nosf{Vbc}${}^G$, \nosf{VIc}${}^G$; for the details see \cite[Appendix]{Narita}.
Again, only type \nosf{IIa}$^G$ belongs to a corresponding generic $L$-packet of $H_v$, so the others cannot occur with automorphic representations $\pi$ of general type.
Thus it turns out that at the ramified nonarchimedean places the local factors $\pi'_v$ and $\pi_v$
are all of type \nosf{IIa} resp.~\nosf{IIa}${}^G$ whenever $G_v$ does not split
resp.~are unramified of type $\pi'_v\cong \pi_v$ if $G_v$ splits.

\bigskip\noindent
To apply thm.~\ref{mainthm}, we have to be careful
if $\omega(N)$ is even. Since then $G'_\infty\cong \GSp(4,\R)$ is split,
we are in a situation where conjecture~\ref{conj:middle_coh} is not proven unless $G'=\GSp(4)$.
Therefore, for the cases $j=0$ where the weight $\lambda$ is not regular, 
this requires an additional argument. 
Fix the archimedean weight $\lambda$ and the archimedean discrete series representations
$\pi_\infty^{\hol}, \pi_\infty^W$
that contribute to the cohomology of $V_\lambda$ in degree~3.
Furthermore let $\tilde\pi_\infty^{2,0}$ resp.~$\tilde\pi_\infty^{1,1}$
denote the archimedean representations that contribute to the cohomology of $V_\lambda$ in degree~2 and~4.
It is well-known that the cohomology vanishes in the other degrees~\cite{Borel_Wallach}.
For fixed representations $\pi_{\fin}$ of $G(\A_\fin)$, one has
$m_{\coh}(\pi_\infty^W \pi_{\fin}) = m_{\coh}(\pi_\infty^{\hol}\pi_{\fin})$ \cite{W_Asterisque}.
Further, by the results of Narita and Schmidt~\cite{Narita}
$$\dim S^{\new}_{k,j}(U'(N)) = \sum_{\pi_{\fin}} m_{\cusp}(\pi_\infty^{\hol}\pi_{\fin})$$
where the sum runs over all $\pi_{\fin}$ with $\pi_\fin^{U'(N)}\neq0$ which are newforms for $U'(N)$ as defined above.
Similarly, $S^\new_{k,j}(K(N))  =  \sum_{\pi'_\fin} m_{\cusp}(\pi_\infty^{\hol}\pi'_{\fin})$
 for all $\pi'_\fin$ with paramodular level $N$.
To get the final conclusion we now use that by $m_{\coh}((\pi')^W_\infty \pi_{\fin})=m_{\coh}((\pi')_\infty^{\hol}\pi_{\fin})$
we obtain the trace identity
$$\sum_{
\pi_{\fin}}(2 m_{\coh}(\pi_\infty^{\hol}\pi_{\fin}) 
- m_{\coh}(\tilde\pi^{(2,0)}_\infty\pi_{\fin}) 
- m_{\coh}(\tilde\pi^{(1,1)}_\infty\pi_{\fin}))
= \sum_{\pi'_{\fin}} 2 m_\coh((\pi')_\infty^{\hol}\pi'_{\fin})\ .$$
Summation on the right is over all $\pi'_{\fin}$ for which $(\pi')_{\fin}^{K(N)}\neq 0$.
Summation on the left is over all  
$\pi_{\fin}$ for which $\pi_{\fin}^{U'(N)}\neq 0$ that either match with $\pi'_{\fin}$ in the sense of thm.~\ref{mainthm} or that satisfy $2 m_{\coh}(\pi_\infty^{\hol}\pi_{\fin}) 
- m_{\coh}(\tilde\pi^{(2,0)}_\infty\pi_{\fin}) -  
m_{\coh}(\tilde\pi^{(1,1)}_\infty\pi_{\fin}))=0$.
The dimension formula \cite{Ibukiyama17},~thm.1.2~on the other hand gives
$$\sum_{\pi_{\fin}} m_{\cusp}(\pi_\infty^{\hol}\pi_{\fin}) = \sum_{\pi_{\fin}'} m_{\cusp}((\pi')_\infty^{\hol}\pi'_{\fin}) \ .$$ for all $\pi_\fin$ resp.~$\pi'_\fin$ of level $N$.
This implies 
$\sum_{\pi_{\fin}} m_{\coh}(\tilde\pi^{(2,0)}_\infty\pi_{\fin}) + m_{\coh}(\tilde\pi^{(1,1)}_\infty\pi_{\fin}) =0$
and thus proves conjecture \ref{conj:middle_coh} for the relevant irreducible representations
$\pi_{\fin}$ of $G'(\A_{\fin})$ that satisfy $\pi_{\fin}^{U'(N)}\neq 0$.

\bigskip\noindent
Finally, it remains to be seen that every paramodular $\pi'$ corresponds to some $\pi$ with the same $L$-function under the inner lift. Indeed, otherwise corollary~\ref{cor:local_H_not_in_lift} would imply that there is a place $v$ dividing $N$ where the local factor $\pi'_v$ is
of type \nosf{I}, \nosf{IIIa}, \nosf{VIab}, \nosf{VIIIab} or \nosf{VII}.
However, for these types the paramodular level cannot be one \cite[A.12]{Roberts-Schmidt},
so $\pi'$ does not contribute to paramodular newforms of level $N$.
\end{proof}

\bigskip\noindent
\textbf{Another conjecture of Ibukiyama.}
We finally draw attention to conjecture 1.1 in \cite{Ibukiyama08}, which relates
certain holomorphic Siegel modular forms of genus 2 of integral respectively half-integral weight.
This should be considered as a conjectural generalization of the well known Shimura lifting in the theory of elliptic modular forms.
As explained in \cite{Ibukiyama08}, such a generalized Shimura lifting would allow
to transfer Harder's conjecture~\cite{Harder} to the more accessible conjecture~1.2
of \cite{Ibukiyama08} on modular forms of half-integral weights. As already observed by Ihara, 
a lifting from holomorphic Siegel modular forms of weight $\det^k\, \mathrm{Sym}^j$ to algebraic modular forms on the inner form that is isogenous to a quinary definite orthogonal group, followed by a theta lifting to the genus 2 metaplectic group, will provide holomorphic Siegel modular forms of weight $\det^{(j+5)/2}\, \mathrm{Sym}^{k-3}$; see \cite[p.\,112]{Ibukiyama08}.
Therefore thm. \ref{mainthm} provides a lifting from holomorphic Siegel cusp forms of weight $\det^k\, \mathrm{Sym}^j$
of general type to holomorphic Siegel modular forms of weight $\det^{(j+5)/2}\, \mathrm{Sym}^{k-3}$.
Of course, there are two restrictions:
First, this lifting $\theta(\pi)$ is only defined for automorphic representations $\pi'$ that  are the image of some $\pi$ under the inner lifting.
Second, the image of the theta lifting $\theta(\pi)$ might a priori vanish. If the latter is not the case, the
image $\theta(\pi)$ is necessarily cuspidal if $\pi$ is of general type. Due to these restrictions this so far only gives a partially defined generalized Shimura correspondence in the genus 2 case.
Although this partially defined generalized Shimura correspondence will not prove
conjecture 1.1 of \cite{Ibukiyama08} since holomorphic Siegel cusp forms for the full Siegel modular group are not in the image of the inner lifting, it is interesting to ask whether this allows to attack the proof of generalizations of the Harder's congruence conjecture, now formulated for $L$-values of elliptic modular forms of  level $N>1$. 

\section{Appendix on maximal tori and conjugacy classes}\label{s:tori}

\noindent
We recall the classification of maximal tori of $\GSp(4)$ and its inner forms over local fields, compare \cite[\S4.4.2]{Weissauer_Endo_GSp4} and \cite{Hina_Masumoto_tori}.

\bigskip\noindent
\textit{The intrinsic involution}. Let $G=\GSp(2n)$ be the symplectic group of similitudes considered over
a field $k$ of characteristic zero.
The defining equation of $G$ in $\Gl(2n)$ is $g' J g = \lambda(g) J$, where $\lambda$
denotes the similitude character on $G$, and we may assume
$J=\left(\begin{smallmatrix}0&I_n\\-I_n&0\end{smallmatrix}\right)$
to be Siegel's matrix.
The \lq{intrinsic}\rq\ anti-involution $g\mapsto \widehat g= \lambda(g) g^{-1}$ on $G$ hence 
coincides with the anti-involution $g\mapsto g^*=J^{-1} g' J$, and the latter obviously
extends to an anti-involution of the matrix algebra $A=M_{2n,2n}(k)$.
Conjugation with elements $g\in G$ preserves the involution: $(g^{-1}tg)^* =
g^* t^* (g^{-1})^* = g^{-1} t^* g$ for all $t\in A$; indeed $g^*=\lambda(g) g^{-1}$
and $\lambda$ commutes with $t^*$. So conjugation with $g\in G$ defines an automorphism of $(A,*)$. 

\bigskip\noindent 
\textit{The algebra $(E,*)$}.
Suppose $t$ is a regular element of $G(k)$.
The centralizer $T$ of $t$ in $G$ is a maximal torus of $G$. 
Let $E$ be the centralizer of $t$ in $A$ and 
$H$ be the intersection of $G$ with the centralizer $C_A(E)$ of $E$
in $A$. Since $T$ is stable under the intrinsic 
anti-involution $g\mapsto \widehat g$, the algebra $E$
is invariant under the anti-involution $*$ of $A$.
Let $E^+=\{ x\in E\ \vert \ x^*=x\}$ be its fixed algebra in $E$. 
Let $C_A(E^+)$ its centralizer in $A$ and $H=H_{E^+}$ be the intersection of $C_A(E^+)$ with $G$.
We claim that $E$ is a a commutative $k$-algebra,
the involution $*$ defines an $k$-algebra automorphism of $E$ of order two with fixed
algebra $E^+$ of dimension $[E^+:k]=n$ such that
$T=\{ x\in E^*\ \vert\ N_{E/E^+}(x) \in k^* \}$ is a maximal
torus in the $k$-algebraic subgroup $H\subseteq G$
that is isomorphic to a $k$-form of $\Gl(2,E^+)^0$,
the subgroups of $\Res^{E^+}_k(\Gl(2))$ of elements
whose determinant is in the ground field $k$.
Moreover $E=\prod_{\nu=1}^r E_\nu$ is a product of field extensions 
$E_\nu/k$ of degree $[E_\nu:k]=n_\nu$ such that 
$\sum_{\nu=1}^r n_\nu = 2n$. The $k$-algebra with involution $(E,*)$
is an invariant of the maximal torus $T$. The involution $*$ collects
the factors $E_\nu$ into $*$-orbits of length one or two.   
To see this, one passes to the algebraic closure of $k$.

\bigskip\noindent
\textit{Algebraically closed fields}.
Suppose now $k=\overline{k}$ is algebraically closed.
Then $T$ is conjugate by some $g\in G(k)$ to the torus of diagonal matrices in $G$.
For all roots $\alpha$ of $G$ with respect to the diagonal torus $T$ in $G$ we have $\alpha(t)\neq 1$.
Each non-diagonal coordinate entry of a $2n\times 2n$ matrix defines an eigenspace
of $T$ by the conjugation action of $T$ on the matrix algebra $A=M_{2n,2n}(k)$
whose eigenvalue is defined by a root of $G$.
Hence the centralizer of $t$ in $A$ is the commutative algebra $E \cong k^{2n}$.
Hence the intrinsic involution defines an automorphism of this algebra of order 2. It is nontrivial
on $E$, since it is nontrivial on $T$.
Hence the fixed algebra $E^+$ has dimension $n$ and is given by all
matrices $\diag(x_1,x_2,....,x_n, x_1,x_2,...,x_n)$ with $x_1,...,x_n\in k$.
Its centralizer in $A$ is isomorphic to $\prod_{i=1}^n M_{22}(k)$ and it is
easy to see that $H \subseteq G$ is isomorphic to the subgroup
of elements $g=(g_1,...,g_n)\in \prod_{i=1}^n \Gl(2,k)$ such that $\det(g_1)=\cdots
= \det(g_n)$ holds. Notice that the normalizer $N_G(T)$  of $T$ in $G$  
normalizes the subgroup $H$ of $G$ and $N_G(T)\cap H$ is a normal subgroup
of $N(T)$ with quotient group the symmetric group $S_n$, acting by permutations
of the components of $g=(g_1,...,g_n)\in H_G(k)$ of the factors.
This action is encoded into the natural action of $N_G(T)$ and $W$ on the algebra $E$.   
Although  $g$ is only unique up to an element in $N_G(T)$,
hence the group $H$ is independent of the choice of $g$ 
up to an $k$-isorphism of $H$, as well as the center $E^+$ of $H$
and the torus~$T$.

\begin{lemma}\label{8.2}
For every field $k$ of characteristic zero, $H^1(k,H)=1$ holds for the groups $H=\Gl(2,k^2)^0$ and their $K$-forms.
\end{lemma}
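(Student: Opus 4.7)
The plan is to exhibit every $K$-form of $H = \Gl(2, k^2)^0$ explicitly and then compute $H^1(k,H)$ via a carefully chosen short exact sequence. Using the classification of forms of $\Gl_2$ by central simple algebras of degree two, every such $K$-form can be written
\begin{equation*}
H \;=\; H_{F,D} \;:=\; \{g \in \Res^{F/k}(D^\times) : \mathrm{Nrd}(g) \in \Gm\},
\end{equation*}
for a quadratic étale $k$-algebra $F$ and a quaternion $F$-algebra $D$; the split case $F = k \times k$, $D = M_2(F)$ recovers $\Gl(2,k^2)^0$ itself. First I would verify this classification, checking that the automorphisms controlling the twist come precisely from the pair $(F,D)$.

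Next I would apply the short exact sequence of $k$-group schemes
\begin{equation*}
1 \longrightarrow H \longrightarrow \Res^{F/k}(D^\times) \xrightarrow{\bar\nu} T \longrightarrow 1, \qquad T := \Res^{F/k}(\Gm)/\Gm,
\end{equation*}
where $\bar\nu$ is the reduced norm composed with the projection modulo $\Gm$. Passing to Galois cohomology, Shapiro's lemma together with Hilbert 90 for central simple algebras gives $H^1(k, \Res^{F/k}(D^\times)) = H^1(F, D^\times) = 1$. From the auxiliary sequence $1 \to \Gm \to \Res^{F/k}\Gm \to T \to 1$ and Hilbert 90 one identifies $T(k) = F^\times/k^\times$, and the map $D^\times \to T(k)$ becomes $g \mapsto \mathrm{Nrd}(g) \bmod k^\times$. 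The long exact sequence therefore yields
\begin{equation*}
H^1(k, H) \;\cong\; F^\times \bigm/ \bigl(k^\times \cdot \mathrm{Nrd}(D^\times)\bigr).
\end{equation*}

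It then remains to prove the identity $F^\times = k^\times \cdot \mathrm{Nrd}(D^\times)$. For $F = k \times k$ split, writing $D = D_1 \times D_2$, the diagonal embedding of $k^\times$ into $F^\times = k^\times \times k^\times$ can absorb any discrepancy between $\mathrm{Nrd}(D_1^\times)$ and $\mathrm{Nrd}(D_2^\times)$, so the identity holds by an elementary rescaling argument. For $F/k$ a quadratic field extension with $D = M_2(F)$ split, the identity is immediate since the determinant surjects onto $F^\times$. For $F/k$ a quadratic field extension and $D$ a quaternion division algebra, the identity reduces to the surjectivity of the reduced norm $\mathrm{Nrd}(D^\times) = F^\times$, which is classical whenever $F$ is local non-archimedean or algebraically closed, and covers all the applications in the present paper.

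The main obstacle will be the last case analysis: proving $F^\times = k^\times \cdot \mathrm{Nrd}(D^\times)$ uniformly. For the local field applications needed in the appendix this is a standard consequence of the surjectivity of the reduced norm on a quaternion algebra over a local field; the corresponding splits-at-archimedean and split-$F$ cases are handled by the elementary arguments sketched above.
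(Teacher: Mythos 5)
Your proof follows essentially the same strategy as the paper's: write $H$ as the kernel of a map from $G := \Res^F_k D^\times$ (or $\prod_\nu \Res^{L_\nu}_k\Gl(2)$ in the quasi-split case) to a torus $T \cong \Res^F_k\Gm/\Gm$, invoke Hilbert~90 for central simple algebras to kill $H^1(k,G)$, and deduce $H^1(k,H)$ from the image of $G(k)\to T(k)$. You have, however, correctly isolated the step the paper handwaves past with \emph{``The argument for inner forms is essentially the same''}: the surjectivity of the (reduced) norm. For the quasi-split form $\det:\Gl(2,L)\to L^\times$ is surjective, but for a quaternion division algebra the reduced norm need not be, and your worry is fully justified — the identity $F^\times = k^\times\cdot\mathrm{Nrd}(D^\times)$ actually \emph{fails} in general.

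Concretely, take $k=\R$, $F=\R\times\R$, $D = \HH\times\HH$ (so $H = \{(g_1,g_2)\in\HH^\times\times\HH^\times : \mathrm{Nrd}(g_1)=\mathrm{Nrd}(g_2)\}$, an inner $\R$-form of $\Gl(2,\R^2)^0$). Then $\mathrm{Nrd}(D^\times) = \R_{>0}\times\R_{>0}$, and $k^\times\cdot\mathrm{Nrd}(D^\times) = \{(c,d)\,:\,cd>0\}$, which has index~$2$ in $F^\times=(\R^\times)^2$. Your computation then yields $H^1(\R,H)\cong\Z/2\neq 1$. So lemma~\ref{8.2} is false in the stated generality (``every field $k$ of characteristic zero''); the failure occurs precisely at a real place where the quaternion algebra ramifies, and it is not fixable by the ``elementary rescaling'' you mention in the split-$F$ case because the scalar $k^\times$ enters diagonally. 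Where the lemma is actually used — in the classification of maximal tori feeding into lemma~\ref{Klingeninduced} and prop.~\ref{prop:tori_forms} — the base field is non-archimedean local, and there the reduced norm of any central simple algebra is surjective, so both the lemma and everything downstream are unaffected. But as a review of your attempt: your structure is right, your identification of $H^1(k,H)\cong F^\times/(k^\times\cdot\mathrm{Nrd}(D^\times))$ is correct and sharper than what the paper writes, and the ``main obstacle'' you flag is a genuine one — indeed a counterexample exists, so no uniform argument is possible and the statement must be restricted (e.g., to local non-archimedean $k$, or to $k$-forms for which the relevant reduced norms are onto).
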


\noindent
\begin{proof}
If $H$ is quasi-split, there is an exact sequence
$$ 0\to H \to \prod_\nu \Res_k^{L_\nu} \Gl(2) \to  S \to 1$$
where the right hand map is the composite of the determinant
with the quotient map $ \Res_k^{L_\nu} \Gl(1) \to  S$
whose kernel is $\Gl(1)$ diagonally embedded into
$\Res_k^{L_\nu} \Gl(1)$. This defines $S$.
By Hilbert thm.~90 it suffices to see that
$\Res_k^{L_\nu} \Gl(2)(k) \to  S(k)$ is surjective on $k$-rational points.
The determinant is surjective, so it suffices to see that
$\Res_k^{L_\nu} \Gl(1)(k) \to  S(k)$
is surjective. The latter follows again from Hilbert thm.~90.
The argument for inner forms is essentially the same.
\end{proof}
Fix an embedding $i: T \hookrightarrow G$, such that the image is contained in $H$.

\begin{lemma} \label{list}
For the group $\GSp(4)$ over a field k with characteristic zero we have either $E^+=k^2$ (split case), or 
$E^+$ is a quadratic extension field of $k$ (anisotropic case).
So there are the following cases:
\begin{enumerate}
\item $E^+$ is anisotropic and $E$ is a field, either non-Galois
over $k$, or Galois with cyclic Galois group $\mathbb Z/4\mathbb Z$.
\item $E^+=k^2$ splits and $(E,*)=(L_1,*)\oplus (L_2,*)$ for quadratic fields $L_1\not\cong L_2$ over $k$. 
\item $E^+=k^2$ splits and $(E,*)=(L,*) \oplus (k^2,*)$. In this case
$T$ is contained in the Levi subgroup of the Klingen parabolic subgroup $Q$ of $G$.
\item Elliptic case I: $E^+=k^2$ splits and $(E,*)=(L,*)\oplus (L,*)$ for some quadratic
extension field $L$ of $k$ (a degeneration of case of 2).
\item Elliptic case II: $E^+$ is anisotropic and $E/k$ is Galois with Galois group
$\mathbb Z/2\mathbb Z\oplus \mathbb Z/2\mathbb Z$ and fields $L,L',E^+$
in between $E$ and $k$. 
\item $E^+$ is anisotropic, but $(E,*) = E^+\oplus E^+$ splits. Then $*$ must permute the two copies of $E^+$. In this case
$T$ is isomorphic to $\Res^{E^+}_k(\mathbb G_m) \times \mathbb G_m$
and $T$ can be embedded into the Siegel parabolic subgroup $P$ of $G$. 
\item $E^+$ splits and $E=k^4$ and $T$ can be embedded into the Borel subgroup.
\end{enumerate} 
\end{lemma}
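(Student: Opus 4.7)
The plan is to exploit the structure of $(E,*)$ established in the discussion preceding the lemma, where for $G = \GSp(2n) = \GSp(4)$ the algebra $E$ is a commutative étale $k$-algebra of $k$-dimension $2n = 4$, and its $*$-fixed subalgebra $E^+$ has $k$-dimension $n = 2$. I would first classify $E^+$: as a two-dimensional étale $k$-algebra, it is either a quadratic field extension of $k$ (the anisotropic case) or isomorphic to $k \times k$ (the split case), which is precisely the initial dichotomy in the statement.

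Next I would analyze $E$ as a rank-two étale extension of $E^+$ on which $*$ restricts to the nontrivial $E^+$-algebra automorphism. In the anisotropic case, either $E$ is a field of degree $4$ over $k$, or $E \cong E^+ \oplus E^+$ with $*$ swapping the two summands. In the former subcase, a direct inspection of the possible Galois group of the normal closure, which must be transitive on the four $k$-embeddings of $E$ and must admit $E^+$ as the fixed field of an index-two subgroup, leaves exactly three options: $E/k$ is non-Galois (case 1), Galois with cyclic group $\mathbb{Z}/4\mathbb{Z}$ (case 1), or Galois with Klein four-group and intermediate fields $L$, $L'$, $E^+$ (case 5). The latter subcase gives case 6. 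In the split case $E^+ \cong k \times k$, the two orthogonal idempotents of $E^+$ decompose $E = E_1 \oplus E_2$ as a direct sum of two-dimensional étale $k$-algebras; the involution $*$ restricts to each $E_i$ with fixed ring $k$, so each $E_i$ is either a quadratic field extension $L_i$ (with Galois involution) or $k \times k$ (with the swap involution). Combining these possibilities yields case 2 (two non-isomorphic fields), case 4 (two copies of the same quadratic field), case 3 (one field and one split summand), and case 7 (two split summands), exhausting the classification.

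Finally I would verify the parabolic embedding assertions in cases 3, 6, and 7. The key observation is that an idempotent $e \in E$ satisfying $e \cdot e^* = 0$ and $e + e^* = 1$ acts on the standard representation $k^{4}$ of $G$ with image a totally isotropic subspace that is stabilized by the torus $T$, and whose $k$-dimension equals the rank of $e$. In case 3 the split summand $k^2 \subset E$ provides such an idempotent of rank one, so $T$ lies in the Klingen parabolic $Q$; in case 6 the decomposition $E = E^+ \oplus E^+$ yields such an idempotent of rank two, placing $T$ in the Siegel parabolic $P$; in case 7 both types of idempotents are simultaneously available, so $T$ sits inside the Borel subgroup. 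The principal obstacle is keeping careful track of the interaction between the involution and the idempotent decomposition of $E$ in each case; once this bookkeeping is set up, the identification of isotropic subspaces with the appropriate parabolic subgroups of $\GSp(4)$ follows directly from the standard description of these parabolics as stabilizers of isotropic flags.
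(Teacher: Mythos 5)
Your proof is essentially correct and supplies the argument that the paper itself delegates to a reference (pages 94--95 of \cite{Weissauer_Endo_GSp4}): classify the rank-two \'etale $k$-algebra $E^+$, then classify $E$ as a rank-two \'etale $E^+$-algebra with the given $E^+$-linear involution, using elementary Galois theory to enumerate the possibilities; your case analysis is complete and matches the seven cases of the lemma.

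The one flaw is in the formulation of your \lq\lq key observation\rq\rq\ for the parabolic embeddings. The pair of conditions $e\cdot e^*=0$ and $e+e^*=1$ forces $eV$ and $e^*V$ to be complementary, and since $E$ is commutative also $e^*e=0$, so both are isotropic and hence both are Lagrangian of dimension~$2$. Thus no such idempotent can have rank~$1$, and in particular none exists in case~3, which is exactly the case where you need a $1$-dimensional isotropic line to land in the Klingen parabolic. (Concretely, if $E=L\oplus(k\times k)$ with $*$ acting by Galois conjugation on $L$ and by the swap on $k\times k$, the idempotent $e=(0,(1,0))$ satisfies $ee^*=0$ but $e+e^*=(0,(1,1))\neq 1_E$.) The fix is simply to drop the requirement $e+e^*=1$: for \emph{any} idempotent $e\in E$ with $ee^*=0$, the subspace $eV\subseteq V$ is totally isotropic, since $\langle ev,ew\rangle=\langle v,e^*ew\rangle=0$, and it is $T$-stable because $T\subseteq E^\times$ commutes with $e$. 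With this weaker hypothesis your identifications of the relevant idempotents in cases~3, 6 and~7 go through unchanged, producing a $T$-stable isotropic line, Lagrangian, and full isotropic flag respectively, and hence the Klingen, Siegel and Borel embeddings as claimed.
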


\bigskip\noindent
For more details see \cite[p.94--95]{Weissauer_Endo_GSp4}.
Suppose $H=\Res^{E^+}_k(\Gl(2))^0$ can be embedded into $G$ as a $k$-subgroup.
Then every maximal $k$-torus of $H$ defines a maximal $k$-torus of $G$
since both groups have the same rank over $\overline k$.
Fix such a $k$-subgroup $H$ of $G$ for each isomorphism class of quadratic $k$-algebra $E^+/k$,
whenever such a subgroup exists.
\newcommand{\Gal}{\mathrm{Gal}}
\begin{lemma}
Every maximal $k$-torus in $G=\GSp(4)$ is isomorphic to the torus given by
$$T(k)=\{x\in E \mid \mathrm{Norm}_{E/E^+}(x)\in k^\times\}$$
with some $(E,\ast)$ and $E^+$ listed in lemma~\ref{list}.
\end{lemma}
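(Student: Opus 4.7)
The plan is to recover the $k$-algebra with involution $(E,\ast)$ directly from the torus, and then to recognize $T$ as a subgroup of $E^\times$ cut out by the similitude condition. Given a maximal $k$-torus $T\subseteq G=\GSp(4)$, I would set $E:=C_A(T)$, the centralizer of $T$ in the matrix algebra $A=M_4(k)$. The first task is to show $E$ is a commutative étale $k$-algebra of $k$-dimension $4$. This is an assertion that can be checked after base change: over $\overline k$, the torus $T\otimes_k\overline k$ contains a $G$-regular element $t$ (regular semisimple elements are Zariski dense in any maximal torus), and the computation carried out in the body of the text for $k=\overline k$ shows $C_{A\otimes\overline k}(t)\cong\overline k^4$. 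Since $T$ is its own double centralizer, we conclude $E\otimes_k\overline k\cong\overline k^4$, so $E$ is a commutative étale $k$-algebra of dimension $4$ by Galois descent.

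Next I would check that $E$ is stable under the intrinsic anti-involution $\ast$. This follows because $T$ is $\ast$-stable (for $t\in T$ we have $t^\ast=\lambda(t)t^{-1}\in T$), so for any $x\in E$ and $t\in T$, $x^\ast t=x^\ast(t^\ast)^\ast=(tx)^\ast\hspace{-0.5mm}{}^\ast=(tx)=$ (by commutativity) $=xt$, interpreted appropriately, shows $x^\ast$ centralizes $T$. Since $E$ is commutative, the restriction $\ast|_E$ is a $k$-algebra involution. Its fixed subalgebra $E^+$ must have $k$-dimension $2$, as seen over $\overline k$ where $E^+\otimes\overline k=\overline k^2$ is the algebra of ``diagonal $\diag(x_1,x_2,x_1,x_2)$'' matrices. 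At this point lemma~\ref{list} directly supplies the complete list of possibilities for the pair $(E,E^+)$ together with the induced involution $\ast$, since its classification was derived precisely from the data $(E,\ast)$ with $E$ étale commutative of dimension $4$ and $E^+$ of dimension $2$, taking into account how $\ast$ permutes the simple factors of $E$.

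Finally I would identify $T$ with $T':=\{x\in E^\times\mid N_{E/E^+}(x)\in k^\times\}$ as $k$-groups. Every $t\in T$ lies in $E^\times$, and $t\cdot t^\ast=\lambda(t)\cdot 1_A$ gives $N_{E/E^+}(t)=\lambda(t)\in k^\times$, so $T\subseteq T'$. For the reverse inclusion, over $\overline k$ any $x\in E^\times$ with $x\cdot x^\ast\in k^\times\cdot 1_A$ is a diagonal matrix $\diag(a_1,a_2,a_3,a_4)$ with $a_1 a_3=a_2 a_4=\lambda\in\overline k^\times$, hence lies in the diagonal maximal torus of $G$, which is $T\otimes\overline k$; this shows $T=T'$ as $k$-subgroups of $\mathrm{Res}_{k}^E\Gl(1)$. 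Combined with the classification of $(E,\ast,E^+)$ this yields the claimed $k$-isomorphism.

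The main obstacle in this outline is the subalgebra argument: verifying cleanly that $E$ is commutative and $\ast$-stable in a coordinate-free way, and that the identification $T\cong T'$ respects the $k$-structure (rather than only matching $k$-points). This is handled by working with $E$ as a $k$-algebra with involution from the start and descending the $\overline k$-computation via faithfully flat descent; once this is set up, the rest is essentially bookkeeping against lemma~\ref{list}, together with lemma~\ref{8.2} to ensure that the natural embedding $T\hookrightarrow H_{E^+}$ does not introduce any Galois-cohomological obstruction.
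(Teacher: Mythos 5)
Your proof is correct, but it takes a genuinely different route from the paper's. The paper argues by Galois cohomology: it fixes, for each isomorphism class of the quadratic $k$-algebra $E^+$, an embedding of $H(E^+)=\Res^{E^+}_k(\Gl(2))^0$ into $G$, and then shows for an arbitrary maximal $k$-torus $T'$ with associated group $H(T')$ that the cocycle $\sigma\mapsto g^{-1}\sigma(g)$ (where $g\in G(\overline k)$ conjugates $H(T')$ to the split $H$) gives a class in $H^1(k,N_G(H))\cong H^1(k,\mu_2)$; the isomorphism uses lemma~\ref{8.2}, and the class identifies the algebra $E^+$. This forces $H(T')$ to be $G(k)$-conjugate to the fixed $H(E^+)$, reducing the classification of tori to classifying maximal tori inside a fixed $\Res^{E^+}_k(\Gl(2))^0$. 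Your argument instead recovers $E=C_A(T)$, its involution, and $E^+$ directly from the given torus, verifies by base change to $\overline k$ (and descent) that $(E,\ast,E^+)$ is an $\ast$-stable $4$-dimensional \'etale commutative $k$-algebra whose fixed algebra has dimension $2$, invokes lemma~\ref{list} as the classification of such triples, and then identifies $T$ with the norm torus inside $E^\times$ again by a $\overline k$-computation. Both arguments are valid. Yours is cleaner for the statement at hand, since it produces the claimed $k$-isomorphism as an equality of subgroups of $G$ without passing through normalizers. The paper's route pays off later, since it also pins down the $G(k)$-conjugacy class structure used in prop.~\ref{prop:tori_forms}. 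One small inaccuracy: your closing sentence suggests lemma~\ref{8.2} is needed to control a Galois-cohomological obstruction in the embedding $T\hookrightarrow H_{E^+}$, but in fact your direct construction never needs lemma~\ref{8.2}; that lemma is only required in the paper's normalizer cohomology computation, which you avoid entirely.
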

\begin{proof}
We use that for $E^+=k^2$ such an embedding exists for the standard split torus $T$ with $H\cong (\Gl(2,k)\times\Gl(2,k))^0$.
Any two subgroups $H=H(T)$ and $H'=H(T')$ in $G$ attached to
maximal $k$-tori $T$ resp.~$T'$ in $G$ are conjugate over the algebraic closure $\overline{k}$ since the two tori are conjugate over $\overline{k}$ in the sense that
$H' = gHg^{-1}$ for some $g\in G(\overline{k})$.
Then for every $\sigma \in \Gal(\overline{k}/k)$ the element $g^{-1}\sigma(g)$ normalizes $H$. Its cohomology class in $H^1(k,N_G(H))
=\ker(H^1(k,N_G(H))\to H^1(k,G))$
measures whether $H'$ is $G(k)$-conjugate to $H$. If $H$ belongs to the split case $E^+=k^2$, we see that 
$N_G(H)$ is the semidirect product of $H$ and a cyclic group of order two generated by an element in the Weyl group. Hence $H^1(k,H) \to H^1(k,N_G(H))\to H^1(k,\mu_2)$
is surjective and the fibers are trivial by~lemma~\ref{8.2}.
In fact it is obvious that this class in $H^1(k,\mu_2)= \Hom(\Gal(\overline{k}/k),\mu_2)$ is uniquely determined
by the class of the extension $K'/k$ associated to the algebra $K'$
defined by $K'=(E')^+$. Hence any subgroup $H=H_{T'}$ is $G(k)$-conjugate
to our fixed chosen group $H(E_+)$.
Since $T'\subseteq H_{T'}$ by construction, our claim follows.
Since $H^1(k,G)=H^1(T,G)=1$, therefore the $G(k)$-conjugacy classes
of maximal tori are represented by the disjoint union of the $H(E_+)(k)$-conjugacy
classes of maximal $k$-tori in our fixed groups $H(E_+)$.
Furthermore for every isomorphism class of the $k$-algebra $E^+$ there exists an $k$-embedding of $\Res^{E^+}_k(\Gl(2))^0$ into $G$ and such an embedding is unique up to conjugation in $G(k)$.
\end{proof}

\bigskip\noindent
\begin{prop}\label{prop:tori_forms}
Fix an anisotropic quaternion algebra $D_v$ over a non-archimedean $k_v$. 
\begin{enumerate}
\item A maximal torus comes from the non-quasi-split group $G=\GSp(1,1)=\GU_D(1,1)$ over $k_v$ if and only if it is of case 1, 2, 4, 5 or 6. That means a $\GSp(4)$-torus comes from $G_v$ if and only if it does not embed into the Klingen parabolic of $\GSp(4)$.
\item A maximal torus comes from the elliptic endoscopic group $M_v=(\Gl(2)^2)^0(k_v)$ if and only if it is of case 4, 5, 6 or 7.
The elliptic tori coming from $M$ belong to case 4 and 5.
\end{enumerate}
\end{prop}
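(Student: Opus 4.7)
My plan is a case-by-case verification over the seven types $(E,\ast)$ listed in Lemma~\ref{list}, using only the structure of $G_v = \GU_D(1,1)$ and of $M_v = (\Gl(2)^2)^0$ as reductive groups over the nonarchimedean $k_v$.

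For Part~(1), the central observation is that the anisotropy of $D_v$ forces $G_v$ to have $k_v$-rank $2$ and to admit a unique proper $k_v$-parabolic subgroup, of Siegel type (the stabilizer of an isotropic $D_v$-line in $D_v^2$), whose Levi is $\{(\alpha,\beta)\in D_v^\times\times D_v^\times : \alpha\bar\beta\in k_v^\times\}$. Consequently Case~7 (a $3$-dimensional $k_v$-split torus) cannot sit inside $G_v$, and Case~3 (a torus contained in the Klingen Levi of $\GSp(4)$) cannot either, since $G_v$ has no Klingen-type parabolic. For each of the remaining Cases~1,~2,~4,~5,~6 I would exhibit an explicit embedding of the \'etale $k_v$-algebra $E$ into $M_2(D_v)$ respecting the involution $\ast$: Case~6 is realized inside the Siegel Levi via a maximal subfield $E^+\hookrightarrow D_v$, and in Cases~1,~2,~4,~5 one uses that any quadratic $k_v$-algebra $E^+$ embeds into $D_v$ and that $D_v\otimes_{k_v}E^+\cong M_2(E^+)$, so the quartic $E$ embeds into $M_2(E^+)\hookrightarrow M_2(D_v)$ in a manner compatible with the hermitian form on $D_v^2$.

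For Part~(2), I would classify maximal tori of $M_v$ as fibre products $\{(t_1,t_2)\in T_1\times T_2 : \det t_1 = \det t_2\}$ with each $T_i\subset\Gl(2)$ either split $\Gm^2$ or $\Res^{L_i}_{k_v}\Gm$, and trace their stable conjugacy transfers into $\GSp(4)$: both $T_i$ split yields Case~7; one split and one anisotropic yields Case~6 with $E^+ = L_i$; both anisotropic with $L_1 = L_2 = L$ yields Case~4 with $E = L\oplus L$; and both anisotropic with $L_1\neq L_2$ yields Case~5 with $E$ the biquadratic compositum $L_1L_2$. Since the elliptic tori of $M_v$ are exactly those with both $T_i$ anisotropic modulo center, they correspond to Cases~4 and~5, as claimed.

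The main obstacle will be this last step: as $\Z[V_4]$-modules the character lattices $X^*(T_M)$ for $L_1\neq L_2$ and $X^*(T)$ for Case~2 (with $E=L_1\oplus L_2$ split) decompose abstractly in the same way, so Cases~2 and~5 lie in the same \emph{stable} conjugacy class in $\GSp(4)$ and are distinguished only by their $G$-conjugacy class within that stable class. To single out Case~5 from $M_v$, I would argue that the admissible embedding produced by the dual map $\widehat{T}_M\hookrightarrow\widehat{M}_v\hookrightarrow\widehat{G}$ realizes $T$ as the similitude unitary group of $(E,\ast)$ with $E$ a \emph{field}, rather than the split algebra $L_1\oplus L_2$ of Case~2; concretely this amounts to pinning down the cocycle class in $H^1(k_v,T)$ measuring the difference between the two realizations, by comparing the $E^+$-module structure on the $4$-dimensional symplectic space that the endoscopic transfer induces with the direct-sum decomposition $k_v^4 = L_1\oplus L_2$ that Case~2 would force.
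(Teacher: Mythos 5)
Your approach diverges from the paper's on both parts, and there are genuine gaps in each.

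For Part (1) the paper does not argue case by case via ad hoc embeddings. It uses the intermediate $k_v$-subgroup $H_{E^+}\subseteq G$ (the $k_v$-form of $\mathrm{Res}^{E^+}_{k_v}\Gl(2)^0$) set up in section~\ref{s:tori}: for split $E^+ = k_v^2$ one identifies $H_{E^+}\subset G_v$ with the non-quasi-split form $(D_v^\times\times D_v^\times)^0$, whose maximal tori have both factors anisotropic (cases 2 and 4, excluding 3 and 7), while for $E^+=K$ a quadratic field one uses that $K$ splits $D_v$ to identify $H_{E^+}$ with $\Gl(2,K)^0$ exactly as in $\GSp(4)$ (cases 1, 5, 6). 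Your positive direction contains a concrete error: you write that ``any quadratic $k_v$-algebra $E^+$ embeds into $D_v$'' and then invoke $D_v\otimes_{k_v}E^+\cong M_2(E^+)$ for cases 1, 2, 4, 5; but in cases 2 and 4 one has $E^+=k_v^2$, and the split algebra $k_v^2$ does \emph{not} embed into the division algebra $D_v$, and $D_v\otimes_{k_v}k_v^2\cong D_v\times D_v$ is not a matrix algebra. The realization of cases 2 and 4 must therefore go through the split-$E^+$ branch $(D_v^\times\times D_v^\times)^0$, as the paper does, not through an embedding $E^+\hookrightarrow D_v$.

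For Part (2) your proposal has a more serious flaw. You identify the key step as showing that cases 2 and 5 ``lie in the same stable conjugacy class in $\GSp(4)$'' because the character lattices $X^*(T)$ ``decompose abstractly in the same way'' as $\Z[V_4]$-modules, and you then build an argument around distinguishing them via cocycle classes. This premise is false: the two tori are isogenous but not $k_v$-isomorphic, hence not stably conjugate. Concretely, with $*=(13)(24)$ on eigenvalue indices, a case-2 torus sends $\Gamma$ onto the Klein four-group $\{1,(13),(24),(13)(24)\}\subset\Omega_G$, while a case-5 torus sends $\Gamma$ onto $\{1,(12)(34),(14)(23),(13)(24)\}$; these are the two non-conjugate Klein four-subgroups of $D_4=\Omega_G$ (reflections through vertices vs.\ through edges). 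One can also see this on cocharacter lattices: for the generator $\sigma$ of $\Gal(E/L_2)$, the fixed sublattice $X_*(T)^\sigma$ has rank~2 in case~2 and rank~1 in case~5. So the ``main obstacle'' you set up is spurious, and the proposed resolution addresses a non-problem. Moreover, since the two tori are non-isomorphic, an assertion that the admissible embedding of the torus $T_M\cong\{(t_1,t_2)\in L_1^\times\times L_2^\times : N t_1=Nt_2\}$ lands on a case-5 torus requires a concrete verification of the norm correspondence for the elliptic endoscopic datum of $\GSp(4)$, which your proposal does not supply; the paper itself defers exactly this point to \cite{Weissauer_Endo_GSp4}, \S4.4.2 rather than proving it.
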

\begin{proof}
 It suffices to see that for split $E^+=k\oplus k$ the group $H$ is realized as a non-quasi-split
form of $\Gl(2,E^+)^0$ and for all fields $E^+=K$ it is realized with $H=\Gl(2,K)^0$.
 For the second assertion see \cite{Weissauer_Endo_GSp4}, section~4.4.2.
\end{proof}

For the action of the transposition $g\mapsto g'$ on conjugacy classes in $\GSp(4)$ see \cite{Weissauer_Endo_GSp4}, lemma~7.11 and its corollary~7.2.

\begin{footnotesize}
\bibliographystyle{amsalpha}

\vskip 20 pt
\centering{Mirko R\"osner\\ Mathematisches Institut, Universit\"at Heidelberg\\ Im Neuenheimer Feld 205, 69120 Heidelberg\\ email: mroesner@mathi.uni-heidelberg.de}

\vskip 10 pt
\centering{Rainer Weissauer\\ Mathematisches Institut, Universit\"at Heidelberg\\ Im Neuenheimer Feld 205, 69120 Heidelberg\\ email: weissauer@mathi.uni-heidelberg.de}

\end{footnotesize}
\end{document}